\title[Invariant splitting principles]{Invariant splitting principles for the Lipshitz--Ozsv\'{a}th--Thurston correspondence}
\author{Gary Guth}
\address{Department of Mathematics, Stanford University, Palo Alto, CA 94301}
\email{gmguth@stanford.edu}
\author{Sungkyung Kang}
\address{Mathematical Institute, University of Oxford, Andrew Wiles Building, Radcliffe Observatory Quarters, Woodstock Road, Oxford, OX2 6GG, United Kingdom}
\email{sungkyung3838@gmail.com}
\begin{document}

\maketitle

\begin{abstract}
We prove that the Lipshitz-Ozsv\'{a}th-Thurston correspondence between extended type D structures of knot complements and $\F[U, V]/(UV)$ knot Floer complexes can be arranged so that $\iota_K$-invariant splittings of knot Floer chain complexes correspond to $\iota_{S^3 \setminus K}$-invariant splittings of bordered Floer homology of knot complements. This provides a novel way to compute the involutive knot Floer homology of satellites from that of their companions. As a topological application, we show that our results can be applied to construct infinitely many examples of exotic pairs of contractible 4-manifolds which remain exotic after one stabilization. Along the way, we also establish first order naturality of bordered Floer homology. 
\end{abstract}

\tableofcontents

\section{Introduction}\label{sec: introduction}

\emph{Heegaard Floer homology} is an expansive kit of tools for low dimensional topologists introduced by Ozsv\'ath and Szab\'o in \cite{os_holodisks,ozsvath2004holomorphicproperties}. Heegaard Floer homology assigns invariants to 3-manifolds as well as to links inside of them \cite{rasmussen_knotcompl,os_knotinvts,os_linkinvts}; moreover, Heegaard Floer homology has the structure of a TQFT, assigning maps to 4-dimensional cobordisms between 3-manifolds as well as to surface cobordisms between links \cite{os_holotri,zemke_graphcob,zemke_linkcob}. 

\emph{Bordered Floer homology} is a collection of invariants for 3-manifolds with boundary, due to \LOT \cite{LOT_bordered_HF}. In this theory, each 3-manifold $Y$ is assigned a pair of invariants, $\CFAh(Y)$ and $\CFDh(Y)$ (an $\cA_\infty$-module and a differential graded module respectively). Importantly, there is a \emph{pairing theorem}, which states that if $Y_1$ and $Y_2$ have common boundary, then
\[
\CFh(Y_1 \cup Y_2)\simeq \CFAh(Y_1) \boxtimes\CFDh(Y_2),
\]
where $\CFAh(Y_1) \boxtimes \CFDh(Y_2)$ is a convenient model for the derived tensor product $\CFAh(Y_1)\widetilde{\otimes}\CFDh(Y_2)$.

\emph{Involutive Heegaard Floer homology} is an extension of these invariants introduced by Hendricks and Manolescu \cite{hendricks_manolescu_Invol}.
Analogous to the intrinsic $Pin(2)$-symmetry in Seiberg--Witten Floer homology, Heegaard Floer homology has an intrinsic $\Z/2$-symmetry which arises from the conjugation of $\mathrm{Spin}^c$ structures on the underlying manifolds. This induces an action on $\CF^-(Y)$, usually denoted $\iota$, which is used to define involutive Heegaard Floer homology. In a similar vein, Hendricks--Manolescu define an action on the knot Floer complex $\CFK_{\mathbb{F}[U,V]}(S^3,K)$, denoted $\iota_K$, and define a new knot invariant, \emph{involutive knot Floer homology}. 

These two extensions of the original Heegaard Floer homology package have been utilized very successfully in the study of 3.5-dimensional topology. In the realm of concordance and homology cobordism, involutive Floer homology has been used to prove the 3-dimensional homology cobordism group contains a $\Z^\infty$ summand \cite{DHSL_infinite_rank} and further, is not even generated by Seifert fibered spaces \cite{HHSZ_surgery_exact_invol}. Involutive knot Floer homology has been used by Hom--Park--Stoffregen and the second author to prove the existence of an infinite family of rationally slice knots which are linearly independent in the smooth concordance group \cite{HKPS_lin_ind_rationally_slice}. Bordered Floer homology has proven to be quite adept at detecting exotic behavior: the first author used these invariants to produce stably-exotic surfaces in the 4-ball \cite{guth_one_not_enough_exotic_surfaces} (i.e. surfaces which remain exotic after attaching arbitrarily many 1-handles); work of the authors with Hayden-Park showed that vast families of exotic surfaces in $B^4$ can be constructed utilizing similar satellite techniques \cite{guth2023doubled}. The second author combined both involutive and bordered Floer homology to prove the existence of a pair of stably exotic 4-manifolds with boundary \cite{kang2022stabilization} (i.e. which remain exotic after taking a connected sum with $S^2\times S^2)$. Additionally, Cohen \cite{cohen2023composition} has given an algorithm for computing cobordism maps on $\HFh$ using bordered Floer homology, extending the algorithm for computing $\HFh$ for 3-manifolds in \cite{LOT_factoring_mapping_classes}.

\subsection{Satellites and Spin$^c$ conjugation:}

Several of these recent results rely on \emph{satellite operations}, which provide a controlled means of introducing topological complexity; it is therefore sensible to understand their effect on topological invariants. In particular, in this article we consider the question of understanding the action of $\iota_K$ on the knot Floer homology of a satellite knot. Over the ring $\cR := \F[U,V]/(UV)$, understanding the knot Floer complex of satellites is quite tractable via bordered Floer homology \cite{LOT_bordered_HF}; despite this, understanding the involution is quite difficult. 

The first computation of involutive knot Floer homology for a satellite knot appeared in \cite{HKPS_lin_ind_rationally_slice}, where the $\iota_K$-action was computed for $(2n+1,1)$-cables of the figure-eight knot. The computation relies the fact that $\iota^2_K$ is chain homotopic to the Sarkar map, $1+\Phi\Psi$, where $\Phi$ and $\Psi$ are formal derivatives of the differential of the full $CFK_{\F[U,V]}$ complex with respect to the formal variables $U$ and $V$, respectively. Later, Collins computed the involutive knot Floer homology of the Mazur satellite of $4_1$ using a similar set of ideas \cite{collins2022homology}. However, this strategy has clear limitations: this strategy often leads to an impossibly big case-by-case analysis unless $K$ is simple (i.e.,  $CFK_{\F[U,V]}(S^3,K)$ is not too big.) Moreover, this strategy does not import any information from the involutive action on the original (companion) knot. The aim of the present paper is to develop a more efficient method for understanding the interaction between satellite operations and the Spin$^c$ conjugation action by taking into account the action of $\iota_K$ on the companion knot.

\subsection{Satellite Splittings}

As we alluded above, for a companion knot $K$ in $S^3$ and pattern knot $P$ in the solid torus, the $\cR$-coefficient knot Floer homology, $\CFK_\cR(S^3,P(K))$, can be understood by applying the techniques of bordered Floer homology \cite{LOT_bordered_HF}. When $P$ is relatively simple (for example, when $P$ is a cabling pattern, a doubling pattern, a Mazur pattern, etc.) $\CFK_\cR(S^3, P(K))$ is quite computable; consequently, the knot Floer homology of these satellites is well understood \cite{homcabling,hedden_cables_I,hedden_cables_II,hedden_whitehead,levine_doubling_operators,levine_non_surjective,hanselman_watson_cabling,chen_satellites_one_diagrams,chen_hanselman2023satellite}.

This strategy is particularly efficient when the knot Floer complex of $K$ splits into manageable pieces: 
\[
\CFK_\cR(S^3,K) \cong C_1 \oplus \hdots \oplus C_n.
\]
By \cite[Chapter 11]{LOT_bordered_HF}, the $\cR$-coefficient knot Floer complex for $K$ determines the type D structure of its complement, $\CFDh(\KC)$ (and vice versa). We refer to this as the \emph{\LOT correspondence}. Moreover, splittings of $\CFK_\cR(S^3,K)$ induce splittings of $\CFDh(\KC)$. The type D structure for the complement of $P(K)$ is obtained from $\CFDh(\KC)$ by tensoring with the type DA bimodule associated to the pattern $P$: 
\[
\CFDh(S^3\smallsetminus P(K)) \simeq \CFDAh((S^1\times D^2) \smallsetminus P)\boxtimes \CFDh(\KC).
\]
The induced splitting of $\CFDh(\KC)$ produces a splitting of $\CFDh(S^3\smallsetminus P(K))$, which in term gives rise to a splitting of the knot Floer complex of $P(K)$: 
\[
\CFK_\cR(S^3,P(K)) \cong P(C_1) \oplus \hdots \oplus P(C_n),
\]
where $P(C_k)$ is determined by $C_k$ and $P$\footnote{It is actually not obvious that splittings of $\CFDh(S^3\smallsetminus P(K))$ determine splittings of $\CFK_\cR(S^3,P(K))$. %\textcolor{red}{The referee complained that the original language here was confusing (Other correction (1) from X) -- I don't want to get too deeply into the weeds, so I hope this footnote is enough. What do you think? Looks good!} 
This follows from \Cref{lem:summand-extendable}. See \Cref{sec:basepoints and extensions,sec: doubling} for a more detailed discussion.}. Essential information about $K$ (e.g., its local equivalence class or torsion order) is often contained in a single summand of $\CFK_\cR(S^3,K)$, and consequently, the analogous statement is often true for $P(K)$ as well. Leveraging this fact can greatly simplify the complexity of computations needed to extract information about $P(K)$. 

A priori, it is unclear whether this strategy can used to understand the $\iota_K$ action on the Floer complex of satellite knots. Given an $\iota_K$ equivariant direct sum decomposition of $\CFK_\cR(S^3,K)$, it is not guaranteed that the induced splitting of $\CFK_\cR(S^3,P(K))$ will be $\iota_{P(K)}$-equivariant. Our first result states that, up to homotopy, we can take this to be the case. 

\begin{thm}\label{thm: induced splittings}
Let $K$ be a knot in $S^3$ and let $P$ be a satellite pattern in the solid torus. Moreover, let 
\[
\CFK_\cR(S^3,K) \cong C_1 \oplus \hdots \oplus C_n
\]
be an $\iota_K$-equivariant splitting. Then, up to homotopy, the decomposition 
\[
\CFK_\cR(S^3,P(K)) \cong P(C_1) \oplus \hdots \oplus P(C_n)
\]
given by \LOT is $\iota_{P(K)}$-equivariant.
\end{thm}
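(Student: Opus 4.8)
The plan is to transport the problem from the world of knot Floer complexes to the world of bordered invariants, where the relevant involution is induced by a literal topological symmetry, and then push the splitting through the satellite bimodule. First I would invoke the \LOT correspondence together with its (yet to be established, but promised in the abstract) compatibility with involutions: an $\iota_K$-equivariant splitting $\CFK_\cR(S^3,K) = C_1 \oplus \dots \oplus C_n$ corresponds to an $\iota_{S^3\setminus K}$-equivariant splitting $\CFDh(\KC) = D_1 \oplus \dots \oplus D_n$ of the type D structure of the knot complement, where $\iota_{S^3\setminus K}$ is the conjugation symmetry of the bordered invariant. This is precisely the "invariant splitting principle" advertised in the title; I would state it as a proposition proved earlier (using first-order naturality of bordered Floer homology) and cite it here. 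The upshot is that the $D_k$ are genuine summands, not just summands up to homotopy, and the conjugation action respects the decomposition on the nose or up to a controlled homotopy.

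Next I would analyze how tensoring with the pattern bimodule interacts with conjugation. The type D structure of the satellite complement is $\CFDh(S^3\setminus P(K)) \simeq \CFDAh(T_P)\boxtimes \CFDh(\KC)$, where $T_P = (S^1\times D^2)\setminus P$. Conjugation on $S^3\setminus P(K)$ should be realized as the box tensor of the conjugation symmetry $\iota_{T_P}$ of the \emph{pattern} bimodule with $\iota_{S^3\setminus K}$ on the companion piece — this is a naturality/functoriality statement for $\boxtimes$ under the conjugation symmetry, which again I would package as a lemma using the first-order naturality results. Granting this, the splitting $\CFDh(\KC) = \bigoplus_k D_k$ immediately induces $\CFDh(S^3\setminus P(K)) = \bigoplus_k \CFDAh(T_P)\boxtimes D_k$, and since $\iota_{S^3\setminus P(K)} = \iota_{T_P}\boxtimes \iota_{S^3\setminus K}$ preserves each summand (up to homotopy, inherited from the companion), this is an $\iota_{S^3\setminus P(K)}$-equivariant splitting. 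The satellite extension property is what guarantees each $\CFDAh(T_P)\boxtimes D_k$ is again the type D structure of an honest knot complement, i.e. corresponds under \LOT to a bona fide summand $P(C_k)$ of $\CFK_\cR(S^3,P(K))$.

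Finally I would run the \LOT correspondence in reverse, together with its involutive compatibility, to conclude that the decomposition $\CFK_\cR(S^3,P(K)) = P(C_1)\oplus\dots\oplus P(C_n)$ is $\iota_{P(K)}$-equivariant up to homotopy — the "up to homotopy" in the statement absorbing the fact that the correspondence between $\iota_{P(K)}$ and $\iota_{S^3\setminus P(K)}$ is only canonical up to chain homotopy, and that the box tensor formula for conjugation holds up to homotopy.

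The main obstacle, I expect, is establishing that conjugation on the bordered side is compatible with $\boxtimes$ in the required sense — i.e. that the conjugation symmetry of a glued-up manifold factors as the box tensor of the conjugation symmetries of the pieces, as a homotopy-coherent (not merely on-the-nose) statement. This requires genuine naturality of the bordered invariants under the conjugation diffeomorphism at the level of $\cA_\infty$-modules and bimodules — precisely the "first order naturality of bordered Floer homology" the abstract says is established along the way — and controlling the homotopies well enough that they assemble correctly under the tensor product. A secondary subtlety is checking that the homotopy equivalence realizing the \LOT correspondence can be chosen compatibly with the involutions on both sides simultaneously, which is where the invariant splitting principle of the title does its real work.
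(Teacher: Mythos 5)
Your proposal matches the paper's argument essentially exactly: transport the splitting to $\CFDh(\KC)$ via Theorem \ref{thm:CFK-to-CFA}, use Proposition \ref{prop:bimodule_involution} to show the bordered involution of the satellite complement factors as $\iota_P\boxtimes\iota_{\KC}$, invoke the satellite extension property to ensure extendability of the summands, and then apply Theorem \ref{thm:CFA-to-CFK} to descend back to an $\iota_{P(K)}$-equivariant splitting of $\CFK_\cR(S^3,P(K))$. You also correctly identified first-order naturality as the underlying technical ingredient making the bordered involutions well-defined.
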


\subsection{The Invariant Splitting Principles}

Our strategy will be to understand the relation between $\iota_K$ and the corresponding action on $\CFDh(\KC)$, which takes the form of a homotopy equivalence
\begin{align}
    \iota_{\KC}: \widehat{CFDA}(\AZ)\boxtimes \widehat{CFD}(\KC) \ra \CFDh(\KC).
\end{align}
Here, $\AZ$ denotes a particular diagram, the \emph{Auroux-Zarev piece}, considered independently in \cite{auroux_symprod_BFH,zarev2010joining}.

\begin{defn}
    We say that a splitting $\CFDh(\KC) \cong M_1 \oplus \hdots \oplus M_n$ is $\iota_{\KC}$-equivariant if there are homotopy equivalences
    \[
    \frak{I}_j:\widehat{CFDA}(\AZ)\boxtimes M_i \rightarrow M_i
    \]
    such that $\iota_{\KC} \sim \frak{I}_1 \oplus \cdots \oplus \frak{I}_n$, i.e. the bordered involution of $\KC$ decomposes along the given splitting up to homotopy.
\end{defn}

We derive \Cref{thm: induced splittings} as a corollary of the following two theorems, which we call the \emph{invariant splitting principles} for the \LOT correspondence.

\begin{thm}\label{thm:CFK-to-CFA}
    Given a knot $K$ and an $\iota_K$-invariant splitting of $\CFK_\cR(S^3, K),$
    \[
    CFK_\mathcal{R}(S^3,K) \simeq C_1 \oplus \cdots \oplus C_n,
    \]
    there is a corresponding $\iota_{\KC}$-equivariant splitting of $\CFDh(\KC)$,
    \[
    \widehat{CFD}(S^3 \setminus K)\simeq M_1 \oplus \cdots \oplus M_n
    \]
    such that each $M_i$ corresponds to $C_i$ under the \LOT correspondence.
\end{thm}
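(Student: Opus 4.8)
The plan is to upgrade the \LOT correspondence from an equivalence on objects to one that also transports the conjugation symmetry, and then to observe that a homotopy-block-diagonal morphism is carried to a homotopy-block-diagonal morphism.

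First I would make precise the functoriality of the correspondence of \cite[Chapter 11]{LOT_bordered_HF}: passing from $\CFK_\cR(S^3,K)$ to $\CFDh(\KC)$ is realized by an explicit algebraic procedure applied to a reduced model of the knot Floer complex, and this procedure is additive and sends $\cR$-homotopy equivalences to homotopy equivalences of type D structures. Hence, once a model is fixed, the given splitting $\CFK_\cR(S^3,K) \simeq C_1 \oplus \cdots \oplus C_n$ is carried to a splitting $\CFDh(\KC) \simeq M_1 \oplus \cdots \oplus M_n$ with $M_i$ the image of $C_i$; this is the object-level content of the statement and is essentially formal once a model is pinned down.

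The substantive step is to identify the bordered involution $\iota_{\KC}$, which is a homotopy class of maps $\CFDAh(\AZ) \boxtimes \CFDh(\KC) \to \CFDh(\KC)$, as the image of $\iota_K$ under this correspondence. Here I would invoke the first-order naturality of bordered Floer homology, established earlier in the paper, to see that the conjugation symmetry of $\KC$ is well-defined up to homotopy as such a map, and then compare it with $\iota_K$ through the \LOT dictionary, tracking the role of the Auroux--Zarev piece $\AZ$ as the bordered incarnation of the change of boundary parametrization that accompanies $\mathrm{Spin}^c$-conjugation. The target of this step is a homotopy-commutative square relating $\iota_K$ on the $\CFK_\cR$ side to $\iota_{\KC}$ on the bordered side, compatibly with the equivalences $C_i \simeq M_i$.

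Granting this, the theorem follows. Write $\iota_K \sim \iota_1 \oplus \cdots \oplus \iota_n$ with $\iota_i \colon C_i \to C_i$ the restriction to the $i$-th summand (which exists because the splitting is $\iota_K$-equivariant). Since $\boxtimes$ is additive, $\CFDAh(\AZ) \boxtimes \CFDh(\KC) = \bigoplus_i \CFDAh(\AZ) \boxtimes M_i$, and functoriality sends $\iota_i$ to a map $f_i \colon \CFDAh(\AZ) \boxtimes M_i \to M_i$ with $\iota_{\KC} \sim f_1 \oplus \cdots \oplus f_n$; each $f_i$ is a homotopy equivalence because $\iota_i$ is. I expect the main obstacle to be the comparison in the previous paragraph: the involution does not live in a single fixed category on the two sides --- on the $\CFK_\cR$ side it is a genuine skew-equivariant chain map, while on the bordered side it is a map out of a box tensor product with $\AZ$ --- so one must show the \LOT correspondence is functorial enough to intertwine these two flavors of symmetry and to do so respecting homotopies. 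This is precisely the content I would want to extract from the first-order naturality discussion.
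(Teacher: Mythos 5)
Your proposal brackets as ``essentially formal'' or ``functorial'' exactly the steps that constitute the technical content of the paper's proof, and as written it has a genuine gap.

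The \LOT\ correspondence from \cite[Chapter 11]{LOT_bordered_HF} is, a priori, a bijection on homotopy equivalence classes of objects; it is not handed to us as a functor that transports morphisms of $\cR$-complexes to type~D morphisms. To move the projection maps $p_i$ of the given splitting across the correspondence, the paper does not appeal to any formal functoriality of the LOT procedure. Instead it builds an explicit chain map $\Lambda\colon \End^\cA(\CFDh(\KC)) \to \End(\CFK_\cR(S^3,K,p))$ as a cobordism map (the ``pair of pants'' cobordism composed with a large-surgery cobordism), proves it is compatible with composition (\Cref{lem:composition}), proves it induces a bijection on homotopy classes of projections (\Cref{prop: bijective on projection}), and then lifts the $p_i$ to projections $q_i$ on $\CFDh(\KC)$. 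None of this is formal: the bijectivity statement rests on a concrete $d$-invariant / large-surgery computation. Your claim that the passage is ``additive and sends $\cR$-homotopy equivalences to homotopy equivalences of type D structures'' does not produce this transport of projections, nor does it explain why the resulting summands $M_i$ actually correspond to the $C_i$ under \LOT; for that the paper needs the doubling argument (\Cref{prop:doubling-CFA-to-CFK} and \Cref{prop: doubling complexes}) relating $\CFAh_\cR(\bX)\boxtimes M$ to $\cC_M\oplus\cC_M$, which is the step that pins down the identification.

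The second gap is in identifying the involutions. You invoke first-order naturality to ``compare [$\iota_\KC$] with $\iota_K$ through the \LOT\ dictionary,'' but naturality only establishes that $\iota_\KC$ is well-defined; it does not say anything about how $\iota_\KC$ and $\iota_K$ match up across the correspondence. What the paper actually needs here is (a) the involutive pairing theorem of \cite{kang_bordered_involutive_HFK} relating $\iota_\KC$ and $\iota_K$ under box tensor product, and (b) the topological \Cref{lem: lambda commutes with involutions} showing $\Lambda$ intertwines the two conjugation actions (and its consequence \Cref{cor: invariant splittings CFK to CFA}). Your ``homotopy-commutative square relating $\iota_K$ to $\iota_\KC$'' is, in effect, a restatement of these results rather than a derivation of them, and you have not indicated any argument that would establish it. In short, the theorem is not a formal consequence of naturality plus additivity; it requires the cobordism map $\Lambda$ and the doubling proposition, and your proposal supplies neither.
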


\begin{thm}\label{thm:CFA-to-CFK}
    Given a knot $K$ and an $\iota_{S^3 \setminus K}$-invariant splitting
    \[
    \widehat{CFD}(S^3 \setminus K)\simeq M_1 \oplus \cdots \oplus M_n,
    \]
    there is an $\iota_K$-invariant splitting
    \[
    CFK_\mathcal{R}(S^3,K) \simeq C_1 \oplus \cdots \oplus C_n
    \]
    such that each $C_i$ corresponds to $M_i$ under the \LOT correspondence.
\end{thm}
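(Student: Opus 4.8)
The plan is to run the argument of Theorem \ref{thm:CFK-to-CFA} in reverse, using the \LOT correspondence as a dictionary but being careful that the direction $\CFDh(\KC) \rightsquigarrow \CFK_\cR(S^3,K)$ requires the summands to be \emph{extendable} (i.e.\ that each $M_i$ arises, up to homotopy equivalence, as $\CFDh$ of a genuine knot complement piece so that the inverse of the \LOT correspondence in \cite[Chapter 11]{LOT_bordered_HF} applies). First I would fix, for each $i$, a pair $C_i$ of $\cR$-complexes together with a homotopy equivalence $\phi_i \colon \CFDh(\KC) \to M_1 \oplus \cdots \oplus M_n$ realizing the given splitting and identifying $M_i$ with the type D structure associated to $C_i$ by the \LOT correspondence; since the splitting is genuine (not merely up to homotopy) one may as well take $\CFDh(\KC) = M_1 \oplus \cdots \oplus M_n$ on the nose. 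Then $\CFK_\cR(S^3,K)$ is homotopy equivalent to $C_1 \oplus \cdots \oplus C_n$, and the only content left is to promote this to an $\iota_K$-equivariant homotopy equivalence.

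The key step is to transport the equivariance datum through the correspondence. By hypothesis the bordered involution decomposes, $\iota_{\KC} \sim f_1 \oplus \cdots \oplus f_n$ with $f_i \colon \CFDAh(\AZ) \boxtimes M_i \to M_i$. I would then invoke the comparison established in the proof of Theorem \ref{thm:CFK-to-CFA} (or more precisely its underlying mechanism) identifying, under the \LOT correspondence, the bordered involution $\iota_{\KC}$ with $\iota_K$: the Auroux--Zarev piece $\AZ$ is exactly the bimodule implementing the passage between the type D side and the $\CFK_\cR$ side, and $\iota_K$ on $\CFK_\cR(S^3,K)$ is (chain homotopic to) the map induced on $\CFDAh(\AZ)\boxtimes \CFDh(\KC)$ by $\iota_{\KC}$ after applying this identification. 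Concretely, one builds $\iota_i \colon C_i \to C_i$ as the image of $f_i$ under this dictionary, checks it is a well-defined self-homotopy-equivalence of the $\cR$-complex $C_i$ (using that $M_i$ is extendable, so the dictionary is valid summand-by-summand), and then $\iota_K \sim \iota_1 \oplus \cdots \oplus \iota_n$ because $\boxtimes$ is additive over direct sums and the identification is natural with respect to splittings. The compatibility conditions defining $\iota_K$ — that it be a skew-filtered, grading-shifting homotopy involution with $\iota_K^2 \sim 1 + \Phi\Psi$ — are inherited from the corresponding properties of $\iota_{\KC}$ since the dictionary preserves all of this structure.

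The main obstacle I anticipate is the \emph{naturality} bookkeeping: the \LOT correspondence and the boxtensor with $\CFDAh(\AZ)$ are only well-defined up to homotopy, so matching $f_i$ with $\iota_i$ requires keeping track of a coherent system of homotopy equivalences and verifying that the homotopy $\iota_{\KC} \sim f_1 \oplus \cdots \oplus f_n$ transports to a homotopy $\iota_K \sim \iota_1 \oplus \cdots \oplus \iota_n$ rather than merely an equality of homotopy classes term by term (which would not immediately give the decomposition as a single homotopy). This is precisely the point where the ``first order naturality of bordered Floer homology'' advertised in the abstract is needed, and I would isolate it as a lemma: the assignment $M \mapsto$ (the $\cR$-complex recovered by \LOT) upgrades to a functor on the relevant homotopy category that is additive and sends the $\AZ$-action to the $\iota_K$-action, with all structure maps compatible up to specified higher homotopies. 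Granting that lemma, the rest is a diagram chase; the extendability hypothesis is what guarantees each $M_i$ lies in the domain of this functor.
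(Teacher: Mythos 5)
Your proposal correctly identifies the \emph{problem} — transporting the $\iota_{\KC}$-equivariance of the splitting across the \LOT\ correspondence is the hard part — but it punts on the \emph{mechanism}, which is where essentially all of the paper's work lives. You posit a ``lemma'' that the \LOT\ assignment $M\mapsto \cC_M$ upgrades to a functor sending the $\AZ$-action to the $\iota_K$-action, and say ``granting that lemma, the rest is a diagram chase.'' But that lemma is not a consequence of the first-order naturality proved in Subsection~\ref{subsec:naturality}; that naturality statement only makes the bordered involutions well-defined, it does not by itself identify the two sides. The paper's actual vehicle is the link-cobordism map
\[
\Lambda: \End^{\cA}(\CFDh(\KC)) \longrightarrow \End_\cR(\CFK_\cR(S^3,K)),
\]
built in Section~\ref{sec: cobs}. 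One applies $\Lambda$ to the projections $p_i$ onto the $M_i$, checks $\Lambda$ is compatible with composition (\Cref{lem:composition}) so the $\Lambda(p_i)$ are homotopy projections, upgrades them to honest pairwise-commuting projections (\Cref{lem: homotopy projection to projection}, \Cref{lem: commuting projections}), and then uses the \emph{doubling property} (\Cref{prop:doubling-CFA-to-CFK}, \Cref{prop: doubling complexes}) to see that the resulting summands of $\CFK_\cR(S^3,K)$ agree with the ones produced by the \LOT\ correspondence. Equivariance then comes from \Cref{lem: lambda commutes with involutions}/\Cref{cor: invariant splittings CFA to CFK}, not from any abstract functoriality. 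None of this cobordism machinery appears in your sketch.

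There is also a concrete technical misstatement: you write that ``the Auroux--Zarev piece $\AZ$ is exactly the bimodule implementing the passage between the type D side and the $\CFK_\cR$ side.'' It is not; the passage from $\CFDh(\KC)$ to $\CFK_\cR(S^3,K)$ goes via extending to $\CFDt(\KC)$ and boxing with $\CFAt(T_\infty,\nu)$, while $\AZ$ is the alpha-to-beta interpolating piece used to formulate the bordered involution. This conflation obscures a second gap you do not address: the \LOT\ correspondence goes through \emph{extended} type D structures, so one must show that a splitting of $\CFDh(\KC)$ actually lifts to a compatible splitting of the extension. This is precisely what the triply-pointed diagram $\bX$, the bimodule $\CFDAy$, and \Cref{lem: Y tensor extends} / \Cref{prop: doubling complexes} are designed to resolve, and your sketch assumes it silently.
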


Using \Cref{thm:CFK-to-CFA} and \Cref{thm:CFA-to-CFK}, we prove the following \emph{relative splitting principles}. Recall that if $K$ and $K'$ are knots in $S^3$, we say that $\CFK_\cR(S^3,K)$ and $\CFK_\cR(S^3,K')$ are locally equivalent if there is a map $f: \CFK_\cR(S^3,K) \ra \CFK_\cR(S^3,K')$ which induces an isomorphism $H_*((U,V)^{-1}\CFK_\cR(S^3,K)) \ra H_*((U,V)^{-1}\CFK_\cR(S^3,K'))$. 

Given a map \[f: C_1 \oplus \hdots \oplus C_n \ra D_1 \oplus\hdots \oplus D_m\] represented as an $m$ by $n$ matrix, we denote its $ij$-component by $f_{ij} = \pi_{D_i }\circ f|_{C_j}$.

\begin{thm}\label{thm:relative CFK-to-CFA}
    Let $K$ and $K^\prime$ be knots in $S^3$ whose knot Floer complexes are locally equivalent over $\mathcal{R}$, and fix splittings
    \[
        CFK_\mathcal{R}(S^3,K) \simeq C_1 \oplus \cdots \oplus C_m, \quad CFK_\mathcal{R}(S^3,K^\prime) \simeq D_1 \oplus \cdots \oplus D_n,
    \]
    as well as a bidegree-preserving chain map $f:CFK_\mathcal{R}(S^3,K)\rightarrow CFK_\mathcal{R}(S^3,K^\prime)$. If $f$ is $\iota_K$-equivariant, i.e. $f\iota_K \sim \iota_{K^\prime} f$, then there exist splittings
    \[
        \CFDh(S^3 \smallsetminus K) \simeq M_1 \oplus \cdots \oplus M_m, \quad
        \CFDh(S^3 \smallsetminus K^\prime) \simeq N_1 \oplus \cdots \oplus N_n,
    \]
    such that each $C_i$ corresponds to $M_i$ and $D_j$ corresponds to $N_j$ under the \LOT correspondence, and a degree-preserving type D morphism (with respect to the refined grading defined in \cite[Section 10.5]{LOT_bordered_HF})
    \[
    f^D:\widehat{CFD}(S^3 \smallsetminus K)\rightarrow \CFDh(S^3 \smallsetminus K^\prime)
    \]
    satisfying $f^D \circ \iota_{S^3 \smallsetminus K} \sim \iota_{S^3 \smallsetminus K^\prime}\circ (\mathrm{id}_{\widehat{CFDA}(\AZ)}\boxtimes f^D)$, such that $f_{ij} \sim 0$ if and only if $f^D_{ij} \sim 0$ and  $f_{ij}$ is a homotopy equivalence if and only if $f^D_{ij}$ is a homotopy equivalence.
\end{thm}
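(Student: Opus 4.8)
The plan is to deduce \Cref{thm:relative CFK-to-CFA} from the two invariant splitting principles (\Cref{thm:CFK-to-CFA} and \Cref{thm:CFA-to-CFK}) by a ``mapping cone'' trick, packaging the map $f$ together with $K$ and $K'$ into a single algebraic object whose own $\iota$-invariant splitting encodes the block structure of $f$. Concretely, since $f$ is an $\iota$-equivariant chain map and $K,K'$ are locally equivalent, $f$ is a local map; consider the mapping cone $\Cone(f)$, or better, the direct sum complex $CFK_\cR(S^3,K)\oplus CFK_\cR(S^3,K')$ equipped with the filtered differential that records $f$. First I would observe that the two given splittings, together, give a splitting of $CFK_\cR(S^3,K)\oplus CFK_\cR(S^3,K')$ into $C_1\oplus\cdots\oplus C_m\oplus D_1\oplus\cdots\oplus D_n$, and I would reorganize $\iota_K\oplus\iota_{K'}$ plus the chain homotopy $H$ witnessing $f\iota_K\sim\iota_{K'}f$ into a single self-map of the cone that is a genuine $\iota$-type involution up to homotopy. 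The point is that $f_{ij}\sim 0$ is detected by whether the corresponding off-block component of the differential on the cone is null-homotopic.

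Next I would apply \Cref{thm:CFK-to-CFA} on the bordered side: the $\iota$-equivariant splitting of the combined complex transfers, under the \LOT correspondence, to an $\iota_{\KC}$-equivariant splitting of the type D module of the combined object, which by naturality of the box tensor product and of the correspondence is $\CFDh(\KC)\oplus\CFDh(S^3\smallsetminus K')$ with differential recording a type D morphism $f^D$. Here I would need to check that the \LOT correspondence, as refined in \Cref{thm:CFK-to-CFA}, is compatible with mapping cones — i.e. that a chain map between knot Floer complexes corresponds to a type D morphism between the associated type D structures, and that null-homotopy is preserved in both directions. This is where I would invoke the refined grading of \cite[Section 10.5]{LOT_bordered_HF} to ensure $f^D$ is degree-preserving. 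Reading off the block components, $f^D_{ij}\sim 0$ exactly when the relevant off-block differential component of the transferred cone is null-homotopic, which by the equivalence of the cones under the correspondence happens exactly when $f_{ij}\sim 0$. The converse implication (that an off-block $f^D_{ij}\sim 0$ forces $f_{ij}\sim 0$) is handled symmetrically by running the same argument through \Cref{thm:CFA-to-CFK}, starting from the bordered side.

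The main obstacle I anticipate is verifying that the ``combined'' object I build out of $f$, $\iota_K$, $\iota_{K'}$, and the homotopy $H$ is itself an honest input to \Cref{thm:CFK-to-CFA} — that is, that the assembled self-map really is (homotopic to) an involution of the right algebraic type, with the right grading behavior, so that the invariant splitting principle applies verbatim. This requires some care because $\iota^2$ is only the Sarkar map rather than the identity, so the cone-level self-map will square to a prescribed quasi-identity rather than to the identity, and I must confirm that the hypotheses of \Cref{thm:CFK-to-CFA} (and its bordered analogue) are stated flexibly enough to accommodate this, or else enlarge the statement slightly. A secondary technical point is bookkeeping the $\boxtimes$ with $\CFDAh(\AZ)$ through the cone construction so that the output morphism $f^D$ genuinely satisfies the stated intertwining relation $f^D\circ\iota_{S^3\smallsetminus K}\sim\iota_{S^3\smallsetminus K'}\circ(\mathrm{id}_{\CFDAh(\AZ)}\boxtimes f^D)$; this should follow formally once the correspondence is known to respect cones and tensor products, but it needs to be spelled out.
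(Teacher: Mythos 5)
Your approach is genuinely different from the paper's, and it contains a gap that you yourself flag but do not resolve: \Cref{thm:CFK-to-CFA} is stated for the knot Floer complex of an actual knot, and the \LOT{} correspondence likewise only applies to complexes arising as $\CFK_\cR(S^3,K)$. The ``combined'' object you propose --- $\CFK_\cR(S^3,K)\oplus\CFK_\cR(S^3,K')$ with a filtered differential recording $f$, or $\Cone(f)$ --- is not the knot Floer complex of any knot, and there is no reason the cone-level self-map you assemble from $\iota_K$, $\iota_{K'}$, and the homotopy $H$ is the $\iota_K$ of a knot. Your remark that you ``must confirm that the hypotheses of \Cref{thm:CFK-to-CFA} are stated flexibly enough'' identifies exactly the obstruction, but the theorem is not flexible in this way, and the claim that ``the correspondence is known to respect cones'' is essentially the conclusion being sought, not an available input. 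Your parenthetical that $f$ is ``a local map'' because $K,K'$ are locally equivalent and $f$ is $\iota_K$-equivariant is also an overreach: the hypotheses only give that $f$ is bidegree-preserving and equivariant, and that \emph{some} local equivalence exists, not that $f$ itself is one.

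The paper's actual route sidesteps the cone construction entirely. It extends the cobordism map $\Lambda$ of Section \ref{sec: cobs} to a composition-compatible chain map
\[
\Lambda_{K,K'}:\Mor^\cA(\CFDh(S^3\smallsetminus K),\CFDh(S^3\smallsetminus K'))\longrightarrow \Mor(\CFK_\cR(S^3,K),\CFK_\cR(S^3,K')),
\]
using the hypothesis that $K$ and $K'$ are locally equivalent to guarantee $\CFK_\cR(S^3,\overline{K}\#K')$ has a free summand of bidegree $(0,0)$; this is precisely what makes the argument of \Cref{prop: bijective on projection} go through and gives that $\Lambda_{K,K'}$ is a bijection on homotopy classes of degree-preserving morphisms. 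It then applies \Cref{thm:CFK-to-CFA} \emph{separately} to $K$ and to $K'$ to produce the splittings $\CFDh(S^3\smallsetminus K)\simeq M_1\oplus\cdots\oplus M_m$ and $\CFDh(S^3\smallsetminus K')\simeq N_1\oplus\cdots\oplus N_n$, checks that $\Lambda_{K,K'}$ intertwines composition with the projections $p_i,q_j$ and with the involutions, and concludes that $q_j\circ f^D\circ p_i\sim 0$ iff $\Lambda_K(q_j)\circ\Lambda_{K,K'}(f^D)\circ\Lambda_K(p_i)\sim 0$, i.e.\ $f^D_{ij}\sim 0$ iff $f_{ij}\sim 0$. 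If you want to salvage your plan you would need to extend the \LOT{} correspondence and \Cref{thm:CFK-to-CFA} to a category of complexes-with-filtration that admits cones; that is a substantial extension and is not what the paper does.
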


\begin{thm}\label{thm:relative CFA-to-CFK}
    Let $K$ and $K^\prime$ be knots in $S^3$ whose knot Floer complexes are locally equivalent over $\mathcal{R}$, and fix splittings
    \[
        \CFDh(S^3 \smallsetminus K) \simeq M_1 \oplus \cdots \oplus M_m, \quad
        \CFDh(S^3 \smallsetminus K^\prime) \simeq N_1 \oplus \cdots \oplus N_n,
    \]
    and a degree-preserving type-D morphism $f^D:\widehat{CFD}(S^3 \smallsetminus K)\rightarrow \CFDh(S^3 \smallsetminus K^\prime)$. Suppose that $f^D$ satisfies $f^D \circ \iota_{S^3 \smallsetminus K} \sim \iota_{S^3 \smallsetminus K^\prime}\circ (\mathrm{id}_{\widehat{CFDA}(\AZ)}\boxtimes f^D)$. Then, there exist splittings
    \[
        CFK_\mathcal{R}(S^3,K) \simeq C_1 \oplus \cdots \oplus C_m, \quad CFK_\mathcal{R}(S^3,K^\prime) \simeq D_1 \oplus \cdots \oplus D_n,
    \]
    such that each $C_i$ corresponds to $M_i$ and $D_j$ corresponds to $N_j$ under the \LOT correspondence, and a bidegree-preserving $\iota_K$-equivariant chain map
    \[
    f:CFK_\mathcal{R}(S^3,K)\rightarrow CFK_\mathcal{R}(S^3,K^\prime)
    \]
    such that $f_{ij}\sim 0$ if and only if $f^D_{ij}\sim 0$ and $f_{ij}$ is a homotopy equivalence if and only if $f^D_{ij}$ is a homotopy equivalence.
\end{thm}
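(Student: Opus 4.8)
The plan is to obtain \Cref{thm:relative CFA-to-CFK} from the object-level correspondence together with the functoriality of the Auroux--Zarev pairing (the argument is the mirror of the one for \Cref{thm:relative CFK-to-CFA}); in particular I would \emph{not} route through a mapping-cone of $f^D$, since the cone need not be realizable as $\widehat{CFD}$ of a knot complement and the machinery behind \Cref{thm:CFA-to-CFK} would not apply to it. Recall that the \LOT correspondence is implemented by tensoring with a type-DA bimodule (one of the Auroux--Zarev pieces, or its reverse), which I will denote $\mathbb{W}$: it is additive, it identifies $\mathbb{W}\boxtimes\widehat{CFD}(S^3\smallsetminus K)$ with $CFK_\mathcal{R}(S^3,K)$ compatibly with the refined gradings of \cite[Section 10.5]{LOT_bordered_HF}, and it is part of a homotopy equivalence of $A_\infty$-categories, with quasi-inverse the tensor with the complementary Auroux--Zarev piece, so that $\widehat{CFDA}(\AZ)\boxtimes\mathbb{W}$ and $\mathbb{W}\boxtimes\widehat{CFDA}(\AZ)$ are homotopy equivalent to identity bimodules. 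Throughout I would work with the homotopy-equivalent model $\mathbb{W}\boxtimes\widehat{CFD}(S^3\smallsetminus K)$ in place of $CFK_\mathcal{R}(S^3,K)$ (and likewise for $K'$); transporting the final answer back along the fixed homotopy equivalences $\mathbb{W}\boxtimes\widehat{CFD}(S^3\smallsetminus K)\simeq CFK_\mathcal{R}(S^3,K)$ is routine and changes nothing up to homotopy.

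First I would produce the splittings and the map. Since $\mathbb{W}\boxtimes(-)$ is additive, $C_i:=\mathbb{W}\boxtimes M_i$ and $D_j:=\mathbb{W}\boxtimes N_j$ give splittings $CFK_\mathcal{R}(S^3,K)\simeq C_1\oplus\cdots\oplus C_m$ and $CFK_\mathcal{R}(S^3,K')\simeq D_1\oplus\cdots\oplus D_n$ with $C_i$ corresponding to $M_i$ and $D_j$ to $N_j$ by construction. Set $f:=\mathrm{id}_{\mathbb{W}}\boxtimes f^D$. This is a bidegree-preserving chain map — bidegree preservation is the refined-grading statement for $\mathbb{W}\boxtimes(-)$ applied to the degree-preserving $f^D$ — and it has exactly the zero-pattern of $f^D$ on the nose, $f_{ij}=\mathrm{id}_{\mathbb{W}}\boxtimes f^D_{ij}$. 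Because $\mathbb{W}\boxtimes(-)$ is a homotopy equivalence on morphism complexes (its quasi-inverse being induced by $\widehat{CFDA}(\AZ)\boxtimes(-)$, using $\widehat{CFDA}(\AZ)\boxtimes\mathbb{W}\simeq\mathbb{I}$), we get $f_{ij}\sim 0$ if and only if $f^D_{ij}\sim 0$; and this survives transport to the genuine knot Floer complexes, since conjugation by a homotopy equivalence preserves null-homotopy.

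It remains to check $f\circ\iota_K\sim\iota_{K'}\circ f$, which is the heart of the matter. The bordered involution $\iota_{S^3\smallsetminus K}$ is presented with a $\widehat{CFDA}(\AZ)$ tensor factor in its source; after tensoring with $\mathbb{W}$ and absorbing that factor via $\mathbb{W}\boxtimes\widehat{CFDA}(\AZ)\simeq\mathbb{I}$, the map $\mathrm{id}_{\mathbb{W}}\boxtimes\iota_{S^3\smallsetminus K}$ becomes an honest endomorphism of $\mathbb{W}\boxtimes\widehat{CFD}(S^3\smallsetminus K)$, and the content of \Cref{thm:CFK-to-CFA} and \Cref{thm:CFA-to-CFK} is precisely that this endomorphism represents $\iota_K$ (and similarly for $K'$). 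Granting this, one applies $\mathrm{id}_{\mathbb{W}}\boxtimes(-)$ to the hypothesis $f^D\circ\iota_{S^3\smallsetminus K}\sim\iota_{S^3\smallsetminus K'}\circ(\mathrm{id}_{\widehat{CFDA}(\AZ)}\boxtimes f^D)$, absorbs the Auroux--Zarev factor on both sides, and reads off $f\circ\iota_K\sim\iota_{K'}\circ f$.

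The main obstacle is this last homotopy bookkeeping. The involutions $\iota_K,\iota_{K'},\iota_{S^3\smallsetminus K},\iota_{S^3\smallsetminus K'}$, the bimodule equivalence $\mathbb{W}\boxtimes\widehat{CFDA}(\AZ)\simeq\mathbb{I}$, the absorption maps, and the identification of $\iota_K$ with the transported bordered involution are each only well-defined up to coherent choices of homotopy; since $\iota^2$ is the Sarkar map rather than the identity one cannot pass to strict equalities, so one must fix an explicit homotopy-coherent package and verify that applying $\mathrm{id}_{\mathbb{W}}\boxtimes(-)$ and absorbing the $\AZ$-factor commute with $f^D$ up to controlled homotopies. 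This is exactly what the first-order naturality of bordered Floer homology established earlier in the paper is for; the only other genuine input, used both for the equivariance step and for the zero-pattern, is that $\mathbb{W}$ and $\widehat{CFDA}(\AZ)$ invert one another up to homotopy as type-DA bimodules, so that $\mathbb{W}\boxtimes(-)$ is a homotopy equivalence of the relevant $A_\infty$-categories and not merely a bijection on isomorphism classes.
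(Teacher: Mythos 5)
Your proposal rests on a bimodule $\mathbb{W}$ that does not exist. You assert that the \LOT correspondence is implemented by tensoring with a type-DA bimodule (``one of the Auroux--Zarev pieces, or its reverse'') that identifies $\mathbb{W}\boxtimes\widehat{CFD}(S^3\smallsetminus K)$ with $CFK_\mathcal{R}(S^3,K)$. This is not the case. The Auroux--Zarev piece $\AZ$ is a bimodule over $(\cA(T^2),\cA(T^2))$; tensoring a type D structure over $\cA(T^2)$ with $\widehat{CFDA}(\AZ)$ yields another type D structure over $\cA(T^2)$, not an $\cR$-complex. It is used in the construction of the bordered involution precisely to interpolate between $\alpha$- and $\beta$-bordered diagrams, not to pass to knot Floer complexes. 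The actual \LOT correspondence requires two steps neither of which is a bimodule tensor against the unextended $\CFDh$: one must first \emph{extend} $\CFDh(\KC)$ to a curved module $\CFDt(\KC)$ over $\widetilde{\cA}$ (an operation that is a central technical concern of this paper: see Lemma~\ref{lem: Y tensor extends} and the discussion of extendable direct summands), and only then box with the extended type A module $\CFAt(T_\infty,\nu)$. Your $C_i:=\mathbb{W}\boxtimes M_i$, $f:=\mathrm{id}_{\mathbb{W}}\boxtimes f^D$, and the whole framing of the argument therefore do not get off the ground.

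Even granting a hypothetical bimodule whose box product recovers $\CFK_\cR$, the invertibility claim (that $\mathbb{W}\boxtimes(-)$ is a homotopy equivalence of morphism complexes, with quasi-inverse coming from the $\AZ$ piece) is false: passing from type D structures over $\cA(T^2)$ to $\cR$-complexes discards algebraic structure and cannot be undone by a bimodule tensor. The inverse direction is the \LOT box-to-arrows construction, which is not of that form. So both the zero-pattern argument and the $\iota_K$-equivariance argument would break, since they lean on this invertibility. The paper's actual proof sidesteps all of this by using the cobordism map $\Lambda_{K,K'}$ built from the pair-of-pants cobordism. The local equivalence hypothesis makes $\CFK_\cR(S^3,\overline K\#K')$ contain a free bidegree-$(0,0)$ summand, which is exactly what is needed to run the argument of Proposition~\ref{prop: bijective on projection} and show that $\Lambda_{K,K'}$ is a \emph{bijection} on homotopy classes of degree-preserving morphisms; compatibility with composition (the analogue of Lemma~\ref{lem:composition}) then transports the block-matrix structure, and compatibility with the involution (the analogue of Lemma~\ref{lem: lambda commutes with involutions}) gives $\iota_K$-equivariance. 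The bridge between $\Lambda_{K,K'}$ and the \LOT correspondence is supplied by Propositions~\ref{prop:doubling-CFA-to-CFK} and~\ref{prop: doubling complexes}, not by any bimodule-inversion statement. If you want to rebuild the proof, you should replace $\mathbb{W}\boxtimes(-)$ with $\Lambda_{K,K'}$ throughout and supply the local-equivalence argument for its bijectivity; the rest of your homotopy-bookkeeping outline is then close in spirit to the paper's, but the identification with the \LOT correspondence has to go through the doubled-diagram $\bX$ mechanism rather than through a bimodule equivalence of categories.
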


Along the way, in Subsection \ref{sec:naturality}, we prove a naturality statement for bordered Floer homology. 

\begin{thm}\label{thm:intro_naturality}
    The bordered type A and D invariants are natural.
\end{thm}

In particular, this implies that bordered involutions considered in \cite{HL_Inv_bordered_floer} are well defined, as is the bordered contact invariant of \cite{alishahi_friendly_contact}.

\subsection{Topological Application}

Finally, using \Cref{thm: induced splittings}, we show the following generalization of the results in \cite{kang2022stabilization}.

\begin{thm} \label{thm: one stab inf family}
    Given any odd integers $n,m\ge 3$, let $K_{m,n}$ be the knot $(2T_{2n,2n+1}\# T_{2n,4n+1})_{m,1}$. Then $Y_{m,n}=S^3 _{+1}(4K_{m,n}\# -4K_{m,n})$ bounds a pair of smooth contractible 4-manifolds which are homeomorphic but not diffeomorphic even after one stabilization, i.e. connected summing with one copy of $S^2 \times S^2$. 
\end{thm}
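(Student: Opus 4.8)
The plan is to follow the strategy of \cite{kang2022stabilization}, where the one genuinely new ingredient — the computation of the $\iota_K$-action on a cable — is now supplied by \Cref{thm: induced splittings}. Set $J := 4K_{m,n} \,\#\, {-4K_{m,n}}$. Since $J$ has the form $L \# {-L}$ with $L := 4K_{m,n}$, it is smoothly slice, and from the cork-twist construction of \cite{kang2022stabilization} one obtains two smooth compact contractible $4$-manifolds $Z_1, Z_2$ with $\partial Z_1 = \partial Z_2 = S^3_{+1}(J)$, related by a self-diffeomorphism of the boundary that extends over neither filling. Being contractible with the same boundary homology sphere, $Z_1$ and $Z_2$ are homeomorphic by Freedman, so the content is \emph{smooth}: they should remain non-diffeomorphic after a single $S^2 \times S^2$-stabilization.

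For this I would invoke the obstruction of \cite{kang2022stabilization}: a diffeomorphism $Z_1 \# (S^2 \times S^2) \cong Z_2 \# (S^2 \times S^2)$ forces the bordered–involutive invariants of $S^3_{+1}(J)$ — hence, via the involutive surgery formula and the connected-sum formulas for $\iota_K$, a suitable involutive local-equivalence-type numerical invariant of the knot $L = 4K_{m,n}$ — to be bounded above by a universal constant accounting for the single $S^2 \times S^2$ summand. By the behavior of this invariant under connected sum, it therefore suffices to show that the invariant of a single copy of $K_{m,n}$ is large enough that four copies exceed that constant. All of the connected-sum and surgery bookkeeping here is as in \cite{kang2022stabilization}; the missing input is the involutive structure on $\CFK_\cR(S^3, K_{m,n})$ itself.

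This is where the present paper enters. Write $K_{m,n} = P_m(C_n)$ with $P_m$ the $(m,1)$-cabling pattern and $C_n = 2T_{2n,2n+1} \# T_{2n,4n+1}$. The companion $C_n$ is a connected sum of torus knots, so $\CFK_\cR(S^3, C_n)$ is a tensor product of staircase complexes together with acyclic summands, and $\iota_{C_n}$ is pinned down by the connected-sum formula for the involution; from this one reads off an $\iota_{C_n}$-equivariant splitting $\CFK_\cR(S^3, C_n) \simeq C^{0} \oplus A$ with $C^{0}$ a staircase carrying the involutive local equivalence class and $A$ acyclic. Because the $(m,1)$-cabling pattern satisfies the satellite extension property \cite{hanselman_watson_cabling}, \Cref{thm: induced splittings} yields an $\iota_{P_m(C_n)}$-equivariant splitting $\CFK_\cR(S^3, K_{m,n}) \simeq P_m(C^{0}) \oplus P_m(A)$. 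The summand $P_m(C^{0})$ — the $(m,1)$-cable of a staircase — is explicit and small enough to analyze by hand; computing the target numerical invariant from it (using oddness of $m$ and $n$ to keep the combinatorics clean), one checks that it grows without bound in $n$, so in particular four copies of $K_{m,n}$ beat one stabilization for every odd $n, m \ge 3$.

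The main obstacle is this last computation: extracting from the cabled staircase $P_m(C^{0})$ a lower bound on the stabilization-robust involutive invariant that is sharp enough, and verifying that the obstruction of \cite{kang2022stabilization} applies verbatim to $S^3_{+1}(J)$. Granting this, letting $n$ vary produces infinitely many examples, distinct because the companions $C_n$ — and hence the satellites $K_{m,n}$ and the boundary $3$-manifolds $S^3_{+1}(4K_{m,n}\#{-4K_{m,n}})$ — are distinguished, e.g.\ by Seifert genus or directly by the computed invariant, which completes the argument.
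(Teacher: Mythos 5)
Your high-level strategy — reduce the exotic-detection problem to understanding $\iota_{K_{m,n}}$ on $\CFK_\cR(S^3,K_{m,n})$, and use \Cref{thm: induced splittings} plus the cabling extension property to import that understanding from the companion — is the right one and matches the paper. But the shape of the obstruction you propose, and the concrete computation needed, diverge from the paper's proof in ways that matter.

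The paper does not extract a numerical involutive invariant that ``grows without bound in $n$'' and must exceed a universal constant set by one stabilization. Instead it packages the stabilization criterion of \cite{kang2022stabilization} qualitatively: one must show $K_{m,n}$ is \emph{involutively nonsimple} (Definition in Section 6), meaning $\widehat{HFK}(S^3,K_{m,n})$ admits an $(\hat{\iota}_K,\hat{\Phi},\hat{\Psi})$-equivariant splitting with \emph{two} one-dimensional summands $V_1,V_2$. Once that is established, \Cref{prop: nonsimple implies stable} applies verbatim with $L=K_{m,n}$, and there is nothing to bound. So your ``main obstacle'' is a misdiagnosis of the target: there is no growing invariant to compute, just a structural splitting to produce. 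Relatedly, your description of the companion's complex as ``a staircase $C^0$ carrying the local equivalence class plus an acyclic $A$'' does not match what the paper uses: citing \cite{hendricks2022quotient}, it splits $\CFK_\cR(S^3,K^\ast_n)$ $\iota_K$-equivariantly as $C_n\oplus D_n$ where $C_n$ is the direct sum of a \emph{free generator $x$} (the local-equivalence part) and a $2\times 2$ box $S_n$ (not acyclic over $\cR$), and the box summand is essential because the cabled box $S^H_{n,m}$ is where the second one-dimensional piece $\F\langle\zeta_{\frac{m+1}{2}}\rangle$ lives. Applying \Cref{thm: induced splittings} only to a staircase-plus-acyclic decomposition would not produce two isolated lines.

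The second gap is that the paper also needs the \emph{relative} splitting principles, \Cref{thm:relative CFK-to-CFA} and \Cref{thm:relative CFA-to-CFK}, which your sketch never invokes. \Cref{thm: induced splittings} gives an $\iota_{K_{m,n}}$-equivariant block decomposition, but it does not by itself pin down $\hat{\iota}_K(x)$ within the $O\oplus (S_n)_{m,-1}$ block. The paper shows $x$ is $\hat{\iota}_K$-fixed by cabling the $\iota$-equivariant maps $f_1:\CFK_\cR(S^3,K^\ast_n)\to \CFK_\cR(S^3,U)$ and $f_2:\CFK_\cR(S^3,4_1)\to\CFK_\cR(S^3,K^\ast_n)$ through the relative principles; the isolation of $\zeta_{\frac{m+1}{2}}$ is then a separate grading-and-differential argument inside the cabled box using the Hanselman--Watson/Chen--Hanselman formula. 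Without both of these steps the ``involutively nonsimple'' condition is not verified, and the conclusion does not follow.
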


\subsection{Structural Overview}

To motivate the sections that follow, we briefly outline the strategy for the proof of Theorem \ref{thm: induced splittings}. Recall that by \cite[Theorem 11.26]{LOT_bordered_HF}, the data of $\CFK_{\cR}(S^3,K)$ is equivalent to the data of a type D structure for $S^3 \smallsetminus K$. Moreover, by \cite{HL_Inv_bordered_floer}, the bordered module $\CFDh(S^3 \smallsetminus K)$ may be equipped with an involutive structure in the form of a map 
\[
\iota_{S^3 \smallsetminus K}: \widehat{CFDA}(\AZ)\boxtimes \CFDh(\KC) \ra \CFDh(\KC).
\]
Under the bordered Floer homology pairing theorem, this map recovers the $\iota_K$-action on $\CFKh(S^3,K)$ \cite{kang_bordered_involutive_HFK}. Given an $\iota_{\KC}$-equivariant splitting $\CFDh(\KC) = M \oplus N$ into extendable type D structures (see \Cref{def:extendable summand}), we obtain a splitting $\CFK_\cR(S^3,K) = C \oplus D$ by tensoring (the extended modules) with the (extended) type A structure for the $S^1$-fiber in the solid torus (which we denote by $(T_\infty, \nu)$ for brevity). See \Cref{fig:T_inf_hd} for a bordered Heegaard diagram. There are two steps in showing that this splitting respects the action of $\iota_K$. 

First, in Section \ref{sec: cobs}, we show that there is a natural cobordism which gives rise to a map
\[
\Lambda: \End^{\cA}(\CFDh(\KC))\ra \End_\cR(\CFK_\cR(S^3,K)).
\]
This cobordism map has the property that if $p_M \in \Mor^{\cA}(\CFDh(\KC), \CFDh(\KC))$ is projection to $M$, then $\Lambda(p_M)$ is (homotopic to) a projection map on $\CFK(S^3,K)$. Furthermore, the construction of $\Lambda$ implies that it intertwines the conjugation actions by $\iota_{\KC}$ and $\iota_K$. 

It turns out that the summand $\Lambda(M):=\Lambda(p_M)(\CFK(S^3,K))$ has a simple bordered Floer theoretic description. There is another natural projection map obtained by tensoring $p_M$ with the identity:
\[
\CFAx \boxtimes\CFDh(\KC) \xrightarrow{\mathbb{I_\bX}\boxtimes p_M} \CFAx \boxtimes \CFDh(\KC) 
\]
where $\bX$ is a bordered Heegaard diagram for $(T_\infty, \nu)$ with a free basepoint on $\partial T_\infty$; see \Cref{fig:x_hd}. The image of this projection is the summand $\CFAx \boxtimes M$. A key result of Section \ref{sec: cobs} is that the summands $\Lambda(M)\oplus \Lambda(M)$ and $ \CFAx \boxtimes M$ are actually homotopy equivalent. 

The second step is to relate $\CFAx \boxtimes M$ to the summand of $\CFK_\cR(S^3,K)$ determined by the \LOT correspondence. In Section \ref{sec: doubling}, we show that for any direct summand of $\CFDh(\KC)$ (for any knot $K$), we have
\[
\CFAx \boxtimes M \simeq\cC_M\oplus\cC_M,
\]
where $\cC_M$ is the $\cR$-complex which corresponds to $M$ under the \LOT correspondence (from which it follows that $\cC_M$ can be identified with $\Lambda(M)$). The invariant splitting principles, \Cref{thm:CFK-to-CFA} and \Cref{thm:CFA-to-CFK} follow from these two observations, and their relative versions, \Cref{thm:relative CFK-to-CFA} and \Cref{thm:relative CFA-to-CFK} follow similarly. We then use \Cref{thm:CFK-to-CFA} and \Cref{thm:CFA-to-CFK} to prove \Cref{thm: induced splittings} in \Cref{sec:maintheoremproof}. We conclude the paper by using these results to prove our main topological application, \Cref{thm: one stab inf family}.

\begin{rem}
    We note that in \cite{HL_Inv_bordered_floer,kang_bordered_involutive_HFK,kang2022stabilization,kang2022torsion}, it was necessary to work with a particular choice of bordered involution (determined by some auxiliary data), since it had not been established that the bordered involutions were well defined. %We address the naturality of bordered Floer homology in \Cref{subsec:naturality}, from which it follows that the bordered involutions are indeed well-defined up to homotopy.
\end{rem}

\subsection*{Notations} We use the following distinction between the symbols $\sim$ and $\simeq$: $X\simeq Y$ means that objects $X$ and $Y$ are homotopy equivalent, while $f\sim g$ means that maps $f$ and $g$ are homotopic. When we deal with additional (free) basepoints, we will denote them as $p$. The unknot will always be written as $U$.

\subsection*{Relation to the ``one is not enough'' result of \cite{kang2022stabilization}} The results of \cite{kang2022stabilization} depend on some of the propositions and lemmas in this paper. In particular, \Cref{prop:doubling-CFA-to-CFK}, \Cref{prop: bijective on projection}, and \Cref{lem: lambda commutes with involutions} are the results that are crucial in the proof of \cite[Theorem 1.1]{kang2022stabilization}.

\subsection*{Acknowledgments} The authors would like to thank Robert Lipshitz for numerous helpful discussions as well as Jesse Cohen and Ciprian Manolescu for helpful conversations. They are also grateful for the careful reading and suggestions of the anonymous referees. GG is supported by a Simons Collaboration Grant on New Structures in Low Dimensional Topology.

\section{Background} \label{sec: background}

In this section, we provide a brief overview of the relevant aspects of bordered and involutive Floer theory; for a more thorough overview, see \cite{LOT_bordered_HF,LOT_notes,LOT_bimodules, hendricks_manolescu_Invol,HL_Inv_bordered_floer}. 

\subsection{Bordered Floer homology}\label{sec: bordered background}

Bordered Floer homology is a package of invariants of 3-manifolds with parametrized boundary. To a surface $F$, bordered Floer homology associates a differential graded algebra $\cA(F)$, and to a 3-manifold $Y$ with boundary with an identification $\phi: \partial Y \ra F$, a type D structure over $\cA(-F)$, called $\CFDh(Y, \phi)$, as well as a type A structure (a right $\cA_\infty$-module) over $\cA(F)$ called $\CFAh(Y, \phi)$. We will work almost exclusively with the algebra associated to the torus, which can be described succinctly as a quotient of a certain path algebra: 
\begin{equation}\label{eq:torus-alg}
  \xymatrix{   \iota_0\,\bullet\ar@/^1pc/[r]^{\rho_1}\ar@/_1pc/[r]_{\rho_3} & \circ\,\iota_1\ar[l]_{\rho_2}
  }/(\rho_2\rho_1=\rho_3\rho_2=0).
\end{equation}
To avoid cluttering the notation, we will usually write $\cA$ instead of $\cA(F)$.

Bordered Floer homology has a pairing theorem \cite[Theorem 1.3]{LOT_bordered_HF}, which recovers the hat-version of the Heegaard Floer homology of the manifold obtained by gluing bordered manifolds along their common boundary. Given 3-manifolds $Y_1$ and $Y_2$ with $\partial Y_1 \cong F \cong -\partial Y_2$, there is a homotopy equivalence
\[
	\CFh(Y_1\cup Y_2) \simeq \CFAh(Y_1) \boxtimes_{\cA} \CFDh(Y_2).
\]
Bordered Floer theory also recovers knot Floer homology \cite[Theorem 11.21]{LOT_bordered_HF}. Given a doubly pointed bordered Heegaard diagram $(\mathcal{H}_1, w, z)$ for $(Y_1, \partial F, K)$ and a bordered Heegaard diagram $(\mathcal{H}_2, z)$ with $\partial Y_1 \cong F \cong -\partial Y_2$, then 
\[
	\CFKh(Y_1 \cup Y_2, K) \simeq \CFAh(\mathcal{H}_1, w, z) \boxtimes_{\cA} \CFDh(\mathcal{H}_2, z).
\]
Typically in the literature, one basepoint lies on the boundary of the Heegaard surface, while the other lies in the interior. Our bordered Heegaard diagrams will always have a basepoint on the boundary, but we will allow for more than one interior basepoint. 

There is also a morphism formulation of the pairing theorem for singly based bordered three-manifolds: 
\[
\Mor^{\cA}(\CFDh(Y_1), \CFDh(Y_2)) \simeq \CFh(-Y_1 \cup Y_2),
\]
see \cite{LOT_HF_as_morphism}. Moreover, the composition map 
\[
\Mor^{\cA}(\CFDh(Y_1), \CFDh(Y_2)) \otimes \Mor^{\cA}(\CFDh(Y_2), \CFDh(Y_3)) \ra \Mor^{\cA}(\CFDh(Y_1), \CFDh(Y_3))
\]
is induced by a cobordism, which we will typically refer to as the \emph{pair of pants} cobordism \cite{cohen2023composition}. For an explicit description of this four-manifold, see Section \ref{sec: cobs}.\\

\subsection{Involutive Floer homology and its bordered formulation}

Involutive Heegaard Floer homology was introduced by Hendricks and Manolescu in \cite{hendricks_manolescu_Invol}. Given a Heegaard diagram $\cH$ representing a based three manifold $Y$, let $\overline{\cH}$ denote the \emph{conjugate diagram} obtained by reversing the orientation of the Heegaard surface and reversing the roles of the alpha and beta circles. There is a canonical map 
\[
\CF(\cH) \xrightarrow{\eta} \CF(\overline{\cH}),
\]
taking tuples of intersection points to the same tuple in the conjugate diagram. Since $\cH$ and $\overline{\cH}$ represent the same 3-manifold, they are related by a sequence of Heegaard moves, which produce a map 
\[
\Phi: \CF(\overline{\cH}) \ra \CF(\cH),
\]
by the naturality of Heegaard Floer homology. The composition of these two maps is usually denoted $\iota$, and the \emph{involutive Heegaard Floer homology} of $Y$ is defined to be the homology of 
\[
\Cone(\CF(Y) \xrightarrow{Q(1 + \iota)} Q \cdot \CF(Y)).
\]
When $\cH$ is a doubly pointed Heegaard diagram representing a knot (say, in $S^3$, for concreteness), a similar construction can be carried out; the only difference is that $\cH$ and $\overline{\cH}$ represent the same knot but with the roles of the two basepoints reversed. We therefore must pushforward the Heegaard diagram $\overline{\cH}$ by a half Dehn twist along the knot before choosing a sequence of Heegaard moves back to the original Heegaard diagram. The composite map is denoted $\iota_K$. \\

Hendricks and Lipshitz formulated a bordered involutive theory in \cite{HL_Inv_bordered_floer}. Given a three-manifold $Y = Y_1 \cup_F Y_2$ represented by a pair of glued bordered Heegaard diagrams $\cH_1 \cup \cH_2$, we can try to emulate the construction above. There is a technical issue however, in that there is an essential asymmetry in a bordered Heegaard diagram -- a bordered Heegaard diagram is \emph{either} an alpha-bordered diagram or a beta-bordered diagram, and an alpha-bordered Heegaard diagram cannot be related to a beta-bordered Heegaard diagram by bordered Heegaard moves. Therefore, when the glued diagram $\cH_1\cup \cH_2$ is conjugated, the diagrams $\overline{\cH}_1$ and  $\overline{\cH}_2$ become $\beta$-bordered diagrams, and therefore cannot be related to $\cH_1$ and $\cH_2$ by bordered Heegaard moves. Hendricks and Lipshitz resolve this issue by inserting a particular interpolating piece, called the \emph{Auroux-Zarev piece}, which we will denote by $\AZ$. $\AZ$ is a diagram with two boundary components; one side is alpha-bordered and the other is beta-bordered. Moreover, it is shown in \cite{LOT_HF_as_morphism} that the diagram $\overline{\AZ}\cup\AZ$ represents the product cobordism $F \times I$. Furthermore, $\overline{\cH}_1\cup \overline{\AZ}$ and $\overline{\cH}_2\cup \AZ$ are related to $\cH_1$ and $\cH_2$ by Heegaard moves. Then, as above, a choice of a sequence of Heegaard moves $\frak{H}_1$ and $\frak{H}_2$
\[
\overline{\cH}_1\cup \bI \cup \overline{\cH}_2 \sim \overline{\cH}_1\cup \overline{\AZ}\cup \AZ \cup \overline{\cH}_2 \sim \cH_1 \cup H_2
\]
induces maps
\[
\iota_{Y_1,\frak{H}_1}: \CFAh(Y_1) \boxtimes \CFDAh(\overline{\AZ}) \ra \CFAh(Y_1)
\]
\[
\iota_{Y_2,\frak{H}_2}: \CFDAh(\AZ) \boxtimes\CFDh(Y_2) \ra \CFDh(Y_2),
\]
which, when tensored together, give a model for the action of $\iota$. Here, we emphasize that in order to show that the maps above are independent of the additional data of the sequence of bordered Heegaard moves, one needs to establish naturality of the bordered modules. However, even without bordered naturality, this framework is quite useful; for any choice of Heegaard moves $\frak{H}_1$ and $\frak{H}_2$, the box tensor product $\iota_{Y_1,\frak{H}_1}\boxtimes\iota_{Y_2,\frak{H}_2}$ is well-defined! Hence, if one is only interested in the action of $\iota_{Y_1\cup Y_2}$, one can safely pick \emph{any} candidate for $\iota_{Y_i,\frak{H}_i}$, since after gluing, all choices will agree. For this reason, the applications in \cite{HL_Inv_bordered_floer,kang2022torsion,kang2022stabilization} do not actually rely on the naturality of the bordered invariants. 

Despite the various work-arounds, we nevertheless do establish (first order) naturality of the bordered invariants in \Cref{sec:naturality}. The following is an immediate consequence of that result.

\begin{thm}
    The homotopy classes of the type A and D morphisms $\iota_{Y_1}$ and $\iota_{Y_2}$ are well defined; i.e., they are independent of the choices of Heegaard moves used in their definition up to homotopy.
\end{thm}

The extension of involutive bordered theory to knots is due to the second author \cite{kang_bordered_involutive_HFK}. Given a knot $K$ in $S^3$, the construction above gives rise to a bordered involution 
\[
\iota_{S^3\smallsetminus K}: \CFAh(S^3\smallsetminus K) \boxtimes \CFDAh(\overline{\AZ}) \ra \CFAh(S^3\smallsetminus K).
\]
By \cite[Theorem 1.3]{kang_bordered_involutive_HFK}, the involution $\iota_{\KC}$ determines the action of $\iota_K$ on $\CFKh(S^3,K)$, i.e. there is a particular choice of map $f \in \Mor^{\cA(T^2)}(\CFDh(T_\infty, \nu), \CFDAh(\AZ) \boxtimes \CFDh(T_\infty, \nu))$ such that the diagram
\begin{align*}
	\begin{tikzcd}[ampersand replacement=\&]
		\CFAh(S^3 \smallsetminus K) \boxtimes \CFDh(T_\infty, \nu) \ar[r,"\simeq"] \ar[d,"\iota_{S^3 \smallsetminus K}^{-1}\boxtimes f"] \&
		\CFKh(S^3,K) \ar[dd,"\iota_K^\varepsilon"] \\
		\overunderset{\hspace{-3mm}\displaystyle\CFAh(S^3 \smallsetminus K) \boxtimes \CFDAh(\overline{\AZ})}{\hspace{0.7mm}\displaystyle\CFDAh(\AZ)\boxtimes\CFDh(T_\infty, \nu)}{\boxtimes} \ar[d,"\simeq"] \& \\
		\CFAh(S^3 \smallsetminus K) \boxtimes \CFDh(T_\infty, \nu) \ar[r,"\simeq"]  \&
		\CFKh(S^3,K),
	\end{tikzcd}
\end{align*}
commutes up to homotopy. $\CFDh(T_\infty, \nu)$ is the type D structure associated to the bordered Heegaard diagram shown in \Cref{fig:T_inf_hd}, which represents the pair $(T_\infty, \nu)$, where $T_\infty$ is the infinity framed solid torus and $\nu$ is an $S^1$-fiber.\footnote{We note that this is an abuse of notation; $\CFDh(T_\infty, \nu)$ depends not only on $\nu$, but a choice of an arc connecting $\nu$ to $\partial T^\infty$. Therefore, when we refer to  $\CFDh(T_\infty, \nu)$, we always refer to the particular diagram in \Cref{fig:T_inf_hd}.} Here, $\varepsilon \in \{\pm 1\}$; surprisingly, from the arguments of \cite{kang_bordered_involutive_HFK} it is unclear whether one recovers $\iota_K$ or its inverse from the bordered perspective. This is ultimately of little importance, since the map $(\CFK(K), \iota_K) \xra{\iota^2} (\CFK(K), \iota_K^{-1})$ is a homotopy equivalence, which intertwines the action of $\iota_K$ and its inverse. Hence, no essential information is lost.

We also note that in \cite{kang_bordered_involutive_HFK}, it was necessary to work with a particular choice of involution (as $\iota_{S^3 \smallsetminus K}$ was not well defined). Though, as we will establish naturality for the bordered invariants in the next section, we assume here that $\iota_{S^3 \smallsetminus K}$ is well-defined.

Having reviewed the basic notions of bordered Floer homology, we turn to the question of naturality.

\section{Naturality in bordered Floer homology} \label{sec:naturality}
%Before we proceed to involutive bordered Floer homology, we establish the (first-order) naturality of bordered Floer homology for bordered 3-manifolds with a single boundary component. Strictly speaking, our results do not depend on naturality of the bordered invariants; it suffices to have some candidate for $\iota_{\KC}$ since we have naturality after gluing. Though, at the same time, naturality of the bordered invariants is more than a mere convenience; e.g. it follows that the involutive bordered modules of \cite{HL_Inv_bordered_floer} are well-defined as is the bordered contact invariant of \cite{Alishahi_bordered_contact}. \\

We first briefly recall the naturality result for ordinary Heegaard Floer homology and some of the formalism set up in \cite{JTZ_naturality_mapping_class_groups}. 
\begin{defn}\label{def:transitive_sys}
    Let $\cC$ be a category and let $\mathcal{I}$ be a set. A \emph{transitive system in $\cC$ indexed by $\mathcal{I}$} is a collection of objects $(X_i)_{i \in \mathcal{I}}$ as well as a distinguished morphism $\Psi_{i\ra j}: X_i \ra X_j$ for every $(i, j) \in \mathcal{I} \times \mathcal{I}$ such that 
    \begin{enumerate}
        \item $\Psi_{j \ra k} \circ \Psi_{i \ra j} = \Psi_{i \ra k}$, and 
        \item $\Psi_{i\ra i} = \id_{X_i}$.
    \end{enumerate}
\end{defn}
A transitive system gives rise to a single object, $X$, by taking the colimit
\[
X := \mathrm{colim}_{i\in\mathcal{I}} X_i. %\left(\coprod_{i \in I} X_i\right)/\sim
\]
%where $x_i \sim x_j$ for $x_i \in X_i$ and $x_j\in X_j$ if and only if $\Psi_{i \ra  j}(x_i) = x_j$. 
In the context of Heegaard Floer homology, Juh\'asz-Thurston-Zemke show that given two Heegaard diagrams, $\cH$ and $\cH'$ representing a based 3-manifold $(M,z)$, there are canonical maps
\[
\Psi_{\cH \ra \cH'}: \HF^\circ(\cH) \ra \HF^\circ(\cH')
\]
which form such a transitive system, and hence, produce a single invariant, $\HF^\circ(M,z).$ Throughout, $\circ \in \{+,-,\infty, \widehat{\hspace{.3cm}}\}$.

More precisely, given a closed 3-manifold $M$ with a basepoint $z\in M$ (additional basepoints may be considered as well), Juh\'asz-Thurston-Zemke consider a graph $\mathcal{G}_{M,z}$:
\begin{itemize}
    \item The vertices of $\mathcal{G}_{M,z}$ are \emph{isotopy classes of embedded Heegaard diagrams}, i.e. oriented Heegaard surfaces $\Sigma\subset M$, specifying a Heegaard splitting $M=H_\alpha \cup_\Sigma H_\beta$ equipped with a basepoint $z\in \Sigma$ and a pair of $g(\Sigma)$-tuples of $\alpha$- and $\beta$-curves, $\boldsymbol\alpha,\boldsymbol\beta$, such that the $\alpha$-curves bound disjoint disks in $H_\alpha$ and the $\beta$-curves bound disjoint disks in $H_\beta$. These two sets of curves are consider up to isotopy\footnote{Working with isotopy diagrams sidesteps admissibility issues.};
    \item The edges of $\mathcal{G}_{M,z}$ encodes Heegaard moves between embedded Heegaard diagrams. Concretely, there is an edge between vertices $\cH_0$ and $\cH_1$ if $\cH_0$ and $\cH_1$ differ by a handleslide, an ambient stabilization or destabilization, or by an element of $\mathrm{Diff}_0^+(M,z)$, the subgroup of $\mathrm{Diff}^+(M,z)$ consisting of those diffeomorphisms isotopic to the identity.
\end{itemize}
A \emph{weak Heegaard invariant} is a map of graphs $F: \cG_{M,z} \ra \cC$, for some category $\cC$, with the property that for every edge $e$ of $\cG_{M,z}$, $F(e)$ is an isomorphism. In this language, $\HF^\circ$ is a weak Heegaard invariant by \cite{os_holodisks}. Given a path $\eta$ in $\cG_{M,z}$, given by
\[
\cH_0 \xra{e_1} \cH_1 \xra{e_2}\hdots \xra{e_n} \cH_n,
\]
we may define a map $F(\eta)$, as the composition
\[
F(e_n) \circ \hdots \circ F(e_1): F(\cH_0) \ra F(\cH_n).
\]
For a weak Heegaard invariant, this map \emph{may} depend on the choice of path $\eta$. In particular, loops in $\cG_{M,z}$ may have nontrivial monodromy. 

A \emph{strong Heegaard invariant} must satisfy the following additional axioms:
\begin{enumerate}
    \item \emph{Functoriality}: The restrictions of $F$ to the subgraph consisting of only (alpha or beta) handleslides or diffeomorpisms isotopic to the identity are maps of graphs. Moreover, if $e: \cH_0 \ra \cH_1$ is a stabilization, and $e'$ is the corresponding destabilization, then $F(e') = F(e)^{-1}$;
    \item \emph{Commutativity}: For every distinguished rectangle 
    \begin{align*}
        \begin{tikzcd}[ampersand replacement = \&]
            \cH_0 \ar[r,"e"]
             \ar[d,"f"]
             \& 
             \cH_1 
              \ar[d,"f"]\\
            \cH_2 \ar[r,"h"]\&
            \cH_3,
        \end{tikzcd}
    \end{align*}
    in $\cG_{M,z}$, we have that $F(g) \circ F(e) = F(h) \circ F(f)$. See \cite[Definition 2.29]{JTZ_naturality_mapping_class_groups} for an enumeration of distinguished rectangles. 
    \item \emph{Continuity}: If $\cH \in \cG_{M,z}$ and $e$ is an edge corresponding to a diffeomorphism of $\cH$ isotopic to the identity, then $F(e) = \id_{F(\cH)}$. 
    \item \emph{Handleswap Invariance}: For every simple handleswap
    \begin{align*}
        \begin{tikzcd}[ampersand replacement = \&]
            \cH_0 \ar[rd,"e"]
             \& \\
            \cH_2 \ar[u,"g"]\&
            \cH_1 \ar[l,"f"]
        \end{tikzcd}
    \end{align*}
    in $\cG_{M,z}$, we have $F(g)\circ F(f)\circ F(e) = \id_{F(\cH_0)}$. See \cite[Definition 2.31]{JTZ_naturality_mapping_class_groups} for the definition of a simple handleswap.
\end{enumerate}
The fact that $\HF^\circ$ is a \emph{strong} Heegaard invariant is proven in \cite{JTZ_naturality_mapping_class_groups}. We recall the precise statements of these axioms for completeness, and to motivate the definition of a strong bordered Heegaard invariant, though, ultimately, they are rather unimportant to our argument, which simply relies on the fact that the Heegaard Floer invariants of \cite{os_holodisks} are themselves strong in this sense. 

The key result of \cite{JTZ_naturality_mapping_class_groups}, from which naturality follows, is the following theorem.

\begin{thm}{\cite[Theorem 2.38]{JTZ_naturality_mapping_class_groups}}\label{thm: JTZ}
    Let $F: \cG_{M,z} \ra \cC$ be a strong Heegaard invariant. For $\cH, \cH' \in \cG_{M,z}$ and for two oriented paths $\eta, \gamma$ in $\cG_{M,z}$ from $\cH$ to $\cH'$, we have 
    \begin{align*}
        F(\eta) = F(\gamma).
    \end{align*}
\end{thm}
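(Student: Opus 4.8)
The plan is to show that the functor $F$, though a priori only defined on individual morphisms, descends to the fundamental groupoid of the ``graph of Heegaard moves'' and is trivial there; concretely, it suffices to prove that $F(\ell) = \id$ for every loop $\ell$ in $\cG_{M,z}$ based at some (equivalently, any) diagram. Indeed, once this is known, given two paths $\eta,\gamma$ from $\cH$ to $\cH'$ the concatenation $\gamma^{-1}*\eta$ is a loop, so $F(\gamma)^{-1}F(\eta) = F(\gamma^{-1}*\eta) = \id$; here I use the weak-invariance hypothesis that $F$ sends every elementary move to an isomorphism, so that $F(\gamma)$ is invertible and $F$ of a reversed edge is the inverse of $F$ of the edge. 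The reduction to a single basepoint uses that $\cG_{M,z}$ is connected, which is the Reidemeister--Singer theorem: any two Heegaard diagrams of $(M,z)$ are related by isotopies, (de)stabilizations, handleslides, and diffeomorphisms isotopic to the identity.

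Next I would package the relations into a $2$-complex $\mathcal{X}_{M,z}$ whose $1$-skeleton is the graph with vertices the isotopy classes of embedded Heegaard diagrams and edges the elementary moves, attaching a $2$-cell along each loop that one of the four strong-invariance axioms forces $F$ to kill: the commuting squares of moves supported in disjoint regions (Commutativity); the handleslide coherence loops and stabilization-commutation loops (Functoriality together with Commutativity); the loops coming from $\pi_1$ of the space of isotopies and from relations in $\mathrm{Diff}_0^+(M,z)$ (Continuity); and the distinguished handleswap triangle (Handleswap Invariance). By construction $F$ extends over every such $2$-cell, hence factors through $\pi_1$ of the quotient, so it remains only to prove that $\mathcal{X}_{M,z}$ is simply connected, i.e. that every loop of Heegaard moves bounds a disk assembled from these elementary relations.

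The main obstacle is precisely this simple-connectivity statement, and it is where parametrized Morse theory must be invoked. One realizes a loop of isotopy diagrams by a generic loop of self-indexing Morse--Smale pairs on $M$ with the basepoint constrained to the flowline through $z$; by Cerf theory the codimension-one and codimension-two strata of the space of such functions cut the loop into elementary pieces — births/deaths, handleslides, and the places where two events commute or collide (dovetail and the other standard Cerf singularities). The heart of the argument is to verify that each codimension-two degeneration is the combinatorial shadow of a relation already imposed — in particular that the handleswap $2$-cell is exactly the model for the relevant Cerf singularity — and that passing between the ``embedded diagram'' picture and the ``isotopy diagram'' picture introduces no further monodromy, which is what makes the Continuity axiom (rather than mere homotopy of isotopies) do genuine work. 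Once every generic loop is exhibited as a product of the four families of relations, simple connectivity follows and hence $F(\eta)=F(\gamma)$. I would close with the remark that admissibility causes no trouble: working with isotopy diagrams throughout, every isotopy class contains admissible representatives joined by admissible moves, so the holomorphic-curve definition of $F$ on edges is unambiguous.
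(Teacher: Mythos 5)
The statement you are proving is quoted verbatim from Juh\'asz--Thurston--Zemke as their Theorem 2.38; the paper does not supply a proof, it simply cites it and records the corresponding simple-connectivity statement (their Remark 2.39) for later use in Proposition~\ref{prop: bordered invariants have trivial monodromy}. So there is no ``paper's own proof'' to compare against here.

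That said, your proposal is a fair high-level description of what JTZ actually do: reduce to showing that $F$ is trivial on loops (which you handle correctly using that weak invariance makes every edge an isomorphism and that $\cG_{M,z}$ is connected by Reidemeister--Singer), then package the axioms of a strong Heegaard invariant as $2$-cells attached to the nerve, and finally argue that the resulting $2$-complex is simply connected. You rightly flag that last step as ``the main obstacle,'' and that is precisely where your argument stops being a proof and becomes a placeholder: the assertion that every generic loop of Morse--Smale pairs decomposes into elementary Cerf events, that the codimension-two strata are exhausted by the commutativity/functoriality/continuity polygons together with the handleswap triangle, and that the embedded-diagram versus isotopy-diagram bookkeeping introduces no extra monodromy, is the entire content of JTZ's Sections 3--9 and is far from automatic. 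In particular, the discovery that a nontrivial handleswap relation must be \emph{added} as an axiom (rather than following from Cerf theory) is exactly the kind of surprise that shows ``invoke parametrized Morse theory'' cannot be waved through. For the purposes of this paper, the correct move is what the authors do: cite JTZ and record Remark 2.39, rather than attempt to reconstruct the argument.

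Two small inaccuracies in your framing: the morphisms of $\cG_{M,z}$ are generated by four explicitly listed types of moves, and the relevant $2$-complex is built from a fixed finite list of distinguished polygon types given in the definition of a strong Heegaard invariant (not ``each loop that an axiom forces $F$ to kill,'' which would be circular — you need the polygons defined independently of $F$). Also, the handleswap cell is a triangle of \emph{graph-isomorphism} type moves on a stabilized diagram, not a Cerf singularity in the usual sense; JTZ show it is genuinely needed and cannot be derived from the others.
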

In fact, the complex obtained from $\cG_{M,z}$ by attaching 2-cells along the various polygons given in the definition of a strong Heegaard invariant is simply connected \cite[Remark 2.39]{JTZ_naturality_mapping_class_groups}.\\

It follows from this deep result that given two equivalent Heegaard diagram $\cH$ and $\cH'$, any choice of Heegaard moves relating the two diagrams will give rise to map, which we denote
\[
\Psi_{\cH \ra \cH'}: \CF^\circ(\cH) \ra \CF^\circ(\cH'),
\]
which is well-defined up to homotopy. Moreover, for any $\cH_0, \cH_1, \cH_2 \in \cG_{M,z}$, we have that  
\[
\Psi_{\cH_1 \ra \cH_2}\circ \Psi_{\cH_0 \ra \cH_1} \sim \Psi_{\cH_0 \ra \cH_2}.
\]
This is \cite[Corollary 2.41]{JTZ_naturality_mapping_class_groups}. Moreover, by the \emph{continuity axiom}, 
\[
\Psi_{\cH\ra\cH} \sim \id_{\CF^\circ(\cH)}.
\]
Therefore, $(\{\CF^\circ(\cH)\}_{\cH\in \cG_{M,z}}, \{\Psi_{\cH \ra \cH'}\}_{\cH, \cH'\in \cG_{M,z}})$ form a transitive system in the sense of \Cref{def:transitive_sys}, giving rise to the well-defined object
\[
\HF^\circ(M, z),
\]
\emph{the} Heegaard Floer homology invariant of $(M, z)$.

In the case of the Heegaard Floer functors, \Cref{thm: JTZ} may be reframed as follows: consider the map 
\[
\rho_{M,z}: \pi_1 (\cG_{M,z}) \ra H_*\End(\CF^\circ(M,z)),
\]
taking a loop $\eta$ of Heegaard moves based at $\cH_0$ to the class $[\Psi(\eta)]$. \Cref{thm: JTZ} says that this representation is trivial. We will make use of this formulation of their result in the proof of Proposition \ref{prop: bordered invariants have trivial monodromy}.  \\

\begin{comment}
Since the space of generic almost complex structures is weakly contractible, the standard Heegaard Floer theory arguments give us a well-defined homotopy-monodromy representation
\[
\rho_{M,z}:\pi_1 \mathcal{N}_1 (\mathcal{G}_{M,z})\rightarrow H_\ast\mathrm{End}(CF^-(M,z)).
\]
We will write $\mathcal{N}_1(\mathcal{C})$ for the 1-skeleton of the nerve of a given category $\mathcal{C}$. The main result of \cite{JTZ_naturality_mapping_class_groups} is that this representation is trivial, i.e. $\rho_{M,z}=1$.
\end{comment}

We now turn to the bordered setting. Given a bordered 3-manifold $M$ with one boundary component (which contains the basepoint $z$), $(M, z)$ can be represented by a bordered Heegaard diagram, which is a tuple $(\Sigma, \bm \alpha^a, \bm \alpha^c, \bm \beta, z)$, where $\Sigma$ is a genus $g$ surface with a single boundary component, $\bm \beta$ is a collection of $g$ pairwise disjoint circles in the interior of $\Sigma$, $\bm \alpha^a$ is a collection of $2k$ pairwise disjoint arcs in $\Sigma$ with boundary in $\partial \Sigma$, and $\bm \alpha^c$ a collection of $(g-k)$ pairwise disjoint circles in the interior of $\Sigma$ disjoint from $\bm \alpha^a$. 

Any two bordered Heegaard diagrams for equivalent bordered 3-manifolds can be related by a sequence of elementary bordered Heegaard moves, which consist of the following:
\begin{enumerate}
    \item isotopies of the $\alpha$- and $\beta$-curves not crossing the boundary of $\Sigma$;
    \item handleslides of the $\alpha$ curves over $\alpha$ circles\footnote{I.e., both $\alpha$ arcs and circles can slide over $\alpha$-circles, but no $\alpha$ curves can slide over $\alpha$-arcs.} and of the $\beta$ circles over $\beta$ circles;
    \item stabilizations and destabilizations in the interior of $\Sigma$.
\end{enumerate}
Therefore, just as in the closed case, we may consider the graph $\mathcal{G}_M^\partial$, defined as follows:
\begin{itemize}
    \item Vertices of $\mathcal{G}_M^\partial$ are isotopy classes of bordered Heegaard diagrams (again, here we mean that the $\alpha$ and $\beta$ curves are considered up to isotopy fixing the boundary;
    \item The edges of $\mathcal{G}_M^\partial$ are given by handleslides, ambient stabilizations and destabilizations, and elements of $\mathrm{Diff}^+_0(M,\partial M)$ (i.e.  those isotopic to the identity). Note that we include handleslides of $\alpha$-arcs over $\alpha$-circles.
\end{itemize}
It is established in \cite{LOT_bordered_HF} that $\CFDh(-)$ and $\CFAh(-)$ are both weak Heegaard invariants, i.e. to every elementary bordered Heegaard move corresponding to an edge $e$ in $\cG_M^\partial$ from $\cH_0$ to $\cH_1$, they define a homotopy equivalence $\Psi^D(e): \CFDh(\cH_0) \ra \CFDh(\cH_1)$ and $\Psi^A(e): \CFAh(\cH_0) \ra \CFAh(\cH_1).$ In particular, there are well-defined representations
\[
\rho_M^D: \pi_1(\cG_M^\partial) \ra H_*\End^\cA(\CFDh(M)),
\]
\[
\rho_M^A: \pi_1(\cG_M^\partial) \ra H_*\End_\cA(\CFAh(M)),
\]
given by taking a loop of bordered Heegaard moves $\eta$ to the composite of the maps induced by those moves. A priori, it is unclear whether these representations are trivial. 

We shall not define strong bordered invariants, i.e. we will not pin down a minimal set of relations which must be checked to determined whether a map of graphs $F: \cG_{M, z}^\partial \ra \cC$ gives rise to transitive system. Rather, we show that any loop of elementary bordered Heegaard diagrams (between embedded diagrams) must induce the identity morphism. 

According \cite[Proof of Corollary 4.2]{lipshitz2013faithful}, the functor $\widehat{CFAA}(\mathbb{I})$ induces an equivalence of categories; therefore we only need to establish naturality for one of $\CFDh$ or $\CFAh$.  We shall work with type D structures. \\

Before proceeding, we make a few algebraic observations. Let $(M, \delta^1)$ be a type $D$ structure over an dg algebra $\cA$. Let $\overline{M} = \Hom_\F(M, \F)$ and let $\overline{\delta}^1: \overline{M} \ra \overline{M} \otimes \cA$ be induced by the transpose of $\delta^1$. This makes $(\overline{M}, \overline{\delta}^1)$ a (right) type D structure over $\cA$, which is the dual type D structure to $(M, \delta^1)$. If $M$ and $N$ are two type D structures over $\cA$, then the chain complex $\Mor^\cA(M, N)$ of module homomorphisms from $M$ to $N$ is isomorphic to $\overline{M}\boxtimes_\cA \cA \boxtimes_\cA N$ \cite[Proposition 2.7]{LOT_HF_as_morphism}. We will write $M^\vee: = \overline{M}\boxtimes_\cA \cA$, which is the type A structure dual to the type D structure $M$. These isomorphisms respect composition in the following sense. 

\begin{lem}\label{lem:composition_to_boxtensor}
    Suppose that $M$, $N$, and $P$ are type $D$ structures over $\cA$ and that $f: N \ra P$ is a module homomorphism. Then, the diagram
    \begin{align*}
        \begin{tikzcd}[ampersand replacement = \&]
            \Mor^\cA(M, N) \ar[r,"f\circ {-}"] \ar[d,"\cong"] \&
            \Mor^\cA(M, P) \ar[d,"\cong"]\\
            M^\vee\boxtimes_\cA N \ar[r,"\bI \boxtimes f"] \& 
            M^\vee \boxtimes_\cA P
        \end{tikzcd}
    \end{align*}
    commutes.
\end{lem}
\begin{proof}
    A module map $g$ is \emph{basic} if there are generators $x$ and $y$ so that $g(x) = \sigma y$ for some algebra element $\sigma \in \cA$ and vanishes for all other generators. The vertical isomorphisms take such a morphism $g$ to the element $\overline{x}\otimes \sigma \otimes y \in M^\vee \boxtimes N = \overline{M}\boxtimes_\cA \cA \boxtimes_\cA N$. Since every morphism of $\Mor^\cA(N, P)$ is a linear combination of basic morphisms, we may assume that $f$ is basic, i.e. $f(x) = \sigma y$ and vanishes for all other generators of $N$. Suppose $g \in \Mor^\cA(M, N)$ is basic as well, and composes nontrivially with $f$, so that $g(v) = \rho x$. The clockwise composition takes $g$ to $f \circ g$ (which is also basic, $v \mapsto \rho \sigma y$), and hence $f \circ g$ to the element $\overline{v}\otimes \rho \sigma \otimes y.$ The counterclockwise composition takes $g$ to $\overline{v}\otimes \rho \otimes x$; $(\bI \boxtimes f)(\overline{v}\otimes \rho \otimes x)$ is precisely $\overline{v}\otimes  \rho \sigma \otimes y$. 
\end{proof}

\begin{lem}\label{lem: M to MvM is injective}
    Let $\End^\cA(M) := \Mor^\cA(M, M)$. The map 
    \begin{align*}
        H_*(\End^\cA(M)) \ra H_*(\End_\F(M^\vee \boxtimes M)),
    \end{align*}
    induced by $ f \mapsto \bI \boxtimes f$ is injective.
\end{lem}
\begin{proof}
    According to the previous lemma, under the identification $M^\vee \boxtimes M \cong \End^\cA(M)$, this is equivalent to a map 
    \begin{align*}
        \Phi: H_*(\End^\cA(M)) \ra H_*(\End_\F(\End^\cA(M))),
    \end{align*}
    induced by the map $f \mapsto f \circ (-).$ So, suppose that $\Phi([f]) = 0$, i.e. assume that for every $g \in \End^\cA(M)$, $f\circ g$ is nullhomotopic. In particular, when $g = \id_M$, we have that $f \circ \id_M  = f$ is nullhomotopic. Hence, $[f] = 0$, and thus, $\Phi$ is injective.
\end{proof}

It will be useful to have a slightly more general version of the previous lemma. 

\begin{cor}\label{lem:M to N M is injective}
    Let $M$ be a bounded type $D$ structure and let $N$ be a type A structure which is homotopy equivalent to $M^\vee$. Then, map 
    \begin{align*}
        H_*(\End^\cA(M)) \ra H_*(\End_\F(N \boxtimes_\cA M))
    \end{align*}
    induced by $f \mapsto \bI_N \boxtimes f$ is injective. 
\end{cor}
\begin{proof}
    Fix a homotopy equivalence $\varphi: N \ra M^\vee$ with inverse $\psi: M^\vee \ra N$. There is a diagram:
    \begin{align*}
        \begin{tikzcd}[ampersand replacement = \&]
            \End^\cA(M) \ar[r]\ar[rd] \& \End_\F(N \boxtimes_\cA M) \ar[d] \\
            \& \End_\F(M^\vee \boxtimes_\cA M).
        \end{tikzcd}
    \end{align*}
    The horizontal arrow sends $f$ to $\bI_N \boxtimes f$ while the diagonal arrow sends $f$ to $\bI_{M^\vee} \boxtimes f$. The horizontal arrow takes a map $f \boxtimes g$ to $(\psi \boxtimes \bI_M)\circ (f \boxtimes g)\circ (\varphi \boxtimes \bI_M)$. This diagram is homotopy commutative by \cite[Lemma 2.32]{LOT_bordered_HF}. Hence, when we pass to homology, the diagram commutes. By \Cref{lem: M to MvM is injective}, the diagonal arrow induces an injection in homology, from which it follows that the horizontal arrow also induces an injection in homology as well. This proves the claim.
\end{proof}

With these preliminaries out of the way, way may proceed to the proof of naturality.

\begin{prop}\label{prop: bordered invariants have trivial monodromy}
    Let $\eta, \gamma$ be paths in $\cG_M^\partial$ between embedded bordered Heegaard diagrams $\cH$ and $\cH'$. Then, 
    \[
        \Psi^D(\eta) \sim \Psi^D(\gamma).
    \]
\end{prop}
\begin{proof}
    It suffices to prove that for any loop $\eta$ in $\cG_M^\partial$ based at $\cH$
    \[
    \rho_{\cG_M^\partial}(\eta):= [\Psi^D(\eta)] = [\bI_{\CFDh(\cH)}].
    \]
    To do so, consider the double of $M$, which we denote $D(M) := (-M) \cup M.$ Fix a bordered Heegaard diagram $\cH_0$ for $-M$. Since we are working with isotopy classes of bordered Heegaard diagrams we assume that $\cH$ is only provincially admissible (every periodic domain in the interior of $\Sigma$ has both positive and negative multiplicities). Though, in order to invoke the pairing theorem, we make the stronger assumption that $\cH_0$ is admissible (every periodic domain in $\Sigma$, including those which go out to the boundary, have both positive and negative multiplicities).
    
    The fixed diagram $\cH_0$ gives rise to a map 
    \[
    \cF: \cG_{M}^\partial \ra \cG_{D(M),z}.
    \]
    On vertices, $\cF$ acts as $\cH \mapsto \cH_0 \cup \cH$. We define $\cF$ on edges as follows: let $e$ be an edge of $\cG_{M}^\partial$ corresponding to a bordered Heegaard move from $\cH$ to $\cH'$; define $\cF(e)$ to be the Heegaard move from $\cH_0 \cup \cH$ to $\cH_0 \cup \cH'$ given by performing $e$ on the glued diagram. The map $\cF$ induces a map
    \[
    \cF_*: \pi_1(\cG_{M}^\partial,\cH) \ra \pi_1(\cG_{D(M),z}, \cH_0 \cup \cH).
    \]
    There is another natural map
    \begin{align*}
        \Phi: \End^\cA(\CFDh(\cH)) \ra &\End_\F(\CFh(\cH_0 \cup \cH))\\  \simeq& \End_\F(\CFAh(\cH_0)\boxtimes \CFDh(\cH)), 
    \end{align*}
    given by $f \mapsto \bI_{\CFAh(\cH_0)}\boxtimes f.$ We emphasize here that since $\cH_0$ is assumed to be strongly admissible, pairing here is indeed permissible. These two maps are compatible with the monodromy representations in the sense that the diagram
    \begin{align}\label{diagram:naturality}
        \begin{tikzcd}[ampersand replacement=\&]
            \pi_1(\cG_{M}^\partial,\cH) \ar[d,"\cF_*"]\ar[rr,"\rho^D_M"] \&\&
             H_\ast \mathrm{End}^\cA(
    \widehat{CFD}(\cH))\ar[d,"\Phi"]\\
            \pi_1(\cG_{D(M),z}, \cH_0 \cup \cH) \ar[rr,"\rho_{D(M),z}"] \&\&
            H_\ast\mathrm{End}(\CFAh(\cH_0)\boxtimes \CFDh(\cH)).
        \end{tikzcd}
    \end{align}
    is commutative. 

    To see that this diagram commutes, let $e$ be an edge in $\cG_{M}^\partial$ corresponding to an elementary bordered Heegaard move between some $\cH$ and $\cH'$. By \cite[Lemma 5.6, Section 10]{HL_Inv_bordered_floer}, there is a homotopy commutative diagram
    \begin{align}\label{diagram:bordered moves pairing}
    \begin{tikzcd}[column sep = huge,ampersand replacement=\&]
        \CFAh(\cH_0) \boxtimes \CFDh(\cH) \ar[rr,"\bI_{\CFAh(\cH_0)}\boxtimes \Psi^D(e)"] \ar[d,"\simeq"] \& \&
        \CFAh(\cH_0) \boxtimes \CFDh(\cH') \ar[d,"\simeq"]\\
        \CFh(\cH_0\cup\cH) \ar[rr,"\Psi(\cF(e))"] \& \&
        \CFh(\cH_0\cup \cH')
    \end{tikzcd}
    \end{align}
    where the vertical arrows are given by the pairing theorem. Let $\eta \in \pi_1(\cG_M, \cH)$ be a loop given by
    \[
    \cH \xra{e_1} \cH_1 \xra{e_2} \hdots \xra{e_{n-1}} \cH_{n-1} \xra{e_n} \cH.
    \]
    The clockwise composition in Diagram (\ref{diagram:naturality}) is 
    \[
        \phi(\rho_{M}^D(\eta)) = [\bI_{\CFAh(\cH_{n-1})}\boxtimes \Psi^D(e_n) \circ \hdots \circ \bI_{\CFAh(\cH)}\boxtimes \Psi^D(e_1)].
    \]
    By the commutativity of Diagram (\ref{diagram:bordered moves pairing}), this is equal to
    \[
        [\Psi(\cF(e_n))\circ \hdots \circ \Psi(\cF(e_1))] = \rho_{D(M),z}(\cF_*(\eta)),
    \]
    as claimed. \\

    By \cite[Theorem 2.38]{JTZ_naturality_mapping_class_groups}, the counterclockwise composition in Diagram (\ref{diagram:naturality}) is constant; hence, the clockwise composition is as well. In particular, the image of $\rho_{M}^D$ is contained in $\mathfrak{f}^{-1}(1).$ Therefore, it suffices to prove that the map $\mathfrak{f}$ is injective. 

    According to \Cref{lem: M to MvM is injective}, the map 
    \begin{align*}
        H_*(\End^\cA(\CFDh(\cH))) \ra H_*(\End_\F(\CFDh(\cH)^\vee \boxtimes \CFDh(\cH))),
    \end{align*}
    is injective. But, by \cite[Theorem 2]{LOT_HF_as_morphism}, $\CFDh(\cH)^\vee$ is homotopy equivalent to $\CFAh(-\cH)$, which is a bordered Heegaard diagram for $-M$. But, then, by \cite[Theorem 2]{LOT_bordered_HF}, $\CFAh(-\cH)$ is homotopy equivalent to $\CFAh(\cH_0)$. Hence, by \Cref{lem:M to N M is injective}, the map 
    \begin{align*}
        \Phi: H_*(\End^\cA(\CFDh(\cH))) \ra H_*(\End_\F(\CFAh(\cH_0) \boxtimes \CFDh(\cH)))
    \end{align*}
    is injective, completing the proof.
\end{proof}

\begin{rem}
    There is a minor subtlety in the commutativity of Diagram \ref{diagram:bordered moves pairing} when the edge $e$ corresponds to a handleslide involving one of the $\alpha$ curves. In the bordered setting, this corresponds to a situation in which one must count holomorphic triangles in a diagram with two different sets of $\alpha$-arcs. A general theory of pairing triangle in this setting does not currently exist in the literature. Regardless, in the special case of handleslides, the square does commute, as discussed in \cite[Section 10]{HL_Inv_bordered_floer}. 
\end{rem}

It follows immediately from \Cref{prop: bordered invariants have trivial monodromy} that if $\cH$ and $\cH'$ are equivalent, embedded bordered Heegaard diagrams, we can define
\[
\Psi^D_{\cH \ra \cH'}:= \Psi^D(\eta)
\]
for any path $\eta$ in $\cG_{M}^\partial$; up to homotopy, this map does not depend on $\eta$.
    
\begin{thm}\label{thm:bordered naturality one boundary}
    The collection $(\{\CFDh(\cH)\}_{\cH\in \cG_M^\partial}, \{\Psi^D_{\cH \ra \cH'}\}_{\cH, \cH'\in \cG_{M}^\partial})$ forms a transitive system. In particular, the type D bordered Floer invariants are natural. 
\end{thm}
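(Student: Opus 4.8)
The plan is to deduce the theorem formally from \Cref{prop: bordered invariants have trivial monodromy}, which supplies all of the content; the rest is bookkeeping in the homotopy category of type D structures over $\cA$, in which the equalities of the two axioms of \Cref{def:transitive_sys} are to be interpreted. First I would record that $\cG_M^\partial$ is connected --- any two embedded bordered Heegaard diagrams for $M$ are related by a finite chain of elementary bordered Heegaard moves (isotopy, handleslide, ambient (de)stabilization, and a diffeomorphism isotopic to the identity), the bordered analogue of the Reidemeister--Singer statement, which follows from \cite{LOT_bordered_HF}. Hence the maps $\Psi_{\cH \ra \cH'}$ introduced above exist for every ordered pair $(\cH, \cH') \in \cG_M^\partial \times \cG_M^\partial$, and by \Cref{prop: bordered invariants have trivial monodromy} their homotopy classes are unambiguous.

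Next I would verify the two axioms. For composition, fix $\cH_0, \cH_1, \cH_2 \in \cG_M^\partial$ and choose paths $\eta$ from $\cH_0$ to $\cH_1$ and $\gamma$ from $\cH_1$ to $\cH_2$; their concatenation $\gamma \ast \eta$ is a path from $\cH_0$ to $\cH_2$, and because $\Psi$ of a path is by construction the composite of the homotopy equivalences attached to its edges, one has the on-the-nose equality $\Psi(\gamma \ast \eta) = \Psi(\gamma) \circ \Psi(\eta)$. Invoking \Cref{prop: bordered invariants have trivial monodromy} for $\eta$, for $\gamma$, and for $\gamma \ast \eta$ yields
\[
\Psi_{\cH_1 \ra \cH_2} \circ \Psi_{\cH_0 \ra \cH_1} \;\sim\; \Psi(\gamma) \circ \Psi(\eta) \;=\; \Psi(\gamma \ast \eta) \;\sim\; \Psi_{\cH_0 \ra \cH_2},
\]
which is axiom (1). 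For axiom (2), the constant path at $\cH$ induces the empty composite $\id_{\CFDh(\cH)}$, and by \Cref{prop: bordered invariants have trivial monodromy} every loop at $\cH$ (in particular every path from $\cH$ to itself) induces a map homotopic to it, so $\Psi_{\cH \ra \cH} \sim \id_{\CFDh(\cH)}$. Therefore $(\{\CFDh(\cH)\}_{\cH \in \cG_M^\partial}, \{\Psi_{\cH \ra \cH'}\})$ is a transitive system. Its colimit over $\cG_M^\partial$, all of whose structure maps are homotopy equivalences, is represented by any single $\CFDh(\cH)$ together with the canonical identifications $\Psi_{\cH \ra \cH'}$; this is the invariant $\CFDh(M)$. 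Finally, transporting along the equivalence of homotopy categories induced by $-\boxtimes \widehat{CFAA}(\mathbb{I})$ from \cite[Proof of Corollary 4.2]{lipshitz2013faithful}, which intertwines $\Psi^D(e)$ with $\Psi^A(e)$ for each elementary move $e$, carries this transitive system to one of type A structures, so $\CFAh(M)$ is natural as well.

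I do not anticipate a substantive obstacle here: the essential input, triviality of the homotopy-monodromy representation $\rho_M^D$, is already \Cref{prop: bordered invariants have trivial monodromy}, and the present statement merely repackages it. The two points deserving a little care are the connectivity of $\cG_M^\partial$ (so that the structure maps are defined for all pairs, not only for ``equivalent'' diagrams) and the observation that the transitive system here necessarily lives in the homotopy category, so that the strict equalities of \Cref{def:transitive_sys} must be read as homotopies --- which is exactly what the Proposition provides. If one wanted a strictly (rather than homotopy-coherently) commuting system one would additionally have to pin down a coherent choice of representatives, but that refinement is not needed for the naturality statement as phrased.
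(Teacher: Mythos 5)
Your argument matches the paper's: both deduce the theorem as a formal consequence of \Cref{prop: bordered invariants have trivial monodromy}, verifying the two axioms of \Cref{def:transitive_sys} (read up to homotopy) by considering concatenated paths and the constant loop. If anything, your write-up is a bit more careful than the paper's terse proof, which in fact appears to swap the labels of parts (1) and (2) of the definition while carrying out exactly the computation you describe.
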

\begin{proof}
    Part (1) of \Cref{def:transitive_sys} is immediate from \Cref{prop: bordered invariants have trivial monodromy}. If $\cH_0$, $\cH_1$, and $\cH_2$ are equivalent embedded bordered Heegaard diagrams, then by \Cref{prop: bordered invariants have trivial monodromy}
    \[
        (\Psi^D_{\cH_1 \ra \cH_2}\circ \Psi^D_{\cH_0 \ra \cH_1})\circ (\Psi^D_{\cH_0 \ra \cH_2})^{-1} \simeq \bI_{\CFDh(\cH_0)},
    \]
    from which Part (2) of \Cref{def:transitive_sys} follows.
\end{proof}

\begin{thm}
    The collection $(\{\CFAh(\cH)\}_{\cH\in \cG_M^\partial}, \{\Psi^A_{\cH \ra \cH'}\}_{\cH, \cH'\in \cG_{M}^\partial})$ forms a transitive system. In particular, the type A bordered Floer invariants are natural. 
\end{thm}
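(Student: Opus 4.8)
The plan is to deduce this from the type D case, \Cref{thm:bordered naturality one boundary}, by transporting across the equivalence of homotopy categories induced by box-tensoring with the type AA identity bimodule. Recall from \cite[Proof of Corollary 4.2]{lipshitz2013faithful} (see also \cite{LOT_bimodules}) that the functor $\widehat{CFAA}(\mathbb{I})\boxtimes-$ is an equivalence from the homotopy category of type D structures over $\cA$ to the homotopy category of $\cA_\infty$-modules over the relevant algebra, and that it carries $\CFDh(\cH)$ to a model homotopy equivalent to $\CFAh(\cH)$ for every embedded bordered Heegaard diagram $\cH$. So the strategy is to show the type A monodromy representation is obtained from the type D one by applying this functor, and then invoke \Cref{prop: bordered invariants have trivial monodromy}.

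First I would verify that these equivalences are natural with respect to elementary bordered Heegaard moves: for an edge $e$ of $\cG_M^\partial$ from $\cH$ to $\cH'$, the square
\[
\begin{tikzcd}[ampersand replacement=\&]
\widehat{CFAA}(\mathbb{I})\boxtimes\CFDh(\cH)\ar[r,"\mathbb{I}\boxtimes\Psi^D(e)"]\ar[d,"\simeq"]\&\widehat{CFAA}(\mathbb{I})\boxtimes\CFDh(\cH')\ar[d,"\simeq"]\\
\CFAh(\cH)\ar[r,"\Psi^A(e)"]\&\CFAh(\cH')
\end{tikzcd}
\]
commutes up to homotopy. This is the type AA analogue of Diagram~(\ref{diagram:bordered moves pairing}), and I expect it to follow from the same input used there — the pairing theorem applied to the trivial (product) bordered cobordism, together with the fact that the maps induced by bordered Heegaard moves are themselves induced by cobordism maps, which are natural under gluing. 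This is the step I expect to require the most care, since it is the only genuinely new ingredient beyond the type D case; the remainder of the argument is formal.

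Granting this, the type A monodromy representation $\rho_M^A:\pi_1\cN(\cG_M^\partial)\to H_*\End_\cA(\CFAh(M))$ factors through $\rho_M^D$ via the functor $\widehat{CFAA}(\mathbb{I})\boxtimes-$, so \Cref{prop: bordered invariants have trivial monodromy} gives $[\Psi^A(\eta)]=[\mathbb{I}]$ for every loop $\eta$ in $\cG_M^\partial$. Hence $\Psi^A(\eta)\sim\Psi^A(\gamma)$ for any two paths from $\cH$ to $\cH'$, and I may define $\Psi_{\cH\to\cH'}:=\Psi^A(\eta)$ for any such path without ambiguity up to homotopy. Finally I would check the two axioms of \Cref{def:transitive_sys} exactly as in the proof of \Cref{thm:bordered naturality one boundary}: the composition law $\Psi_{\cH_1\to\cH_2}\circ\Psi_{\cH_0\to\cH_1}\sim\Psi_{\cH_0\to\cH_2}$ holds because both sides are induced by paths $\cH_0\to\cH_2$ and the monodromy is trivial, while $\Psi_{\cH\to\cH}\sim\id$ follows from the continuity axiom for $\CFAh$ (or, equivalently, by transporting the corresponding statement for $\CFDh$ across the equivalence). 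I note that one could instead rerun the proof of \Cref{prop: bordered invariants have trivial monodromy} verbatim with $\CFAh$ in place of $\CFDh$, but this would require reproving the injectivity of the analogue of the map $\mathfrak{f}$, so routing through the established equivalence is cleaner.
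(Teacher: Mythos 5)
Your approach is exactly the one the paper takes: the paper's proof is a one-sentence appeal to \Cref{prop: bordered invariants have trivial monodromy} together with the fact that $\CFAAh(\bI)\boxtimes(-)$ is an equivalence of categories. Your more detailed write-up---in particular flagging the need to verify the homotopy-commuting square relating $\Psi^D(e)$ and $\bI\boxtimes\Psi^A(e)$ across the equivalence---correctly fills in what the paper leaves implicit.
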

\begin{proof}
    This follows from \Cref{prop: bordered invariants have trivial monodromy} and the fact that $\CFAAh(\bI)\boxtimes (-)$ is an equivalence of categories \cite{lipshitz2013faithful}.
\end{proof}

The case of multiple boundary components follows from the single boundary case. We recall the definition of a \emph{strongly bordered 3-manifold:}
\begin{defn}
    Fix connected surfaces $(F_1, D_1, z_1'), \hdots, (F_k, D_k, z_k')$ with preferred disks $D_i$ with basepoints $z_i'$ on their boundaries. A \emph{strongly bordered} 3-manifold $M$ with boundary components $F_1, \hdots, F_k$ is a 3-manifold $Y$ equipped with the following data: 
    \begin{enumerate}
        \item Preferred disks $\Delta_1, \hdots \Delta_k$ and basepoints $z_i \in \partial \Delta_i$ in each of its boundary components;
        \item A diffeomorphism $\psi:\amalg_i (F_i, D_i, z_i') \ra (\partial Y, \amalg_i \Delta_i, \amalg_i z_i)$;
        \item A framed tree, $\mathbf{z}$, consisting of an interior basepoint $z$ and arcs connecting $z$ to each $z_i$.
    \end{enumerate}
\end{defn}
Compare with \cite[Definition 1.3]{LOT_bimodules} or \cite[Section 2]{hanselman_graph}.\\

Any strongly bordered 3-manifold can be represented by an \emph{arced bordered Heegaard diagram with $k$ boundary components} \cite[Definition 2.2]{hanselman_graph}.  Given a strongly bordered 3-manifold $(M,\mathbf{z})$ with $k$ boundary components, define a graph, $\cG_{M,\mathbf{z}}^{k\partial}$ as follows:
\begin{itemize}
    \item Vertices of $\cG_{M,\mathbf{z}}^{k\partial}$ are embedded strongly arced bordered Heegaard diagrams;
    \item Edges are arced bordered Heegaard moves: isotopies, handleslides, ambient stabilizations and destabilizations, and elements of $\Diff^+_0(M, \partial M, \mathbf{z}).$
\end{itemize}

To a strongly bordered 3-manifold $M$ with $k$ boundary components, the techniques from \cite{LOT_bimodules} determine a \emph{multimodule},$\CFDh^k(M)$, over $\cA_1 \otimes \hdots \otimes \cA_k$, where $\cA_i := \cA(-F_i)$ as follows. Fix an arced bordered Heegaard diagram, $\cH$, for $M$. Let $\cH_{dr}$ be the bordered Heegaard diagram (with a single boundary component) obtained by deleting a neighborhood of the tree $\mathbf{z}$ and let $M_{dr}$ be the 3-manifold associated to $\cH_{dr}$. For each arc, $\lambda_i$, $i\in \{2, \hdots, k\}$, in $\mathbf{z}$ from $z$ to $z_i$, fix a pair of points $p_i^\pm$ in $\partial \nu(\mathrm{int}(\lambda))$ (one in each component). See \cite[Definition 5.5]{LOT_bimodules}. The drilling operation gives rise to a simplicial map
\[
\cD: \cG_{M,\mathbf{z}}^{k\partial} \ra \cG_{M_{dr},z_k^+}^{\partial},
\]
in the obvious way. The choice of basepoint in $M_{dr}$ is unimportant, but we fix it for concreteness. Drilling has a natural inverse, \emph{filling}, given by attaching $(k-1)$ bands to $\cH_{dr}$ along $p_i^\pm$ \cite[Definition 5.8]{LOT_bimodules}. In particular, $\cD$ is an isomorphism of graphs. 

Note, that $M_{dr}$ has boundary $\#^i F_i$. Let $\cA^\#:=\cA(\#^i -F_i)$. There is a natural projection 
\[
\rho: \cA^\# \ra \cA_1 \otimes \hdots \otimes \cA_k,
\]
which gives rise to an induction functor 
\[
\mathbf{Induct}^{\cA_1,\hdots,\cA_k}: \prescript{\cA^\#}{}{\mathbf{Mod}} \ra \prescript{\cA_1,\hdots,\cA_k}{}{\mathbf{Mod}}.
\]
The multimodule associated to $M$ is given by
\[\CFDh^k(M):= \mathbf{Induct}^{\cA_1,\hdots,\cA_k}(\CFDh(M_{dr})).\]
Since the drilling functor is an isomorphism of categories, invariance of the mutimodules follows immediately from invariance of the usual bordered modules. As we now prove, naturality follows easily as well. 

\begin{prop}\label{prop: multimodule invariants have trivial monodromy}
    Let $\eta, \gamma$ be paths in $\cG_{M,\mathbf{z}}^{k\partial}$ from $\cH$ to $\cH'$. Then, 
    \[
        \Psi(\eta) \sim \Psi(\gamma).
    \]
\end{prop}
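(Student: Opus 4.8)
The plan is to reduce \Cref{prop: multimodule invariants have trivial monodromy} to the single-boundary case, \Cref{prop: bordered invariants have trivial monodromy}, using the drilling functor $\cD$. Since $\cD: \cG_{M,\mathbf{z}}^{k\partial} \ra \cG_{M_{dr},z_k^+}^{\partial}$ is an isomorphism of categories, it induces an isomorphism $\cD_*: \pi_1\cN(\cG_{M,\mathbf{z}}^{k\partial}) \ra \pi_1\cN(\cG_{M_{dr},z_k^+}^{\partial})$ on fundamental groups of the nerves, carrying a loop of arced bordered Heegaard moves to the corresponding loop of ordinary bordered Heegaard moves on the drilled diagram. So it suffices to check that the monodromy representation of $\pi_1\cN(\cG_{M,\mathbf{z}}^{k\partial})$ on $H_*\End^{\cA_1,\ldots,\cA_k}(\CFDh^k(M))$ factors compatibly through the monodromy representation $\rho^D_{M_{dr}}$, which is trivial by \Cref{prop: bordered invariants have trivial monodromy}.

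The key step is to verify that the induction functor $\mathbf{Induct}^{\cA_1,\ldots,\cA_k}$ is compatible with the Heegaard-move maps and is, in an appropriate sense, faithful enough to detect homotopies. Concretely: for an elementary arced bordered Heegaard move $e$ from $\cH$ to $\cH'$, the associated homotopy equivalence $\Psi^k(e): \CFDh^k(\cH) \ra \CFDh^k(\cH')$ is, by construction of the multimodule invariant, obtained by applying $\mathbf{Induct}^{\cA_1,\ldots,\cA_k}$ to the homotopy equivalence $\Psi^D(\cD(e)): \CFDh(\cH_{dr}) \ra \CFDh(\cH'_{dr})$ coming from the single-boundary theory (this is essentially the content of how the invariant is defined in \cite{LOT_bimodules}, and one should cite the relevant lemma there). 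Since induction is a functor, it takes composites to composites, so for a loop $\eta = (e_1, \ldots, e_n)$ based at $\cH$ we get
\[
\Psi^k(\eta) = \mathbf{Induct}^{\cA_1,\ldots,\cA_k}\big(\Psi^D(\cD(\eta))\big).
\]
By \Cref{prop: bordered invariants have trivial monodromy}, $\Psi^D(\cD(\eta)) \sim \bI_{\CFDh(M_{dr})}$; applying the functor $\mathbf{Induct}^{\cA_1,\ldots,\cA_k}$ to this homotopy (functors preserve homotopies of module morphisms, as the homotopy is itself a morphism satisfying the relevant structure equations, and induction is additive and preserves these relations) yields $\Psi^k(\eta) \sim \mathbf{Induct}^{\cA_1,\ldots,\cA_k}(\bI_{\CFDh(M_{dr})}) = \bI_{\CFDh^k(M)}$. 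This gives triviality of the monodromy, and hence $\Psi(\eta) \sim \Psi(\gamma)$ for any two paths from $\cH$ to $\cH'$, exactly as in the deduction of \Cref{prop: bordered invariants have trivial monodromy}.

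The main obstacle I anticipate is bookkeeping around the auxiliary choices built into the multimodule construction — the basepoints $p_i^\pm$ on $\partial\nu(\mathrm{int}(\lambda_i))$, the choice of interior basepoint $z_k^+$ in $M_{dr}$, and the framed tree $\mathbf{z}$ itself — and making sure that arced bordered Heegaard moves upstairs correspond bijectively to honest bordered Heegaard moves downstairs so that $\cD$ really is an isomorphism of categories on the nose (not merely an equivalence). One must confirm that elements of $\Diff^+_0(M, \partial M, \mathbf{z})$ map to elements of $\Diff^+_0(M_{dr}, \partial M_{dr})$ under drilling and conversely under filling, and that stabilizations/destabilizations and handleslides are preserved in both directions; this is where the precise definitions from \cite[Definitions 5.5, 5.8]{LOT_bimodules} do the work. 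A secondary, more routine point is checking that $\mathbf{Induct}^{\cA_1,\ldots,\cA_k}$ genuinely preserves the homotopy relation between morphisms of $\cA_\infty$- (or dg-) modules; since induction is restriction of scalars along the algebra map $\rho: \cA^\# \ra \cA_1\otimes\cdots\otimes\cA_k$ composed with the projection on idempotents, it is a strict functor and this is straightforward, but it should be stated explicitly. Once these compatibilities are in place, the argument is a formal consequence of \Cref{prop: bordered invariants have trivial monodromy} together with functoriality of $\cD$ and $\mathbf{Induct}$.
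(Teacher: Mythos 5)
Your proof is correct and takes the same route as the paper: both reduce to \Cref{prop: bordered invariants have trivial monodromy} via the drilling/filling isomorphism of categories, and both use the fact that the maps associated to arced Heegaard moves are defined by applying the induction functor to the corresponding maps on the drilled diagram (the paper cites \cite[Proposition~6.5]{LOT_bimodules} for this compatibility, which is the ``relevant lemma'' you anticipate needing). The paper packages the argument as a commutative square with the filling functor on one side and induction on the other, but the content is identical to your factorization of $\Psi^k(\eta)$ through $\mathbf{Induct}\circ\Psi^D\circ\cD$.
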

\begin{proof}
    As in the single boundary case, it suffices to prove that the monodromy representation
    \[
    \rho^{D^k}_{M,\mathbf{z}}: \pi_1 (\cG_{M,\mathbf{z}}^{k\partial}) \ra H_*\End^{\cA_1, \hdots, \cA_k}(\CFDh^k(\cH)),
    \]
    is trivial. Consider the following diagram: 
    \begin{align}\label{diagram:naturality-multi}
        \begin{tikzcd}[ampersand replacement=\&]
            \pi_1 (\cG_{M,\mathbf{z}}^{k\partial}) \ar[rr,"\rho^{D^k}_{M,\mathbf{z}}"] \&\&
             H_\ast \mathrm{End}^{\cA_1, \hdots, \cA_k}(
    \widehat{CFD}(\cH))\\
            \pi_1(\mathcal{G}_{M_{dr},z}) \ar[rr,"\rho_{M_{dr},z}"]\ar[u] \&\&
            H_\ast\End^{\cA^\#}(\CFDh(\cH_{dr})) \ar[u],
        \end{tikzcd}
    \end{align}
    where the left vertical arrow is induced by the filling construction and the right vertical arrow is given by the induction functor. The square commutes by the definition of the multimodules (the maps associated to arced bordered Heegaard moves are, by definition, obtained by applying the induction functor to the maps induced by the corresponding Heegaard moves on the drilled diagram \cite[Proposition 6.5]{LOT_bimodules}.) By \Cref{prop: bordered invariants have trivial monodromy}, the counterclockwise composition is constant, hence, the clockwise composition is constant as well. But, the left arrow is an isomorphism; hence, $\rho_{M}^{D^k}$
    is the trivial representation.
\end{proof}

Naturality then follows just as in the single boundary case. 

\begin{thm}\label{thm: naturality for multimodules}
    The collection $(\{\CFDh^k(\cH)\}_{\cH\in \cG_{M,\mathbf{z}}^{k\partial}}, \{\Psi_{\cH \ra \cH'}\}_{\cH, \cH'\in \cG_{M,\mathbf{z}}^{k\partial}})$ forms a transitive system. In particular, the type D multimodule bordered Floer invariants are natural. 
\end{thm}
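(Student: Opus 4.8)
The plan is to mirror the proof of \Cref{thm:bordered naturality one boundary} verbatim, replacing each input with its multimodule analogue. The two ingredients needed are: (i) that any two paths in $\cG_{M,\mathbf{z}}^{k\partial}$ between fixed diagrams induce homotopic maps, which is exactly \Cref{prop: multimodule invariants have trivial monodromy}; and (ii) the composition and identity axioms for the maps $\Psi_{\cH \ra \cH'}$, which follow formally once (i) is in hand.

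First I would use \Cref{prop: multimodule invariants have trivial monodromy} to \emph{define} $\Psi_{\cH \ra \cH'} := \Psi(\eta)$ for any path $\eta$ in $\cG_{M,\mathbf{z}}^{k\partial}$ from $\cH$ to $\cH'$, noting that this is well-defined up to homotopy precisely because all such paths give homotopic composites. (One should first observe that any two embedded strongly arced bordered Heegaard diagrams for $(M,\mathbf{z})$ are connected by such a path; this is the statement that the arced bordered Heegaard moves act transitively, which is part of the invariance package already used implicitly above, e.g. via the drilling isomorphism $\cD$ and the corresponding fact for embedded bordered diagrams.) Then, given three equivalent diagrams $\cH_0, \cH_1, \cH_2$, I concatenate a path from $\cH_0$ to $\cH_1$ with one from $\cH_1$ to $\cH_2$ to obtain a path from $\cH_0$ to $\cH_2$; functoriality of $\Psi$ under concatenation of paths (which is immediate, since $\Psi$ of a path is the composite of the $\Psi$'s of its edges) together with \Cref{prop: multimodule invariants have trivial monodromy} gives
\[
\Psi_{\cH_1 \ra \cH_2}\circ \Psi_{\cH_0 \ra \cH_1} \sim \Psi_{\cH_0 \ra \cH_2}.
\]
This is part (1) of \Cref{def:transitive_sys}. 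For part (2), I take $\eta$ to be a constant path at $\cH$; by the continuity-type input used in the single-boundary case (or directly, since the constant path has no edges and hence $\Psi$ of it is the identity), $\Psi_{\cH \ra \cH} \sim \id_{\CFDh^k(\cH)}$. Hence the collection forms a transitive system, and taking the colimit yields the well-defined multimodule invariant $\CFDh^k(M,\mathbf{z})$, establishing naturality.

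There is no real obstacle here: this theorem is a formal corollary of \Cref{prop: multimodule invariants have trivial monodromy} in exactly the same way that \Cref{thm:bordered naturality one boundary} is a formal corollary of \Cref{prop: bordered invariants have trivial monodromy}, and the proof can be given in a single sentence referring back to that argument. The only point worth stating carefully is the reduction ``naturality then follows just as in the single boundary case,'' which unpacks to the two verifications of \Cref{def:transitive_sys} above. Accordingly I would keep the proof to one or two lines, citing \Cref{prop: multimodule invariants have trivial monodromy} and the proof of \Cref{thm:bordered naturality one boundary}.

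\begin{proof}
    This follows from \Cref{prop: multimodule invariants have trivial monodromy} exactly as \Cref{thm:bordered naturality one boundary} follows from \Cref{prop: bordered invariants have trivial monodromy}. Namely, for equivalent embedded strongly arced bordered Heegaard diagrams $\cH, \cH'$ for $(M, \mathbf{z})$, set $\Psi_{\cH \ra \cH'} := \Psi(\eta)$ for any path $\eta$ in $\cG_{M,\mathbf{z}}^{k\partial}$ from $\cH$ to $\cH'$; by \Cref{prop: multimodule invariants have trivial monodromy} this is independent of $\eta$ up to homotopy. Part (1) of \Cref{def:transitive_sys} is then immediate from \Cref{prop: multimodule invariants have trivial monodromy}: for equivalent $\cH_0, \cH_1, \cH_2$, concatenating a path $\cH_0 \to \cH_1$ with a path $\cH_1 \to \cH_2$ gives a path $\cH_0 \to \cH_2$, so that
    \[
        (\Psi_{\cH_1 \ra \cH_2}\circ \Psi_{\cH_0 \ra \cH_1})\circ \Psi_{\cH_0 \ra \cH_2}^{-1} \simeq \bI_{\CFDh^k(\cH_0)}.
    \]
    Part (2) follows by taking $\eta$ to be the constant path at $\cH$, for which $\Psi(\eta) = \id_{\CFDh^k(\cH)}$. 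Hence the collection forms a transitive system, and the colimit $\CFDh^k(M, \mathbf{z})$ is the desired natural invariant.
\end{proof}
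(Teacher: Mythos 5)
Your proposal is correct and matches the paper's approach: the paper states this theorem without a separate proof environment, remarking only that ``Naturality then follows just as in the single boundary case,'' and your argument is precisely that deduction—deriving the two transitive-system axioms from \Cref{prop: multimodule invariants have trivial monodromy} in the same way \Cref{thm:bordered naturality one boundary} is derived from \Cref{prop: bordered invariants have trivial monodromy}. Your additional remark about connectivity of $\cG_{M,\mathbf{z}}^{k\partial}$ (so that $\Psi_{\cH\ra\cH'}$ is defined for all pairs) is a detail the paper leaves implicit, and is a reasonable thing to flag.
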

The case of other type of multimodules follows by box-tensoring with $\CFAAh(\bI)$.

\section{Extensions and basepoints}\label{sec:basepoints and extensions}

In this section, we consider two methods of extracting $\CFK_\cR(K)$ from $\CFDh(\KC)$: the first method involves working over an extension of the usual torus algebra (as in \cite{hanselman2023bordered}) while the second involves adding in an additional basepoint (as in \cite{kang_bordered_involutive_HFK}). 

\subsection{Extensions of the Torus Algebra}

The bordered invariants for the complement of a knot $K\subset S^3$ contain equivalent data to the knot Floer complex for $K$ over the ring $\cR$. The correspondence passes through the formalism of \emph{extended type D structures}, which we briefly describe now. The torus algebra has an extension, given by
\begin{equation}\label{eq:ext_torus-alg}
\widetilde{\cA} =
  \xymatrix{
\iota_0\,\bullet\ar@/^1pc/[r]^{\rho_1}\ar@/_2pc/[r]_{\rho_3} 
    & \circ\,\iota_1\ar@/^1pc/[l]_{\rho_2}\ar@/_2pc/[l]_{\rho_0}}
    /(\rho_{i+1}\rho_i = \rho_{0}\rho_{1}\rho_{2}\rho_{3}\rho_{0}=0), \hspace{1cm} i \in \Z/4.
\end{equation}
An extended type D structure is a curved module over the extended torus algebra $\tilde{\mathcal{A}}$, i.e. it is a finitely generated vector space $V=V_0 \oplus V_1$ over $\F$, on which the subalgebra of idempotents $\mathcal{I}=\mathrm{Span}(\iota_0,\iota_1)$ acts by projections to $V_0$ and $V_1$, respectively, together with a differential
\[
\delta^1: V\rightarrow \tilde{\mathcal{A}}\otimes_{\mathcal{I}} V
\]
such that the squared differential
\[
V\xrightarrow{\delta^1}\tilde{\mathcal{A}}\otimes V\xrightarrow{\id_{\tilde{\mathcal{A}}}\otimes \delta^1} \tilde{\mathcal{A}}\otimes_{\mathcal{I}}\tilde{\mathcal{A}}\otimes_{\mathcal{I}} V\xrightarrow{\tilde{\mu}\otimes \id_V}\tilde{\mathcal{A}} \otimes_{\mathcal{I}} V
\]
is given by $\partial^2 = \mathbb{U}\otimes \mathrm{id}_V$. Here, $\mathbb{U}$ denotes the central element of $\tilde{\mathcal{A}}$, defined as 
\[
\mathbb{U} =  \rho_{0123}+\rho_{1230}+\rho_{2301}+\rho_{3012} = \sum_{\vert I \vert =4}\rho_I.
\]
We say that an extended type $D$ structure $V$ is an \emph{extension} of a type D structure $V'$ if 
\[
V' \simeq \cA \otimes_{\cAt} V
\]
as differential modules. Conversely, a type D structure $V'$ is \emph{extendable} if there exists such a $V.$

By counting additional rigid holomorphic curves which are allowed to cover the region containing $\rho_0$ at most once, one can define an extended type D structure $\CFDt(Y)$ for bordered 3-manifolds $Y$ with torus boundary. $\CFDt(Y)$ is genuinely an extension of $\CFDh(Y)$, i.e.
\[
\CFDh(Y) \simeq \cA \otimes_{\cAt} \CFDt(Y).
\]
Moreover, the fact that this process defines a well-defined extended type D structure proves that type D structures for bordered 3-manifold with torus boundary are always extendable. Moreover, by \cite[Proposition 4.16]{hanselman2023bordered}, when an extension exists, it is unique up to homotopy equivalence. \\

In \cite[Chapter 11]{LOT_bordered_HF}, \LOT show that there is a correspondence 
\[
\{\text{reduced models for }\CFK_\cR(S^3,K)\} \iff \{\text{reduced models for } \CFDt(S^3\smallsetminus K)\}.
\]
Given a reduced module for $\CFK_\cR(S^3,K)$, they show that the holomorphic curves defining the differential on $\CFK_\cR(S^3,K)$ determine those defining the differential for $\CFDt(S^3\smallsetminus K)$. Conversely, given $\CFDt(S^3\smallsetminus K)$, the knot Floer complex can be recovered by tensoring with the extended type $A$ structure, $\CFAt(T_\infty, \nu)$, for an $S^1$-fiber in the $\infty$-framed solid torus, which has a single generator, $t$ and $\cA_\infty$-operations
\[
m_{2+n}(t, \rho_{3}, \rho_{23}, \hdots, \rho_{23}, \rho_2) = U^n t, \hspace{1cm} m_{2+n}(t, \rho_{1}, \rho_{01}, \hdots, \rho_{01}, \rho_0) = V^n t.
\]
This diagram is shown in Figure \ref{fig:T_inf_hd}. Our convention is that the basepoint $w$ is recorded by the variable $U$ and basepoint $z$ is recorded by the variable $V$.

\begin{figure}[h]
    \centering
    \includegraphics[width = 0.3 \textwidth]{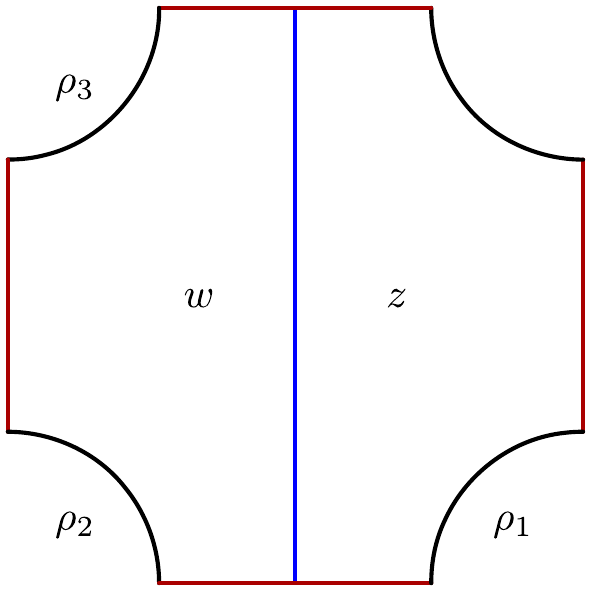}
    \caption{The doubly pointed bordered Heegaard diagram for the longitudinal knot in the solid torus.}
    \label{fig:T_inf_hd}
\end{figure}

The passage from $\CFK_\cR(K)$ to $\CFDh(\KC)$ is described in \cite[Theorem 11.26]{LOT_bordered_HF}. 

We also note that it will actually suffice to work with a slightly smaller algebra than the full extended algebra. The $UV = 0$ truncation of the knot Floer complex corresponds to counting holomorphic curves which can cover either of the basepoints, but not both. On the bordered side, this means we need to allow curves which are asymptotic to the chords $\rho_{01}$ and $\rho_{23}$, but not chords containing $\rho_{30}$. Therefore, in order to recover $\CFK_\cR(S^3,K)$, it suffices to work with the  quotient algebra 
\[
\tilde{\cA}_{0,3} := \tilde{\cA}/\langle \rho_{30}\rangle,
\]
and bordered modules over this algebra. In particular, we will often make use of the following object.

\begin{defn}\label{def:half extended torus module}
    Define $\CFAnuhalf$ to be the type A structure over $\cAt_{0,3}$ generated by a single element $x$ and $\cR$-equivariant $\cA_\infty$ operations 
    \[
    m_{2+n}(t, \rho_{3}, \rho_{23}, \hdots, \rho_{23}, \rho_2) = U^n t, \hspace{1cm} m_{2+n}(t, \rho_{1}, \rho_{01}, \hdots, \rho_{01}, \rho_0) = V^n t.
    \]
    Equivalently, $\CFAnuhalf$ is the type A structure with $\cA_\infty$ operations defined by counting holomorphic curves which may cover either of the two basepoints determining $\nu$, but not both. Curves covering $w$ are weighted by powers of $U$ and those covering $z$ are weighted by powers of $V$.
\end{defn}
\begin{rem}
    We note that $\CFAnuhalf$ has the same $\cA_\infty$ operations as $\CFAt(T_\infty, \nu)$, but we emphasize that the two modules have different underlying algebras. 
\end{rem}

\subsection{Free basepoints}

Working with these extended modules allow us to pass back and forth between $\CFK_\cR$ and $\CFDh$. But, there are some significant draw-backs. We would like to be able to make use of some of the features of bordered Floer theory (such as bimodules and the morphism pairing theorem) which are incompatible with extensions.

Therefore, as in \cite{kang_bordered_involutive_HFK}, we will at times try to extract information about $\CFK_\cR(K)$ from the unextended $\CFDh(\KC)$ by working with bordered modules with additional basepoints; in particular, we will make use of the triply pointed bordered Heegaard diagram $\bX$ in Figure \ref{fig:x_hd}. This diagram represents an $S^1$-fiber, $\nu := S^1 \times \pt$ in the infinity framed solid torus $S^1\times D^2$ with a free basepoint, $p$, on its boundary (by free, we mean that $p$ does not intersect $\nu$.) 

Let $\CFAx$ be the type A structure over $\cA(T^2)$ freely generated over $\cR$ by tuples of intersection points in $\bX$ with operations 
\[
    m_{n+1}(x, a(\rho_{I_1}), \hdots, a(\rho_{I_n})) = \sum_{y \in\mathfrak{G}(\bX)} \sum_{\substack{B\in \pi_2(x, y),\\\ind(B, \Vec{\rho})}} \# \left(\cM^B(x, y, \rho_{I_1}, \hdots, \rho_{I_n})\right) U^{n_w(B)}V^{n_x(B)} y.
\]
Note that these curves are allowed to cover the interior basepoints defining $\nu$, but not the boundary basepoint. By the pairing theorem,
\[
\CFAx\boxtimes \CFDh(\KC) \simeq \CFK_\cR(S^3,K, p),
\]
where $p$ is a free basepoint. 

\begin{figure}[h]
    \centering
    \includegraphics[width = 0.3 \textwidth]{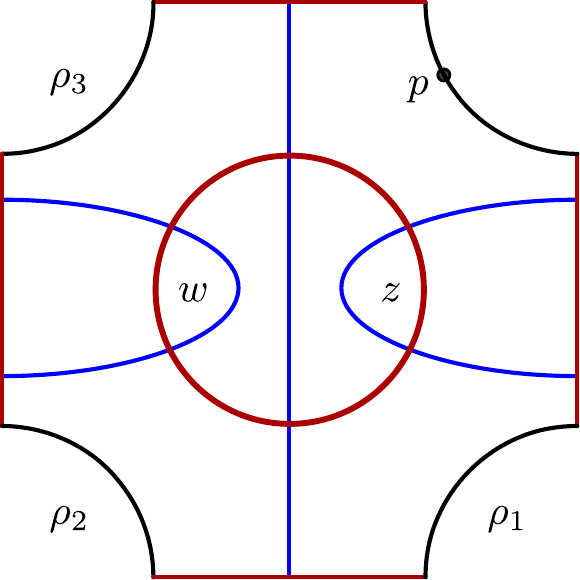}
    \caption{The triply pointed bordered Heegaard diagram for the longitudinal knot in the solid torus with a free basepoint.}
    \label{fig:x_hd}
\end{figure}

Working with the triply pointed diagram $\bX$, as opposed to the usual doubly pointed diagram for $(T_\infty,\nu)$, offers some technical advantages. As noted above, $\CFAh_\cR(\bX)\boxtimes \CFDh(\KC) \simeq \CFK_\cR(S^3,K,p)$, which according to \cite[Lemma 7.1]{zemke_linkcob}, is homotopy equivalent to $\CFK_\cR(S^3,K)^{\oplus 2}$. Hence, by tensoring with $\CFAh_\cR(\bX)$, we can study $\CFK_\cR(S^3,K)$ without having to work over the extended (or half extended) torus algebra. The extra basepoint is a small price to pay. 

Secondly, the diagram $\bX$ behaves better than $(T_\infty, \nu)$ when glued to the Auroux-Zarev piece. Any singly pointed alpha-bordered Heegaard diagram $\cH$ has the property that 
\[
    \overline{\cH}^\beta \cup \overline{\AZ} \sim \cH^\alpha.
\]
This fact is a key ingredient in the proof of the morphism version of the pairing theorem as well as in the construction of the bordered involution. This property does not hold for doubly pointed bordered Heegaard diagrams, and in particular, the standard diagram for $(T_\infty, \nu)$ is not related to the diagram obtained by gluing the Auroux-Zarev piece to its conjugate (the bordered modules are not even homotopy equivalent). However, by \cite[Lemma 3.1]{kang_bordered_involutive_HFK}, we do have that 
\[
\overline{\bX} \cup \overline{\AZ} \sim \bX,
\]
up to swapping the basepoints, which makes working with $\bX$ more flexible. Crucially, this fact will allow us to utilize \cite{cohen2023composition}, and realize the composition map 
\begin{align*}
    \End^\cA(\CFDh(\KC)) \otimes \Mor^\cA(\CFDh(\KC), \CFDx) \ra \Mor^\cA(\CFDh(\KC), \CFDx)
\end{align*}
as a cobordism map. 

In order to bridge the two perspectives (extensions vs. extra basepoints), we introduce a bimodule version of $\bX$. A Heegaard diagram for the complement of $\nu$ in $S^1\times D^2$ can be obtained from $\bX$ in the standard way: we stabilize $\bX$ with a punctured tube whose feet are attached near $w$ and $z$, add an extra $\beta$ circle and a new pair of $\alpha$ arcs to obtain a diagram which represents the complement of $\nu$ in $S^1 \times D^2$. Label the Reeb chords on the new boundary by $\{\sigma_i\}_{0\le i \le 3}$ and add a basepoint to the region whose boundary contains $\sigma_0$. We will denote this diagram (see Figure \ref{fig:y_arc}) by $\bY$. 

\begin{figure}
    \centering
    \includegraphics[width = 0.65 \textwidth]{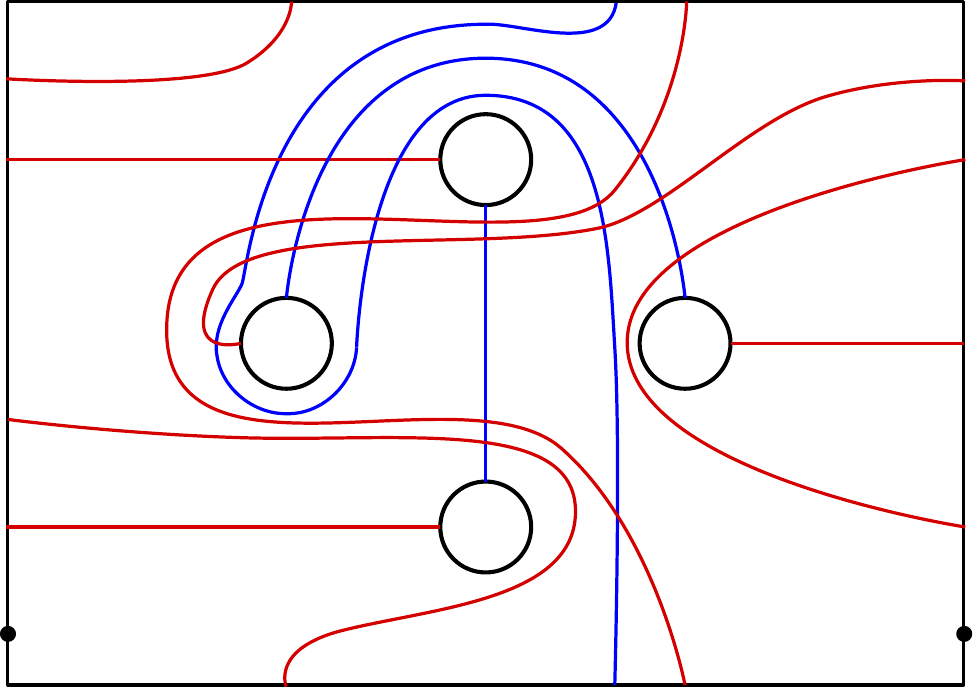}
    \caption{The Heegaard diagram $\bY$.}
    \label{fig:y_arc}
\end{figure}

By construction, $\bX = \bY \cup (T^\infty, \nu)$. We would like associate to $\bY$ a type $DA$ bimodule which will pair with $\CFAnuhalf$ to recover $\CFAh_\cR(\bX).$ Naively, one might define a bimodule $\CFDAh(\bY)$ just as in \cite{LOT_bimodules}, as the type DA bimodule generated by tuples of intersection points with type DA structure maps 
\[
\delta^1_{n+1}(x, a(\sigma_1), \hdots, a(\sigma_n)) 
:=\sum_{y \in \mathfrak{G}(\bY)} \sum_{\substack{B \in \pi_2(x, y)\\
                              \ind(B;\Vec{\rho};\sigma_1,\hdots \sigma_n) = 1}}
          \# (\cM^B(x, y; \Vec{\rho}; \sigma_1, \hdots, \sigma_n))a(-\rho_1), \hdots, a(-\rho_m)\otimes  y,
\]
given by counting rigid holomorphic curves with prescribed asymptotics. However, it is not immediate that these maps give rise to well-defined bimodules, since the diagram $\bY$ is not an arced Heegaard diagram; in fact, there is no path in $\bY$ connecting the basepoints on the two boundaries so we cannot appeal to \cite{LOT_bimodules} to verify that the maps $\delta_n$ satisfy the compatibility conditions for a type DA bimodule. 

Rather than carefully study the moduli spaces appearing in the maps $\delta_n^1$, we employ the following devious trick. The curves defining the structure maps avoid the basepoints on the two boundaries. So, we may consider the diagram $\bY^+$ obtained by attaching a 2-dimensional 1-handle to $\bY$ with feet attached near the two basepoints, in the regions containing $\rho_{0}$ and $\sigma_0$. $\bY^+$ can be promoted to an arced diagram by connecting the two basepoints by an arc that traverses the annulus. In this case, there is a well-defined bimodule $\CFDAh(\bY^+)$ with type DA-module structure maps 
\[
    (\delta^1_{n+1})^+: \CFDAh(\bY^+) \otimes \cA^{\otimes n} \ra \cA \otimes \CFDAh(\bY^+)
\]
given by \cite{LOT_bimodules}. The curves appearing in the structure maps for $\CFDAh(\bY^+)$ avoid the basepoint arc, and are therefore supported in the region of the diagram which is identical to $\bY.$ Therefore, the moduli spaces defining the maps $\delta_n^1$ are identical to those defining the maps $(\delta^1_{n+1})^+$; it therefore follows that the bimodule $\CFDAh(\bY)$ is well defined. Equivalently, we can simply \emph{define} $\CFDAh(\bY)$ to be bimodule with structure maps $(\delta^1_{n+1})^+$. It follows that this bimodule can be paired with other bordered modules to compute the Floer homology after gluing. In summary, we have:

\begin{lem}
    The bimodule $(\CFDAh(\bY), \delta_n^1)$ is well-defined. Moreover, the various pairing theorems for bimodules also hold for $\CFDAh(\bY)$. 
\end{lem}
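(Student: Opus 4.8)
The plan is to exploit the fact that, while $\bY$ itself is not an arced bordered Heegaard diagram, the holomorphic curves that enter the definition of its structure maps never see the basepoints on the two boundary components, so we can pass to an enlarged diagram that \emph{is} arced without changing any moduli space. Concretely, first I would describe carefully the stabilization $\bY^+$: attach a two-dimensional $1$-handle to $\bY$ with feet in the two regions containing $\rho_0$ and $\sigma_0$ respectively, and then run a framed arc $\mathbf{z}$ through the resulting annulus connecting the basepoint $z_w$ near $\rho_0$ to the basepoint near $\sigma_0$. This makes $\bY^+$ a (strongly) arced bordered Heegaard diagram with two boundary components, and $\bY^+$ manifestly represents the same strongly bordered $3$-manifold (the complement of $\nu$ in $S^1 \times D^2$, with the appropriate framed tree): the extra $1$-handle together with the arc through it is an elementary stabilization in the sense of \cite{LOT_bimodules}. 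By the results of \cite{LOT_bimodules} there is then a well-defined type $DA$ bimodule $\CFDAh(\bY^+)$, with structure maps $(\delta^1_{n+1})^+$ satisfying the type $DA$ relations, and to which all the pairing theorems of \cite{LOT_bimodules} apply.

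Next I would identify the moduli spaces. A domain $B \in \pi_2(x,y)$ that contributes to $(\delta^1_{n+1})^+$ has $n_{\mathbf{z}}(B) = 0$, i.e. multiplicity zero along the basepoint arc; since this arc traverses the annulus created by the new $1$-handle, any such $B$ has multiplicity zero on the entire annulus, so $B$ is supported in the subsurface of $\bY^+$ that is literally a copy of $\bY$ (with the feet of the $1$-handle removed, which does not affect domains of multiplicity zero there). Conversely every domain in $\bY$ avoiding the two boundary basepoints extends uniquely to such a $B$ in $\bY^+$. The same correspondence holds for the decorated moduli spaces $\cM^B(x,y;\vec\rho;\sigma_1,\dots,\sigma_n)$, since the almost complex structures and the asymptotic data are local to the part of the diagram away from the $1$-handle. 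Hence the moduli spaces counted by $(\delta^1_{n+1})^+$ agree, term by term and with the same $U$- and $V$-weights, with those appearing in the formula defining $\delta^1_{n+1}$ on $\bY$. Therefore the naively-defined maps $\delta^1_{n+1}$ on $(\CFDAh(\bY), \delta^1_n)$ coincide with $(\delta^1_{n+1})^+$, so they satisfy the type $DA$ structure relations and $\CFDAh(\bY)$ is well-defined; equivalently, we simply \emph{define} $\CFDAh(\bY) := \CFDAh(\bY^+)$ and observe that its structure maps are given by the stated curve counts.

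For the pairing statement, the point is again that gluing $\bY$ to another (singly or doubly pointed) bordered diagram $\cH$ along the $\sigma$-boundary, or to $(T_\infty,\nu)$, takes place in the region of $\bY^+$ away from the $1$-handle and the arc $\mathbf{z}$, so $\bY^+ \cup \cH$ is an ambient stabilization of $\bY \cup \cH$ and represents the same $3$-manifold (resp. knot complement). Applying the pairing theorem of \cite{LOT_bimodules} to $\bY^+$, together with invariance of the glued-up bordered invariants under ambient stabilization, yields $\CFAh(\cH) \boxtimes \CFDAh(\bY) \simeq \CFh(\bY \cup \cH)$ and in particular $\CFDAh(\bY) \boxtimes \CFDh(T_\infty,\nu) \simeq \CFDh_\cR(\bX)$; first-order naturality from \Cref{thm: naturality for multimodules} guarantees these identifications are canonical up to homotopy.

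The main obstacle — really the only subtle point — is verifying cleanly that multiplicity zero along the basepoint arc forces the domain to be disjoint from the entire annulus created by the $1$-handle, and hence that no ``new'' holomorphic curves (e.g. ones wrapping around the annulus, or degenerating into the $1$-handle region) are introduced or lost in passing between $\bY$ and $\bY^+$; this requires being slightly careful about the index and admissibility bookkeeping near the stabilization region, but it is exactly the standard argument for why ambient stabilization does not change the holomorphic curve counts, as in \cite{LOT_bordered_HF, LOT_bimodules}.
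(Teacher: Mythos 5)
Your proposal takes essentially the same route as the paper: stabilize $\bY$ to an arced diagram $\bY^+$ by attaching a tube with feet in the regions containing $\rho_0$ and $\sigma_0$ and running the basepoint arc $\mathbf{z}$ through it, invoke \cite{LOT_bimodules} for $\bY^+$, and then observe that the relevant moduli spaces are the same for $\bY$ and $\bY^+$ because any domain with vanishing multiplicity along $\mathbf{z}$ is supported away from the tube, so the naively defined $\delta^1_n$ coincide with $(\delta^1_n)^+$ and inherit both the structure relations and the pairing theorems. The one thing you assert that the paper deliberately avoids asserting is that $\bY^+$ ``manifestly represents the same strongly bordered 3-manifold'' as $\bY$; since $\bY$ is not arced, it does not carry the framed-tree data needed to specify a strongly bordered 3-manifold, so the claim is a bit loose, and in any case the argument never needs it --- only the identification of moduli spaces and structure maps, which you also give, is used.
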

In particular, it follows that
    \[
        \CFAh_{\F[U, V]/V}(T_\infty, \nu) \boxtimes \CFDAh(\bY) \simeq \CFAh_{\F[U, V]/V}(\bX),
    \]
since the $w$ basepoint is contained in the region containing the chords $\rho_2$ and $\rho_3$. By moving the basepoint from the region intersecting the Reeb chord $\sigma_0$ to the region intersecting $\sigma_{3}$ and working over the algebra $\cA'(T^2) := \cAt/\langle\sigma_{3}\rangle$, we have that   
    \[
        \CFAh_{\F[U, V]/U}(T_\infty, \nu) \boxtimes \CFDAh(\bY) \simeq \CFAh_{\F[U, V]/U}(\bX),
    \]
since the $z$ basepoints contained in the region containing the chords $\rho_0$ and $\rho_1$. By the same argument as above, attaching a tube with one foot connecting the two basepoints allows us to define type DA structure maps $(\delta^1_n)'$ which produce a type DA bimodule over $(\cA'(T^2), \cA(T^2)).$ By combining the contributions of these two structure maps, we may define a \emph{half-extended} bimodule associated to $\bY$, i.e. a $(\cA(T^2)_{0,3},\widetilde{\cA}(T^2))$-bimodule. Working over this quotient allows us to encode information about holomorphic curves which cover either the $\sigma_0$ region or the $\sigma_3$ region and not both; hence, when paired with $\CFAnuhalf$, we can record those curves which cover at most one of the two basepoints.

\begin{rem}
    We note that it is unclear how to define a bimodule over the fully extended torus algebra. For this reason, we content ourselves with working over the smaller algebra $\cA(T^2)_{0,3}$. As noted above, this is enough to recover $\CFK_\cR(S^3, K)$.
\end{rem}

\begin{lem}\label{lem:half-extended pairing}
    The diagram $\bY$ gives rise to a well-defined type-DA $(\cA(T^2),\widetilde{\cA}(T^2)_{0,3})$-bimodule, which we denote $\Yd$. In particular, by the pairing theorem 
    \[
    \CFAnuhalf \boxtimes \CFDAyh \simeq \CFAh_\cR(\bX).
    \]
    where $\CFAnuhalf$ is the half-extended module from Definition \ref{def:half extended torus module}.
\end{lem}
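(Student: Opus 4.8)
The plan is to combine the two ``single-quotient'' bimodule structures already constructed—the one over $(\cA'(T^2), \cA(T^2))$ recording $U$-powers and the one over $(\cAt/\langle\sigma_3\rangle, \cA(T^2))$ recording $V$-powers—into a single bimodule over $\cAt_{0,3}(T^2) = \cAt(T^2)/\langle\sigma_{30}\rangle$, and then to verify the pairing statement by an argument parallel to the one used above for $\CFDAh(\bY)$. First I would recall that $\cAt_{0,3}$ is spanned over $\mathcal{I}$ by the chords $\sigma_i$ and the longer chords $\sigma_{01}, \sigma_{12}, \sigma_{23}$ (together with $\sigma_{012}, \sigma_{123}$), i.e. precisely the chords appearing in domains in $\bY$ that cover at most one of the two boundary basepoints (the one in the $\sigma_0$ region and the one in the $\sigma_3$ region). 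I would then \emph{define} $\CFDAyh$ to be the type DA bimodule freely generated over $\cR$ by $\mathfrak{G}(\bY)$ with structure maps obtained by counting rigid holomorphic curves whose output chords on the half-extended boundary are taken in $\cAt_{0,3}$, i.e. which are allowed to cross the $\sigma_0$ region or the $\sigma_3$ region but not both.

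To see that these maps define an honest type DA bimodule, I would use the same ``devious trick'' as for $\CFDAh(\bY)$ twice, once for each quotient, noting that the relevant moduli spaces are supported away from the basepoint arc in each case. Concretely: attaching a tube with one foot in the $\sigma_0$ region (as in the construction of the $(\cA'(T^2),\cA(T^2))$-bimodule) yields an arced diagram computing the ``$U$-part'' of the structure maps, and attaching a tube with one foot in the $\sigma_3$ region yields an arced diagram computing the ``$V$-part''; since the $\sigma_0$-covering curves and the $\sigma_3$-covering curves are honestly disjoint families (no rigid curve covers both regions, as that would force the domain to cover both basepoints), the total structure map is just the sum of the two contributions, and the $A_\infty$/DA compatibility relations for $\CFDAyh$ decompose into the corresponding relations already established for the two single-quotient bimodules. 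Equivalently, one observes that $\CFDAyh$ reduces to $\CFDAh(\bY^+)$ after setting either $U = 0$ or $V = 0$, so the structure relations, which hold mod $U$ and mod $V$ separately and involve no monomial divisible by both $U$ and $V$ (such a monomial would correspond to a domain covering both basepoints, impossible in $\bY$), hold over $\cR = \F[U,V]/(UV)$.

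For the pairing statement, I would tensor $\CFDAyh$ with $\CFD_\cR(T_\infty,\nu)$ and compare with $\CFDx = \CFDh_\cR(\bX)$ via the decomposition $\bX = \bY \cup (T_\infty, \nu)$. The pairing theorem over each quotient has already been recorded above: $\CFAh_{\F[U,V]/V}(T_\infty,\nu)\boxtimes\CFDAh(\bY)\simeq\CFAh_{\F[U,V]/V}(\bX)$ and $\CFAh_{\F[U,V]/U}(T_\infty,\nu)\boxtimes\CFDAh(\bY)\simeq\CFAh_{\F[U,V]/U}(\bX)$. Box-tensoring $\CFD_\cR(T_\infty,\nu)$ with $\CFDAyh$ produces a curved module whose differential counts matched pairs of curves, one in $\bY$ and one in $(T_\infty,\nu)$, recording $U$-powers from curves covering $w$ and $V$-powers from curves covering $z$, with no term divisible by $UV$—exactly the curves defining $\CFDx$. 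The main obstacle I anticipate is not the holomorphic-curve bookkeeping (which is dictated by the single-quotient pairing theorems) but making precise that the $\boxtimes$ construction is well-defined over the \emph{half-extended} algebra $\cAt_{0,3}$—that is, checking that the relevant sums are finite and that the curved relation $\partial^2 = \mathbb{U}\otimes\mathrm{id}$ (or its truncation over $\cAt_{0,3}$) is respected; this requires knowing that the central element $\mathbb{U}$ maps to $0$ in $\cAt_{0,3}$, so that the tensor product is an ordinary (uncurved) module, matching $\CFDx$ on the nose. Once this is in place, the homotopy equivalence follows from the usual ``tensoring commutes with gluing'' argument of \cite{LOT_bimodules}, applied to the two arced completions $\bY^+$ simultaneously.
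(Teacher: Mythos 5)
Your approach is essentially the paper's: combine the two single-quotient bimodule structures (basepoint at $\sigma_0$ versus at $\sigma_3$) into one half-extended bimodule over $\widetilde{\cA}(T^2)_{0,3}$ via the tube-attachment trick, then deduce the pairing by the same argument as the ordinary bimodule pairing theorem. The paper itself omits the proof of the pairing statement entirely, citing \cite[Theorem 11]{LOT_bimodules}, so your reconstruction fills in exactly what the authors intended.

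One correction is needed in your final paragraph. You assert that the central element $\mathbb{U}$ maps to $0$ in $\widetilde{\cA}_{0,3} = \widetilde{\cA}/\langle\rho_{30}\rangle$, and lean on this to conclude that $\CFD_\cR(T_\infty,\nu)$ and the box tensor are uncurved. This is not true: $\mathbb{U} = \rho_{0123} + \rho_{1230} + \rho_{2301} + \rho_{3012}$, and while $\rho_{1230},\rho_{2301},\rho_{3012}$ all factor through $\rho_{30}$ and vanish, $\rho_{0123}$ does not, so $\mathbb{U} \equiv \rho_{0123} \ne 0$ in $\widetilde{\cA}_{0,3}$. What saves the argument is an idempotent calculation: the unique generator $t$ of $\CFD_\cR(T_\infty,\nu)$ lies in idempotent $\iota_0$, and the length-four chord with both idempotents equal to $\iota_0$ is $\rho_{1230}$, which \emph{is} killed by $\rho_{30}$; the surviving chord $\rho_{0123}$ has idempotent $\iota_1$ on both sides and so never multiplies against $t$. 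Thus $\mathbb{U}\iota_0 = 0$ in $\widetilde{\cA}_{0,3}$, which is the statement you actually need, and your conclusion stands. (A small side note: $\cA'(T^2)$ and $\widetilde{\cA}/\langle\sigma_3\rangle$ are the \emph{same} algebra in the paper's notation, not two different quotients, and the $\sigma_0$-basepoint version is the one recording $U$, not $\cA'$; this does not affect the structure of your argument.)
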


The proof of \Cref{lem:half-extended pairing} is the same as the ordinary pairing formula for a type D module and a type DA bimodule \cite[Theorem 11]{LOT_bimodules}, and thus is omitted.

\begin{rem}
    Since it will be necessary to work with this bimodule over the usual torus algebra and also over the half-extended torus algebra we emphasize our notation: $\CFDAy$ will be used to denote the $(\cA, \cA)$-bimodule and $\CFDAyh$ to denote the half-extended $(\widetilde{\cA}_{0,3},\cA)$-bimodule. Since $\CFDAyh$ is constructed to capture information regarding curves which may cover either of the two basepoints in $(T_\infty, \nu)$, we think of $\CFDAyh$ as having two basepoints, or a knot, in its left hand boundary. 
\end{rem}

The following sections will necessitate a bit of a juggling act with the various extensions of the usual bordered algebras and modules we have defined in this section. Aiming to help improve the reader's notation-eye-coordination, the various algebraic objects are collected in Table \ref{tab:modules tables}. 

\begin{table}[h]
    \centering
    \begin{tabular}{|l|l|l|} \hline 
        (Bi)Module & Ground Ring & Basepoints \\ \hline & &\\[-1em]
         $\CFAh(T_\infty, \nu)$ & $\cA(T^2)$ & Two boundary basepoints  \\ [0.5ex]
         $\CFAt(T_\infty, \nu)$ & $\widetilde{\cA}(T^2)\otimes \cR$ & Two boundary basepoints  \\ [0.5ex]
         $\CFAh_\cR(\bX)$ & $\cA(T^2)\otimes \cR$ & One boundary basepoint, two internal  \\ [0.5ex]
         $\CFAnuhalf$ & $\widetilde{\cA}(T^2)_{0,3}\otimes \cR$ & Two boundary basepoints  \\ [0.5ex]
         $\CFDAy$ & $(\cA(T^2),\cA(T^2))$ & One basepoint on each boundary. \\ [0.5ex]
         $\CFDAyh$ & $(\widetilde{\cA}(T^2)_{0,3},\cA(T^2))$ & Two basepoints on type A boundary, one on the type D boundary \\ [0.5ex] \hline
    \end{tabular}
    \caption{A summary of some of bordered modules and bimodules that will be used in this paper.}
    \label{tab:modules tables}
\end{table}

\subsection{Splittings}\label{subsection:splittings}

Before proceeding, we make a few remarks about how the \LOT correspondence behaves with respects to splittings.

\begin{defn}\label{def:extendable summand}
    Let $M$ be an extendable type D structure. We say that a summand $N$ of $M$ is an \emph{extendable direct summand} if $N$ is itself extendable, and there is another extendable type D structure $N'$ such that  $M\simeq N\oplus N^\prime$.
\end{defn}

Let us introduce some notation. If $C$ is a summand of $\CFK_\cR(K,p)$, we will write $\cM_C$ for the corresponding summand of $\CFDh(\KC,p)$ obtained by applying the \LOT construction. Conversely, if $M$ is an extendable summand of $\CFDh(\KC,p)$ we will write $\cC_M$ for the summand of $\CFK_\cR(K,p)$ given by choosing an extension $M_{0,3}$ of $M$ over $\cA_{0,3}$ and tensoring with $\CFAnuhalf.$

In comparing $\CFK_\cR$ and $\CFDt$, we will make repeated use of the classification of these two classes of objects due to \cite{hanselman2023bordered,popovic2023link}. The following is a direct consequence of their work, though we collect the relevant details for completeness. In particular, this correspondence is carefully discussed in \cite[Section 6]{popovic2023link}.

\begin{lem}[\cite{hanselman2023bordered,popovic2023link}] \label{lem:type D vs R cx summands}
    Under the \LOT correspondence, indecomposable summands of $\CFDt(\KC)$ correspond precisely to indecomposable summands of $\CFK_\cR(S^3, K)$.
\end{lem}
\begin{proof}
    Indecomposable summands of $\cR$-complexes are either local systems or snake complexes. Applying the \LOT correspondence such a summand $C$ yields a type D structure $\cM_C$. According to \cite[Section 3]{hanselman2023bordered}, $\cM_C$ can be represented by immersed train track. If $C$ is a snake complex, the corresponding immersed train track is already an immersed curve with a single connected component, and therefore specifies an indecomposable, extendable summand $\widetilde{\cM_C}$. See \Cref{fig:popovic_correspondence2}. 
    
    If $C$ is a local system, it is specified up to isomorphism by a triple $(s, w, [A])$, where $s$ specifies the \emph{shape} of of $C$, $w$ is a natural number, and $[A]$ is a conjugacy class of $A \in \mathrm{GL}_w(\F)$. $C$ determines an immersed curve with local system as follow: the shape of $C$ specifies an immersed curve in the solid torus and $[A]$ determines the local system. In \cite{hanselman2023bordered}, they note that local systems can be interpreted as ``crossover arrows'', between parallel strands. Hence, the associated type $D$ structure is extendable and consists of a single component so is indecomposable. See \Cref{fig:popovic_correspondence} for an example.

    The reverse direction is even simpler. Given an extendable indecomposable summand $M$, we can represented it by an immersed multi-curve with local system. Replacing the puncture with a pair of basepoints and counting bigons weighted by powers of $U$ and $V$ recovers either a snake complex or a local system. The resulting complex is exactly $\cC_M.$ 
\end{proof}
\begin{rem} \label{rem: LOT are extendable}
    In particular, any summand $M$ of $\CFDh(\KC)$ which came from a summand of $\CFK_\cR(S^3,K)$ is extendable. This can be generalized even further: it follows from \cite[Theorem 4.2]{popovic2023link} that $\mathcal{M}_C$ is extendable for any finitely generated free chain complex $C$ over $\cR$. Furthermore, the non-equivariant version of \Cref{thm: induced splittings} is clear.
\end{rem}

\begin{figure}
\def\svgwidth{\linewidth}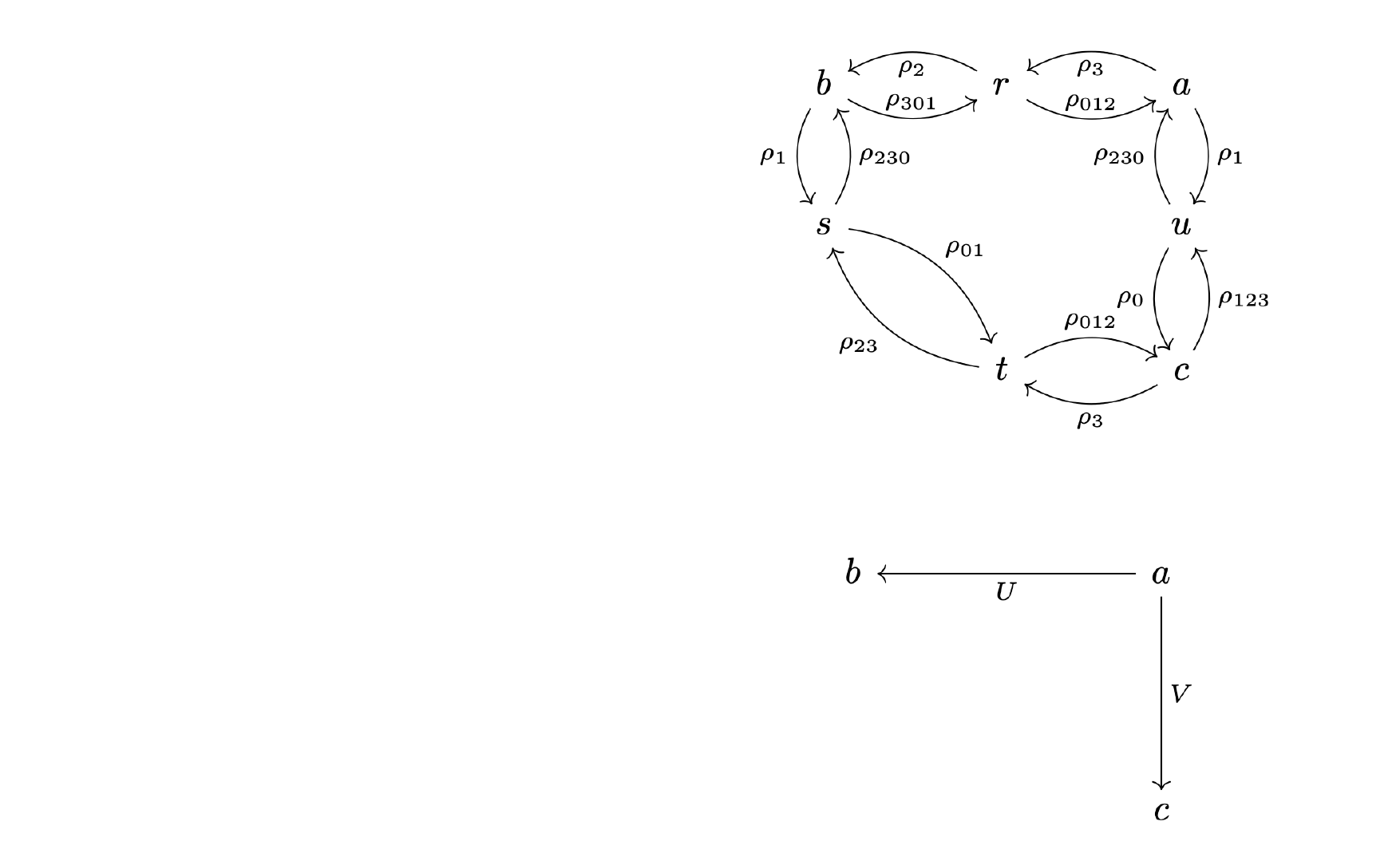
\caption{An irreducible snake complex, represented as a decorated immersed curve, an extended type D structure, and an $\cR$-complex.}\label{fig:popovic_correspondence2}
\end{figure}

\begin{figure}
\def\svgwidth{\linewidth}%% Creator: Inkscape 1.4 (e7c3feb1, 2024-10-09), www.inkscape.org
%% PDF/EPS/PS + LaTeX output extension by Johan Engelen, 2010
%% Accompanies image file '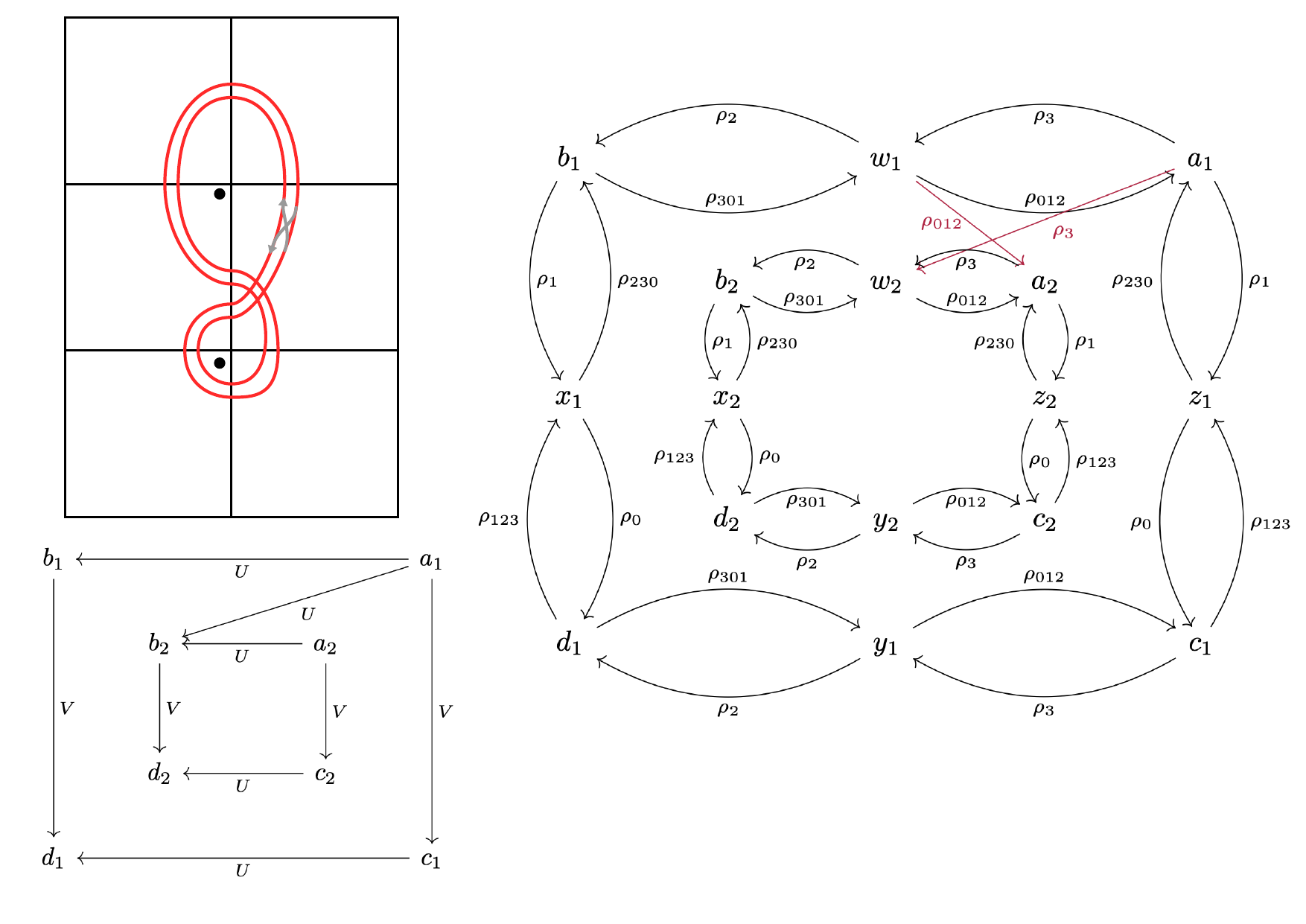' (pdf, eps, ps)
%%
%% To include the image in your LaTeX document, write
%%   \input{<filename>.pdf_tex}
%%  instead of
%%   \includegraphics{<filename>.pdf}
%% To scale the image, write
%%   \def\svgwidth{<desired width>}
%%   \input{<filename>.pdf_tex}
%%  instead of
%%   \includegraphics[width=<desired width>]{<filename>.pdf}
%%
%% Images with a different path to the parent latex file can
%% be accessed with the `import' package (which may need to be
%% installed) using
%%   \usepackage{import}
%% in the preamble, and then including the image with
%%   \import{<path to file>}{<filename>.pdf_tex}
%% Alternatively, one can specify
%%   \graphicspath{{<path to file>/}}
%% 
%% For more information, please see info/svg-inkscape on CTAN:
%%   http://tug.ctan.org/tex-archive/info/svg-inkscape
%%
\begingroup%
  \makeatletter%
  \providecommand\color[2][]{%
    \errmessage{(Inkscape) Color is used for the text in Inkscape, but the package 'color.sty' is not loaded}%
    \renewcommand\color[2][]{}%
  }%
  \providecommand\transparent[1]{%
    \errmessage{(Inkscape) Transparency is used (non-zero) for the text in Inkscape, but the package 'transparent.sty' is not loaded}%
    \renewcommand\transparent[1]{}%
  }%
  \providecommand\rotatebox[2]{#2}%
  \newcommand*\fsize{\dimexpr\f@size pt\relax}%
  \newcommand*\lineheight[1]{\fontsize{\fsize}{#1\fsize}\selectfont}%
  \ifx\svgwidth\undefined%
    \setlength{\unitlength}{841.88976378bp}%
    \ifx\svgscale\undefined%
      \relax%
    \else%
      \setlength{\unitlength}{\unitlength * \real{\svgscale}}%
    \fi%
  \else%
    \setlength{\unitlength}{\svgwidth}%
  \fi%
  \global\let\svgwidth\undefined%
  \global\let\svgscale\undefined%
  \makeatother%
  \begin{picture}(1,0.70707071)%
    \lineheight{1}%
    \setlength\tabcolsep{0pt}%
    \put(0,0){\includegraphics[width=\unitlength,page=1]{popovic_correspondence.pdf}}%
    \put(0.18140654,0.39028697){\color[rgb]{0,0,0}\makebox(0,0)[lt]{\smash{\begin{tabular}[t]{l}{\small$c$}\end{tabular}}}}%
    \put(0.18111765,0.64641778){\color[rgb]{0,0,0}\makebox(0,0)[lt]{\smash{\begin{tabular}[t]{l}{\small$b$}\end{tabular}}}}%
    \put(0.14500408,0.47576799){\color[rgb]{0,0,0}\makebox(0,0)[lt]{\smash{\begin{tabular}[t]{l}{\small$a$}\end{tabular}}}}%
    \put(0.17966966,0.50627555){\color[rgb]{0,0,0}\makebox(0,0)[lt]{\smash{\begin{tabular}[t]{l}{\small$d$}\end{tabular}}}}%
    \put(0.12195333,0.44599218){\color[rgb]{0,0,0}\makebox(0,0)[lt]{\smash{\begin{tabular}[t]{l}{\small$y$}\end{tabular}}}}%
    \put(0.21656591,0.44162622){\color[rgb]{0,0,0}\makebox(0,0)[lt]{\smash{\begin{tabular}[t]{l}{\small$z$}\end{tabular}}}}%
    \put(0.23115604,0.56892683){\color[rgb]{0,0,0}\makebox(0,0)[lt]{\smash{\begin{tabular}[t]{l}{\small$w$}\end{tabular}}}}%
    \put(0.110416,0.56922743){\color[rgb]{0,0,0}\makebox(0,0)[lt]{\smash{\begin{tabular}[t]{l}{\small$x$}\end{tabular}}}}%
  \end{picture}%
\endgroup%

\caption{An irreducible local system, represented as a decorated immersed curve, an extended type D structure, and an $\cR$-complex.}\label{fig:popovic_correspondence}
\end{figure}

The free basepoint plays no essential role. We can certainly apply the algebraic construction of \cite[Theorem 11.26]{LOT_bordered_HF} to $\CFK_\cR(K, p)$ to produce a corresponding type $D$ structure. Moreover, the proof \cite[Theorem 11.26]{LOT_bordered_HF} still holds to imply that the resulting type D structure is a model for $\CFDh(\KC, p)$. Though, for simplicity, we deduce the desired result algebraically as follows.

\begin{lem}\label{thm: LOT}
    Let $\widetilde{M}$ be a summand of the extended type $D$ structure $\CFDt(\KC, p)$ for a knot complement with a free interior basepoint, $p$. Denote by $\widetilde{M}_{0,3}$ truncated module over the algebra $\widetilde{\cA}_{0,3}$ and by $M$ the module over the algebra over $\cA$ coefficients. Let 
    \[
    \cC_M := CFA_\cR(T_\infty,\nu)\boxtimes \widetilde{M}_{0,3}.
    \]
    Then $M$ is homotopy equivalent to the type D structure given by the \LOT correspondence.
\end{lem}
\begin{comment}
\begin{thm}[\cite{LOT_bordered_HF}]\label{thm: LOT}
    Let $\widetilde{M}$ be a summand of the extended type $D$ structure $\CFDt(\KC, p)$ for a knot complement with a free interior basepoint, $p$, over the algebra $\tilde{\cA}_{0,3}$. Let $\cC_M$ be the $\cR$-module
    \[
    \cC_M :=\CFAt(T_\infty, \nu)\boxtimes \widetilde{M}.
    \]
    Suppose that the type D structure $M$ obtained by restricting $\widetilde{M}$ to $\cA$ is extendable. Then $M$ is homotopy equivalent to the the type $D$ structure $\cM_{\cC_M}$ given by the \LOT correspondence.
\end{thm}
\end{comment}
\begin{proof}
    Given a $\cR$-complex $C$, denote by $\cM_C$ the type D structure associated to $C$ under the \LOT correspondence. In this notation, the claim is that $M \simeq \cM_{C_M}$, i.e. that the \LOT construction is a left inverse to the assignment 
    \[
    M \mapsto \CFA_\cR(T_\infty, \nu)\boxtimes \widetilde{M}_{0,3}.
    \]
    
    By assumption, the type D structure $M$ is an extendable direct summand of $\widehat{CFD}(S^3 \smallsetminus K,p)$. By the work of Zemke \cite{zemke_linkcob}, adding an interior basepoint amounts to performing a free stabilization. It follows that 
    \[
    \widehat{CFD}(S^3 \smallsetminus K,p)\simeq \widehat{CFD}(S^3 \smallsetminus K)^{\oplus 2}.
    \]
    Furthermore, we may assume that 
    \[
    M \simeq \mathcal{M}_{C_1 \oplus C_2} \simeq \mathcal{M}_{C_1}\oplus \mathcal{M}_{C_2}
    \]
    for some (possibly trivial) direct summands $C_1$ and $C_2$ of $CFK_{\mathcal{R}}(S^3,K)$. This follows from, say, considering the immersed curve formulation of bordered Floer homology \cite{hanselman2023bordered}: direct sums of extendable summands correspond to unions of connected components in the immersed curve formulation of extendable type D structures \cite[Theorem 1.5]{hanselman2023bordered} the immersed curve representative of $\widehat{CFA}(S^3 \smallsetminus K)^{\oplus 2}$ is simply given by two copies of the immersed curve representative of $\widehat{CFA}(S^3 \smallsetminus K)$.
    
    According to \Cref{lem:type D vs R cx summands} (and \Cref{rem: LOT are extendable}), for any direct summand $C$ of a knot Floer chain complex $CFK_\cR(S^3,K)$ of any knot $K$, $\cM_C$ admits an extension $\widetilde{\cM}_C$, whose truncation $\widetilde{\cM}^{0,3}_C$ satisfies
    \[
    CFA_\cR(T_\infty,\nu)\boxtimes \widetilde{\cM}^{0,3}_C \simeq C.
    \]
    Then it follows from the unique extension property \cite[Proposition 4.16]{hanselman2023bordered} of extendable type D structures that
    \[
    \widetilde{M}\simeq \widetilde{\mathcal{M}}_{C_1}\oplus \widetilde{\mathcal{M}}_{C_2},
    \]
    so tensoring with $CFA_\mathcal{R}(T_\infty,\nu)$ yields 
    \[
    CFA_\mathcal{R}(T_\infty,\nu)\boxtimes \widetilde{M}_{0,3}\simeq C_1 \oplus C_2,
    \]
    implying that $\cC_M \simeq C_1\oplus C_2$. Therefore,
    \[
    M\simeq \mathcal{M}_{C_1 \oplus C_2}\simeq \mathcal{M}_{\cC_M},
    \]
    as desired.
\end{proof}

In summary, splittings of $\CFK_\cR(S^3,K)$ induce splittings of $\CFDh(\KC)$ and splittings of $\CFDt(\KC)$ induce splittings of $\CFK_\cR(S^3,K).$ However, it is not clear a priori that a splitting of $\CFDh(\KC)$ induces a splitting of $\CFK_\cR(S^3,K)$. We will address this issue in \Cref{lem:summand-extendable} by showing that splittings of $\CFDh(\KC)$ induce splittings of $\CFDt(\KC)$.

\subsection{Algebraic Lemmas}\label{subsection: algebraic lemmas}

Here, we include some basic algebraic lemmas which will be useful to us later.

\begin{defn}
    We say that a morphism $p$, between objects of the same type, i.e. chain complexes, type D or type A structures, etc. is a \emph{projection} if $p^2 = p$ and a \emph{homotopy projection} if $p^2$ is homotopic to $p$.
\end{defn}

The homotopy type of a projection determines the homotopy equivalence type of the corresponding summand. 

\begin{lem}\label{lem: homotopic projections have equivalent images}
    Let $C$ be a finitely generated chain complex over $\mathcal{R}$ and let $p, q \in \End_\cR(C)$ be projection maps. If $p$ and $q$ are homotopic, then their images $p(C)$ and $q(C)$ are homotopy equivalent. The same holds for finitely generated reduced type D (or type A) structures.
\end{lem}
\begin{proof}
    If $p$ and $q$ are homotopic, then it is well known that $\Cone(p)\simeq \Cone(q)$. But, as $p$ and $q$ are projections, $\Cone(p) \simeq \ker(p)$ and $\Cone(q) \simeq \ker(q)$. Hence, the kernels of homotopic projections are homotopy equivalent. But, if $p$ is a projection, so is $1 + p$ (similarly for $q$). But, $\ker(1 + p) \simeq p(C)$ and $\ker(1 + q) \simeq q(C)$. Hence, $p(C) \simeq q(C)$.
\end{proof}

We will make repeated use of the fact that homotopy projections are homotopic to honest projections. 

\begin{lem} \label{lem: homotopy projection to projection}
    Given a finitely generated chain complex $C$ over $\mathcal{R}$, any homotopy projection of $C$ is homotopic to a projection. This statement also holds for finitely generated reduced type D (or type A) structures over the torus algebra $\mathcal{A}$.
\end{lem}
\begin{proof}
    We will only prove the lemma for the case of chain complexes since the proof for type D (or type A) structures case is nearly identical. Let $p:C\rightarrow C$ be a homotopy projection. Since $C$ is finitely generated over $\mathcal{R}$, we see that it has only finitely many elements in each bidegree. Thus, by invoking the finite generation again, we see that there are only finitely many bidegree-preserving chain endomorphisms of $C$. Hence the maps $p,p^2,p^3,\cdots$ cannot be pairwise distinct, i.e. there exists some integers $n,k>0$ such that $p^n = p^{n+k}$. Choose any integer $N$, divisible by $k$, such that $N>n$. Then $p^N = p^{2N} = (p^N)^2$ and thus $p^N$ is a projection. Also, since $p\sim p^2$, any positive power of $p$ is homotopic to $p$, and thus $p\sim p^N$. Therefore $p^N$ is a projection of $C$ that is homotopic to $p$, as desired.
\end{proof}

Furthermore, this process can be done for whole collections.

\begin{lem} \label{lem: commuting projections}
    Given a finitely generated chain complex $C$ over $\mathcal{R}$, let $p_i:C\rightarrow C$, $i=1,\cdots,n$ be chain endomorphisms such that the following conditions are satisfied:
    \begin{itemize}
        \item For each $i$, $p_i^2 \sim p_i$.
        \item For each $i,j$, $p_i p_j \sim p_j p_i$.
    \end{itemize}
    Then, each $p_i$ can be homotoped so that they form a family of pairwise commuting projections. The same statement holds for finitely generated type D (or type A) modules if $p_i$ are replaced by type D (or type A) endomorphisms.
\end{lem}
\begin{proof}
    We only prove the lemma in the chain complex setting; the type D (or type A) setting can be proved in the same way. First, homotope $p_i$ so that they form a pairwise commuting family of endomorphisms. To do this, inductively define chain endomorphisms $p^\prime_i$ as follows.
    \begin{itemize}
        \item Invoke \Cref{lem: homotopy projection to projection} to homotope $p_1$ to an honest projection $p^\prime_1$;
        \item Define $p^\ast_{i+1} = \displaystyle\sum_{\lambda:\{1,\cdots,i\}\rightarrow \{0,1\}} p_{\lambda,i} p_{i+1} p_{\lambda,i}$, where  $p_{\lambda,i} = \prod_{j=1}^i (\lambda(j) + p_j)$.
        \item Again, invoke \Cref{lem: homotopy projection to projection} to homotope $p^\ast_{i+1}$ to an honest projection $p^\prime_{i+1}$.
    \end{itemize}
    Note that $p_{\lambda,i}p_{\lambda^\prime,i}\sim 0$ whenever $\lambda\neq \lambda^\prime$.
    \begin{align*}
        p_{\lambda,i}p_{\lambda^\prime,i} & = \prod_{j=1}^i (\lambda(j) + p_j)(\lambda'(j) + p_j)\\
        & = \prod_{j=1}^i(\lambda(j)\lambda'(j) + p_j\lambda'(j) +\lambda(j)p_j + p_jp_j) \\
        & \sim \prod_{j=1}^i (\lambda(j)\lambda'(j) + p_j\lambda'(j) +\lambda(j)p_j + p_j.)
    \end{align*}
    Since $\lambda\neq \lambda^\prime$, there is some $j$ so $\lambda(j)\lambda'(j) = 0$ and $\lambda(j)\neq \lambda'(j)$. Hence, 
    \begin{align*}
        \lambda(j)\lambda'(j) + p_j\lambda'(j) +\lambda(j)p_j + p_j & = 0 + p_j + p_j\\
        & = 0
    \end{align*}
    as claimed. Therefore, we have that
    \[
    \left( p^\ast_{i+1} \right)^2 = \sum_\lambda (p_{\lambda,i}p_{i+1}p_{\lambda,i})^2 \sim  \sum_\lambda p^2_{\lambda,i}p^2_{i+1}p^2_{\lambda,i} = \sum_\lambda p_{\lambda,i}p_{i+1}p_{\lambda,i} = p^\ast_{i+1},
    \]
    so the map $p^\ast_{i+1}$ is a homotopy projection. Since the maps $p_1,\cdots,p_n$ homotopy-commute, it is straightforward to see that each $p^\ast_{i+1}$ is homotopic to $p_{i+1}$. Furthermore, since $p_1,\cdots,p_i$ are inductively assumed to be pairwise commuting projections, we can also see that $p^\ast_{i+1}$ commutes with $p_1,\cdots,p_i$. This ensures that the map $p^\ast_{i+1}$ splits along the splitting of $C$, induced by $p_1,\cdots,p_i$, into $2^i$ direct summands. 
    
    Now, since the proof of \Cref{lem: homotopy projection to projection} modifies a homotopy projection only on the image of its sufficiently big power by making it act by identity on it, we see that the projection $p^\prime_{i+1}$ should also split along the same splitting of $C$. Therefore $p^\prime_{i+1}$ is a projection which commutes with $p_1,\cdots,p_i$; the lemma follows.
\end{proof}

\section{Induced Splittings from Cobordism Maps}\label{sec: cobs}

As outlined in the introduction, our first step in establishing the invariant splitting principles is to construct a map
\(
\Lambda: \End(\CFDh(\KC)) \ra \End(\CFK_\cR(S^3,K)),
\)
prove that it descends to a ring map on homology, and show that it sets up a correspondence between splittings of $\CFDh(\KC)$ and $\CFK_\cR(S^3, K)$, without needing to pass through $\CFDt(\KC)$. Essentially by construction, this correspondence will respect the actions of $\iota_K$ and $\iota_{\KC}$. The following is the first key result of this section. 

\begin{prop} \label{prop:doubling-CFA-to-CFK}
    Let summand $M$ be a summand of $\CFDh(\KC)$. Then, there is a corresponding summand $\Lambda(C)$ of $\CFK_\cR(S^3,K)$ determined up to homotopy equivalence by the property that  
    \[
        \Lambda(C) \oplus \Lambda(C) \simeq \CFAx\boxtimes M.
    \]
    Moreover, if $\CFDh(\KC) \simeq M_1 \oplus \hdots \oplus M_n$ is an $\iota_{\KC}$-equivariant splitting, then $\CFK_\cR(S^3,K) \simeq \Lambda(M_1) \oplus \hdots \oplus\Lambda(M_n)$ is an $\iota_K$-equivariant splitting.
\end{prop}

We emphasize that this splitting of $\CFK_\cR(S^3,K)$ does not depend on a choice of splitting of $\CFDt(\KC)$ (and therefore bypasses the issue of extending splittings of $\CFDh(\KC)$). If $\CFDh(\KC) \cong M_1 \oplus \hdots \oplus M_n$ was obtained from a splitting $\CFDt(\KC) \cong \widetilde{M}_1 \oplus \hdots \oplus \widetilde{M}_n$, we then have two (a priori different) splittings of $\CFK_\cR(S^3,K)$ into summands of the form $\Lambda(M_i)$ and $\CFAt(T^\infty, \nu) \boxtimes \widetilde{M_i}$. Given the relation $\Lambda(C_i) \oplus \Lambda(C_i) \simeq \CFAx\boxtimes M_i$, it seems reasonable to hope that $\Lambda(C_i) \simeq \CFAt(T^\infty, \nu)\boxtimes M_i$. We return to this in \Cref{sec: doubling}. 

The second key result of this section is to show that the association $\pi \mapsto \Lambda(\pi)$ actually establishes a correspondence between projection maps of $\CFDh(\KC)$ and $\CFK_\cR(S^3,K)$.

\begin{prop}\label{prop: bijective on projection}
    The map $\Lambda$ induces a bijection between homotopy classes of projections on $\CFDh(\KC)$ and $\CFK_\cR(S^3,K)$.
\end{prop}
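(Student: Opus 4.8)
The plan is to leverage the map $\Lambda$ together with the structural results alluded to in the introduction, namely that $\Lambda(p_M)(\CFK_\cR(S^3,K,p))^{\oplus 2} \simeq \CFAx \boxtimes M$ and that $\CFAx \boxtimes M \simeq \cC_M \oplus \cC_M$, where $\cC_M$ is the $\cR$-complex corresponding to $M$ under the \LOT correspondence (\Cref{thm: LOT}). Since $\Lambda$ is compatible with composition, it sends idempotents (up to homotopy) to idempotents (up to homotopy): if $p$ is a projection on $\CFAh(\KC)$, then $\Lambda(p)$ satisfies $\Lambda(p) \circ \Lambda(p) \sim \Lambda(p \circ p) \sim \Lambda(p)$, so $\Lambda$ at least gives a well-defined map on homotopy classes of idempotents. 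The content of the proposition is that this descends to a bijection on homotopy classes of \emph{projections} (idempotents whose image is a direct summand), and this should be checked in two directions.

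\textbf{Surjectivity.} Given a projection $q$ on $\CFK_\cR(S^3,K,p)$, its image is a direct summand $C \subseteq \CFK_\cR(S^3,K,p)$. By \Cref{thm: LOT} (applied to the knot complement with a free interior basepoint), $C$ corresponds to a summand $M$ of the extended type D structure, hence to a summand of $\CFDh(\KC)$; let $p_M$ be the associated projection in $\End^\cA(\CFDh(\KC))$. I would then show $\Lambda(p_M) \sim q$ (up to the homotopy ambiguity and up to the factor-of-two coming from the free basepoint) by identifying both images with $C$: the image of $\Lambda(p_M)$ is $\cC_M$-related to $C$ by the two structural observations above, and the image of $q$ is $C$ by construction. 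One must be slightly careful here because $\CFK_\cR(S^3,K,p) \simeq \CFK_\cR(S^3,K)^{\oplus 2}$, so the correspondence is between summands up to doubling; the cleanest route is to phrase everything on the $p$-stabilized complex throughout, where $\CFAx \boxtimes M$ lives naturally.

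\textbf{Injectivity.} Suppose $p_M, p_N$ are projections on $\CFDh(\KC)$ with $\Lambda(p_M) \sim \Lambda(p_N)$. Then their images are homotopy equivalent summands of $\CFK_\cR(S^3,K,p)$, and in particular $\cC_M \simeq \cC_N$ as $\cR$-complexes. Applying the \LOT correspondence (again \Cref{thm: LOT}), which is an equivalence between reduced models for $\CFK_\cR$ and reduced models for $\CFDt$, together with the uniqueness of extensions \cite[Proposition 4.16]{hanselman2023bordered}, forces $M \simeq N$ as type D structures, and moreover forces the inclusions of these summands to agree up to homotopy, so $p_M \sim p_N$. I expect the main obstacle to be bookkeeping the homotopy-coherence: $\Lambda$ is only defined up to homotopy, idempotents are only idempotent up to homotopy, and one must ensure that ``homotopic projections have homotopy equivalent images, and conversely'' is applied correctly in the $\cA_\infty$ / type-D setting — i.e. upgrading the homotopy equivalence of images to a homotopy of the projection maps themselves. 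This is where one invokes that a homotopy equivalence between summands of a complex can be promoted to a self-homotopy-equivalence respecting the splitting, so that the two projections are conjugate and hence homotopic. The cobordism description of $\Lambda$ and its compatibility with composition (from the earlier part of Section \ref{sec: cobs}) are exactly what make this conjugation argument go through.
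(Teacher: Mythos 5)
Your approach does not match the paper's, and it contains a genuine gap that the paper's strategy is specifically designed to avoid.

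The paper's proof rests on a single \emph{homological} claim: the composition of $F_{W_n,\fraks_0}$ with the large surgery isomorphism $\Gamma_n$ carries $\HFh_{1/2}(S^3_0(K\#\overline{K}))$ isomorphically onto the bidegree-$(0,0)$ summand of $\HFK_\cR(K\#\overline{K})$, via the integer-surgery exact triangle and $d$-invariant considerations. Once $\Lambda$ is known to be a bijection on the relevant homology group, injectivity of $\Lambda$ on homotopy classes of endomorphisms is immediate, surjectivity is immediate, and the only remaining step is \Cref{lem: homotopy projection to projection} to trade homotopy projections for honest projections. In particular, no invocation of the \LOT correspondence, $\CFAx$, or \Cref{prop: doubling complexes} is needed.

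The gap in your argument is the step from ``homotopy equivalent images'' to ``homotopic projections,'' which appears in both your injectivity and surjectivity directions. A projection is determined by an image \emph{and a complement}; two projections with isomorphic (even equal) images can easily fail to be homotopic. The simplest example: on $C = \F^2$ with zero differential, $p = \mathrm{diag}(1,0)$ and $q = \mathrm{diag}(0,1)$ have isomorphic images but are not homotopic (there are no nontrivial homotopies at all). Your proposed rescue --- ``the two projections are conjugate and hence homotopic'' --- is a non-sequitur: in the example above, $p$ and $q$ are conjugate by the coordinate swap, yet not homotopic. Conjugate idempotents in $H_*\End(C)$ need not represent the same class, so nothing about the \LOT identification $\cC_M \simeq \cC_N$ (nor the unique extension result) can by itself force $p_M \sim p_N$. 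The same issue blocks your surjectivity argument, where you conclude $\Lambda(p_M)\sim q$ from the fact that both have image (homotopy equivalent to) $C$. To close this gap you would need to control the complement/idempotent itself, not just its image, and this is precisely what the paper's isomorphism-in-homology claim delivers in one stroke: injectivity and surjectivity on homotopy classes of morphisms come for free once $\Lambda$ is shown to induce an isomorphism on the appropriately graded piece of $\HFh(S^3_0(K\#\overline{K}))$.
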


\begin{rem}
    We note that it may initially appear odd that the function $\Lambda$ takes ``hat'' endomorphisms to $\cR$-complex endomorphisms. Though, this is not too unusual in Heegaard Floer theory. In some sense, this is because the function $\Lambda$ is closely related to the large surgery isomorphism, which we recall (for large $N$) computes $\CFh(S^3_N(K))$ in terms of $\CFK_\cR(S^3, K)$ rather than $\CFKh(S^3, K)$. 
    
    More precisely, by the morphism pairing theorem, we may identify
    \begin{align*}
        \End^\cA(\CFDh(\KC)) \simeq \CFh(S^3_0(K\#\overline{K})).
    \end{align*}
    For our purposes, it will be more convenient to replace the basepoint of $S^3_0(K\#\overline{K})$ with a local unknot $\lambda$. The map $\Lambda$ will be defined in terms of a certain three-ended cobordism, which gives rise to a map:
    \begin{align*}
        \CFK_\cR(S^3_0(K\#\overline{K}),\lambda) \otimes_\cR \CFK_\cR(S^3, K) \ra \CFK_\cR(S^3, K).
    \end{align*}
    Since there is a clear identification $\CFh(S^3_0(K\#\overline{K}))\otimes_\F \cR \simeq \CFK_\cR(S^3_0(K\#\overline{K}),\lambda)$, we think of this as a map 
    \begin{align*}
        \CFKh(S^3_0(K\#\overline{K})) \otimes_\F\CFK_\cR(S^3, K) \ra \CFK_\cR(S^3, K),
    \end{align*}
    which explains the mixing of the $\CFh$ and $\CFK_\cR$.
\end{rem}

\subsection{Pair of pants cobordisms}

We recall the ``pair of pants'' construction from \cite{LOT_HF_as_morphism,cohen2023composition}. Given a triple of bordered 3-manifolds, $Y_1$, $Y_2$, and $Y_3$ with common boundary $F$, the associated pair of pants cobordism 
\[
P: (-Y_1 \cup Y_2) \amalg (-Y_2 \cup Y_3) \ra (-Y_1 \cup Y_3)
\]
is defined to be
\[
P = (\Delta\times F)\cup_{e_1 \times F} (e_1 \times Y_1)\cup_{e_2 \times F} (e_2 \times Y_2)\cup_{e_3 \times F} (e_3 \times Y_3),
\]
where $\Delta$ is a triangle with edges $e_1$, $e_2$, and $e_3$. We will also consider the case where $Y_1$ contains a knot $K$ in its interior. The triple $((Y_1, K)$, $Y_2$, $Y_3)$ then determines a link cobordism 
\[
(P, C): (-Y_1 \cup_F Y_2, K) \amalg (-Y_2 \cup Y_3, \emptyset) \ra (-Y_1 \cup Y_3, K)
\]
by the same construction. 

\begin{figure}
\def\svgwidth{.5\linewidth}%% Creator: Inkscape 1.3.2 (091e20e, 2023-11-25), www.inkscape.org
%% PDF/EPS/PS + LaTeX output extension by Johan Engelen, 2010
%% Accompanies image file '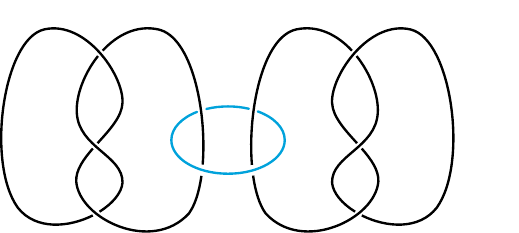' (pdf, eps, ps)
%%
%% To include the image in your LaTeX document, write
%%   \input{<filename>.pdf_tex}
%%  instead of
%%   \includegraphics{<filename>.pdf}
%% To scale the image, write
%%   \def\svgwidth{<desired width>}
%%   \input{<filename>.pdf_tex}
%%  instead of
%%   \includegraphics[width=<desired width>]{<filename>.pdf}
%%
%% Images with a different path to the parent latex file can
%% be accessed with the `import' package (which may need to be
%% installed) using
%%   \usepackage{import}
%% in the preamble, and then including the image with
%%   \import{<path to file>}{<filename>.pdf_tex}
%% Alternatively, one can specify
%%   \graphicspath{{<path to file>/}}
%% 
%% For more information, please see info/svg-inkscape on CTAN:
%%   http://tug.ctan.org/tex-archive/info/svg-inkscape
%%
\begingroup%
  \makeatletter%
  \providecommand\color[2][]{%
    \errmessage{(Inkscape) Color is used for the text in Inkscape, but the package 'color.sty' is not loaded}%
    \renewcommand\color[2][]{}%
  }%
  \providecommand\transparent[1]{%
    \errmessage{(Inkscape) Transparency is used (non-zero) for the text in Inkscape, but the package 'transparent.sty' is not loaded}%
    \renewcommand\transparent[1]{}%
  }%
  \providecommand\rotatebox[2]{#2}%
  \newcommand*\fsize{\dimexpr\f@size pt\relax}%
  \newcommand*\lineheight[1]{\fontsize{\fsize}{#1\fsize}\selectfont}%
  \ifx\svgwidth\undefined%
    \setlength{\unitlength}{248.65168294bp}%
    \ifx\svgscale\undefined%
      \relax%
    \else%
      \setlength{\unitlength}{\unitlength * \real{\svgscale}}%
    \fi%
  \else%
    \setlength{\unitlength}{\svgwidth}%
  \fi%
  \global\let\svgwidth\undefined%
  \global\let\svgscale\undefined%
  \makeatother%
  \begin{picture}(1,0.44909848)%
    \lineheight{1}%
    \setlength\tabcolsep{0pt}%
    \put(0,0){\includegraphics[width=\unitlength,page=1]{splice.pdf}}%
    \put(0.41008329,0.27170317){\color[rgb]{0,0.63921569,0.85490196}\makebox(0,0)[lt]{\smash{\begin{tabular}[t]{l}{\small$0$}\end{tabular}}}}%
    \put(0.03610767,0.41248049){\color[rgb]{0,0,0}\makebox(0,0)[lt]{\smash{\begin{tabular}[t]{l}{\small$0$}\end{tabular}}}}%
    \put(0.7559783,0.41248049){\color[rgb]{0,0,0}\makebox(0,0)[lt]{\smash{\begin{tabular}[t]{l}{\small$0$}\end{tabular}}}}%
  \end{picture}%
\endgroup%

\caption{The splice of the 0-framed trefoil and its mirror}\label{fig:splice}
\end{figure}

We will be interested in the pair of pants cobordism $(P,C)$ formed from the triple $(T_\infty, \nu), (\KC)_0, (\KC)_n$. It will be helpful to have an explicit description of manifolds of this kind. First, we note that in this situation, all three 3-manifolds are knot complements, hence the glued up 3-manifolds are \emph{splices}. If $(\KC)_n$ and $(S^3 \smallsetminus J)_m$ are framed knot complements, we will write $\mathcal{S}((K,m), (J,n))$ for their splice. We will make use of the fact that the splice of two knot complements is diffeomorphic to Dehn surgery on their connected sum, i.e. 
\[
(\KC)_n \cup (S^3 \smallsetminus J)_m \cong S^3_{n+m}(K\#J).
\]
See, for example, \cite{Gordon_dehn_surgery_satellites}. $(P, C)$ can be given a handle decomposition as follows. The process is carried out in Figure \ref{fig:lambda_cob1}.

\begin{figure}[h]
\def\svgwidth{\linewidth}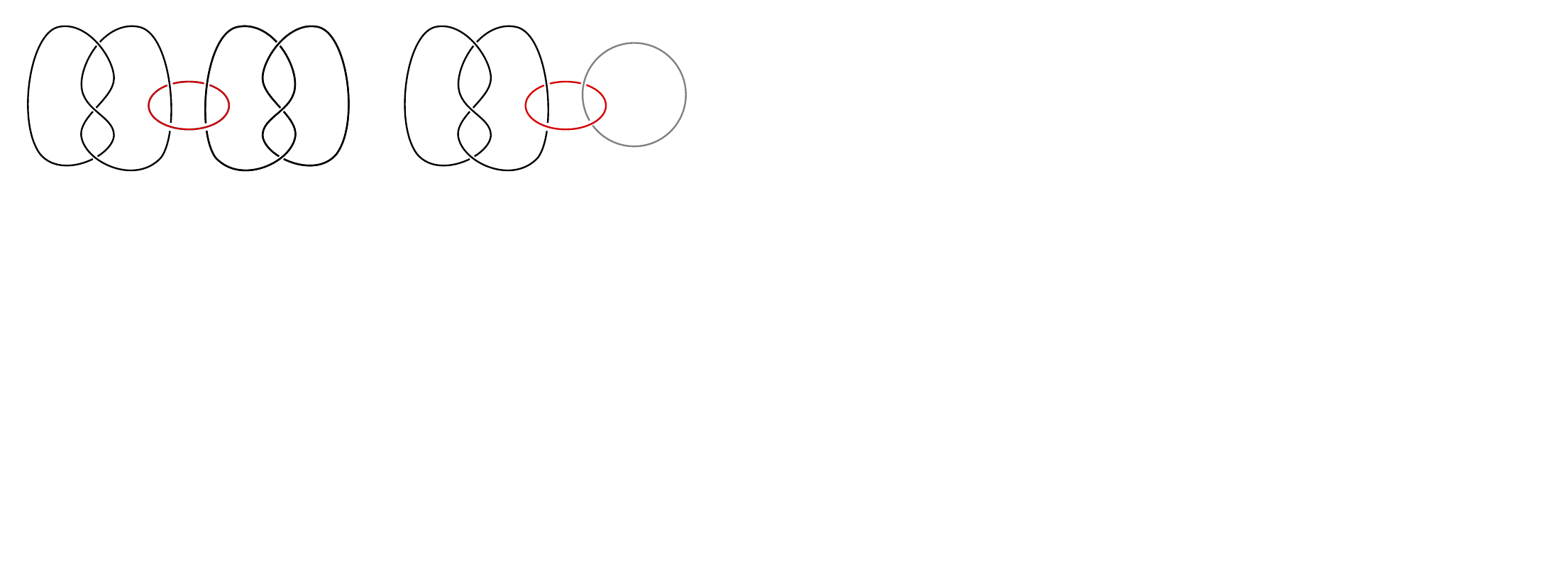
\caption{Building the cobordism $(\Lambda,C)$}\label{fig:lambda_cob1}
\end{figure}

\begin{enumerate}
    \item The base of the cobordism has two components given by thickening up $\mathcal{S}((K,n), (\overline{K},0))$ and $\mathcal{S}((K,0), (U,\infty))$. At this step, the surface $C$ appears as (the product of) a meridian of the unknotted component of the surgery presentation of $\mathcal{S}((K,0), (U,\infty))$. See Figure \ref{fig:lambda_cob1} (a).
    
    \item  Next, attach $(S^3\smallsetminus K) \times I$ to $-(S^3\smallsetminus K)_0 \amalg (S^3\smallsetminus K)_0$ along $(S^3\smallsetminus K) \times \partial I$. This can done in two steps. Let $B$ be a small ball intersecting $K$ in a trivial arc. Then, the complement of $B \times I$ in $(S^3\smallsetminus K) \times I$ is diffeomorphic to the complement of the canonical ribbon disk for $\overline{K}\# K$, which we denote $\Delta_{\overline{K}\#K}$. 
    
    Topologically, $I \times (B\smallsetminus K)$ is just an interval cross a solid torus. If we build $B\smallsetminus K$ with a single 3-dimensional 0-handle and 1-handle, the product $I \times (B\smallsetminus K)$ decomposes as a 4-dimensional 1-handle and 2-handle. This 4-dimensional 1-handle is attached to $\partial_- P$, giving a cobordism 
    \begin{align*}
        -(S^3\smallsetminus K)_0 \amalg (S^3\smallsetminus K)_0 \ra -(S^3\smallsetminus K)_0 \# (S^3\smallsetminus K)_0.
    \end{align*}
    Next, the 2-handle is attached (with framing zero) along the connected sum of the meridian of $S^3 \smallsetminus \overline{K}$ and $S^3 \smallsetminus K$. See Figure \ref{fig:lambda_cob1} (b). The 1-handle connecting the two components of $\partial_-P$ is not drawn. 
    
    \item We now just need to attach $B^4 \smallsetminus \Delta_{\overline{K}\#K}$. The boundary of $B^4 \smallsetminus \Delta_{\overline{K}\#K}$ is $S^3_0(\overline{K} \# K)$, or equivalently, the splice of $-(\KC)_0$ and $(\KC)_0$. This 3-manifold is visible in \ref{fig:lambda_cob1} (b), as surgery on the 3-component link consisting of $\overline{K}$, $K$, and an unknotted component clasping them. We glue $B^4 \smallsetminus \Delta_{K\#\overline{K}}$ along this submanifold. 
\end{enumerate}

\begin{figure}
\def\svgwidth{\linewidth}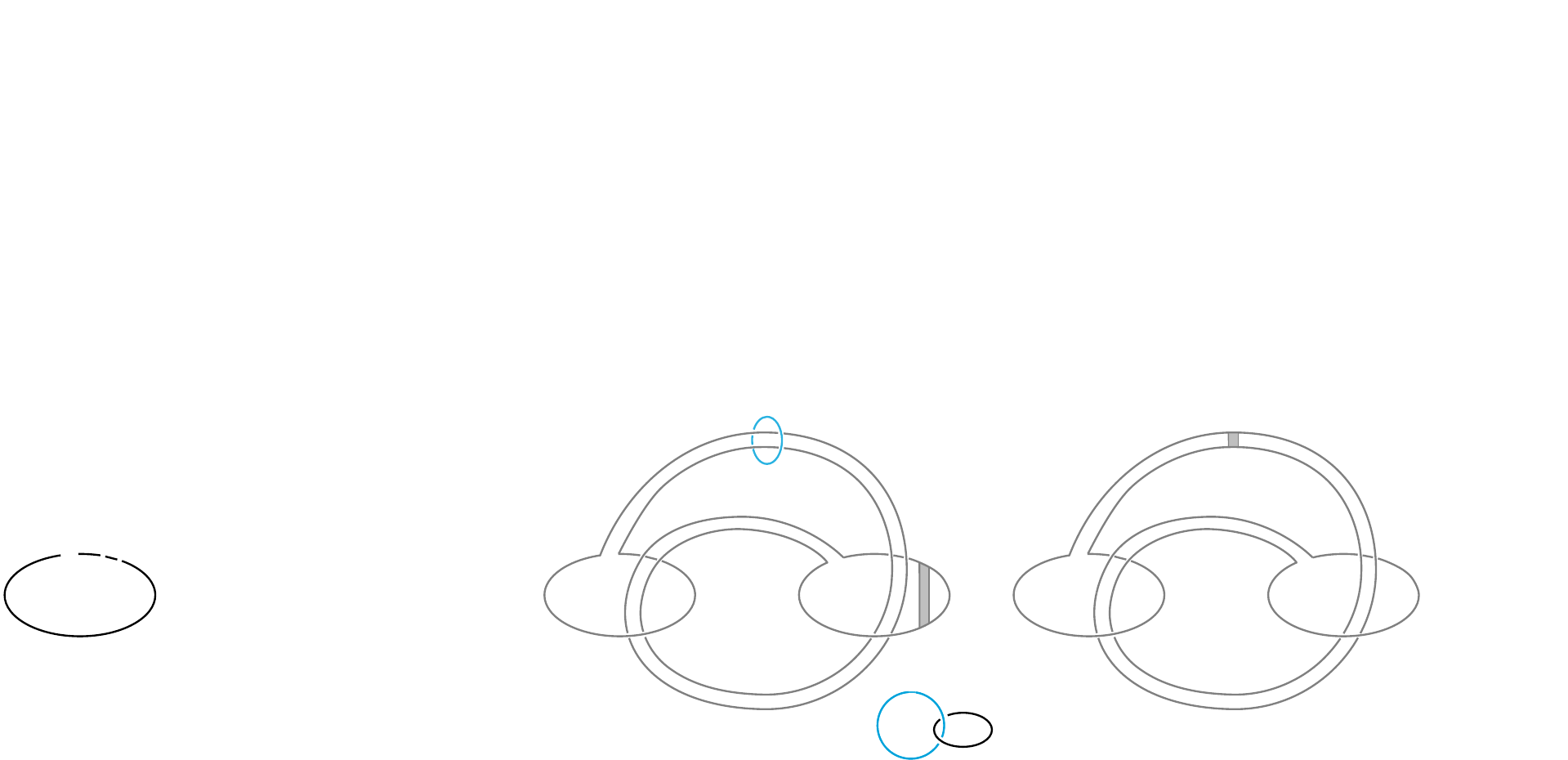
\caption{Trading 2- and 4- dimensional handles. (a) The $D$ in a nonstandard $B^4$ with a canceling band-minima pair. (b) The band-minima pair from (a) have been slid under the right-hand 1-handle and then slid under the left-hand 1-handle by a band that follows the 2-handle. (c) The four-dimensional 1- and 2-handles have been pulled away from $D$ and canceled by a combination of handle-slides and band-handle swim moves. (d) The dual disk $D'$ appears in a nonstandard $B^4$ with a canceling band-maxima pair. (e) The canceling surgery curves are pulled away from the diagram and canceled. (f) The band is slid over the 0-framed 2-handle after which the 2-handle is swum through the band and cancels the 3-handle.}\label{fig:trading}
\end{figure}

There is a conceptually simpler description of this 4-manifold. Given a ribbon disk $D$ with boundary $K$, a handle decomposition of $D$ induces a handle decomposition for $B^4 \smallsetminus D$ \cite[Chapter 6]{GS_4mflds}. Conversely, given a ribbon handle decomposition for $B^4 \smallsetminus D$ we may produce a handle decomposition for $D$ via the following procedure: first, attach a 0-framed 2-handle, $h$, to a meridian of one of the 1-handles in $B^4 \smallsetminus D$; note that $h$ fills the cavity formed by carving out $D$, so what remains is a non-standard handle decomposition for $B^4$. A meridian, $\mu$, of the attaching circle for $h$ can therefore be identified with $K$; moreover, the obvious slice disk for $\mu$ can be identified with $D$. The 1- and 2-handles of $(B^4 \smallsetminus D)\cup h$ can all be canceled to give a diffeomorphism to the standard $B^4$; the disk $D$ can be traced through this diffeomorphism by adding additional minima disks and saddles. We refer to this process as \emph{trading handles}. Of course, this procedure can be dualized, where handles for a for a ribbon disk consisting of saddles and maxima can be traded for 2- and 3-handles of the exterior. Examples of both cases are shown in Figure \ref{fig:trading}.\\

We can apply this trick in the in the last step of the cobordism $P$. Note, that we attached $B^4 \smallsetminus \Delta_{\overline{K}\#K}$ along the splice of $-(\KC)_0$ and $(\KC)_0$; the $\langle 0\rangle$-framed curve $K$ has a earring hanging off, which is precisely the configuration in Frame (d) of \Cref{fig:trading}. Hence, we can trade the handles of $B^4 \smallsetminus \Delta_{\overline{K}\#K}$ for additional handles in the concordance $C$. Concretely, rather than attaching $B^4 \smallsetminus \Delta_{\overline{K}\#K}$, we perform surgery on all the curves shown in Frame (b) of \Cref{fig:lambda_cob1}; the resulting 3-manifold is $S^3$ and through this diffeomorphism, $\nu$ is taken to the knot $K\# \overline{K}\# K$; we attach $\Delta_{\overline{K}\# K}$ to the second two components of this knot, which completes the link cobordism to $(S^3, K)$. For an explicit example of this procedure, see \Cref{fig:lambda_cob2}. 

\Cref{fig:lambda_simplify} shows a simplification of the steps (1) and (2) of this cobordism. We complete the cobordism $(P, C)$ by attaching $\Delta_{\overline{K}\#K}$.

\begin{figure}[h]
\def\svgwidth{\linewidth}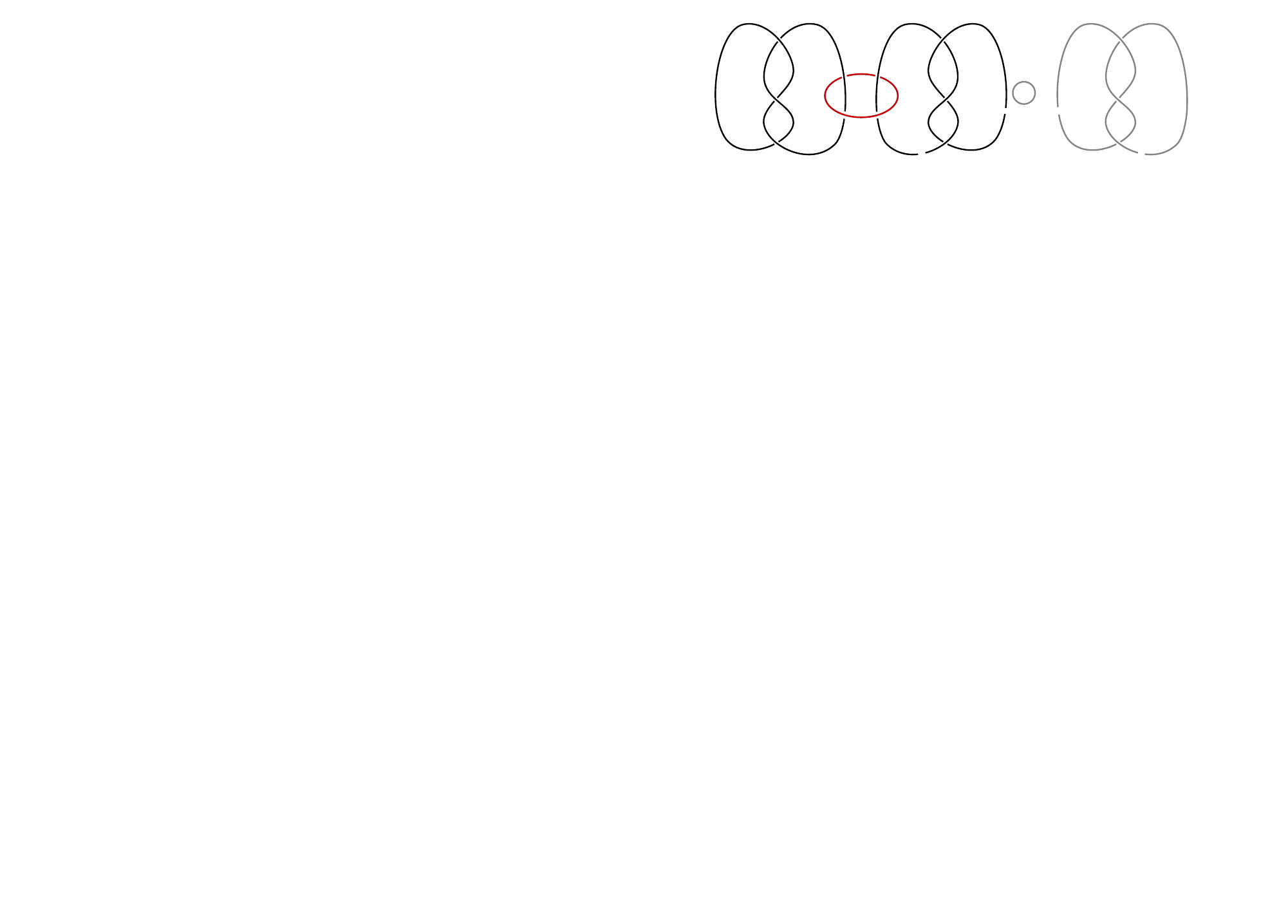
\caption{Canceling handles to simplify $(P,C)$. For simplicity, 3- and 4-handles are dropped from the notation.}\label{fig:lambda_cob2}
\end{figure}

Hence, we have the following efficient description of $(P,C)$. The final frame of \Cref{fig:lambda_simplify} can be built as the composition:
\begin{enumerate}
    \item A birth cobordism
    \begin{align*}
        S^3_n(K\#\overline{K}) \amalg (S^3, K) \ra  (S^3_n(K\#\overline{K}), \lambda) \amalg (S^3, K).
    \end{align*}
    \item A 2-handle cobordism 
    \begin{align*}
        (S^3_n(K\#\overline{K}), \lambda) \amalg (S^3, K) \ra (S^3, K\#\overline{K}) \amalg (S^3, K)
    \end{align*}
    \item A connected sum cobordism 
    \begin{align*}
        (S^3, K\#\overline{K}) \amalg (S^3, K) \ra  (S^3, K\#\overline{K} \# K).
    \end{align*}
\end{enumerate}
This leaves one last step:
\begin{enumerate}
    \item[(4)] Finally, we attach the canonical slice disk $\Delta_{\overline{K}\# K}$ to $K\# \overline{K} \# K$, giving a cobordism 
    \begin{align*}
        (S^3, K\#\overline{K} \# K) \ra (S^3, K).
    \end{align*} 
\end{enumerate}

\begin{figure}[h]
\def\svgwidth{.6\linewidth}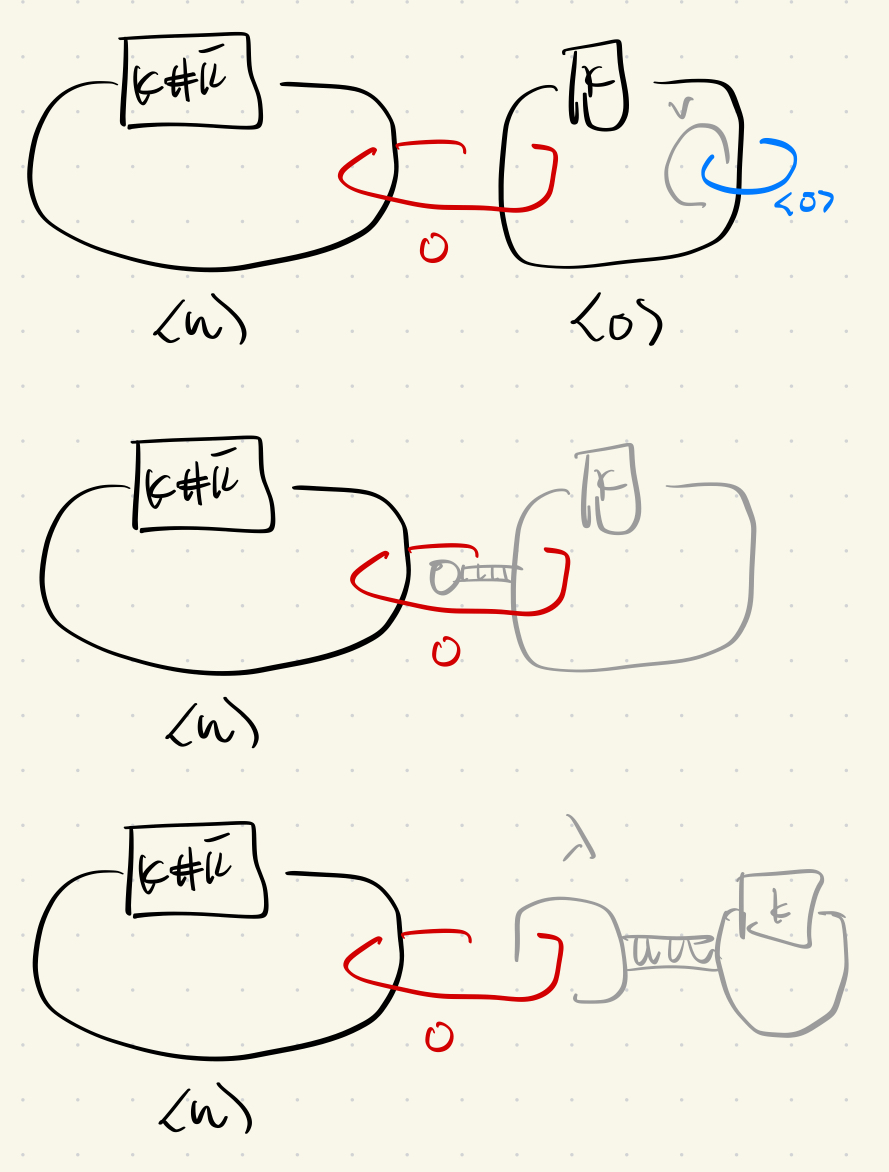
\caption{A simplified handle decomposition for the link cobordism $(P,C)$.}\label{fig:lambda_simplify}
\end{figure}

We would like to associate to $(P, C)$ a cobordism map 
\begin{align*}
    \CFh(S^3_n(K\#\overline{K})) \otimes_{\F} \CFK_\cR(S^3, K) \ra \CFK_\cR(S^3, K),
\end{align*}
using the decomposition of $(P, C)$ above. There is one minor subtlety concerning basepoints. In \cite{zemke_linkcob}, empty links are not considered, so there is no map associated to the birth cobordism 
\begin{align*}
    S^3_n(K\#\overline{K}) \ra (S^3_n(K\#\overline{K}, \lambda)). 
\end{align*}
However, this issue can be avoided by replacing the basepoint in $S^3_n(K\#\overline{K})$ with a local unknot, as $\CFh(S^3_n(K\#\overline{K}))\otimes_\F \cR \simeq \CFK_\cR(S^3_n(K\#\overline{K}, \lambda)$. We shall ignore this component of the cobordism, and think of $(P, C)$ as beginning with the 2-handle link cobordism 
\begin{align*}
    \CFK_\cR(S^3_n(K\#\overline{K}, \lambda)) \ra \CFK_\cR(S^3,K\#\overline{K}).
\end{align*}
Crucially, for large $n$, this cobordism induces the large surgery isomorphism; see \cite[Section 3]{hendricks2022involutive}.

According to \cite[Proposition 5.1]{zemke_connectedsums}, there is a connected sum cobordism map
\[
G_{K_1,K_2}: \CFL(S^3\amalg S^3, K_1 \amalg K_2) \ra \CFL(S^3, K_1 \# K_2)
\]
given by the natural cobordism consisting of a 1-handle/fusion band. This cobordism is second component of $(P, C)$. This map is a homotopy equivalence, with inverse 
\[
E_{K_1\#K_2}: \CFL(S^3, K_1 \# K_2) \ra \CFL(S^3\amalg S^3, K_1 \amalg K_2)
\]
corresponding to attaching a 3-handle/fission band. Moreover, by \cite[Theorem 1.2]{juhasz_zemke_stabilization_bounds}, the map induced by the canonical slice disk, $\Delta_{\overline{K}\# K}$ for $\overline{K}\# K$, viewed as a map
\[
F_D: \CFL(S^3, \overline{K}\# K) \ra \CFL(U) \cong \cR
\]
satisfies 
\[
F_{\Delta_{\overline{K}\# K}} \sim \Tr\circ E_{\overline{K}\# K}
\]
where $\Tr$ is the canonical trace map. In particular, we have the following. 

\begin{lem}\label{lem:compute (Lambda, C)} The map induced by $(P, C)$ is homotopic to the composition
    \[(\id_{\CFK_\cR(K)} \otimes (\Tr\circ E_{K\#(\overline{K}\#K)}))\circ G_{K\#\overline{K},K}\circ (\Gamma_{n,[0]}\circ F_{W_{n}, \fraks_0} \otimes \id_{\CFK(S^3,K,p)}),\]
    where $\Gamma_{n,[0]}$ is the large surgery cobordism map. 
\end{lem}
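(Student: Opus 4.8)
The plan is to combine the diffeomorphism of Lemma~\ref{lem: pants cob} with functoriality of Zemke's link cobordism maps and three standard ingredients: the large surgery cobordism map, the connected-sum maps, and the map of the canonical ribbon disk. Since Lemma~\ref{lem: pants cob} gives a diffeomorphism $(\Lambda, C)\cong(\Xi, \Sigma)$ relative to the boundary and respecting the decoration by a pair of parallel arcs, diffeomorphism-invariance \cite{zemke_linkcob} gives $F_{\Lambda, C}\sim F_{\Xi, \Sigma}$. Recall that $(\Xi, \Sigma)$ is $(W_n\amalg (S^3, K)\times I)$ followed by a piece $(\Xi', \Sigma')\colon S^3_n(K\#\overline{K})\amalg (S^3,K,p)\to (S^3,K)$ built from the large-surgery $2$-handle $(\Gamma_n, S)$ on the first factor, a $1$-handle together with a trivial fusion band joining the belt sphere of $h$ to $K$, and a cap of the $\overline{K}\#K$-summand by its canonical slice disk $D'$. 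Functoriality then yields $F_{\Xi,\Sigma}\sim F_{\Xi',\Sigma'}\circ (F_{W_n,\fraks_0}\otimes\id)$, where $F_{W_n,\fraks_0}$ is the restriction of $F_{W_n}$ to the $\mathrm{Spin}^c$-structure $\fraks_0$ used to define it, so it remains to identify $F_{\Xi',\Sigma'}$.

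I would identify $F_{\Xi',\Sigma'}$ by naming the map of each elementary piece. For $n$ large, the $2$-handle along a meridian of $K\#\overline{K}$, decorated by its cocore, induces the large surgery isomorphism $\Gamma_n\colon \CFh(S^3_n(K\#\overline{K}))\to\CFK_\cR(S^3,K\#\overline{K})$ by \cite[Section~3]{hendricks2022involutive}, contributing $\Gamma_n\otimes\id$. The $1$-handle-with-trivial-band is by definition the connected-sum cobordism, hence induces the connected-sum map $G$ of \cite{zemke_linkcob, zemke_connectedsums}, fusing the belt sphere $K\#\overline{K}$ with the knot $K$ of the product factor. Capping $\overline{K}\#K$ by the canonical ribbon disk $D'$ contributes $F_{D'}$; writing $(K\#\overline{K})\#K=K\#(\overline{K}\#K)$ and splitting off the $\overline{K}\#K$-summand via the $E$-maps of \cite{zemke_connectedsums}, this disk map factors up to homotopy as such a splitting followed by $\id\otimes F_{D'}$, and $F_{D'}\sim Tr\circ E_{\overline{K}\#K}$ by \cite[Theorem~1.2]{juhasz_zemke_stabilization_bounds}. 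Since $G$ and the $E$-maps are mutually homotopy-inverse \cite[Proposition~5.1]{zemke_connectedsums}, the composite of the band map $G$ with the subsequent splitting $E_{\overline{K}\#K}$ is homotopic to the identity on the relevant factors, so all that survives is the trace pairing $Tr$ contracting the $\overline{K}$-factor of $K\#\overline{K}$ (identified with $\CFK_\cR(S^3,K)^\vee$) against the complex of the product factor, with the $K$-factor of $K\#\overline{K}$ left as output. Under the identification $\CFK_\cR(S^3,K\#\overline{K})\simeq \CFK_\cR(S^3,K)\otimes\CFK_\cR(S^3,K)^\vee$, this is precisely $(\id\otimes Tr)\circ(\Gamma_n\otimes\id)$, and composing with $F_{W_n,\fraks_0}\otimes\id$ gives the asserted formula.

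The main obstacle is the compatibility bookkeeping rather than any single hard step: one must check that the handle decomposition of $(\Xi',\Sigma')$ into a $2$-handle, a $1$-handle-with-band, and a disk-cap is compatible with the chosen parallel-arc decoration so that Zemke's formulas apply verbatim to each piece, and one must keep the connected-sum identifications $\CFK_\cR(S^3,K_1\#K_2)\simeq\CFK_\cR(S^3,K_1)\otimes\CFK_\cR(S^3,K_2)$ consistent throughout the $G$--$E$ cancellation, carrying the free basepoint $p$ along (which is harmless by \cite{zemke_linkcob}, and may in fact be treated as a small local unknot as in the surrounding discussion). The genuinely delicate topological content --- the diffeomorphism $(\Lambda,C)\cong(\Xi,\Sigma)$ --- has already been supplied by Lemma~\ref{lem: pants cob}, so the present lemma reduces to assembling standard properties of the link-cobordism TQFT.
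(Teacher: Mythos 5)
Your proof is correct and follows essentially the same route as the paper: decompose the cobordism into elementary pieces via Lemma~\ref{lem: pants cob}, apply Zemke's functoriality piece-by-piece, use Juh\'asz--Zemke to identify the canonical slice disk map with the trace composed with a fission band, and invoke the $G$--$E$ cancellation. One notational slip: what cancels against $G_{K\#\overline{K},K}$ is the splitting map $E_{K\#(\overline{K}\#K)}$ (your ``such a splitting,'' which separates $K\#\overline{K}\#K$ into $K\amalg(\overline{K}\#K)$), not $E_{\overline{K}\#K}$, which arises from the factorization $F_{D'}\sim Tr\circ E_{\overline{K}\#K}$ and does not compose directly with $G$; once this is corrected the argument lands exactly on the formula in the paper.
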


\begin{proof}
    The composition above corresponds exactly to the simplified handle decomposition of $(P, C)$ discussed above.
\end{proof}

\subsection{Definition and formal properties of $\Lambda$}

Without further ado, we finally formally define our candidate cobordism map. Let $W_n$ be the cobordism $S^3_0(K\#\overline{K})\#L(n,1) \ra S^3_n(K\#\overline{K})$ obtained by attaching a 0-framed two-handle which clasps $K$ and $U$. Here, we think of $L(n,1)$ as the boundary of the disk bundle over $S^2$ with Euler number $n$. The lens space has a distinguished $\Spinc$-structure $\ell$ which bounds a $\Spinc$ structure $\fraks$ on the disk bundle which satisfies $\langle c_1(\fraks), [S^2]\rangle = n.$ We fix an identification $\Spinc(L(n,1)) \cong H^2(L(n, 1);\Z) \cong \Z/n$ by fixing $\ell = [0]$. Let $\Theta$ be the generator of $\HFh(L(n, 1),[0])$. Hence, there is a map 
\begin{align*}
    F_{W_n}(-, \Theta): \CFh(S^3_0(K\#\overline{K})) \ra \CFh(S^3_n(K\#\overline{K}))
\end{align*}

Now, as above, we consider the triple of bordered 3-manifolds: $(T_\infty, \nu), (\KC)_0, (\KC)_n$. The pair of pants construction above defines a cobordism 
\[
(P, C):(-(\KC)_0\cup (\KC)_n)\amalg(-(T_\infty, \nu)\cup (\KC)_0)\ra (-(T_\infty, \nu)\cup (\KC)_n). 
\]
Or, more compactly, $(P, C)$ is a cobordism 
\[
S^3_n(K\#\overline{K}) \amalg (S^3,K) \ra (S^3,K).
\]
We define $(\bm{\curlywedge}, C)$ to be the composition 
\[
(\bm{\curlywedge}, C):= (P, C) \circ (W_n \amalg (S^3, K) \times I).
\]

It will be helpful to understand $\Spinc$-structures on $\bm{\curlywedge}$. Since $P$ and $(W_n \amalg (S^3, K) \times I)$ are glued along rational homology spheres, a $\Spinc$-structure on $\bm{\curlywedge}$ is uniquely specified by its restriction to $P$ and $W_n$. Consider again the handle decomposition for $\bm{\curlywedge}$ shown in \Cref{fig:P_handle_decomp}. Note that the 1-handle connecting the two components of $\partial_- \bm{\curlywedge}$ has already been attached. Furthermore, we have only drawn 4-dimensional handles -- the handles of the surface $C$ belonging to the slice disk $\Delta_{\overline{K}\# K}$ are not included, as they have no effect on the  $\Spinc$-structures. The diagram in \Cref{fig:P_handle_decomp} is obtained from our description of $(P,C)$ above by precomposing with the cobordism $W_n$, and performing a single handle slide to simplify the following computations.

$C_1(\bm{\curlywedge};\Z)$ is generated by the meridians of bracketed surgery curves (and in particular is rank 4) and $C_2(\bm{\curlywedge};\Z)$ is generated by the various 2-cells which are attached during surgery or during the cobordism (it is rank 6 generated by the cells labeled $a,\hdots,f$.) We will write $A, \hdots, F$ for the dual generators of $C^2$. A quick computation shows that $H^2(\bm{\curlywedge};\Z) \cong \Z\langle A, C \rangle$. We can see a handle decomposition for $W_n$ as well: simply erase the curves labeled $d$, $e$, and $f$. A similar computation shows that 
\begin{align*}
    H^2(W_n) & \cong \Z/n \langle A \rangle \oplus \Z \langle C \rangle \\ 
    H^2(\partial_- W_n) & \cong \Z/n \langle A \rangle \oplus \Z \langle C \rangle \\ 
    H^2(\partial_+ W_n) & \cong \Z/n \langle A = C \rangle 
\end{align*}
and the restriction maps $H^2(W_n) \ra H^2(\partial_- W_n)$ and $H^2(W_n) \ra H^2(\partial_+ W_n)$ are given by $([A], C) \mapsto ([A],C)$ and $([A], C) \mapsto [A+C]$ respectively. The restriction $H^2(\bm{\curlywedge}) \ra H^2(W_n)$ is given by $(A, C) \mapsto ([A],C).$

Now, consider the cobordism given by attaching the 2-handle labeled $d$ to $\partial_+ W_n$; this was the cobordism $\Gamma_n$. A sequence of handle slides shows that the ingoing end of $\Gamma_n$ is $S^3$. In this case, we have
\begin{align*}
    H^2(\Gamma_n;\Z) & \cong \Z\langle A = C \rangle,
\end{align*}
and the restriction to $H^2(\partial_-\Gamma_n;\Z)$ is given by $A \mapsto [A]$ and the other restriction is of course trivial. The restriction $H^2(\bm{\curlywedge}) \ra H^2(\Gamma_n)$ is given by $A, C \mapsto A + C$.

\begin{figure}[h]
\def\svgwidth{.6\linewidth}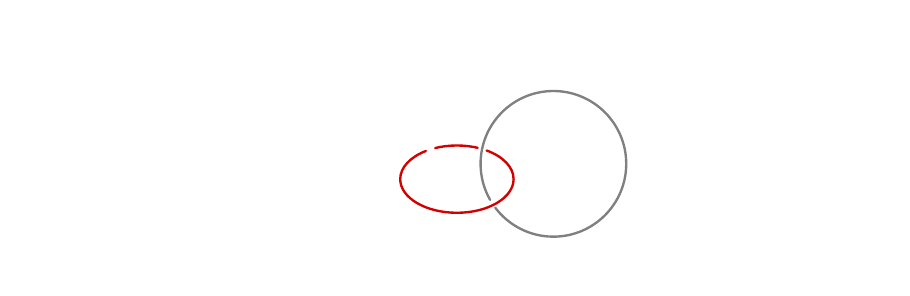
\caption{A handle decomposition for $\bm{\curlywedge}$; the generators for the second homology are denoted $a, b, \hdots, f$.}
\label{fig:P_handle_decomp}
\end{figure}

Note that $\bm{\curlywedge} \smallsetminus (W_n \cup \Gamma_n)$ has a single $\Spinc$-structure (this portion of the cobordism is diffeomorphic to $S^3 \times I$), so it is enough to choose a $\Spinc$-structure on $W_n \cup \Gamma_n.$ $\Spinc$-structures on $W_n$ are determined by their restrictions to $L(n,1) \# S^3_0(K\# \overline{K}) \subset \partial_- W_n$; hence, we choose the $\Spinc$-structure $\frak{u}_0$ on $W_n$ which extends the $\Spinc$-structure $[0] \oplus [0]$. On $\Gamma_n$, we must then choose a $\Spinc$-structure which restricts to $[0]$ on $S^3_n(K\# \overline{K})$. We choose the $\Spinc$-structure $\frak{y}_0$ which satisfies 
\begin{align*}
    \langle c_1(\frak{y}), [F] \rangle + n = 0,
\end{align*}
where $F$ is a surface in $\Gamma_n$ obtained by capping off the cocore of the 2-handle attached along $d$ with a Seifert surface for the image of $\nu$ in $S^3$.

Therefore, we define $\fraks_0 = \frak{u}_0 \# \frak{y}_0$ to be the unique $\Spinc$-structure on $\bm{\curlywedge}$ which extends $\frak{u}_0$ and $\frak{y}_0$. 

\begin{defn}
    Let $\Lambda$ be the map
    \begin{align*}
        \Lambda:= F_{\bm{\curlywedge}, C, \fraks_0}(\Theta, -): \CFh(S^3_0(K\#\overline{K}), [0]) \otimes_\F \CFK_\cR(S^3, K) \ra \CFK_\cR(S^3,K),
    \end{align*}
    where $(\bm{\curlywedge}, C) = (P\cup W_n, C)$ and $n$ is large. Under the morphism paring theorem, $\Lambda$ therefore determines a map
    \[
        \Lambda: \End^\cA_0(\CFDh(\KC)) \ra \End_\cR(\CFK_\mathcal{R}(S^3,K)),
    \]
    where $\End^\cA_0(\CFDh(\KC)) \subset \End^\cA(\CFDh(\KC))$ is the summand corresponding to $\CFh(S^3_0(K\#\overline{K}), [0])$.
\end{defn}

\begin{rem}
    We again emphasize that we are abusing notation here. Technically, the cobordism defines a map 
    \begin{align*}
        \CFK_\cR(S^3_0(K\#\overline{K}), \lambda) \otimes_\cR \CFK_\cR(S^3, K) \ra \CFK_\cR(S^3, K),
    \end{align*}
    but, under the identification $\CFh(S^3_0(K\#\overline{K}), [0])\otimes_\F \cR \simeq \CFK_\cR(S^3_0(K\#\overline{K}), \lambda, [0])$, $\Lambda$ takes the above form.
\end{rem}

This map has a straightforward interpretation: a morphism $f \in \End^\cA_0(\CFDh(\KC)) \simeq \CFh(S^3_0(K\#\overline{K}),[0])$ is mapped to $\CFh(S^3_n(K\#\overline{K}))$ by the cobordism $W_n$; provided $n$ is sufficiently large, $f$ will land in $\CFh(S^3_n(K\#\overline{K}), [0])$. This group is identified via the large surgery isomorphism with $\hat{\mathbb{A}}_0(K\#\overline{K}) \subset \CFK_\cR(K\#\overline{K})$. Under the identification of $\CFK_\cR(K\#\overline{K}) \simeq \End_\cR(\CFK_\cR(S^3, K))$, the connected sum cobordism followed by the disk map for $\Delta_{\overline{K}\#K}$ correspond to evaluation 
\begin{align*}
    \End_\cR(\CFK_\cR(S^3, K)) \otimes \CFK_\cR(S^3, K) \ra \CFK_\cR(S^3, K). 
\end{align*}

Some of the formal properties of $\Lambda$ are immediate from the definition. 
\begin{lem}\label{lem:composition}
    Viewed as a map $\Lambda: \End_0^\cA(\CFDh(\KC)) \ra \End_\cR(\CFK_\mathcal{R}(S^3,K,p))$, $\Lambda$ satisfies: 
    \[
        \Lambda(g) \circ \Lambda(f) \sim \Lambda(g \circ f),
    \]
    for any $f, g \in \End_0^\cA(\CFDh(\KC))$.
\end{lem}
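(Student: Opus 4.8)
The key is that $\Lambda$ is induced by the pair-of-pants cobordism $(\Lambda,C)$, and composition of such cobordism maps is governed by a gluing/associativity statement for pair-of-pants cobordisms. Concretely, $\Lambda(f)$ is obtained by the composite
\[
F_{\Lambda,C}\circ\bigl(F_{W_n,\fraks_0}\otimes\id\bigr)
\]
after translating $f\in\End^\cA(\CFDh(\KC))$ into an element of $\CFh(S^3_0(K\#\overline K))$ via the morphism pairing theorem, as recorded in Proposition~\ref{prop:compute Lambda} and Lemmas~\ref{lem: X_n commutes}--\ref{lem:compute (Lambda, C)}. The plan is to compute $\Lambda(g)\circ\Lambda(f)$ by stacking the two cobordisms defining $\Lambda(f)$ and $\Lambda(g)$ and showing the resulting composite four-manifold-with-surface is diffeomorphic (rel boundary, and preserving the decorated surface) to the single cobordism computing $\Lambda(g\circ f)$, up to the addition of cancelling handles. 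This reduces the lemma to a topological identity among cobordisms, plus the fact that the assignment $f\mapsto F_{X_n}\circ(-)$ (postcomposition with the drilled cobordism map of Lemma~\ref{lem: X_n commutes}) is functorial.

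First I would unwind everything through the morphism pairing theorem: by \cite{LOT_HF_as_morphism} we have $\CFh(-Y_1\cup Y_2)\simeq\Mor^\cA(\CFDh(Y_1),\CFDh(Y_2))$, and by \cite{cohen2023composition} the pair-of-pants cobordism for a triple $(Y_1,Y_2,Y_3)$ induces, under this identification, the composition map $f\otimes g\mapsto g\circ f$ on $\Mor$-complexes, \emph{compatibly with composition} — that is, the pair-of-pants cobordisms satisfy an associativity relation (gluing two of them along a common boundary $S^3$-piece yields, after handle cancellation, a pair-of-pants cobordism for the appropriate composite triple). So I would first establish the analogue of Cohen's composition-compatibility in our slightly modified setting where $Y_1$ is the solid torus carrying the fiber $\nu$ and a free basepoint; the curves defining $\CFA_\cR(\bX)$ avoid the boundary basepoint, so Cohen's argument applies verbatim, exactly as in the proof of the lemma citing \cite[Theorem 1.1]{cohen2023composition} above. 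Next, Lemma~\ref{lem: X_n commutes} (more precisely its reformulation: $F_{W_n}$ corresponds to postcomposition with $F_{X_n}$ on $\Mor^\cA((\KC)_0,(\KC)_n)$) tells us that feeding $f\in\End^\cA(\CFDh(\KC))$ into the top of the diagram of Proposition~\ref{prop:compute Lambda} produces $F_{X_n}\circ f\in\Mor^\cA((\KC)_0,(\KC)_n)$, and then the pair-of-pants map sends this to (roughly) $\id_\bX\boxtimes(F_{X_n}\circ f)$, i.e. the summand-projection/morphism associated to $f$ lands in $\CFK_\cR$ via $\Lambda(f)\sim (\id\otimes\Tr)\circ(\Gamma_n\circ F_{W_n}\otimes\id)$.

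With that in hand, the computation of $\Lambda(g)\circ\Lambda(f)$ is: write out $\Lambda(g)\circ\Lambda(f)$ as a stack of two copies of the Proposition~\ref{prop:compute Lambda} diagram, use the composition-compatibility of pair-of-pants maps to collapse the two pair-of-pants layers into one, use functoriality of $f\mapsto F_{X_n}\circ f$ (postcomposition is strictly associative, $F_{X_n}\circ(F_{X_n}\circ(-))$ being handled by stacking the drilled cobordisms $X_n$, which glue to give $X_n$ again up to cancelling handles since the relevant 3-manifolds stabilize for large $n$) to identify the composite with the single diagram for $g\circ f$, and conclude $\Lambda(g)\circ\Lambda(f)\sim\Lambda(g\circ f)$. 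The main obstacle is the topological bookkeeping in the middle step: one must check that gluing the ``output $S^3$'' boundary of the cobordism for $\Lambda(f)$ to the ``input $S^3$'' of the cobordism for $\Lambda(g)$, and then simplifying (cancelling the 2-/3-handles and the large-surgery 2-handles, using the $B^4$-from-ribbon-disk handle-trading moves of Figure~\ref{fig:trading} exactly as in the proof of Lemma~\ref{lem: pants cob}), really does yield a \emph{single} pair-of-pants cobordism for the triple $(\bX,(\KC)_0,(\KC)_0)$ with the correctly decorated surface — equivalently, that the two nested ``splice'' constructions compose as expected. I expect this to go through by the same Kirby-calculus moves already used in Lemma~\ref{lem: pants cob}, since the extra layer just introduces a second copy of the same local picture, which cancels against the first.
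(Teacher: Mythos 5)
Your high-level plan matches the paper's: identify both $\Lambda(g)\circ\Lambda(f)$ and $\Lambda(g\circ f)$ as cobordism maps (the latter by realizing composition on $\End^\cA(\CFDh(\KC))$ as a pair-of-pants cobordism map), then show the two composite decorated four-manifolds are diffeomorphic rel boundary using the Kirby-calculus description from Lemma~\ref{lem: pants cob}. However, the paper's proof is considerably shorter and avoids Proposition~\ref{prop:compute Lambda} and Lemmas~\ref{lem: X_n commutes}, \ref{lem:compute (Lambda, C)} entirely: it re-expresses the lemma as commutativity of the square relating $F_{\Lambda,C}$ to the composition $\circ^{op}$ on $\End^\cA(\CFDh(\KC))$, notes that the two paths around the square are induced by $(\Lambda,C)\cup_{(S^3,K)}(\Lambda,C)$ and $(\Omega\amalg (S^3,K)\times I)\cup(\Lambda,C)$ respectively (where $\Omega$ is the pair of pants built from three copies of $(\KC)_0$), and observes these cobordisms are diffeomorphic rel boundary, directly from the handle description. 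Your intermediate step invoking ``functoriality of $f\mapsto F_{X_n}\circ f$,'' and in particular the expression $F_{X_n}\circ(F_{X_n}\circ(-))$ with two drilled cobordisms $X_n$ glued together, does not typecheck: $F_{X_n}$ maps $\CFDh((\KC)_0)$ to $\CFDh((\KC)_n)$, so $F_{X_n}\circ F_{X_n}$ is undefined, and in the composite $(\Lambda,C)\cup_{(S^3,K)}(\Lambda,C)$ the two copies of $W_n$ are attached to the two \emph{disjoint} $S^3_0(K\#\overline{K})$ input boundary components and never stack on top of one another. That detour is unnecessary: if you drop the algebraic unwinding of $\Lambda$ through $F_{W_n}$ and $\Gamma_n$ and simply compare the cobordisms inducing $\Lambda(g)\circ\Lambda(f)$ and $\Lambda(g\circ f)$ directly, your argument collapses to the paper's.
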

\begin{proof}
    It suffices to prove the following diagram is homotopy commutative.   
    \begin{center}
        \begin{tikzcd}[column sep = huge]
            \End^\cA_0(\CFDh(\KC))^{\otimes 2} \otimes \CFK_\mathcal{R}(S^3,K) \ar[r,"\id\otimes \Lambda"]\ar[d, "\circ^{op} \otimes \id"] &
            \End^\cA_0(\CFDh(\KC)) \otimes \CFK_\mathcal{R}(S^3,K) \ar[d,"\Lambda"] \\
            \End^\cA_0(\CFDh(\KC)) \otimes \CFK_\mathcal{R}(S^3,K)
            \ar[r,"\id\otimes \Lambda"] &
            \CFK_\mathcal{R}(S^3,K)
        \end{tikzcd}
    \end{center}
    This is purely topological. The clockwise composition is induced by the cobordism 
    \[
    (\bm{\curlywedge}, C) \cup_{(S^3, K)} (\bm{\curlywedge}, C): L(n,1)\#S^3_0(K \# \overline{K}) \amalg L(n,1)\#S^3_0(K \# \overline{K}) \amalg (S^3, K) \ra (S^3, K). 
    \]
    Let $\Omega$ be the pair of pants cobordism obtained by gluing together $L(n,1)\#(\KC)_0$, $L(n,1)\#(\KC)_0$, and $(\KC)_0$. The counter-clockwise composition is induced by the cobordism 
    \[
    (\Omega \amalg (S^3, K) \times I) \cup_{S^3_0(K\# \overline{K}) \amalg(S^3, K,p)} (\bm{\curlywedge}, C): S^3_0(K \# \overline{K}) \amalg S^3_0(K \# \overline{K}) \amalg (S^3, K) \ra (S^3, K). 
    \]
    From the handle decomposition of $(\bm{\curlywedge}, C)$, it is clear that these two cobordisms are diffeomorphic rel boundary (the two cobordisms only differ by the order the 2-handles are attached). Hence, they induce homotopic maps. 
\end{proof}

As $\Lambda$ is induced by a cobordism map, it follows from the conjugation invariance of the link cobordism maps that $\Lambda$ is compatible with involutions.
\begin{lem}\label{lem: lambda commutes with involutions}
    The map $\Lambda$ makes the following diagram homotopy-commutative.
    \[
    \xymatrix{
    \mathrm{End}^\mathcal{A}_0(\widehat{CFD}(S^3 \smallsetminus K)) \ar[rr]^{\Lambda} \ar[d]_{f\mapsto \iota_{S^3 \smallsetminus K}\circ (\id\boxtimes f) \circ \iota^{-1}_{S^3 \smallsetminus K}} && \mathrm{End}_\cR(CFK_\mathcal{R}(S^3,K)) \ar[d]^{f\mapsto \iota_K \circ f \circ \iota^{-1}_K} \\
     \mathrm{End}^\mathcal{A}_0(\widehat{CFD}(S^3 \smallsetminus K)) \ar[rr]^{\Lambda} && \mathrm{End}_\cR(CFK_\mathcal{R}(S^3,K))
    }
    \]
\end{lem}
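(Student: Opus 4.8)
The strategy is to reduce the claim to a purely topological statement about link cobordisms, in exactly the same spirit as the proof of Lemma \ref{lem:composition}. The map $\Lambda$ is induced by the link cobordism $(\Lambda, C)$, and conjugation by $\iota_K$ (respectively by $\iota_{S^3\smallsetminus K}$) is, after passing through the pairing theorems, induced by the conjugation cobordism (a product cobordism carrying a conjugated decoration, or equivalently the composite of the maps realizing $\overline{\cH}\cup\overline{\AZ}\cup\AZ\cup\overline{\cH}\sim \cH\cup\cH$ used in the definition of the bordered involution). So the heart of the matter is to show that the two composite cobordisms
\[
(\Lambda,C)\circ\bigl(\text{conj}_{S^3\smallsetminus K}\amalg\, \mathrm{id}_{S^3_0(K\#\overline K)}\bigr)\quad\text{and}\quad \text{conj}_K\circ(\Lambda,C)
\]
are diffeomorphic rel boundary (and carry isotopic decorated surfaces), hence induce homotopic maps on (knot) Floer homology by the naturality and functoriality of Zemke's link cobordism maps \cite{zemke_linkcob}, together with the first-order naturality of bordered Floer homology established in \Cref{thm:bordered naturality one boundary}, \Cref{prop: bordered invariants have trivial monodromy} and the compatibility of the pairing-theorem equivalences with Heegaard moves (\cite[Lemma 5.6]{HL_Inv_bordered_floer}).

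The concrete steps I would carry out are: (1) Recall that, by \cite{kang_bordered_involutive_HFK} (the commutative square reproduced in Section \ref{sec: background}), the $\iota_K$-action on $\CFK_\cR(S^3,K)$ is realized by $\iota_{S^3\smallsetminus K}$ tensored with the fixed morphism $f\in\Mor^{\cA}(\CFDh(T_\infty,\nu),\CFDAh(\AZ)\boxtimes\CFDh(T_\infty,\nu))$; since $\Lambda$ is built from a pair-of-pants whose $(T_\infty,\nu)$-leg carries the diagram $\bX$, the two conjugation actions are intertwined through this same tensoring, provided the pair-of-pants construction is natural with respect to the Heegaard moves used to define the bordered involution. (2) Using the explicit handle/cobordism description of $(\Lambda,C)$ from \Cref{lem: pants cob} — i.e. $(\Lambda,C)\cong(\Xi,\Sigma)$, the boundary connected sum of $S^3_n(K\#\overline K)\times I$ with $S^3\times I$, a $0$-framed two-handle on a meridian, and the canonical slice disk for $\overline K\#K$ capping off — track where a conjugation (half Dehn twist along the knot, or the $\AZ$-insertion) on the incoming $S^3\smallsetminus K$ boundary propagates through the cobordism. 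The key geometric observation is that the canonical slice disk $D_{\overline K\#K}$ is conjugation-symmetric in the appropriate sense, and the half Dehn twist on the $K$-summand of the top boundary matches the half Dehn twist carried along $\Sigma$; this is what makes the two composites isotopic. (3) Translate this diffeomorphism back through the morphism pairing theorem and \cite[Theorem 1.1]{cohen2023composition}, and invoke \Cref{lem:free basepoint doubles everything} to dispose of the free basepoint $p$, exactly as in the proof of \Cref{lem:compute (Lambda, C)}.

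The main obstacle I anticipate is step (2): carefully checking that the conjugation symmetry of the cobordism $(\Xi,\Sigma)$ holds on the nose (or up to an isotopy of decorated surfaces that is invisible to the Floer functor). Concretely, one must verify that conjugating the bordered diagram for $S^3\smallsetminus K$ and then performing the sequence of Kirby moves in Figures \ref{fig:lambda_cob1}--\ref{fig:lambda_cob2} produces the same result as performing the moves first and then conjugating the output — and that the bordered Auroux--Zarev interpolating pieces inserted to define $\iota_{S^3\smallsetminus K}$ can be slid along the $e_1$-leg of the pair of pants without obstruction (here the flexibility of $\bX$, namely $\overline{\bX}\cup\overline{\AZ}\sim\bX$ from \cite[Lemma 3.1]{kang_bordered_involutive_HFK}, is essential). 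Once this naturality of the pair-of-pants under conjugation is in hand, the homotopy-commutativity of the square follows formally from functoriality of the cobordism maps and the naturality results of \Cref{subsec:naturality}.
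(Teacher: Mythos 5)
Your plan identifies several of the right ingredients (the morphism pairing theorem identification of $\iota_{\KC}$-conjugation with the $\iota$-action on $\widehat{CF}(S^3_0(K\#\overline{K}))$, naturality of the pairing-theorem equivalences, the flexibility of $\bX$) but misses the decisive shortcut, and the central step as formulated is ill-posed. The paper's proof is two lines: under the morphism pairing theorem, the left vertical map of the square \emph{is} the $\iota$-action on $\widehat{CF}(S^3_0(K\#\overline{K}))$ (this is from the discussion preceding Section 3 of \cite{kang_bordered_involutive_HFK}); and then the homotopy-commutativity of the square is exactly Zemke's equivariance theorem \cite[Theorem~1.3]{zemke_connectedsums}, which says a decorated link cobordism map commutes with the Spin$^c$-conjugation involutions up to replacing the decoration on the surface by its conjugate --- and since $C$ is a cylinder, its decoration is isotopic to its conjugate, so no correction is needed.

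The problem with your step (2) is that conjugation by $\iota_K$ or $\iota_{\KC}$ is not itself a cobordism map: $\iota$ arises from conjugating a Heegaard diagram and choosing Heegaard moves back, not from any four-dimensional cobordism. So the displayed ``two composite cobordisms'' you propose to compare are not both cobordisms, and the claim that they are ``diffeomorphic rel boundary'' has no meaning as written. What you are implicitly reaching for is precisely the content of Zemke's theorem; but rather than cite it, your plan would amount to re-proving it in the special case at hand by chasing the conjugation through the Kirby moves of Figures~\ref{fig:lambda_cob1}--\ref{fig:lambda_cob2}. That is a great deal of unnecessary work, and the ``conjugation-symmetry of the cobordism $(\Xi,\Sigma)$'' you anticipate having to verify is not actually what enters the argument: what matters is that the decoration on $C$ (a pair of parallel arcs on a cylinder) is invariant under conjugation, which is immediate. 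I would also note that step~(3), invoking the free-basepoint doubling lemma, plays no role here; that is used elsewhere but not in the proof of this lemma.
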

\begin{proof}
    It is shown in the discussions preceding \cite[Section 3]{kang_bordered_involutive_HFK} that the conjugation action 
    \[
    f\mapsto \iota_{S^3 \smallsetminus K}\circ (\id\boxtimes f) \circ \iota^{-1}_{S^3 \smallsetminus K}
    \]
    is identified with the $\iota$-action on $\widehat{CF}(S^3 _0 (K\# \overline{K}))$ under the morphism pairing theorem. Hence the lemma follows from \cite[Theorem 1.3]{zemke_connectedsums} and the fact that $C_0$ is a cylinder.
\end{proof}

\begin{cor}\label{cor: invariant splittings CFA to CFK}
    Let $p\in \mathrm{End}_\mathcal{A}(\CFDh(\KC))$ be a projection map which produces an $\iota_{\KC}$-equivariant splitting; note that this is equivalent to the statement that 
    \[
    p \sim \iota_{\KC}\circ (\bI \boxtimes p)\circ \iota^{-1}_{\KC}.
    \]
    Then the map $\Lambda(p)$ is homotopic to an $\iota_K$-equivariant projection map.
\end{cor}
\begin{proof}
    It follows from \Cref{lem: lambda commutes with involutions} that $\Lambda(p)$ is homotopy $\iota_K$-equivariant and from \Cref{lem:composition} that $\Lambda(p)$ is a homotopy projection, as 
    \begin{align*}
        \Lambda(p)^2 \sim \Lambda(p^2) = \Lambda(p). 
    \end{align*}
    According to \Cref{lem: homotopy projection to projection}, $\Lambda(p)$ can be homotoped to an honest projection. To upgrade this to honest equivariant projection, we simply have to replace $\iota_K$ in its homotopy class by
    \[
    \iota^\prime_K = \iota_K + [\Lambda(p),\iota_K],
    \]
    which commutes with $\Lambda(p)$,
    \[
    [\Lambda(p),\iota^\prime_K] = [\Lambda(p),\iota_K] + [\Lambda(p),[\Lambda(p),\iota_K]] = 0
    \]
    since $\Lambda(p)^2 = \Lambda(p)$, as desired.
\end{proof}

The reverse direction follows from the previous lemmas and \Cref{prop: bijective on projection} (which we will prove in the next subsection).

\begin{cor}\label{cor: invariant splittings CFK to CFA}
    Let $p\in \mathrm{End}_\mathcal{A}(\CFDh(\KC))$ be an endomorphism such that $\Lambda(p)$ is an $\iota_K$-equivariant projection. Then $p$ is an $\iota_{\KC}$-equivariant projection.
\end{cor}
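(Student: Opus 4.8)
The plan is to run the argument of the previous corollary in reverse, using the bijectivity of $\Lambda$ on homotopy classes of projections (Proposition \ref{prop: bijective on projection}) together with the compatibility of $\Lambda$ with the two conjugation actions (Lemma \ref{lem: lambda commutes with involutions}). First I would show that $p$ is a homotopy projection: since $\Lambda(p)$ is a projection, in particular $\Lambda(p)^2 \sim \Lambda(p)$, so $\Lambda(p+p^2) \sim \Lambda(p) + \Lambda(p)^2 \sim 0$ by \Cref{lem:composition}; then the computation in the proof of \Cref{prop: bijective on projection} (that $\Lambda$ is injective on the relevant homology, i.e. $\ker\Lambda$ contains no nonzero classes through the large surgery/cobordism maps) forces $p+p^2 \sim 0$, so $p$ is a homotopy projection, and by \Cref{lem: homotopy projection to projection} it is homotopic to an honest projection. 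After replacing $p$ by this honest projection (call it again $p$), $\Lambda(p)$ is still homotopic to the original $\iota_K$-equivariant projection.

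Next I would upgrade homotopy-equivariance of $p$ to genuine equivariance. By \Cref{lem: lambda commutes with involutions}, the diagram relating the conjugation action $f \mapsto \iota_{\KC}\circ(\bI\boxtimes f)\circ \iota_{\KC}^{-1}$ on $\End^{\cA}(\CFDh(\KC))$ and $f \mapsto \iota_K \circ f \circ \iota_K^{-1}$ on $\End(CFK_\cR(S^3,K))$ commutes up to homotopy through $\Lambda$. Since $\Lambda(p)$ is $\iota_K$-equivariant, $\Lambda(\iota_{\KC}\circ(\bI\boxtimes p)\circ\iota_{\KC}^{-1}) \sim \iota_K \circ \Lambda(p)\circ \iota_K^{-1} \sim \Lambda(p)$. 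Both $p$ and $\iota_{\KC}\circ(\bI\boxtimes p)\circ\iota_{\KC}^{-1}$ are (homotopy) projections mapping under $\Lambda$ to the same homotopy class, so by the injectivity part of \Cref{prop: bijective on projection} they are homotopic: $p \sim \iota_{\KC}\circ(\bI\boxtimes p)\circ\iota_{\KC}^{-1}$, i.e. $p$ is a homotopy $\iota_{\KC}$-equivariant projection.

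Finally I would convert homotopy $\iota_{\KC}$-equivariance into strict equivariance exactly as in the proof of \Cref{cor: invariant splittings CFA to CFK}: replacing $\iota_{\KC}$ within its homotopy class by $\iota_{\KC}' = \iota_{\KC} + [\,p\,,\,\iota_{\KC}\,]$ (with the bracket taken in the appropriate graded sense against the module action $\bI\boxtimes(-)$) yields a representative that strictly commutes with $p$, using $p^2 = p$ to kill the nested bracket term. The main obstacle is the first step — extracting $p + p^2 \sim 0$ from $\Lambda(p+p^2)\sim 0$ — which relies on the injectivity of $\Lambda$ established in the proof of \Cref{prop: bijective on projection}; one must check that the class $[p+p^2]$ lives in the part of $H_*\End^{\cA}(\CFDh(\KC)) \cong \HFh(S^3_0(K\#\overline{K}))$ on which $\Lambda$ was shown to be injective, which follows since $p+p^2$ is a boundary-degree-zero endomorphism and $\Lambda$ is injective on all of the relevant summand, not merely on the degree-$(0,0)$ piece. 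Once that is in hand, the rest is a formal manipulation.
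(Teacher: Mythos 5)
Your proposal is correct and follows essentially the same route as the paper: extract an honest projection $p$ from the bijectivity of $\Lambda$ on homotopy classes of projections, form the conjugate $p' = \iota_{\KC}\circ(\bI\boxtimes p)\circ\iota_{\KC}^{-1}$, observe via Lemma \ref{lem: lambda commutes with involutions} and the $\iota_K$-equivariance of $\Lambda(p)$ that $\Lambda(p')\sim\Lambda(p)$, conclude $p\sim p'$ from injectivity, and then perturb $\iota_{\KC}$ within its homotopy class to make the equivariance strict. The only differences are cosmetic: you re-derive the ``$p$ is a homotopy projection'' step that the paper delegates to Proposition \ref{prop: bijective on projection}, and your remark about injectivity beyond the degree-$(0,0)$ piece is unnecessary (and slightly misstated), since $p+p^2$ is degree-preserving and therefore automatically lands in the summand where injectivity is established.
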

\begin{proof}
    It follows from \Cref{prop: bijective on projection} that $p$ is a projection. Consider the conjugate projection
    \[
    p^\prime = \iota_{\KC} \circ (\bI\boxtimes p)\circ \iota^{-1}_{\KC}.
    \]
    By \Cref{lem: lambda commutes with involutions}, this implies that $\Lambda(p^\prime)\sim\Lambda(p)$. Since $\Lambda(p^\prime)$ is also a projection, applying \Cref{prop: bijective on projection} again tells us that $p\sim p^\prime$, i.e. $p$ is homotopy $\iota_{\KC}$-equivariant. As in the proof of \Cref{cor: invariant splittings CFA to CFK}, $\iota_\KC$ can be changed within its homotopy class such that $p$ becomes honestly $\iota_{\KC}$-equivariant.
\end{proof}

\subsection{The Bordered Perspective}

In order to relate splittings of $\CFK_\cR(S^3, K)$ induced by $\Lambda$ to splittings coming from the \LOT correspondence, we need a bordered interpretation of $\Lambda$. 

This is straightforward for the cobordism $W_n$. We can express $W_n = X_n \cup (\KC \times I)$, where $X_n$ is obtained by attaching the 2-handle of $W_n$ to $L(n,1)\#(\KC)$. The cobordism map induced by $W_n$ can be computed as follows.

\begin{lem}\label{lem: X_n commutes}
Let $X_n: (\KC)_0\#L(n,1) \ra (\KC)_n$ be the cobordism with corners obtained from $W_n$ by drilling out the thickened annulus $\nu(K) \times I$. Then, there is a map $F_{X_n}: \CFDh((\KC)_0\#L(n,1))\ra \CFDh((\KC)_n)$ defined by counting holomorphic triangles which makes the following diagram
    \begin{center}
        \begin{tikzcd}
            \CFAh(-(\KC)_0)\boxtimes \CFDh(((\KC)_0\#L(n,1)) \ar[r] \ar[d,"\bI \boxtimes F_{X_n}"] 
            &  \CFh(S^3_0(K\#\overline{K})\#L(n,1)) \ar[d,"F_{W_n}"] \\
            \CFAh(-(\KC)_0)\boxtimes \CFDh((\KC)_n)  
            \ar[r]
            &  \CFh(S^3_n(K\#\overline{K}))
        \end{tikzcd}
    \end{center}
    commute up to homotopy. The horizontal arrows are given by the \LOT pairing theorem. Moreover, if $n$ is sufficiently large, the map $F_{W_n,\frak{u}_0}(-,\Theta)$ can be computed from the map $\bI \boxtimes F_{X_n}$.
\end{lem}

Throughout this section, we establish the convention:
$$\Mor^\cA(Y_1, Y_1):= \Mor^{\cA}(\CFDh(Y_1), \CFDh(Y_2)).$$ 
We also note that reformulating this lemma in terms of the morphism pairing theorem shows that the cobordism map $F_{W_n}$ is homotopic to the map 
\[
\Mor^\cA((\KC)_0, (\KC)_0) \ra \Mor^\cA((\KC)_0, (\KC)_N)
\]
given by post-composition with $F_{X_n}$. See \Cref{lem:composition_to_boxtensor}.

\begin{proof}
    Fix a doubly pointed Heegaard diagram $\cH_K = (\Sigma, \alpha, \beta, w, z)$ for $K$ in $S^3$. Attach a two-dimensional 1-handle, which we identify with $S^1 \times [-1, 1]$, to $\cH_K$ with feet near $w$ and $z$. Add two additional alpha circles: $\alpha_0$ which we identify with $S^1 \times \{0\}$ and $\alpha_{1}$ which is a longitude of $K$ in $\Sigma$ and intersects $\alpha_0$ in single point, $P$; next we attach an additional beta circle, $\beta_0$ which is geometrically dual to $\alpha_0$. Let $\cH_0$ be the bordered Heegaard diagram $(\Sigma \smallsetminus \nu(P), \alpha_0^a,\alpha_1^a, \alpha,\beta_0\cup \beta)$, where $\alpha_0^a$ and $\alpha_1^a$ are the arcs $\alpha_0 \smallsetminus \nu(P)$ and $\alpha_1 \smallsetminus \nu(P)$ respectively. We also ensure that $\beta_0$ wraps around the new 1-handle sufficiently many times to ensure the boundary is equipped with the $0$-framing.\\ 

    Let $\gamma$ be a collection of curves in $\Sigma$ obtained by Hamiltonian isotopies of the original $\beta$ curves. Let $\gamma_n$ be obtained from $\beta_0$ by winding $n$ additional times around the 1-handle. Hence, the bordered Heegaard diagram $\cH_n:=(\Sigma \smallsetminus \nu(P), \alpha_0^a,\alpha_1^a, \alpha, \gamma_n\cup \gamma)$ represents the $n$-framed complement of $K$. The curves $\beta_0$ and $\gamma_n$ intersect in $n$ points, $x_0, \hdots, x_{n-1}$; in fact, $\CFh(\Sigma, \beta_0\cup \beta, \gamma_n\cup \gamma)\simeq \CFh(\#^{g(\Sigma)-1}S^1\times S^2 \# L(n,1))$. Let $\Theta_0^{\beta, \gamma}$ be the top graded intersection point in the zeroth Spin$^c$ structure. Counting holomorphic triangles yields a map 
    \[
    \CFDh(\cH_0) \xrightarrow{m_2({-}, \Theta^{\beta, \gamma}_0)} \CFDh(\cH_n).
    \]
    Finally, fix a diagram $\cH = (\Sigma', \alpha_0^{a'}, \alpha_1^{a'}, \alpha',\beta')$ for $-(\KC)_0$. Let $\gamma'$ be a push off of the $\beta'$ curves and let $\Theta^{\beta', \gamma'}$ be a top graded generator of $\CFh(\beta', \gamma')$. We may now apply the pairing theorem for triangles to obtain the homotopy commutative square
    \begin{center}
        \begin{tikzcd}[column sep = huge]
        \CFAh(\cH)\boxtimes \CFDh(\cH_0) \ar[d, "\simeq"]  \ar[rr, "m_2({-}{,}\Theta^{\beta', \gamma'}) \boxtimes m_2({-} {,} \Theta^{\beta, \gamma})"]& &
         \CFAh(\cH)\boxtimes \CFDh(\cH_N)   \ar[d,"\simeq"] \\
        \CFh(\cH\cup \cH_0)   \ar[rr, "m_2({-}{,}\Theta^{\beta, \gamma} \cup \Theta^{\beta', \gamma'})"] & &
           \CFh(\cH\cup \cH_N).
        \end{tikzcd}
    \end{center}
    Under the identification of $(\Sigma', \alpha_0^{a'}, \alpha_1^{a'}, \alpha',\beta')$ with $(\Sigma', \alpha_0^{a'}, \alpha_1^{a'}, \alpha',\gamma')$, the map $m_2({-},\Theta^{\alpha', \beta'})$ is the identity. Moreover, the Heegaard triple obtained by gluing together the two bordered Heegaard triples above is, by construction, subordinate to the framed link defining the cobordism from $S^3_0(K\#\overline{K})\# L(n,1) \ra S^3_n(K\#\overline{K})$ (see, for instance \cite[Theorem 3.1]{os_HFK_integer_surgeries}). Therefore, $m_2({-}, \Theta^{\alpha, \beta} \cup \Theta^{\alpha', \beta'})$ is precisely this cobordism map. Let $\Theta_B := \Theta^{\alpha, \beta} \cup \Theta^{\alpha', \beta'}$.

    Finally, the map $\bI\boxtimes F_{X_n}({-},\Theta_B)$ computes the map $\sum_{\fraks \in \Spinc(W_n)} F_{W_n,\fraks}({-},\Theta)$. As $\Spinc$-structures on $F_{W_n}$ are determined by their restrictions to $\partial_-W_n \cong S^3_0(K\#\overline{K})\# L(n,1)$, it is clear that the restriction of $\bI\boxtimes F_{X_n}({-},\Theta_B)$ to $\CFh(S^3_0(K\#\overline{K}),[0])$ is homotopic to $F_{W_n,\frak{u}_0}({-},\Theta)$. Since this map is supported on finitely many $\Spinc$-structures, as long as $n$ is large, the image will be contained in $\CFh(S^3_n(K\#\overline{K}),[0])$.
\end{proof}

\begin{rem}
    Going forward, we will abuse notation, and write $F_{X_n}$ for the map $F_{X_n}({-},\Theta_B)$. 
\end{rem}

The cobordism $(P, C)$ is slightly more problematic. By the work of Cohen \cite{cohen2023composition}, the pair of pants cobordism associated to a triple $(T_\infty, \nu), (\KC)_0, (\KC)_n$: 
\[
(P, C):(-(\KC)_0\cup (\KC)_n)\amalg(-(T_\infty, \nu)\cup (\KC)_0)\ra (-(T_\infty, \nu)\cup (\KC)_n). 
\]
realizes the map 
\begin{align*}
    \Mor^\cA((\KC)_0, (\KC)_n) \otimes \CFA^-(T^\infty, \nu)\boxtimes\CFDh((\KC)_0) \ra \CFA^-(T^\infty, \nu)\boxtimes\CFDh((\KC)_n)
\end{align*}
given by $f\otimes (a \boxtimes x)\mapsto \bI\boxtimes f(a \boxtimes x)$. Here, $\CFA^-(T^\infty, \nu)\boxtimes\CFDh((\KC)_0) \simeq \CFK^-(S^3, K)$. Here, $\CFK^-(S^3, K)$ is the complex whose differential is defined by counting holomorphic curves are not allowed to cover one of the two basepoints. As the $z$-basepoint of $\nu$ is contained in the region of $(T^\infty, \nu)$ containing the Reeb chord $\rho_0$, this construction does not give rise to a map to $\CFK_\cR(S^3, K)$. 

To circumvent this issue, we replace the bordered Heegaard diagram $(T^\infty, \nu)$ with the diagram $\bX$. The associated pair of pants cobordism is topologically identical to $(P, C)$, but the presence of the free basepoint $p$ changes the configuration of basepoints. Above, we replaced the basepoint $q$ in $S^3_n(K \# \overline{K})$ with a local unknot $\lambda$ which is ultimately merged with $K$, and whose basepoints are quasi-destabilized in the connected sum cobordism. But, in the bordered setting, the basepoints are contained in the boundaries of the various bordered 3-manifolds. When the pair of pants cobordism is built by gluing together $(T_\infty, \nu)\times I, (\KC)_0\times I,$ and $(\KC)_n\times I$, the basepoints form a trivalent graph $\frak{G}$, which is disjoint from $C$. By replacing the basepoints $p$ and $q$ with local unknots $U_p$ and $U_q$, and the graph $\frak{G}$ with the link cobordism $\partial(\frak{G}\times D^2)$, we obtained a well-defined cobordism map 
\begin{align*}
    F_{P, C, \frak{G}}: \CFK_\cR(S^3_0(K\#\overline{K}), U_q)\otimes_\cR \CFK_\cR(S^3, K\cup U_p) \ra \CFK_\cR(S^3, K\cup U_p),
\end{align*}
associated to the triple $(P, C, \frak{G})$. We will at times abbreviate $F_{P, C, \frak{G}}$ by $F_{\frak{G}}$. We will continue to implicitly identify $\CFK_\cR(S^3_0(K\#\overline{K}), U_q) \simeq \CFh(S^3_0(K\#\overline{K}))\otimes_\F \cR$.

\begin{lem}{\cite[Theorem 1.1]{cohen2023composition}}\label{lem:cohen}
    Let $K$ be a knot in $S^3$ and let $\bX$ be the triply pointed Heegaard diagram for a fiber in the solid torus from Section \ref{sec: background}. Then, the pair of pants link cobordism $(P, C,\frak{G})$ induced by the triple $\bX$, $\KC$, and $\KC$ fits into a homotopy commutative square 
    \begin{center}
        \begin{tikzcd}
            \Mor^\cA((\KC)_0, (\KC)_n) \otimes_\F \CFAx\boxtimes\CFDh((\KC)_0)   
            \ar[r]\ar[d,"f\otimes a\boxtimes x \mapsto \bI \boxtimes f(a\boxtimes x)"]
            &  \CFh(S^3_n(K\#\overline{K})) \otimes_\F \CFK_\cR(S^3,K,p)\ar[d,"F_{\frak{G}}"]\\
            \CFAx\boxtimes\CFDh((\KC)_n)  \ar[r] & 
                \CFK_\cR(S^3,K,p)
        \end{tikzcd}
    \end{center}
    where the horizontal arrows are given by the pairing theorem. 
\end{lem}

\begin{proof}
    Since the $\cA_\infty$-operations on $\CFA_{\cR}(\bX)$ are defined in terms of rigid holomorphic curves which do not cover the boundary basepoint, the same argument as in the proof of \cite[Theorem 1.1]{cohen2023composition} applies.
\end{proof}

Using this cobordism, we obtain a map which closely resembles $\Lambda$. 

\begin{defn}
    Let $\Lambda_p$ be the map
    \begin{align*}
        \Lambda_p:=F_{P, C, \frak{G}, \frak{y}_0} \circ F_{W_n, \frak{u}_0}(\Theta, -): \CFh(S^3_0(K\#\overline{K}), [0]) \otimes_\F \CFK_\cR(S^3, K, p) \ra \CFK_\cR(S^3,K,p),
    \end{align*}
    where $(\bm{\curlywedge}, C) = (P\cup W_n, C)$ and $n$ is large. Under the morphism paring theorem, $\Lambda_p$ therefore determines a map
    \[
        \Lambda_p: \End^\cA_0(\CFDh(\KC)) \ra \End_\cR(\CFK_\mathcal{R}(S^3,K, p)),
    \]
    where $\End^\cA_0(\CFDh(\KC)) \subset \End^\cA(\CFDh(\KC))$ is the summand corresponding to $\CFh(S^3_0(K\#\overline{K}), [0])$.
\end{defn}

To relate $\Lambda$ and $\Lambda_p$, we introduce a third map $\Lambda_\bullet$. Let $(\bm{\curlywedge},C, \gamma)$ be the cobordism obtained from $(\bm{\curlywedge},C)$ by choosing a path from a point in $S^3_0(K\#\overline{K}) \subset \partial_-(\bm{\curlywedge},C)$ to a point in $(S^3,K) \subset \partial_+(\bm{\curlywedge},C)$ away from $K$. We define $\Lambda_\bullet$ to be the associated map,
\begin{align*}
    \CFK_\cR(S^3_0(K\#\overline{K}), U_q) \otimes_\cR \CFK_\cR(S^3,K, p) \ra \CFK_\cR(S^3,K,p).
\end{align*}

For any link cobordism, $(W, \cF)$, adding a free basepoint arc has a very predictable effect on the induced map. 

\begin{figure}[h]
\def\svgwidth{.5\linewidth}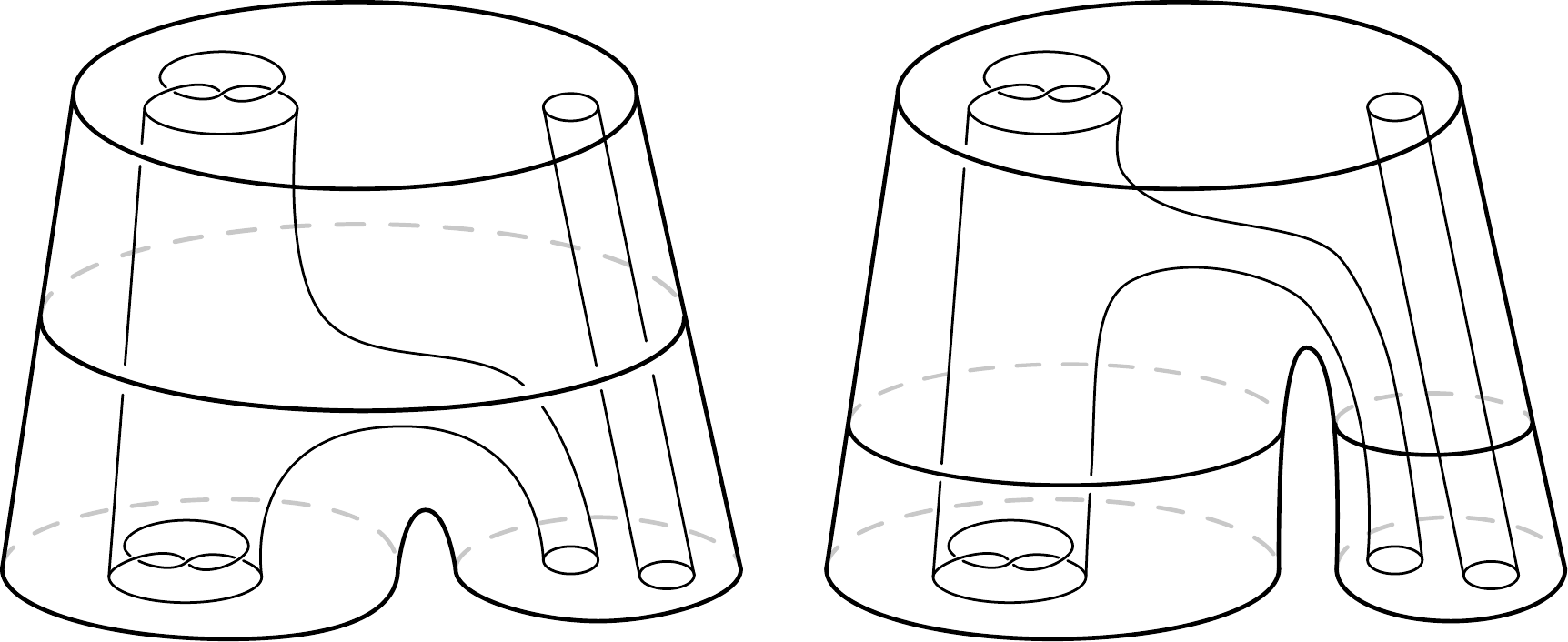
\caption{The cobordisms $F_{W, \cF\cup C_\gamma}\circ G$ and $G\circ (F_{W,\cF}\otimes F_{S^3\times I, (U\cup U) \times I})$}\label{fig:basepoint_cob}
\end{figure}

\begin{lem} \label{lem:free basepoint doubles everything}
    Let $(W, \cF): (Y, K) \ra (Y', K')$ be a decorated link cobordism in the sense of \cite{zemke_linkcob}. Let $p$ and $p'$ be free basepoints in $Y$ and $Y'$ respectively and let $\gamma$ be a path in $W$ connecting $p$ and $p'$. Replace $p$ and $p'$ by local unknots $U_p$ and $U_{p'}$ and $\gamma$ by a cylinder $C_\gamma$ decorated by a pair of parallel arcs. Then, 
    \[
    F_{W, \cF \cup C_\gamma} \circ G_{K,U} \sim G_{K',U} \circ (F_{W, \cF} \oplus  F_{W, \cF}).
    \]
\end{lem}

\begin{proof}
    This follows from the fact that the two compositions correspond to diffeomorphic cobordisms; see Figure \ref{fig:basepoint_cob}). The result follows.
\end{proof}

In particular, after identifying $CFK_\cR(S^3,K)\oplus CFK_\cR(S^3,K)$ and $CFK_\cR(S^3,K,p)$ via the connected sum map, we have that $\Lambda_\bullet \simeq \Lambda\oplus \Lambda$. 

Next, we relate $\Lambda_\bullet$ to $\Lambda_p$. The idea is very similar to the proof above. We will precompose with a simple connected sum cobordism, and relate the resulting 4-manifolds. Again, we will replace free basepoints with local unknots.

\begin{figure}[h]
\def\svgwidth{.8\linewidth}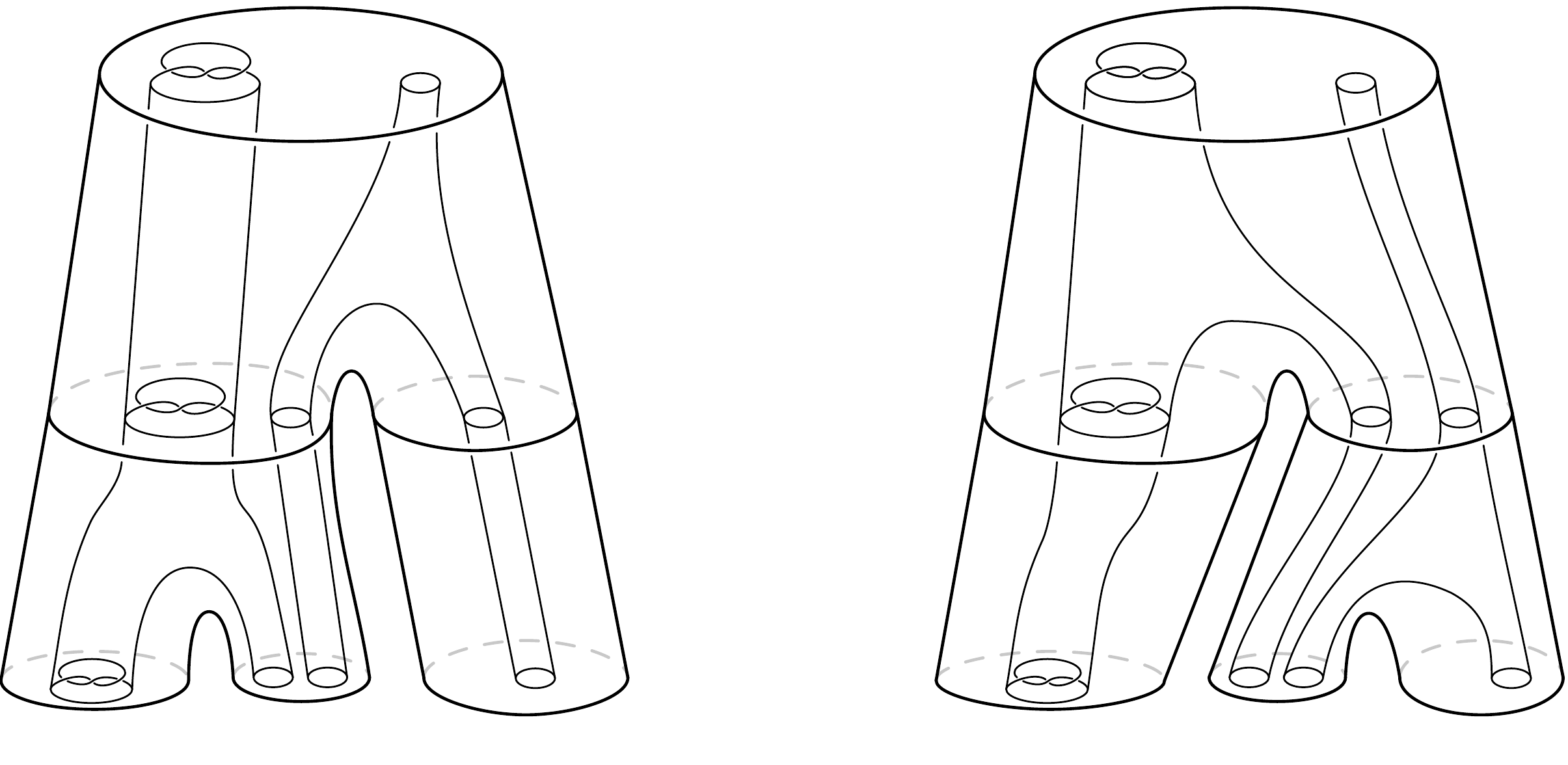
\caption{A diffeomorphism giving a relation between the cobordisms $\Lambda_\bullet$ and $\Lambda_p$.}
\label{fig:trivalent_graph_cob}
\end{figure}

\begin{lem}\label{lem: lambda bullet is lambda p}
    There is a homotopy commutative diagram: 
    \begin{align*}
    \begin{tikzcd}[ampersand replacement = \&, column sep = large]
        \CFK_\cR(S^3_0(K\#\overline{K}), U_r) \otimes_\cR \CFK_\cR(S^3, U_p\cup U_q) \otimes_\cR \CFK_\cR(S^3,K) \ar[r,"\id \otimes G_{U_p,K}"]\ar[d,"G_{U_r,U_q}\otimes \id"] \& \CFKh(S^3_0(K\#\overline{K}), \lambda\cup U_p) \otimes_\cR \CFK_\cR(S^3,K) \ar[d,"\Lambda_\bullet"]\\
        \CFK_{\cR}(S^3_0(K\#\overline{K}), \lambda) \otimes_\cR \CFK_\cR(S^3, K\cup U_p) \ar[r,"\Lambda_p"] \& \CFK_\cR(S^3,K\cup U_p),
    \end{tikzcd}
    \end{align*}    
    where $\Lambda_\bullet$ is the map induced by the cobordism obtained from $(\bm{\curlywedge}, C)$ by adding a free basepoint arc. In particular, the homotopy equivalences $G_{U_p,U_q}$ and $G_{K,U}$ identify $\Lambda_p$ and $\Lambda_\bullet.$
\end{lem}

\begin{proof}
    This is again topological. \Cref{fig:trivalent_graph_cob} shows two decompositions of the same cobordism; the first cobordism corresponds to the counter-clockwise composition, and the second corresponds to the clockwise composition. 

    Recall that in our description of the cobordism $(P,C)$, a local unknot $\lambda$ is birthed in $S^3_n(K\#\overline{K})$ which merges with the knot $K$. In the first frame of \Cref{fig:trivalent_graph_cob}, the birth cobordism $\cD: \emptyset \ra \lambda$ is absorbed into $C$ and so $\lambda$ does not appear in the upper boundary of the connected sum cobordism. In the second cobordism however, the birth disk $\cD$ is merged with the cylinder emanating from the unknot $U_q$; therefore, there is a knot which appears in the boundary, which we identify with $\lambda$.
    %\textcolor{blue}{Is this clear? Or should I say something more?}
\end{proof}

In particular, up to the identifications given by taking connected sums, we may identify $\Lambda_p$ with $\Lambda_\bullet$, which in turn can be identified with $\Lambda \oplus \Lambda$.

\begin{comment}
    which is concretely defined as follows. Note that, in the definition of $(\bm{\curlywedge},C)$, we have the free basepoint $p$; however, even if we remove $p$, the same definition still makes sense. To be more precise, by mimicking the definition of $(P,C)$ but without using $p$, we get a cobordism
\[
(P_0,C_0) : S^3 _n (K\# \overline{K}) \amalg (S^3,K) \rightarrow (S^3,K).
\]
Then we define a link cobordism $(\bm{\curlywedge}_0,C_0)$ as the following composition:
\[
(\bm{\curlywedge}_0,C_0) = (P_0,C_0)\circ (W_n \amalg (S^3,K)\times I).
\]
\begin{defn}
For any type D endomorphism $f\in \End^\mathcal{A}(\widehat{CFD}(S^3 \smallsetminus K))$, regarded as an element of $\widehat{CF}(S^3 _0 (K\# \overline{K}))$ via morphism pairing theorem, we define
\[
\Lambda(f) = F_{\bm{\curlywedge}_0,C_0}(f)\in CFK_\cR(S^3,K\# \overline{K}) \simeq \End(CFK_\cR(S^3,K)).
\]
Note that, by \Cref{lem:composition} and \Cref{lem:free basepoint doubles everything}, $\Lambda$ descends to a ring homomorphism on homology.
\end{defn}
\end{comment}

\subsection{The Projection Correspondence}

We now turn to the proofs of \Cref{prop:doubling-CFA-to-CFK} and \Cref{prop: bijective on projection}. 

The action of $\Lambda_p$ on projection maps is straightforward: if $\pi \in \End^\cA(\CFDh(\KC))$ is a projection map, it follows from the \Cref{lem: X_n commutes} and \Cref{lem:cohen} that $\Lambda_p(\pi) \sim \bI_\bX \boxtimes (F_{X_n}\circ \pi)$. Moreover, we now prove that the summand of $\CFK_\cR(S^3, K, p)$ determined (up to homotopy) by this (homotopy) projection, is homotopy equivalent to the summand determined by $\bI_\bX \boxtimes \pi$. 

\begin{figure}
\def\svgwidth{.8\linewidth}%% Creator: Inkscape 1.3.2 (091e20e, 2023-11-25), www.inkscape.org
%% PDF/EPS/PS + LaTeX output extension by Johan Engelen, 2010
%% Accompanies image file '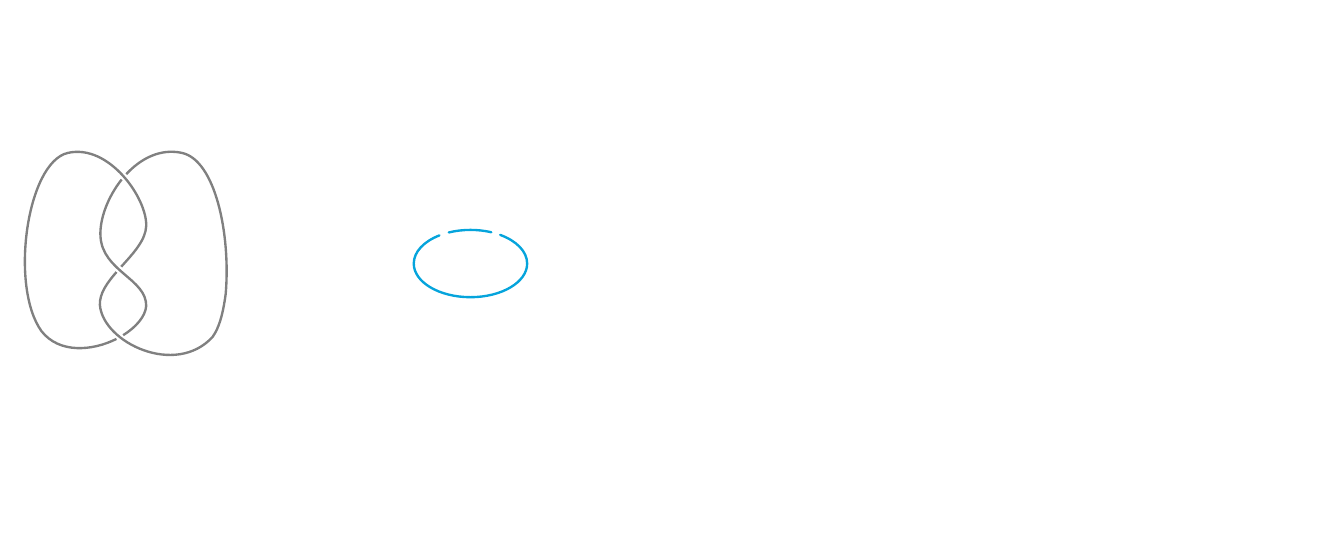' (pdf, eps, ps)
%%
%% To include the image in your LaTeX document, write
%%   \input{<filename>.pdf_tex}
%%  instead of
%%   \includegraphics{<filename>.pdf}
%% To scale the image, write
%%   \def\svgwidth{<desired width>}
%%   \input{<filename>.pdf_tex}
%%  instead of
%%   \includegraphics[width=<desired width>]{<filename>.pdf}
%%
%% Images with a different path to the parent latex file can
%% be accessed with the `import' package (which may need to be
%% installed) using
%%   \usepackage{import}
%% in the preamble, and then including the image with
%%   \import{<path to file>}{<filename>.pdf_tex}
%% Alternatively, one can specify
%%   \graphicspath{{<path to file>/}}
%% 
%% For more information, please see info/svg-inkscape on CTAN:
%%   http://tug.ctan.org/tex-archive/info/svg-inkscape
%%
\begingroup%
  \makeatletter%
  \providecommand\color[2][]{%
    \errmessage{(Inkscape) Color is used for the text in Inkscape, but the package 'color.sty' is not loaded}%
    \renewcommand\color[2][]{}%
  }%
  \providecommand\transparent[1]{%
    \errmessage{(Inkscape) Transparency is used (non-zero) for the text in Inkscape, but the package 'transparent.sty' is not loaded}%
    \renewcommand\transparent[1]{}%
  }%
  \providecommand\rotatebox[2]{#2}%
  \newcommand*\fsize{\dimexpr\f@size pt\relax}%
  \newcommand*\lineheight[1]{\fontsize{\fsize}{#1\fsize}\selectfont}%
  \ifx\svgwidth\undefined%
    \setlength{\unitlength}{635.61820924bp}%
    \ifx\svgscale\undefined%
      \relax%
    \else%
      \setlength{\unitlength}{\unitlength * \real{\svgscale}}%
    \fi%
  \else%
    \setlength{\unitlength}{\svgwidth}%
  \fi%
  \global\let\svgwidth\undefined%
  \global\let\svgscale\undefined%
  \makeatother%
  \begin{picture}(1,0.40359988)%
    \lineheight{1}%
    \setlength\tabcolsep{0pt}%
    \put(0,0){\includegraphics[width=\unitlength,page=1]{lens_cob.pdf}}%
    \put(0.34865005,0.15274776){\color[rgb]{0,0.63529412,0.85882353}\makebox(0,0)[lt]{\smash{\begin{tabular}[t]{l}{\small$0$}\end{tabular}}}}%
    \put(0,0){\includegraphics[width=\unitlength,page=2]{lens_cob.pdf}}%
    \put(0.46931148,0.25771244){\color[rgb]{0.83137255,0,0}\makebox(0,0)[lt]{\smash{\begin{tabular}[t]{l}{\small$\langle n \rangle $}\end{tabular}}}}%
    \put(-0.0012226,0.27618506){\color[rgb]{0.50196078,0.50196078,0.50196078}\transparent{0.917647}\makebox(0,0)[lt]{\smash{\begin{tabular}[t]{l}{\small$K$}\end{tabular}}}}%
    \put(0,0){\includegraphics[width=\unitlength,page=3]{lens_cob.pdf}}%
  \end{picture}%
\endgroup%

\caption{Left: The factorization of the cobordism appearing in Lemma \ref{lem: change framing cob}. Right: A model computation.}\label{fig:lens_cob}
\end{figure}

\begin{lem}\label{lem: change framing cob}
    Let $M$ be a summand of $\CFDh(-(\KC)_0)$. Then $\CFAx \boxtimes M$ is homotopy equivalent to $\CFAx \boxtimes F_{X_n}(M)$. 
\end{lem}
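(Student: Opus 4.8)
The plan is to reduce the lemma to the single claim that the map
\[
\bI_{\CFAx}\boxtimes F_{X_n}\colon \CFAx\boxtimes\CFDh((\KC)_0)\longrightarrow \CFAx\boxtimes\CFDh((\KC)_n)
\]
is a homotopy equivalence (for $n$ large). Granting this, the lemma is formal: a homotopy equivalence carries a direct summand to a direct summand of homotopy-equivalent type, and by functoriality of the box tensor product the restriction of $\bI_{\CFAx}\boxtimes F_{X_n}$ to $\CFAx\boxtimes M$ is precisely the map onto $\CFAx\boxtimes F_{X_n}(M)$.

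To prove the claim I would first trade the algebraic map for a cobordism map. Running the holomorphic-triangle pairing argument of Lemma~\ref{lem: X_n commutes} verbatim, but capping the drilled torus boundary of $X_n$ with $(T_\infty,\nu,p)\times I$ rather than with $(-(\KC)_0)\times I$, identifies $\bI_{\CFAx}\boxtimes F_{X_n}$ up to homotopy with the link cobordism map $F_{\widehat X_n,\,C_0}$ of
\[
\widehat X_n:=X_n\cup_{T^2\times I}\bigl((T_\infty,\nu,p)\times I\bigr)\colon (S^3,K,p)\to(S^3,K,p),
\]
where $C_0$ is the annulus $K\times I$ decorated with a parallel pair of arcs. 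Since $X_n$ is by construction the complement of the tube $\nu(K)\times I$ inside $W_n$, the manifold $\widehat X_n$ is obtained from $W_n$ by excising that tube and regluing $T_\infty\times I$ in its place.

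Finally I would identify $\widehat X_n$ by Kirby calculus; this is the content of Figure~\ref{fig:lens_cob}. Sliding the regluing torus over the $0$-framed two-handle of $W_n$ and trading four-dimensional handles against bands, $\widehat X_n$ should become the boundary connected sum of the product cobordism $(S^3,K,p)\times I$ with a model cobordism $V_n\colon S^3\to S^3$ of the type appearing in the integer surgery exact triangle, with $K$, the free basepoint, and the decoration all carried in the product factor. Consequently $F_{\widehat X_n,\,C_0}\sim F_{V_n}\otimes\id_{\CFK_\cR(S^3,K,p)}$, and it remains to see that $F_{V_n}$ is a homotopy equivalence. This is exactly the unknot model computation already performed inside the proof of Proposition~\ref{prop: bijective on projection}: for $n$ large the composite of the relevant two-handle map with the large-surgery identification is an isomorphism in the relevant $\mathrm{Spin}^c$ structure and grading, so $F_{V_n}$ is a homotopy equivalence. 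Hence $\bI_{\CFAx}\boxtimes F_{X_n}$ is a homotopy equivalence, and the lemma follows.

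The step I expect to be the real work is the middle one: pinning down $\widehat X_n$ by handle calculus while correctly tracking the knot $K$, the free basepoint, and every framing, and verifying that $K$ and the decoration genuinely end up in a product factor so that the cobordism map splits off the nontrivial factor $F_{V_n}$ as an honest tensor factor. Once $\widehat X_n$ has this shape, the accompanying model computation is a routine reprise of the $d$-invariant and surgery-exact-triangle bookkeeping already carried out for Proposition~\ref{prop: bijective on projection}.
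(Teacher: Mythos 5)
Your proposal follows the paper's argument in all essentials: reduce to showing $\bI_{\CFAx}\boxtimes F_{X_n}$ is a homotopy equivalence, identify this map with the cobordism $S^3_n(U)\#(S^3,K,p)\to(S^3,K,p)$ capped with $\Theta_{\fraks_0}$, split off the $K$-independent factor via a connected-sum argument, and conclude by a model computation for the unknot. One small correction: the model computation the paper actually does is a direct genus-one holomorphic-triangle count (Figure~\ref{fig:lens_cob}, via Zemke's duality for pair-of-pants cobordisms), not a reprise of the $d$-invariant bookkeeping from Proposition~\ref{prop: bijective on projection}; the latter computation concerns the composite $\Gamma_n\circ F_{W_n,\fraks_0}$ rather than the $L(n,1)\#S^3\to S^3$ two-handle map needed here, though either method would verify the required nonvanishing.
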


\begin{proof}
    Consider the handle decomposition for $(\bm{\curlywedge}, C)$ again in \Cref{fig:P_handle_decomp}. The 2-handle attached along the curve labeled $b$ can be attached at any point during the cobordism; so far, we have viewed this 2-handle as being attached immediately. But, we could instead attach it last (i.e., we can commute it past the pair of pants cobordisms). We should be slightly careful with $\Spinc$-structures. When we commute $W_n$ through $(P,C)$, we decompose $\bm{\curlywedge}$ as $W_n' \circ P'$, where $W_n': L(n,1) \# (S^3, K) \ra S^3, K)$ is given by attaching the 2-handle and $P'$ is the pair of pants cobordism associated to the triple $(T_\infty, \nu, p), (\KC)_0, (\KC)_0$. We need to determine how $\fraks_0$ restricts to $W_n'$ and $P'$. Utilizing \Cref{fig:P_handle_decomp} once again, we see that under the identifications $\Spinc(W_n) \cong \Z\langle a \rangle$ and $\Spinc(P) \cong \Z\langle c \rangle$, the restriction maps are given by the projections $(a, c)\mapsto a$ and $(a, c)\mapsto c$. 
    
    Therefore, by the invariance of the Heegaard Floer cobordism maps, there is a homotopy commutative diagram 
    \begin{center}
        \begin{tikzcd}
            \Mor^\cA((\KC)_0, (\KC)_0)\otimes (\CFAx\boxtimes\CFDh((\KC)_0)) \ar[d,"f \otimes (a \boxtimes v) \mapsto (\bI\boxtimes (F_{X_n} \circ f))(a \boxtimes v)"]\ar[r,"ev"] &
            (\CFAx\boxtimes\CFDh((\KC)_0) )\ar[d," \bI \boxtimes F_{X_n}"]\\
            \Mor^\cA((\KC)_0, (\KC)_n)\otimes (\CFAx\boxtimes\CFDh((\KC)_0))\ar[r,"ev"] &
            (\CFAx\boxtimes\CFDh((\KC)_n)).
        \end{tikzcd}
    \end{center}
    When $f = \pi$ is the projection to $M$, then the counter clockwise composition maps to $\CFAx\boxtimes F_{X_n}(M)$; the other composition maps to $(\bI \boxtimes F_{X_n})(\CFAx \boxtimes M)$. We claim that the map $\bI \boxtimes F_{X_n}$ is a homotopy equivalence. \\

    First, note that $\CFAx \boxtimes \CFDh((\KC)_0)\simeq \CFK_\cR(S^3, K,p) \simeq \CFAx  \boxtimes \CFDh((\KC)_n)$. As noted above, the map $\bI \boxtimes F_{X_n}$ is induced by the cobordism $S^3_n(U) \# (S^3, K,p) \ra (S^3, K,p)$ obtained by attaching a 0-framed 2-handle along the connected sum of the meridians of $U$ and $K$, and evaluating on the unique generator of $\widehat{HF}(L(n,1),[0])$.
    
    When $K$ is the unknot, this map can be computed by a model computation. The map is induced by the pair of pants cobordism, which by \cite{zemke_duality_mapping_tori}, is given by counting triangles in a genus one Heegaard diagram; see Figure \ref{fig:lens_cob}. The total cobordism map $\CFK_\cR(S^3, U) \ra \CFK_\cR(S^3, U)$ takes $1 \mapsto 1 + U^n + V^n$. One such triangle is shown in \ref{fig:lens_cob}. This is not a homogeneous element; the only term which is contributed by the component of the map in the $\Spinc$-structure $[0]$ on $W_n'$ is the first. Hence, in this $\Spinc$-structure, the map is indeed a homotopy equivalence. 
    
    The general case now follows from the fact that the connected sum cobordism $(S^3,K) \amalg (S^3, U) \ra (S^3,K\#U)$ induces a homotopy equivalence and commutes with the cobordism described above. Hence, the given map is actually homotopic to identity. The lemma follows.
\end{proof}

\Cref{prop:doubling-CFA-to-CFK} is now a simple consequence. 
\begin{proof}[Proof of \Cref{prop:doubling-CFA-to-CFK}]
    Let $\pi$ be the projection to the summand $M$ of $\CFDh(\KC)$. By \Cref{lem:free basepoint doubles everything} and \Cref{lem: lambda bullet is lambda p}, $$\Lambda_p(\pi) \sim \Lambda(\pi) \oplus \Lambda(\pi)$$ for $\Lambda(\pi) \in \End_\cR(\CFK_\cR(S^3, K))$. By \Cref{lem:composition}, we have that $\Lambda(\pi)$ is a homotopic to a projection, as:
    \begin{align*}
        \Lambda(\pi) = \Lambda(\pi^2) \sim \Lambda(p)^2.
    \end{align*}
    Hence, by \Cref{lem: homotopy projection to projection}, we can take $\Lambda(\pi)$ to be an honest projection. By Lemmas \ref{lem: X_n commutes} and \ref{lem:cohen}, $\Lambda_p$ is homotopic to the composition $\bI_\bX \boxtimes (F_{X_n}\circ \pi)$. By \Cref{lem: change framing cob}, the image of this projection map is homotopy equivalent to $\CFAx \boxtimes M$. This establishes the first claim of \Cref{prop:doubling-CFA-to-CFK}. 

    The second claim, that $\iota_{\KC}$-equivariant splittings are taken to $\iota_K$ invariant splittings, follows from \Cref{cor: invariant splittings CFA to CFK}.
\end{proof}

We now turn to the proof of \Cref{prop: bijective on projection}. This follows from our simplified description of $(\bm{\curlywedge}, C)$ from above. 

\begin{proof}[Proof of \Cref{prop: bijective on projection}]
    Let $[p] \in H_*\End_\cA(\CFDh(\KC)) \cong \HFh(S^3_0(K\#\overline{K}))$. Under the morphism pairing theorem, $p$ corresponds to an element in $\HFh_{\frac{1}{2}}(S^3_0(K\#\overline{K}),[0])$. We claim that the map $\Lambda$ takes $\HFh_{\frac{1}{2}}(S^3_0(K\#\overline{K}),[0])$ isomorphically to the summand of $\HFK_\cR(K\#\overline{K})$ in grading $(0,0)$. 
    
    We need to analyze two cobordism maps: first the cobordism 
    \begin{align*}
        W_n: L(n,1) \# S^3_0(K \# \overline{K})  \ra S^3_n(K \# \overline{K}).
    \end{align*}
    and second the large surgery cobordism 
    \begin{align*}
        \Gamma_{n}: S^3_n(K \# \overline{K}) \ra (S^3, K \# \overline{K}).
    \end{align*}
    Of course, $\Gamma_n$ induces an isomorphism, so, it suffices to show that $W_n$ induces an isomorphism $\HFh_{1/2}(S^3_0(K \# \overline{K},[0]) \ra \HFh_{d(L(n,1),[0])}(S^3_n(K \# \overline{K},[0])$. 
    
    To prove this carefully, we recall from \cite{os_HFK_integer_surgeries} that there is an exact triangle
    \begin{center}
        \begin{tikzcd}
            \HFh(S^3_0(K\#\overline{K})) \ar[rr,"\phi"]& & \HFh(S^3_n(K\#\overline{K})) \ar[dl] \\
            & \bigoplus_n \HFh(S^3). \ar[lu]
        \end{tikzcd}
    \end{center}
    The horizontal arrow is induced by the cobordism $W_n: L(n, 1)\# S^3_0(K) \ra S^3_n(K)$. We take $\Theta$ to be the generator of $\HFh(L(n,1), \ell)$; the horizontal map in the triangle is $F_{W_n}(\Theta, -)$.  Recall that the natural restriction map $\Spinc(W_n) \ra \Spinc(L(n,1) \# S^3_0(K \# \overline{K}))$ is an isomorphism; in particular, there is a single $\Spinc$-structure, $\frak{u}_0$, on $W_n$ which extends $\ell \oplus [0] \in \Spinc(L(n,1) \# S^3_0(K \# \overline{K})$. Furthermore, by \cite[Proposition 4.6]{os_HFK_integer_surgeries}, provided $n$ is sufficiently large, the map 
    \begin{align*}
        \HFh(S^3_0(K\# \overline{K})) \xra{F_{W_n}(\Theta,-)} \HFh(S^3_n(K\# \overline{K})) \ra \HFh(S^3_n(K\# \overline{K}),[0]),
    \end{align*}
    preserves the relative $\Z$-gradings of these groups. In particular, $F_{W_n, \frak{u}_0}(\Theta,-)$ takes $\HFh_{1/2}(S^3_0(K\# \overline{K}),[0])$ to $\HFh_{d(L(n,1),[0])}(S^3_n(K\# \overline{K}),[0])$.

    To see this map is an isomorphism, we make the following observations: 
    \begin{enumerate}
        \item Since $K\#\overline{K}$ is slice (ribbon, in fact), there are maps $f_{\bullet}: S^3_\bullet(U)\ra S^3_\bullet(K\#\overline{K})$ for $\bullet \in \{0, n, \infty\}$, where $f_{\bullet} = F_{X_\bullet(C)}$ is the cobordism map induced by the manifold obtained by performing surgery on the canonical concordance $C: U \ra K \# \overline{K}$. We also consider the maps $g_{\bullet}:S^3_\bullet(K\#\overline{K}) \ra S^3_\bullet(U) $ for $\bullet \in \{0, n, \infty\}$, where $g_\bullet$ is induced by the cobordism $\overline{X_\bullet(C)}$ obtained by turning $X_\bullet(C)$ around. Since the $C$ is ribbon, it follows from \cite[Theorem 4.10]{DLVW_ribbon} that $g_\bullet \circ f_\bullet$ is homotopic to the identity.  These maps fit into a commutative diagram:
    \begin{center}
        \begin{tikzcd}
            \ker g_0 \ar[rr,"\simeq"]\ar[dd]& & \ker g_n \ar[dl]\ar[dd] \\
            & \ker g_\infty =0  \ar[lu] & \\
            \HFh(S^3_0(K\#\overline{K})) \ar[dd,"g_0" left, bend right] \ar[rr,"\phi", near start]& & \HFh(S^3_n(K\#\overline{K})) \ar[dl]\ar[dd,"g_n"left, bend right] \\
            & \bigoplus_n \HFh(S^3) \ar[lu]\ar[from  = uu, crossing over] & \\
            \HFh(S^3_0(U))\ar[rr] \ar[uu,"f_0" right, bend right] & &\HFh(S^3_n(U)) \ar[dl] \ar[uu,"f_n" right, bend right] \\
            & \bigoplus_n \HFh(S^3)\ar[lu]\ar[from  = uu,"g_\infty" near start, bend right, crossing over] \ar[uu,"f_\infty" near start, bend right, crossing over] & \\
        \end{tikzcd}
    \end{center}
    Commutativity of the various squares is again topological: the 2-handle attachment cobordisms which induce the maps in the exact triangles commute with the cobordisms obtained by performing surgery on $C$. 
    \item The maps in the bottom triangle can be computed explicitly. Following \cite{os_HFK_integer_surgeries}, we write the generators of $\oplus_n \HFh(S^3)$ as $T^i$, $i = 0, \hdots, (n-1)$. The generator $\theta^+$ of $\HFh(S^3_0(U))$ in grading $+1/2$ is taken to the generator $x_0$ of $\HFh(S^3_n(U), [0])$; the generators $x_s$ of $\HFh(S^3_n(U), [s])$ for $s \ge 1$ are taken to the generators $T^s$ of $\oplus_n \HFh(S^3)$, and $1 \in \oplus_n \HFh(S^3)$ is taken to the generator $\theta^-$ of $\HFh(S^3_0(U))$ in grading $-1/2$.
    \item Since $g_\bullet \circ f_\bullet \sim \id$, we have splittings
    \begin{align*}
        \HFh(S^3_\bullet(K \# \overline{K}),[s]) \cong \ker g_{\bullet, [s]} \oplus \HFh(S^3_\bullet(U),[s]).
    \end{align*}
    We note that since $X_\bullet(C)$ and $\overline{X_\bullet(C)}$ are homology cobordisms, elements of $\Spinc(X_\bullet(C))$ are determined by their restriction to either boundary component. Moreover, the commutative diagram above implies that the maps in the exact triangle respect this splitting.
    \end{enumerate}    
    It follows from (2) and (3) that $\phi$ takes $\ker g_0$ isomorphically to $\ker g_n$. Moreover, the image of $\phi$ is identified with $\ker g_n \oplus \HFh(S^3_n(U),[0])$ and that the kernel of $\phi$ is generated by $\theta^- \in \HFh_{-1/2}(S^3_0(U)) \subset \HFh(S^3_0(K\#\overline{K}))$. In particular, as we have assumed that $n$ is very large, $\HFh_{1/2}(S^3_0(K\#\overline{K}),[0])$ is taken isomorphically onto $\HFh_{d(L(n,1),[0])}(S^3_n(K\#\overline{K}),[0])$. 
    
    It then follows that the composition 
    \[
    \HFh_{\frac{1}{2}}(S^3_0(K\#\overline{K}),\fraks_0) \xrightarrow{F_{W_n,\fraks_0}} \HFh_{d(L(n,1),[0])}(S^3_n(K\#\overline{K}),[0]) \xrightarrow{\Gamma_{n,[0]}} H_0(\widehat{\mathbb{A}}_{0}(K\#\overline{K}))
    \]
    is an isomorphism. Note that this map indeed does land in 
    the summand of $\HFK_\cR(K\#\overline{K})$ of Alexander grading zero as well as Maslov grading zero (since $\Gamma_{n, [0]}$, the the large surgery isomorphism restricted to the Spin$^c$ structure $[0]$, has degree $-d(L(n,1),[0])$ \cite[Corollary 4.2]{os_knotinvts}). 

    We now use the claim to prove that $\Lambda$ induces a bijection between homotopy classes of projections. Given a projection $p\in \mathrm{End}^\mathcal{A}(\widehat{CFA}(S^3 \smallsetminus K))$, we consider its image $\Lambda(p)\in \mathrm{End}(CFK_\mathcal{R}(S^3,K))$; by \Cref{lem:composition}, we have
    \[
    \Lambda(p)^2 \sim \Lambda(p^2) = \Lambda(p),
    \]
    i.e. $\Lambda(p)$ is a homotopy projection. Since $CFK_\mathcal{R}(S^3,K)$ is finitely generated, it follows from \Cref{lem: homotopy projection to projection} that $\Lambda(p)$ is homotopic to a projection. Conversely, if $p$ is an endomorphism of $\widehat{CFD}(S^3 \smallsetminus K)$ such that $\Lambda(p)$ is a projection, we have
    \[
    \Lambda(p+p^2) \sim \Lambda(p)+\Lambda(p^2) \sim 0,
    \]
    so that $p+p^2$ is contained in the kernel of $\Lambda$; then it follows from the claim above that $p+p^2$ is nullhomotopic, i.e. $p$ is a homotopy projection. Then it follows from \Cref{lem: homotopy projection to projection} that $p$ is homotopic to a projection.
\end{proof}

\begin{lem}
     $\Lambda$ is a homotopy ring homomorphism.
\end{lem}
\begin{proof}
    We have already established that $\Lambda(f\circ g) \sim \Lambda(f) \circ \Lambda(g)$; it thus suffices to show that $\Lambda(\id) \sim \id$. To prove this, note that we have
\begin{align*}
    \Lambda(\pi) = \Lambda(\pi \circ \id) \sim \Lambda(\pi)\circ \Lambda(\id).
\end{align*}
for any projection $\pi \in \End^\cA(\CFDh(\KC))$. By \Cref{prop: bijective on projection}, every projection contained in $\End_\cR(\CFK_\cR(S^3, K))$ is homotopic to $\Lambda(\pi)$ for some projection $\pi$ of $\End^\cA(\CFDh(\KC))$. It follows that composition by $\Lambda(\id)$ fixes homotopy classes of all projections of $\End_\cR(\CFK_\cR(S^3, K))$. Since the identity map is also a projection, we get $\Lambda(\id) = \Lambda(\id)\circ \id \sim \id$, as desired.
\end{proof}

\section{The Doubling Property}\label{sec: doubling}

In the previous section, we showed how projection maps on $\CFDh(\KC)$ correspond to projection maps on $\CFK_\cR(S^3,K)$, and that this correspondence is equivariant with respect to the $\mathrm{Spin}^c$ conjugation action. In particular, we showed that if $M$ is a summand of $\CFDh(\KC)$, there is a corresponding summand $\Lambda(M)$ of $\CFK_\cR(S^3, K)$, which satisfies
\begin{align*}
    \Lambda(M) \oplus \Lambda(M) \simeq \CFAx \boxtimes M,
\end{align*}
(see \Cref{prop:doubling-CFA-to-CFK}). We will therefore think of $\CFAx \boxtimes (-)$ as assigning to summands of $\CFDh$ ``doubled'' summands of $\CFK_\cR$. Our primary goal in this section is to relate this process to the \LOT correspondence. To do so, we will explore some of the properties of the diagrams $\bX$ and $\bY$ and derive some basic facts regarding the (bi-)modules $\CFAh_\cR(\bX)$ and $\CFDAy$. In particular, we show that the diagram $\bX$ can be used to pass back and forth between summands of $\CFK_\cR(S^3,K)$ and extendable summands of $\CFDh(\KC)$. 

%\begin{prop}\label{prop: doubling complexes}
%    Let $\mathfrak{C}$ be the set of homotopy equivalence classes of reduced $\cR$-complexes which appear as summands of $\CFK$-complexes and let $\mathfrak{A}$ denote the set of homotopy equivalence classes of extendable direct summands of $\widehat{CFA}(S^3\smallsetminus K)$, where $K$ runs over all knots. Define a map 
%    \[
%    \cM: \fr{C} \ra \fr{A},\;\; C \mapsto \cM_C
%    \]
%    which takes a complex $C$ to the type-A structure $\cM_C$ given by applying the \LOT complex-to-type-A structure procedure. Let 
%    \[
%    \cC: \fr{A} \ra \fr{C}, \;\; M \mapsto \cC_M
%    \]
%    be the map which takes a type A structure $M$ to the $\cR$-complex $\cC_M$ which is determined (up to homotopy) by the property 
%    that $M \boxtimes \CFDx \simeq \cC_M \oplus \cC_M$. Then $\cC$ and $\cM$ are inverses.
%\end{prop}

\begin{prop}\label{prop: doubling complexes}
    Let $\mathfrak{C}$ be the set of homotopy equivalence classes of reduced $\cR$-complexes which appear as summands of $\CFK$-complexes and let $\mathfrak{D}$ denote the set of homotopy equivalence classes of extendable direct summands of $\widehat{CFD}(S^3\smallsetminus K)$, where $K$ runs over all knots. Define a map 
    \[
    \cM: \fr{C} \ra \fr{D},\;\; C \mapsto \cM_C
    \]
    which takes a complex $C$ to the extendable type-D structure $\cM_C$ given by applying the \LOT complex-to-type-D structure procedure (which is extendable, as discussed in \Cref{rem: LOT are extendable}). Let 
    \[
    \cC: \fr{D} \ra \fr{C}, \;\; M \mapsto \cC_M
    \]
    be the map which takes a type D structure $M$ to the $\cR$-complex $\cC_M:= \Lambda(M)$, which we recall is determined (up to homotopy equivalence) by the property that $\CFAh_\cR(\bX)\boxtimes M \simeq \cC_M \oplus \cC_M$. The maps $\cC$ and $\cM$ are inverses.
\end{prop}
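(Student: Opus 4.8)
The strategy is to identify the complex $\cC_M$ in the statement with the complex $CFA_\cR(T_\infty,\nu)\boxtimes\widetilde{M}_{0,3}$ of Theorem \ref{thm: LOT}; once this is done, both identities $\cM\circ\cC=\id$ and $\cC\circ\cM=\id$ become short formal consequences of that theorem together with the construction of the \LOT correspondence. So the essential point is the following claim: \emph{for $M\in\mathfrak{D}$ with extension $\widetilde{M}$, one has $\cC_M\simeq CFA_\cR(T_\infty,\nu)\boxtimes\widetilde{M}_{0,3}$.}

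To prove the claim I would use the splitting $\bX=\bY\cup(T_\infty,\nu)$. The pairing theorem in its half-extended form (the discussion preceding Lemma \ref{lem:half-extended pairing}) gives $\CFAh_\cR(\bX)\simeq CFA_\cR(T_\infty,\nu)\boxtimes\CFDAyh$, hence
\[
\CFAh_\cR(\bX)\boxtimes M\;\simeq\;CFA_\cR(T_\infty,\nu)\boxtimes(\CFDAyh\boxtimes M).
\]
Now $\CFDAyh$ restricts over $\cA$ to $\CFDAy$, and by the pairing theorem for $\bY\cup\KC$ (which represents $S^3\smallsetminus K$ with a free basepoint) one has $\CFDAy\boxtimes\CFDh(\KC)\simeq\CFDh(S^3\smallsetminus K,p)\simeq\CFDh(\KC)^{\oplus 2}$ by the free-stabilization formula of \cite{zemke_linkcob}; splitting off the $M$-summand and invoking Krull--Schmidt for type D structures yields $\CFDAy\boxtimes M\simeq M^{\oplus 2}$. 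Thus $\CFDAyh\boxtimes M$ is a half-extension of $M^{\oplus 2}$, so by uniqueness of extensions in the free-basepoint setting (\cite[Proposition 4.16]{hanselman2023bordered}) it must be $\widetilde{M}_{0,3}^{\oplus 2}$. Hence $\CFAh_\cR(\bX)\boxtimes M\simeq(CFA_\cR(T_\infty,\nu)\boxtimes\widetilde{M}_{0,3})^{\oplus 2}$, and since $\cC_M$ is characterized up to homotopy by $\CFAh_\cR(\bX)\boxtimes M\simeq\cC_M^{\oplus 2}$ and finitely generated reduced $\cR$-complexes admit Krull--Schmidt cancellation ($A^{\oplus 2}\simeq B^{\oplus 2}\Rightarrow A\simeq B$), the claim follows. (Alternatively, the fact that $\CFAh_\cR(\bX)\boxtimes M$ is a square of an $\cR$-complex can be read off directly from Proposition \ref{prop:doubling-CFA-to-CFK}.)

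The claim also makes the maps well-defined: it identifies $\cC_M$ with the \LOT complex corresponding to $M$, which is a summand of $\CFK_\cR(S^3,K)$ because $M$ is a summand of $\CFDh(\KC)$ and the correspondence is additive, so $\cC_M\in\mathfrak{C}$; conversely, for $C\in\mathfrak{C}$ the summand $\cM_C$ of $\CFDt(\KC)$ restricts to a direct summand of $\CFDh(\KC)$ and is extendable by the construction recalled in the proof of Theorem \ref{thm: LOT}, so $\cM_C\in\mathfrak{D}$. The two identities now follow. For $M\in\mathfrak{D}$, Theorem \ref{thm: LOT} applied to $\widetilde{M}$ together with the claim gives $\cM_{\cC_M}\simeq M$, i.e. $\cM\circ\cC=\id_{\mathfrak{D}}$. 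For $C\in\mathfrak{C}$, the construction of the \LOT correspondence (recalled in the proof of Theorem \ref{thm: LOT}) furnishes an extension $\widetilde{\cM}_C$ of $\cM_C$ with $CFA_\cR(T_\infty,\nu)\boxtimes\widetilde{\cM}^{0,3}_C\simeq C$; by uniqueness of the extension this is exactly the complex the claim identifies with $\cC_{\cM_C}$, so $\cC_{\cM_C}\simeq C$, i.e. $\cC\circ\cM=\id_{\mathfrak{C}}$.

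The main obstacle is the identification step, specifically the half-extended and free-basepoint bookkeeping: pinning down which boundary component of $\bY$ carries which basepoints, confirming that $\CFDAyh$ restricts to $\CFDAy$ and correctly records the doubling introduced by the extra basepoint, and checking that the uniqueness-of-extension input of \cite{hanselman2023bordered} remains valid when a free basepoint is present. The formal consequences, and the Krull--Schmidt cancellations, are routine by comparison.
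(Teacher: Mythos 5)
Your overall strategy mirrors the paper's: factor $\CFAh_\cR(\bX)$ through the half-extended bimodule $\CFDAyh$, show that $\CFDAyh\boxtimes M$ is the unique half-extension of $M^{\oplus 2}$, and then invoke Theorem~\ref{thm: LOT} together with Krull--Schmidt cancellation. The central technical claim you isolate is exactly the one the paper's proof establishes, and the bookkeeping you flag as the main obstacle is, as you suspect, routine.

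The genuine gap is the step you dismiss as routine. From $\CFDAy\boxtimes\CFDh(\KC)\simeq\CFDh(\KC)^{\oplus 2}$ you ``split off the $M$-summand and invoke Krull--Schmidt'' to conclude $\CFDAy\boxtimes M\simeq M^{\oplus 2}$, but that inference does not follow. Krull--Schmidt says the multiset of indecomposable summands of $\CFDAy\boxtimes M\,\oplus\,\CFDAy\boxtimes M'$ agrees with that of $M^{\oplus 2}\oplus M'^{\oplus 2}$, but it places no constraint on how those indecomposables distribute between the two pieces: nothing in your reasoning rules out, say, $\CFDAy\boxtimes M\simeq M\oplus M'$ and $\CFDAy\boxtimes M'\simeq M\oplus M'$ for indecomposable $M\not\simeq M'$. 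This is precisely what Lemma~\ref{lem: Y surjects} is for: it proves that for each indecomposable summand $M$ of $\CFDh(\KC)$ there exists \emph{some} summand $N$ with $\CFDAy\boxtimes N\simeq M^{\oplus 2}$, a surjectivity statement whose proof goes through Proposition~\ref{prop:doubling-CFA-to-CFK}, Theorem~\ref{thm: LOT}, Lemma~\ref{lem: Y tensor extends}, and a count of indecomposable pieces. The paper then combines this with the idempotence $\CFDAy\boxtimes\CFDAy\simeq\CFDAy^{\oplus 2}$ of Equation~(\ref{eq: doubling Y}) to force $(\CFDAy\boxtimes M)^{\oplus 2}\simeq M^{\oplus 4}$, at which point Krull--Schmidt really does pin down $\CFDAy\boxtimes M\simeq M^{\oplus 2}$. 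Your argument uses neither Lemma~\ref{lem: Y surjects} nor the doubling identity, and without one of these the Krull--Schmidt invocation is unsupported; the rest of your outline would go through once that step is filled in.
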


\subsection{Properties of the Bimodule $\CFDAy$}

Recall from Section \ref{sec: background}, the bordered diagram $\bY$ with two boundary components. The diagram $\bY$ enjoys a number of useful properties. 

The first useful property is that tensoring with $\CFDAy$ always produces an extendable type D structure. This will be especially useful when trying to lift a splitting of $\CFAh(\KC)$ to a splitting of $\CFAt(\KC)$.

\begin{lem}\label{lem: Y tensor extends}
    Let $K$ be a knot and $M\subset \widehat{CFD}(S^3 \smallsetminus K)$ be any type D direct summand (which may not necessarily extendable). Then $\CFDAh(\bY) \boxtimes M$ is an extendable type D structure.
\end{lem}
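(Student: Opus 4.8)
The idea is to produce an \emph{explicit} extension of $\CFDAh(\bY)\boxtimes M$ by showing that $\CFDAh(\bY)$ itself lifts to the half-extended setting, and then invoking the uniqueness-of-extensions result \cite[Proposition 4.16]{hanselman2023bordered} (in the form valid for manifolds with a free interior basepoint, as flagged in the discussion preceding this lemma). More precisely, recall from Subsection~\ref{sec: bordered background} that $\bY$ carries a half-extended type DA bimodule $\CFDAyh$ over $(\widetilde{\cA}(T^2)_{0,3},\cA(T^2))$, and that by construction $\CFDAh(\bY)$ is the restriction of $\CFDAyh$ along the inclusion $\cA(T^2)\hookrightarrow \widetilde{\cA}(T^2)_{0,3}$ on the type D side. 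Given any type D summand $M\subseteq \CFDh(S^3\smallsetminus K)$ over $\cA(T^2)$, I would form the half-extended type D structure
\[
\widetilde{N} := \CFDAyh \boxtimes M,
\]
which makes sense because the right action of $\CFDAyh$ is over the \emph{unextended} torus algebra $\cA(T^2)$, exactly the algebra over which $M$ is a module. The box tensor product of a curved type DA bimodule with a genuine (uncurved) type D structure requires care, but since $M$ is an honest type D structure and the curvature of $\CFDAyh$ is concentrated in the $\mathbb{U}$-term on its type D side, the usual bookkeeping shows $\widetilde{N}$ is a half-extended type D structure whose squared differential is $\mathbb{U}\otimes\mathrm{id}$.

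The key step is then to check that $\widetilde{N}$ is genuinely an \emph{extension} of $\CFDAh(\bY)\boxtimes M$, i.e. that $\cA(T^2)\otimes_{\widetilde{\cA}(T^2)_{0,3}}\widetilde{N}\simeq \CFDAh(\bY)\boxtimes M$ as differential modules. This follows by comparing the two box tensor products: restriction along $\cA(T^2)\hookrightarrow\widetilde{\cA}(T^2)_{0,3}$ is functorial and commutes with $\boxtimes$ on the type D side, so
\[
\cA(T^2)\otimes_{\widetilde{\cA}(T^2)_{0,3}}(\CFDAyh\boxtimes M)\ \simeq\ \big(\cA(T^2)\otimes_{\widetilde{\cA}(T^2)_{0,3}}\CFDAyh\big)\boxtimes M\ \simeq\ \CFDAh(\bY)\boxtimes M,
\]
the last equivalence being the very definition of $\CFDAh(\bY)$ recalled above. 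This exhibits an explicit extension, so $\CFDAh(\bY)\boxtimes M$ is extendable. (One should note that $\bY$ represents the complement of $\nu$ in $S^1\times D^2$, so $\CFDAh(\bY)\boxtimes M$ is $\CFDh$ of a knot complement with a free interior basepoint; this is precisely the setting in which the extension machinery of \cite{hanselman2023bordered} was extended, as remarked before Definition~\ref{def:half extended torus module}, so the relevant notion of extendability applies.)

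The main obstacle I anticipate is the well-definedness and associativity of the box tensor product $\CFDAyh\boxtimes M$ when the left factor is \emph{curved}: one must verify that the boundedness/admissibility hypotheses needed for $\boxtimes$ to be defined are met (e.g. that $M$ is reduced/bounded, or that $\CFDAyh$ has the appropriate finiteness so that the relevant sums are finite), and that restriction of scalars genuinely commutes with $\boxtimes$ at the chain level rather than merely up to homotopy. These are bookkeeping points rather than conceptual ones — they parallel exactly the argument used to define $\CFDAh(\bY)$ via the auxiliary diagram $\bY^+$ in Subsection~\ref{sec: bordered background} — and can be handled by the same ``devious trick'' of passing to $\bY^+$, where the corresponding statements are the standard pairing theorems of \cite{LOT_bimodules}. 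Alternatively, if one prefers to avoid curved tensor products altogether, one can observe that the extension is forced abstractly: $\CFDAh(\bY)\boxtimes M = \CFDh$ of a torus-boundary manifold with a free basepoint, and every such type D structure is extendable by the holomorphic-curve-counting argument recalled before Definition~\ref{def:half extended torus module} (``type D structures for bordered 3-manifolds with torus boundary are always extendable''), which directly gives the claim; the explicit model $\widetilde{N}=\CFDAyh\boxtimes M$ is then only needed later, e.g. in the proof of Proposition~\ref{prop: doubling complexes}, to \emph{identify} this extension compatibly with a splitting.
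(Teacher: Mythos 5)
Both routes you outline have a genuine gap, and for different reasons.

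Your ``abstract'' alternative fails at the first step. You write that $\CFDAh(\bY)\boxtimes M$ is the type D structure of a torus-boundary manifold with a free basepoint, and hence is extendable by the holomorphic-curve argument. This is only true when $M$ is the \emph{entire} module $\CFDh(S^3\smallsetminus K)$: in that case $\CFDAh(\bY)\boxtimes \CFDh(S^3\smallsetminus K)\simeq \CFDh(\bY\cup(S^3\smallsetminus K))$ by the pairing theorem. When $M$ is a proper summand, $\CFDAh(\bY)\boxtimes M$ is only a direct summand of $\CFDh(\bY\cup(S^3\smallsetminus K))$ and is not, a priori, $\CFDh$ of anything. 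The lemma is nontrivial precisely because extendability of the ambient module does not automatically pass to summands, so this line of reasoning is circular.

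Your ``explicit'' route also does not go through as stated, because $\CFDAyh\boxtimes M$ is a module over the half-extended algebra $\widetilde{\cA}_{0,3}=\widetilde{\cA}/\langle\rho_{30}\rangle$, whereas the definition of \emph{extendable} in this paper requires a curved module over the \emph{full} extended algebra $\widetilde{\cA}$ with $\delta^2=\mathbb{U}\otimes\mathrm{id}$. These are genuinely different pieces of data: the image of $\mathbb{U}$ in $\widetilde{\cA}_{0,3}$ drops three of the four length-four words, and a half-extended structure carries no information about the $\rho_{30}$-part of a putative full extension. Pulling back a $\widetilde{\cA}_{0,3}$-module along the quotient $\widetilde{\cA}\twoheadrightarrow\widetilde{\cA}_{0,3}$ does not produce the required curvature, and there is no general mechanism by which a half-extension promotes to a full one. (For the specific module $\CFAt(T_\infty,\nu)$ the two happen to have the same structure maps, as the paper remarks, but that is an accident of that small diagram, not a general fact.) So invoking the uniqueness-of-extensions result \cite[Proposition 4.16]{hanselman2023bordered}---which is about full extensions---does not close the gap.

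The paper's proof is of a different character: it realizes the projection $\bI_{\CFDAy}\boxtimes p_N$ as the cobordism map $F_P\circ\mathrm{coev}$ associated to the pair-of-pants, factors $P$ into a connected-sum cobordism (where the free interior basepoint of $\bY$ is used to define the connected-sum map on $\CFDt$ as well as on $\CFDh$) and a $2$-handle cobordism (which lifts to the extended theory because it counts holomorphic triangles), and thereby produces an honest lift to a map on $\CFDt(\bY\cup(S^3\smallsetminus K))$. Taking a sufficiently high power of that lift yields a genuine projection whose image is a full extension of $\CFDAy\boxtimes N$, sitting as a direct summand of $\CFDt(\bY\cup(S^3\smallsetminus K))$. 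That cobordism-theoretic lifting is exactly what is missing from your proposal; the half-extended bimodule $\CFDAyh$ plays its role elsewhere (in the proof of Proposition~\ref{prop: doubling complexes}), but it is not strong enough on its own to prove extendability.
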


\begin{proof}
    Let $N\oplus N'$ be a splitting of $\CFDh(\KC)$ and let $p_N\in \End_\cA(\CFDh(\KC))$ be the projection which which acts by the identity on $N$ and by zero on $N'$. By the pairing theorem, 
    \[
    \CFDAy \boxtimes \CFDh(\KC) \simeq \CFDh(\bY \cup (\KC)).
    \]
    It therefore suffices to find a summand of $\CFDt(\bY \cup (\KC))$ whose restriction to $\cA$ is homotopy equivalent to $\CFDAy \boxtimes N$. Our strategy will be to show that the projection map $\bI_{\CFDAy} \boxtimes p_N$ admits a lift to a (homotopy) projection $\CFDt(\bY \cup (\KC)) \ra \CFDt(\bY \cup (\KC))$. We will apply our usual strategy, and realize $\bI_{\CFDAy}\boxtimes p_N$ as a cobordism map. \\

    Let $P: (\KC) \amalg S^3_0(K\# \overline{K}) \ra \KC$ be the pair of pants cobordism associated to the triple $(\bY,\KC, \KC)$. Under the identification of $\HFh(S^3_0(K\#\overline{K}))$ with $\End^\cA(\CFDh(\KC))$, it follows from \cite[Theorem 1.1]{cohen2023composition} that the map $\bI_{\CFDAy} \boxtimes p_N$ is homotopic to the map given by precomposing $F_P$ with the coevaluation map 
    \[
    \mathrm{coev}:\CFDAh(\bY)\boxtimes\CFDh(\KC) \ra \CFDAh(\bY)\boxtimes\CFDh(\KC)\otimes \End^\cA(\KC),
    \]
    \[
    y \otimes x \mapsto y \otimes x \otimes p_N.
    \]
    We note that Cohen's argument applies to the bimodule case, though we could instead close our 3-manifolds up by gluing in $\CFDh(\KC)$ and using the techniques in the proof of \Cref{thm:bordered naturality one boundary}. 

    The cobordism $P$ is the composition of the connected sum cobordism 
    \[
    (\KC) \amalg S^3_0(K\#\overline{K}) \ra (\KC) \# S^3_0(K\#\overline{K})
    \]
    with a two-handle cobordism 
    \[
    (\KC) \# S^3_0(K\#\overline{K}) \ra (\KC).
    \]
    A handle decomposition for this manifold can be obtained from Frame (c) of \Cref{fig:P_handle_decomp} by removing a neighborhood of $\nu$ and taking $n = 0$. Since $\bY$ has a free, interior basepoint, there are well-defined connected sum maps
    \[
    \CFDh(\bY \cup (\KC)) \otimes \CFh(S^3_0(K\# \overline{K})) \xrightarrow{\simeq} \CFDh((\bY \cup \KC)\#(S^3_0(K\# \overline{K}))
    \]
    \[
    \CFDt(\bY \cup (\KC)) \otimes \CFh(S^3_0(K\# \overline{K})) \xrightarrow{\simeq} \CFDt((\bY \cup \KC)\#(S^3_0(K\# \overline{K}))
    \]
    since we may attach the feet of the connected sum tube in the region containing the free basepoint of $\bY \cup (\KC)$. By the pairing theorem for triangles, $\CFDAy\boxtimes N$ is characterized up to homotopy as the image of the 2-handle cobordism 
    \[
        \CFDh((\bY \cup (\KC))\#(S^3_0(K\# \overline{K})) \ra \CFDh(\bY \cup (\KC))
    \]
    precomposed with the coevaluation map. The 2-handle map is defined by counting holomorphic triangles in a bordered Heegaard diagram for $\bY \cup (\KC)$ and therefore lifts to a map 
    \[
        \CFDt((\bY \cup (\KC))\#(S^3_0(K\# \overline{K})) \ra \CFDt(\bY \cup (\KC)).
    \]
    Precomposing this map with (the obvious lift of) the coevaluation map produces the desired lift of $F_P \circ \mathrm{coev} \sim \bI_{\CFDAy}\boxtimes p_N$.\\

    We note that the extension of $\CFDAy \boxtimes N$ obtained in this way can be taken to be a direct summand of $\CFDt(\bY \cup (\KC))$; since $\bI_{\CFDAy}\boxtimes p_N$ is a projection, $(F_P \circ \mathrm{coev})^2 \simeq (F_P \circ \mathrm{coev})$. Therefore, for large $k$, $F := (F_P \circ \mathrm{coev})^k$ is an honest projection to some summand $S$. Let $\widetilde{F}$ be a lift of this projection with respect to $\pi: \CFDt(\bY \cup (\KC)) \ra \CFDh(\bY \cup (\KC))$. Since $\pi \circ \widetilde{F}^\ell = F^\ell\circ \pi = F\circ \pi$, the image of $\widetilde{F}^\ell$ is an extension of $S$ for all $\ell$. For sufficiently large $\ell$, the map $\widetilde{F}^\ell$ is a projection, and therefore its image is a summand. 
\end{proof}

The second useful property enjoyed by $\CFDAy$ is that it acts surjectively on the set of indecomposable summands of $\CFDh(\KC).$ We define indecomposable type D summands as follows.

\begin{defn}
    We say than an extendable type-D structure $M$ is \emph{indecomposable} if for any splitting $M \simeq N \oplus N'$, with $N$ and $N^\prime$ both extendable, either $N$ or $N'$ is acyclic. 
    %We have to assume that $N$ and $N'$ are extendable, but don't have to assume that $M$ is not acyclic (it's a redundant condition).
    %We say that an extendable type A structure $M$ is \emph{indecomposable} if it is not homotopy equivalent to a direct summand of two extendable type A structures which are not acyclic.
\end{defn}

It follows from \cite[Subsection 4.2]{hanselman2023bordered} that any extendable type D structure admits a unique decomposition into indecomposable summands, up to homotopy equivalence of each summand. It follows that any indecomposable summand of $\CFDt(\KC)$ will appear in any choice of a decomposition of $\CFDt(\KC)$ into indecomposable summands. This fact will be used crucially in the proof of \Cref{lem: Y surjects}.

\begin{lem}\label{lem: Y surjects}
    Let $M$ be an indecomposable direct summand of $\CFDh(\KC)$. Then, there is an extendable direct summand $N$ such that 
    \[
        \CFDAh(\bY) \boxtimes N \simeq M \oplus M.
    \]
\end{lem}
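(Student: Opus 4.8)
The plan is to invert the construction of the previous section: given the indecomposable summand $M$ of $\CFDh(\KC)$, I want to produce an extendable type D summand $N$ (necessarily of some knot complement, possibly with a free basepoint) so that tensoring with the bimodule $\CFDAh(\bY)$ recovers two copies of $M$. The guiding principle is the doubling phenomenon already visible in \Cref{prop:doubling-CFA-to-CFK} and in the relation $\CFAx \boxtimes M \simeq \cC_M \oplus \cC_M$: gluing $\bY$ (which carries a free basepoint / a knot in its right boundary) has the effect of doubling, just as adding a free basepoint doubles via Zemke's free stabilization formula. So I expect $N$ to be built from $M$ via the $\cR$-complex $\cC_M$ attached to $M$ by the \LOT correspondence: set $N := \cM_{\cC_M}$, the type D structure of the knot complement associated to $\cC_M$.

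First I would make precise which knot complement $N$ lives in. By \Cref{thm: LOT} and the surrounding discussion, any reduced $\cR$-complex $C$ that appears as a summand of a $\CFK$-complex has an associated extendable type D structure $\cM_C$, and conversely $M$ being an extendable (indeed indecomposable) summand of $\CFDh(\KC)$ corresponds under \LOT to a summand $C_M := \cC_M$ of $\CFK_\cR(S^3,K)$ (using the immersed-curve uniqueness results of \cite{hanselman2023bordered} invoked in the proof of \Cref{thm: LOT}). Then I would take $N$ to be the extendable type D structure $\cM_{\cC_M}$, which is a direct summand of $\CFDh(S^3 \smallsetminus K)$ by the same immersed-curve argument (connected components of the immersed multicurve). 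By construction $N$ corresponds to $\cC_M$, and by \Cref{prop: doubling complexes} — or rather the half of it that says $\cC \circ \cM = \id$ and $\cM \circ \cC = \id$ — we have $M \simeq \cM_{\cC_M} = N$ as type D structures; so actually $N \simeq M$, and the content is the doubling identity $\CFDAh(\bY)\boxtimes N \simeq N \oplus N$.

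Thus the heart of the proof is to establish that for any extendable type D summand $N$ of a knot complement, $\CFDAh(\bY) \boxtimes N \simeq N \oplus N$. Here I would use the pairing relations recorded in \Cref{lem:half-extended pairing} together with the identity $\bX = \bY \cup (T_\infty,\nu)$: tensoring $\CFDAyh \boxtimes \CFD_\cR(T_\infty,\nu) \simeq \CFDx$, so $\CFAh_\cR(\bX) \boxtimes N \simeq (\text{something involving } \CFDAh(\bY)\boxtimes N)$ paired with the fiber module. Combined with \Cref{prop:doubling-CFA-to-CFK}'s defining property $\CFAh_\cR(\bX)\boxtimes M \simeq \cC_M \oplus \cC_M$ and the fact that $\cC_M$ is what $N$ tensored with the fiber produces (i.e. $\CFA_\cR(T_\infty,\nu)\boxtimes \widetilde{N}_{0,3} \simeq \cC_M$ by \Cref{thm: LOT}), one deduces that $(\CFDAh(\bY)\boxtimes N)$ paired with the fiber is two copies of $N$ paired with the fiber; then one promotes this to the type D statement $\CFDAh(\bY)\boxtimes N \simeq N \oplus N$ using that $\bY$ carries a free basepoint so tensoring preserves extendability (\Cref{lem: Y tensor extends}), together with the uniqueness of minimal immersed-curve representatives (since a type D structure is detected by its pairing with the fiber module up to the known free-basepoint doubling). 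The main obstacle I anticipate is precisely this last promotion step: passing from an equivalence after tensoring with $(T_\infty,\nu)$ back to an equivalence of the type D structures themselves. One must rule out that the ``doubling'' introduced by the basepoint on $\bY$ interacts badly with $M$ — e.g. one needs that $\CFDAh(\bY)\boxtimes N$ has no acyclic summands beyond what is forced, and that indecomposability of $M$ prevents $\CFDAh(\bY)\boxtimes N$ from splitting as $M \oplus (\text{something else of the same size})$. I would handle this by invoking the unique decomposition into indecomposables from \cite[Subsection 4.2]{hanselman2023bordered} (as flagged in the paragraph preceding the lemma): since $\CFAh_\cR(\bX)\boxtimes N \simeq \cC_M^{\oplus 2}$ and each $\cC_M$ corresponds to the indecomposable $M$, the indecomposable decomposition of $\CFDAh(\bY)\boxtimes N$ must consist of exactly two copies of $M$, which is the claim.
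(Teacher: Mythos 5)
Your proposal is circular. You set $N := \cM_{\cC_M}$, appeal to \Cref{prop: doubling complexes} to conclude $N \simeq M$, and then reduce the lemma to the doubling identity $\CFDAh(\bY) \boxtimes N \simeq N \oplus N$. But \Cref{prop: doubling complexes} — the statement that $\cM$ and $\cC$ are mutually inverse — appears \emph{after} \Cref{lem: Y surjects} in the paper and its proof invokes this lemma essentially: the internal Claim there (that $\CFDAh(\bY) \boxtimes \cM_C \simeq \cM_C \oplus \cM_C$) is derived precisely by first calling \Cref{lem: Y surjects} to produce some summand $N$ with $\CFDAh(\bY) \boxtimes N \simeq \cM_C^{\oplus 2}$ and then exploiting $\CFDAh(\bY) \boxtimes \CFDAh(\bY) \simeq \CFDAh(\bY)^{\oplus 2}$. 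So neither \Cref{prop: doubling complexes} nor its internal Claim is available here. Note also that the lemma does \emph{not} assert $N \simeq M$; it only asserts existence of some extendable summand $N$, and that one can in fact take $N = M$ is only learned afterward.

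The tools legitimately available at this stage — \Cref{prop:doubling-CFA-to-CFK}, \Cref{lem: Y tensor extends}, \Cref{thm: LOT}, and uniqueness of indecomposable decompositions — do give you, for any decomposition $\CFDh(\KC) = N_1 \oplus \cdots \oplus N_k$ into indecomposables, summands $C_i$ of $\CFK_\cR(S^3,K)$ with $\CFDAh(\bY) \boxtimes N_i \simeq \cM_{C_i}^{\oplus 2}$. But you still cannot conclude that some $\cM_{C_i}$ is $M$: a priori the $\cM_{C_i}$ could be decomposable or even acyclic, so the uniqueness of indecomposable decompositions that you correctly flag as the right tool does not yet apply. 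You identify this obstacle but do not resolve it. The paper supplies two ingredients your proposal omits. First, a non-acyclicity argument: the morphism $G: \CFAh(T_\infty, \nu) \to \CFAh(\bX)$ from \cite[Section 5]{kang_bordered_involutive_HFK}, tensored with $\bI_{\CFDh(\KC)}$, is the split-cobordism map $\CFKh(S^3,K) \to \CFKh(S^3,K,p)$, hence injective on homology; if $\cM_{C_i}$ were acyclic, chasing this through the relevant square would force $N_i$ to be acyclic, contradicting indecomposability. Second, a counting argument: comparing $\bigoplus_i (\cM_{C_i})^{\oplus 2} \simeq \CFDAh(\bY) \boxtimes \CFDh(\KC) \simeq \CFDh(\KC)^{\oplus 2}$ forces each $\cM_{C_i}$ to be indecomposable, after which uniqueness of decompositions identifies $\{N_i\}$ with $\{\cM_{C_i}\}$ as multisets, so some $\cM_{C_{i_0}} \simeq M$ and $N := N_{i_0}$ works. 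Both steps are needed, and your proposal supplies neither.
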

\begin{proof}
    Suppose that such a summand $N$ does not exist. Choose a decomposition
    \[
    \widehat{CFD}(S^3 \smallsetminus K)\simeq N_1\oplus\cdots\oplus N_k
    \]
    into indecomposables. By \Cref{prop:doubling-CFA-to-CFK}, we know that for each $i$, there exists a direct summand $C_i$ of $CFK_\mathcal{R}(S^3,K)$ such that 
    \[
    \CFAx \boxtimes N_i  \simeq C_i \oplus C_i.
    \]
    Since $\CFAx\simeq \CFA_\cR(T_\infty,\nu)\boxtimes\CFDAyh$, it follows that the half-extended type D structure $\widetilde{N}_i^{0,3} :=\CFDAyh \boxtimes N_i $ satisfies $\CFA_\mathcal{R}(T_\infty,\nu) \boxtimes \widetilde{N}_i\simeq C_i\oplus C_i$ and $\CFDAh(\bY)\boxtimes N_i$ is its truncation to $\mathcal{A}$ coefficients. Hence, by \Cref{lem: Y tensor extends} and \Cref{thm: LOT}, we see that
    \[
    \CFDAh(\bY)\boxtimes N_i \simeq \cM_{C_i} \oplus \cM_{C_i},
    \]
    where $\cM_{C_i}$ is a type D summand of $\widehat{CFD}(S^3 \smallsetminus K)$ which corresponds to $C_i$ under the \LOT correspondence. As noted in \Cref{rem: LOT are extendable}, the summand $\cM_{C_i}$ is extendable.

    We claim that $\cM_{C_i}$ is not acyclic; for contradiction, suppose that it were. If $\cM_{C_i}$ is acyclic, then both $C_i$ and $\CFAh_\mathcal{R}(\mathbb{X})\boxtimes N_i$ are acyclic as well. Following the discussions in \cite[Section 5]{kang_bordered_involutive_HFK}, we can define a type A morphism
    \[
    G:\widehat{CFA}(T_\infty,\nu)\rightarrow \widehat{CFA}(\mathbb{X})
    \]
    such that the map given by tensoring with the identity
    \[
    G \boxtimes \bI_{\widehat{CFD}(S^3 \smallsetminus K)}:\widehat{CFK}(S^3,K)\rightarrow \widehat{CFK}(S^3,K,p) \simeq \widehat{CFK}(S^3,K\amalg U)
    \]
    is the cobordism map induced by the split cobordism from $K$ to $K\amalg U$, where $U$ is the unknot. In particular, the map $G\boxtimes \mathbb{I}_{\widehat{CFD}(S^3 \smallsetminus K)} $ is injective on homology. Now consider the following homotopy-commutative diagram:
    \begin{center}
        \begin{tikzcd}
             \widehat{CFA}(T_\infty,\nu) \boxtimes N_i\ar[r,hook]\ar[d,"G \boxtimes \bI"] & 
                \widehat{CFK}(S^3,K) \ar[d,"G \boxtimes \bI"]\\
            \CFAx \boxtimes  \widehat{CFD}(T_\infty,\nu) \ar[r,hook] &
                \widehat{CFK}(S^3,K,p)
        \end{tikzcd}
    \end{center}

    Since the map on the right induces an injective map on homology and the lower left object is acyclic, it follows that $\widehat{CFA}(T_\infty,\nu) \boxtimes N_i$ is acyclic. However, $\widehat{CFA}(T_\infty,\nu) \boxtimes N_i$ is acyclic only if $N_i$ is acyclic (this can be seen easily in the immersed curve picture: it follows from \cite[Example 6.8]{hanselman2023bordered} that the dimension of the homology of $\widehat{CFA}(T_\infty,\nu) \boxtimes N_i$ is equal to the number of intersection points between the immersed curve invariant of $N_i$ (pulled tight) and the line of slope infinity; if the homology vanishes, the immersed curve invariant $N_i$ must be empty and therefore $N_i$ is acyclic). This contradicts our assumption that $N_i$ was indecomposable. Hence, $\cM_{C_i}$ cannot be acyclic.

    For each $i$, consider the number, $n_i$, of summands in $\cM_{C_i}$ when represented up to homotopy equivalence as a direct sum of indecomposable summands. Since $\cM_{C_i}$ is not acyclic, we know that $n_i \ge 1$. Since we have
    \[
    \CFDAy \boxtimes \widehat{CFD}(S^3 \smallsetminus K)\simeq \widehat{CFD}(S^3 \smallsetminus K,p)\simeq \widehat{CFD}(S^3 \smallsetminus K)\oplus \widehat{CFD}(S^3 \smallsetminus K),
    \]
    it follows that $\CFDAy \boxtimes \widehat{CFD}(S^3 \smallsetminus K)$ is homotopy equivalent to a direct sum of $2(n_1+\cdots+n_k)$ indecomposable summands of $\widehat{CFD}(S^3 \smallsetminus K)$. Thus it follows from the uniqueness of decompositions of extendable type A structures in terms of indecomposable direct summands that
    \[
    2(n_1+\cdots+n_k) = 2k,
    \]
    and thus we have that $n_1=\cdots=n_k=1$, i.e. each $\cM_{C_i}$ is indecomposable. Now, since we have assumed that no direct summand $N$ of $\widehat{CFD}(S^3 \smallsetminus K)$ satisfies
    \[
    \CFDAh(\bY)\boxtimes N\simeq M\oplus M
    \]
    and $M$ is indecomposable, it follows again from the uniqueness of decompositions into indecomposable summands that 
    \[
    \cM_{C_i} \oplus \cM_{C_i} \not\simeq M\oplus M
    \]
    for each $i$, and thus none of the indecomposable summands $\cM_{C_i}$ are homotopy equivalent to $M$. Hence $M$ cannot be a direct summand of $\widehat{CFD}(S^3 \smallsetminus K)$; a contradiction. The lemma follows.
\end{proof}

The final useful property of $\CFDAy$ is its behavior under doubling. By construction, $\bY$ represents $T^2 \times [0,1]$ with a basepoint on each boundary; the diagram $\bY\cup \bY$ therefore also represents $T^2 \times [0,1]$, though now with an additional basepoint, $q$, in the interior. In fact, the diagram $\bY \cup \bY$ is equivalent to one which is obtained as a quasi-stabilization of $\bY$. This is shown explicitly in Figure \ref{fig:yy_hmoves}.

\begin{figure}
\def\svgwidth{\linewidth}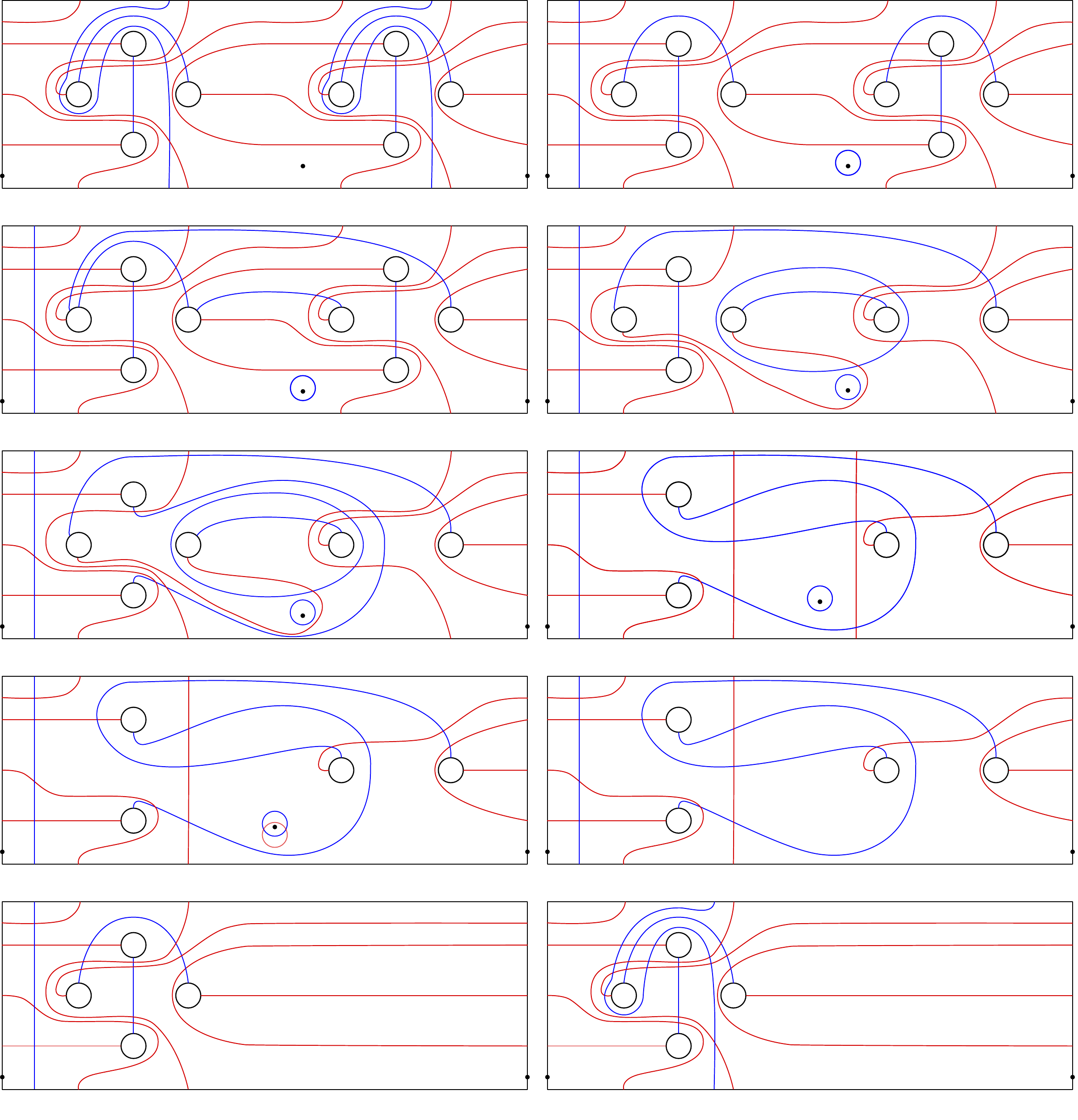
\caption{Heegaard moves relating $\bY \cup \bY$ to $\bY$: $\beta$-handle slides; $\beta$-handle slide; $\alpha$-handle slide; destabilization; $\beta$-handle slide; destabilization; $\beta$-handle slide; quasi-destabilization; diffeomorphism; $\beta$-handle slide.}\label{fig:yy_hmoves}
\end{figure}

Therefore, up to homotopy, the bimodule $\CFDAh(\bY \cup \bY)$ is obtained from $\CFDAh(\bY)$ by a quasi-stabilization (see \cite{zemke_linkcob} for more on quasi-stabilizations in the context of link cobordism maps and \cite{kang_bordered_involutive_HFK} for more in the context of bordered Floer homology). Consequently, 
\begin{equation}\label{eq: doubling Y}
    \CFDAh(\bY) \boxtimes \CFDAh(\bY) \simeq \CFDAh(\bY, q)
        \simeq \CFDAh(\bY) \oplus \CFDAh(\bY).
\end{equation}

We are now equipped to prove Proposition \ref{prop: doubling complexes}.

\begin{proof}[Proof of Proposition \ref{prop: doubling complexes}:]
    Let $C$ be an element of $\fr{C}$ and let $\cM_C$ be the associated element of $\fr{D}$. It suffices to show that
    \[
    \CFAh_\cR(\bX) \boxtimes \cM_C \simeq C \oplus C.
    \]
    Since $\bX = \bY \cup (T_\infty, \nu)$, it follows from the pairing theorem that 
    \[
    \CFAh_\cR(\bX) \boxtimes \cM_C \simeq \CFA_\cR(T_\infty, \nu)  \boxtimes \CFDAyh \boxtimes \cM_C.
    \] 
    Then $\CFDAyh\boxtimes \cM_C$ is a (half extended) type D structure over the algebra $\widetilde{\cA}_{0,3}$.  \\

    \textbf{Claim:} The half-extended type D structure $\CFDAyh\boxtimes \cM_C$ is an extension of the type D structure $\cM_C \oplus \cM_C$.

    \begin{proof}[Proof of claim:]
        Since every summand of $\widehat{CFD}(S^3 \smallsetminus K)$ is the direct sum of indecomposable summands, we may reduce the claim to the case that $\mathcal{M}_C$ is indecomposable. By the classification of extendable type D structures of \cite{hanselman2023bordered} and the classification of $\cR$-complexes of \cite{popovic2023link}, it is clear that indecomposable summands of $\CFK_\cR(S^3,K)$ correspond to indecomposable summands of $\CFDh(\KC)$. Therefore, if $\cM_C$ is indecomposable, it must be that $C$ itself was indecomposable. Then, by Lemma \ref{lem: Y surjects}, we may choose some type $D$ structure $N$ which has the property that $$\CFDAy \boxtimes N \simeq \cM_C \oplus \cM_C.$$ Consider the effect of tensoring with the bimodule $\CFDAy\boxtimes\CFDAy.$ On the one hand (again, by Lemma \ref{lem: Y surjects}), it follows that 
    \begin{align*}
        \CFDAy \boxtimes (\CFDAy \boxtimes N)& \simeq \CFDAy \boxtimes (\cM_{C} \oplus \cM_{C}) \\
       & \simeq (\CFDAy \boxtimes \cM_{C})^{\oplus 2}.
    \end{align*}
    On the other hand, by Equation \ref{eq: doubling Y},
    \begin{align*}
        (\CFDAy \boxtimes \CFDAy) \boxtimes N
        &\simeq (\CFDAy\boxtimes N)^{\oplus 2}\\
        &\simeq (\cM_C \oplus \cM_C)^{\oplus 2}
    \end{align*}
    Therefore, we have that 
    \[
    \CFDAy \boxtimes \cM_C \simeq \cM_C\oplus \cM_C,
    \]
    as desired.
    \end{proof}

    Observe that, by applying the arguments used in the proof of \Cref{lem: Y tensor extends}, we can deduce that this half-extended type D structure lifts to an extended type D structure, which is realized as a direct summand of $\CFDt(\KC,p)$. Hence, by the claim and Theorem \ref{thm: LOT}, it follows that 
    \begin{align*}
        \CFAh_\cR(\bX)\boxtimes \cM_C & \simeq \CFA_\cR(T_\infty,\nu) \boxtimes \CFDAyh \boxtimes \cM_C \\
        & \simeq C \oplus C.
    \end{align*}
    proving the first direction.

    In the reverse direction, let $M$ be an element of $\fr{D}$. By definition, $\cC_M$ is characterized by the property that 
    \[
    \CFAh_\cR(\bX)\boxtimes M \simeq \cC_M \oplus \cC_M.
    \]
    Once again, by the pairing theorem, we have that 
    \begin{align*}
        \CFAh_\cR(\bX)\boxtimes M \simeq \CFA_\cR(T_\infty,\nu)\boxtimes \CFDAyh\boxtimes M \simeq \cC_M \oplus \cC_M.
    \end{align*}
    from which it follows that $\CFDAyh\boxtimes M$ is an extension of $\cM_{\cC_M}\oplus \cM_{\cC_M}.$ But, by the same argument as above, $\CFDAyh\boxtimes M$ is an extension of $\CFDAy\boxtimes M \simeq M \oplus M$. Hence, $M \simeq \cM_{\cC_M}$.
    
\end{proof}

\subsection{Induced Splittings}

Theorems \ref{thm:CFK-to-CFA} and \ref{thm:CFA-to-CFK} are now simple consequences. 

\begin{proof}[Proof of \Cref{thm:CFA-to-CFK}:]
    Fix an $\iota_{\KC}$-invariant splitting of $\CFDh(\KC)$, i.e.
    \[
    \CFDh(\KC) = M_1 \oplus \hdots \oplus M_n.
    \]
    Denote the projection map to $M_i$ by $p_i$. Then $p_1,\cdots,p_n$ are pairwise commuting projections of $\CFDh(\KC)$. By \Cref{lem:composition} and \Cref{prop: bijective on projection}, we know that $\Lambda(p_1),\cdots,\Lambda(p_n)$ are pairwise homotopy commuting projections. Applying \Cref{lem: commuting projections} then makes them pairwise (strictly) commuting projections, which induces a splitting
    \[
    CFK_\mathcal{R}(S^3,K)\simeq N_1 \oplus \hdots \oplus N_n,
    \]
    so that the projection map to $N_i$ is given by $\Lambda(p_i)$. By \Cref{prop:doubling-CFA-to-CFK} and \Cref{prop: doubling complexes}, we see that $N_i$ corresponds to $M_i$ under the \LOT correspondence. Furthermore, by \Cref{cor: invariant splittings CFA to CFK}, it follows that $\Lambda(p_i)$ commutes with $\iota_K$ for all $i$. Therefore our splitting is $\iota_K$-invariant.
\end{proof}

The proof of \Cref{thm:CFK-to-CFA} follows almost directly from \Cref{thm:CFA-to-CFK}, but we need one more lemma.

\begin{lem} \label{lem:summand-extendable}
    Let $K$ be a knot. Then every type-D direct summand of $\widehat{CFD}(S^3 \setminus K)$ is extendable.
\end{lem}
\begin{proof}
    Given a direct summand $M$ of $\widehat{CFD}(S^3 \setminus K)$, denote its complement by $N$, i.e.
    \[
    \widehat{CFD}(S^3 \setminus K) \simeq M\oplus N.
    \]
    Denote the projection endomophism which is identity on $M$ and zero on $N$ by $p$. By \Cref{prop: bijective on projection}, we know that $\Lambda(p)$ is a projection, and thus there exists a splitting map (which is a homotopy equivalence)
    \[
    f:CFK_\cR(S^3,K)\xrightarrow{\simeq} C\oplus D
    \]
    (for some direct summands $C,D$ of $CFK_\cR(S^3,K)$) such that $f\circ \Lambda(p) \circ f^{-1}$ is homotopic to the endomorphism $p_{C,D}$ of $C\oplus D$ defined as identity on $C$ and zero on $D$. 

    Denote the (homotopy types of) type-D modules $\mathcal{M}_C$ and $\mathcal{M}_D$ which correspond to $C$ and $D$, respectively, under the \LOT correspondence. By \cite[Theorem 1.1]{popovic2023link}, we know that they are extendable. Denote the projection endomorphism $p^\prime$ (of $\widehat{CFD}(S^3 \setminus K)$) defined as identity on $\mathcal{M}_C$ and zero on $\mathcal{M}_D$. Applying \Cref{prop: bijective on projection} again to $p^\prime$ and then applying \Cref{thm:CFA-to-CFK}, we see that there exists another splitting map
    \[
    g:CFK_\cR(S^3,K)\xrightarrow{\simeq} C\oplus D
    \]
    such that $g^{-1}\circ p_{C,D}\circ g\sim \Lambda(p^\prime)$.

    Now consider the composition $h_0 = g^{-1}\circ f$. By \Cref{prop: bijective on projection}, we know that $\Lambda^{-1}(h_0)$ is well-defined. Then, by \Cref{lem:composition}, we have
    \[
    p \sim \Lambda^{-1}(h_0)^{-1} \circ p^\prime \circ \Lambda^{-1}(h_0).
    \]
    By taking a reduced model of $\widehat{CFD}(S^3 \setminus K)$, so that it has no nontrivial acyclic (type-D) direct summand, we see that the homotopy autoequivalence $\Lambda^{-1}(h_0)$ of $\widehat{CFD}(S^3 \setminus K)$ can be homotoped to a type-D automorphism. Then the homotopy inverse $\Lambda^{-1}(h_0)^{-1}$ of $\Lambda^{-1}(h_0)$ can be taken to be its strict (i.e. set-theoretic) inverse. This implies that the image of $p$, which is $M$ by definition, is homotopy equivalent to the image of $p^\prime$, which is $\mathcal{M}_C$ by definition. Hence we have a type-D homotopy equivalence
    \[
    M\simeq \mathcal{M}_C;
    \]
    since $\mathcal{M}_C$ is extendable, we conclude that $M$ is indeed extendable.
\end{proof}

\begin{proof}[Proof of \Cref{thm:CFK-to-CFA}:]
    Fix an $\iota_K$-invariant splitting of $CFK_\mathcal{R}(S^3,K)$, i.e.
    \[
    CFK_\mathcal{R}(S^3,K)\simeq N_1 \oplus \cdots \oplus N_n.
    \]
    Denote the projection map to $N_i$ by $p_i$. Then $p_1,\cdots,p_n$ are pairwise commuting projections of $CFK_\mathcal{R}(S^3,K)$. By \Cref{prop: bijective on projection}, there exist projections $q_1,\cdots,q_n$ of $\CFDh(\KC)$ such that $p_i = \Lambda(q_i)$ for all $i$. By \Cref{lem:composition}, we have
    \[
    \Lambda(q_i q_j) \sim \Lambda(q_i)\Lambda(q_j) = p_ip_j = 0
    \]
    for any $i\ne j$. Since $0$ is a projection, it follows from \Cref{prop: bijective on projection} that $q_iq_j \sim 0$ for all $i\ne j$. Hence $q_1,\cdots,q_n$ are pairwise homotopy commuting projections. Then applying \Cref{lem: commuting projections} allows us to assume that they are actually pairwise (strictly) commuting projections, which induces a splitting
    \[
    \CFDh(\KC) = M_1 \oplus \hdots \oplus M_n.
    \]
    By \Cref{lem:summand-extendable}, we know that each $M_i$ is extendable. Hence, by applying \Cref{prop:doubling-CFA-to-CFK} and \Cref{prop: doubling complexes}, we see that $N_i$ corresponds to $M_i$ under the \LOT correspondence. Furthermore, by \Cref{cor: invariant splittings CFK to CFA}, we know that $q_i$ is $\iota_{\KC}$-equivariant for all $i$. Therefore our splitting is $\iota_{\KC}$-invariant.
\end{proof}

To conclude this section, we prove \Cref{thm:relative CFK-to-CFA} and \Cref{thm:relative CFA-to-CFK}. Their proofs are almost identical and similar in spirit to the proofs of Theorems \ref{thm:CFK-to-CFA} and \ref{thm:CFA-to-CFK} (and also use them in a straightforward manner), so we will only present the latter.

\begin{proof}[Proof of \Cref{thm:relative CFA-to-CFK}]
    The assumption that $CFK_\mathcal{R}(S^3,K)$ and $CFK_\mathcal{R}(S^3,K^\prime)$ are locally equivalent (not necessarily involutively) implies that $CFK_\mathcal{R}(S^3,\overline{K}\# K^\prime)$ has a free summand of bidegree $(0,0)$. Hence, even though $\overline{K}\# K^\prime$ is not slice, it is locally equivalent to the unknot, so the same argumentation as in the proof of \Cref{prop: bijective on projection} may be applied to deduce that the composition-preserving chain map
    \[
    \Lambda_{K,K^\prime}:\mathrm{Mor}^A(\CFDh(S^3 \smallsetminus K),\CFDh(S^3 \smallsetminus K^\prime))\rightarrow \mathrm{Mor}(CFK_\mathcal{R}(S^3,K),CFK_\mathcal{R}(S^3,K^\prime))
    \]
    induces a bijection between homotopy classes of degree (or bidegree) preserving morphisms. Furthermore, it is also straightforward, by mimicking the proof of \Cref{lem:composition}, to see that
    \[
    \Lambda_{K,K^\prime}(g\circ f)\sim \Lambda_{K^\prime}(g)\circ \Lambda_{K,K^\prime}(f),\quad \Lambda_{K,K^\prime}(f\circ h)\sim \Lambda_{K,K^\prime}(f)\circ \Lambda_K(h)
    \]
    for degree-preserving morphisms
    \[
    f\in \mathrm{Mor}^\mathcal{A}(\CFDh(S^3 \smallsetminus K),\CFDh(S^3 \smallsetminus K^\prime)),\quad g\in \mathrm{End}_\mathcal{A}(\CFDh(S^3 \smallsetminus K^\prime)),\quad h\in \mathrm{End}_\mathcal{A}(\CFDh(S^3 \smallsetminus K)).
    \]

    Given a degree-preserving type-D morphism
    \[
    f^D:\CFDh(S^3 \smallsetminus K)\rightarrow \CFDh(S^3 \smallsetminus K^\prime),
    \]
    consider the corresponding bidegree-preserving chain map 
    \[
    \Lambda_{K,K^\prime}(f^D):CFK_\mathcal{R}(S^3,K)\rightarrow CFK_\mathcal{R}(S^3,K^\prime).
    \]
    A slight modification of the proof of \Cref{lem: lambda commutes with involutions} shows that $\Lambda_{K,K'}$ commutes with the involution. Hence, we have
    \[
    f^D \circ \iota_{S^3 \smallsetminus K} \sim \iota_{S^3 \smallsetminus K^\prime}\circ (\mathrm{id}\boxtimes f^D).
    \]
    Suppose that $f^D$ splits along the decompositions
    \[
    \CFDh(S^3 \smallsetminus K) \simeq M_1 \oplus \cdots \oplus M_m, \quad
        \CFDh(S^3 \smallsetminus K^\prime) \simeq N_1 \oplus \cdots \oplus N_n.
    \]
    The proof of \Cref{thm:CFA-to-CFK} then produces corresponding splittings
    \[
    CFK_\mathcal{R}(S^3,K) \simeq C_1 \oplus \cdots \oplus C_m, \quad CFK_\mathcal{R}(S^3,K^\prime) \simeq D_1 \oplus \cdots \oplus D_n,
    \]
    such that the following conditions are satisfied.
    \begin{itemize}
        \item Each $C_i,D_j$ corresponds to $M_i,N_j$ under the \LOT correspondence, respectively.
        \item If we denote the projections to $M_i$ and $N_j$ by $p_i$ and $q_j$, respectively, then the projections to $C_i$ and $D_j$ are given by $\Lambda_K(p_i)$ and $\Lambda_{K^\prime}(q_j)$, respectively. 
    \end{itemize}
    
    Now suppose that the $(i,j)$ entry of the block matrix form of $f^D$ is zero. This is equivalent to saying that $q_j \circ f^D \circ p_i \sim 0$, which then implies that
    \[
    \Lambda_{K^\prime}(q_j)\circ \Lambda_{K,K^\prime}(f^A)\circ \Lambda_K(p_i) \sim \Lambda_{K,K^\prime}(q_j \circ f^D \circ p_i) \sim \Lambda_{K,K^\prime}(0)=0.
    \]
    Thus the $(i,j)$ entry of the block matrix form of $\Lambda_{K,K^\prime}(f^D)$ is zero. Conversely, suppose that the $(i,j)$ entry of the block matrix form of $\Lambda_{K,K^\prime}(f^D)$ is zero. Then, by the same logic, we see that
    \[
    \Lambda_{K,K^\prime}(q_j \circ f^D \circ p_i)\sim 0
    \]
    However, since $\Lambda_{K,K^\prime}$ induces a bijection between homotopy classes of degree (or bidegree) preserving morphisms, it follows that $q_j \circ f^D \circ p_i \sim 0$, which means that the $(i,j)$ entry of the block matrix form of $f^D$ is zero. The theorem thus follows by taking $f=\Lambda_{K,K^\prime}(f^D)$.
\end{proof}

\section{Satellites and Involutive Splittings} \label{sec:maintheoremproof}
In this section, we prove Theorem \ref{thm: induced splittings}.

\newcommand{\biP}{\prescript{}{\cA}{P}^\cA}
\newcommand{\cHaa}{\prescript{\alpha}{}{\cH^\alpha}}
\newcommand{\cHbb}{\prescript{\beta}{}{\overline{\cH}^\beta}}

\subsection{Type DA Bimodule Involutions}

\begin{defn}
    An \emph{involutive} type DA bimodule over $(\cA, \cA) := (\cA(F),\cA(F))$ is a pair $(\biP, \iota_P)$ where $\biP$ is a type DA bimodule over $\cA$ and $\iota_P$ is a homotopy equivalence
    \[
    \iota_P: \CFDAh(\AZ)\boxtimes\biP \boxtimes \CFDAh(\overline{\AZ}) \ra \biP.
    \]
\end{defn}
Given bordered 3-manifold $Y$ with $\partial_L Y \cong F \cong \partial_R Y$ represented by an arced diagram $\cH$, it follows from \cite[Lemma 4.6]{LOT_HF_as_morphism} that
\[
    \prescript{\alpha}{}{\AZ^\beta}\cup \cHbb\cup \prescript{\beta}{}{\overline{\AZ}^\alpha} \sim \cH^\alpha,
\]
where $\sim$ means the two diagrams are related by Heegaard moves. These Heegaard moves define a homotopy equivalence 
    \[
    \CFDAh(\AZ)\boxtimes \CFDAh(Y) \boxtimes \CFDAh(\overline{\AZ}) \ra \CFDAh(Y).
    \]
By naturality of the bordered bimodules, Theorem \ref{thm: naturality for multimodules}, up to homotopy, this map does not depend on the choice of Heegaard moves. We define this map to be the bordered involution, $\iota_Y$, for $Y$.

Unsurprisingly, $\iota_{S^3 \smallsetminus P(K)}$ can be recovered from $\iota_{\KC}$ and $\CFDAh((S^1\times D^2)\smallsetminus P)$.

\begin{prop}\label{prop:bimodule_involution}
    Let $K$ be a knot in $S^3$ and let $P$ be a satellite pattern in the solid torus. Let $\iota_{S^3\smallsetminus K}$, $\iota_{S^3\smallsetminus P(K)}$, and $\iota_{P}$ be the bordered involutions for $\CFDh(\KC)$, $\CFDh(S^3 \smallsetminus P(K))$, and $\CFDAh((S^1\times D^2)\smallsetminus P)$ respectively. Then, the following diagram commutes up to homotopy
    \begin{center}
        \begin{tikzcd}[column sep = huge]
            \CFDAh(\AZ)\boxtimes\CFDh(S^3 \smallsetminus P(K)) \ar[rr,"\iota_{S^3\smallsetminus P(K)}"] \ar[d,"\simeq"] && \CFDh(S^3 \smallsetminus P(K))\ar[d,"\simeq"] \\
             \shortstack[c]{$\CFDAh(\AZ)\boxtimes \CFDAh(S^1\times D^2 \smallsetminus P) \boxtimes \CFDAh(\overline{\AZ})$\\
             $\boxtimes$ \\
             $ \CFDAh(\AZ) \boxtimes\CFDh(S^3 \smallsetminus K) $ }
             \ar[rr,"\iota_{\KC}\boxtimes \iota_{P}"] && 
             \shortstack[c]{$ \CFDAh(S^1\times D^2 \smallsetminus P)$\\
             $\boxtimes$ \\ $\CFDh(S^3 \smallsetminus K)$ }.
        \end{tikzcd}
    \end{center}
    The vertical arrows are given by the pairing theorem. 
\end{prop}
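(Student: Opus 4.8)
The plan is to reduce the statement to the definition of the bordered involution on a glued arced diagram, together with the (already-established) naturality of bordered bimodules. Recall that $S^3 \smallsetminus P(K)$ is obtained by gluing the pattern complement $(S^1\times D^2)\smallsetminus P$ to the knot complement $\KC$ along a torus; fix an arced bordered Heegaard diagram $\cH_P$ for $(S^1\times D^2)\smallsetminus P$ and a bordered Heegaard diagram $\cH_K$ for $\KC$, so that $\cH_P \cup \cH_K$ represents $S^3 \smallsetminus P(K)$. The key point is that the defining Heegaard moves for $\iota_{S^3\smallsetminus P(K)}$, which implement the equivalence $\prescript{\alpha}{}{\AZ^\beta}\cup \overline{\cH_P \cup \cH_K}^\beta \cup \prescript{\beta}{}{\overline{\AZ}^\alpha} \sim (\cH_P\cup \cH_K)^\alpha$, can be chosen to be supported away from the gluing region; that is, they can be taken to be the disjoint union (after inserting an interpolating $\AZ$ piece along the internal gluing torus as well) of the moves defining $\iota_P$ and those defining $\iota_K$. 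Concretely, one writes $\overline{\cH_P\cup\cH_K}^\beta \sim \overline{\cH_P}^\beta \cup \overline{\AZ} \cup \AZ \cup \overline{\cH_K}^\beta$ by inserting the canonical $\overline{\AZ}\cup\AZ \sim \mathbb{I}$ along the internal torus (valid since $\overline{\AZ}\cup\AZ$ represents the product cobordism, by \cite{LOT_HF_as_morphism}), and then applies the pattern-side and companion-side conjugation moves independently.

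First I would make the gluing explicit and set up the diagram-level identity: show that $\prescript{\alpha}{}{\AZ^\beta}\cup \overline{\cH_P}^\beta \cup \prescript{\beta}{}{\overline{\AZ}^\alpha}$ is related by Heegaard moves to $\cH_P^\alpha$ (this is the statement defining $\iota_P$), that the analogous statement holds for $\cH_K$, and that inserting $\overline{\AZ}\cup\AZ$ along the internal torus is a Heegaard equivalence. Concatenating these, one gets a specific sequence of Heegaard moves realizing the equivalence that defines $\iota_{S^3\smallsetminus P(K)}$, and by construction the induced map on bordered modules factors, under the pairing theorem isomorphisms, as $\iota_{\KC}\boxtimes \iota_P$ (using the pairing theorem compatibility of Heegaard moves with $\boxtimes$, as in \cite[Lemma 5.6]{HL_Inv_bordered_floer}, applied twice — once along the external torus bounding the $\AZ$ pieces and once along the internal splicing torus). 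Then I would invoke Theorem \ref{thm: naturality for multimodules} (and Theorem \ref{thm:bordered naturality one boundary}): since $\iota_{S^3\smallsetminus P(K)}$ is, up to homotopy, independent of the choice of Heegaard moves used to define it, the particular factored sequence above computes $\iota_{S^3\smallsetminus P(K)}$ up to homotopy, which is exactly the commutativity of the asserted square.

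The main obstacle I anticipate is the bookkeeping around the two distinct $\AZ$-insertions and the orientations of the boundary components: the pattern complement has two torus boundary components (one glued to $\KC$, one left as the boundary of $S^3\smallsetminus P(K)$), so one must be careful that the $\AZ$ piece inserted to define $\iota_{S^3\smallsetminus P(K)}$ lives on the \emph{outer} boundary, while the $\overline{\AZ}\cup\AZ$ pair inserted to split the conjugate diagram lives on the \emph{inner} (splicing) torus, and that these interact correctly with the $\CFDAh(\overline{\AZ})$ appearing in $\iota_P$. Verifying that the moves can genuinely be chosen disjointly — i.e. that there is no forced interaction between the pattern-side and companion-side conjugation moves — is the crux; it follows because each elementary bordered Heegaard move in the standard lists is supported in a small region, and the standard sequences realizing $\prescript{\alpha}{}{\AZ^\beta}\cup\overline{\cH}^\beta\cup\prescript{\beta}{}{\overline{\AZ}^\alpha}\sim \cH^\alpha$ are local to each summand of a connected-sum/gluing decomposition, so after a suitable isotopy they commute. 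Once that locality is granted, the pairing-theorem compatibility and naturality arguments are routine and the proposition follows.
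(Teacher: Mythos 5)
Your proposal is correct and follows essentially the same strategy as the paper's proof: insert $\overline{\AZ}\cup\AZ \sim \mathbb{I}$ along the internal splicing torus, observe that the resulting Heegaard-move sequences realizing $\iota_P$ on the pattern side and $\iota_{\KC}$ on the companion side can be performed independently (via the local equivalences from \cite[Lemma 4.6]{LOT_HF_as_morphism}), and then invoke the pairing-theorem compatibility of Heegaard moves together with naturality to conclude that this factored sequence computes $\iota_{S^3\smallsetminus P(K)}$ up to homotopy. The only cosmetic difference is that the paper cites \cite[Theorem 5.1]{HL_Inv_bordered_floer} directly for the tensoring step, where you reach the same conclusion by combining \cite[Lemma 5.6]{HL_Inv_bordered_floer} with the bordered naturality theorems established earlier in the paper.
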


\begin{proof}
    Fix an bordered Heegaard diagram $\cH_K^\alpha$ for $\KC$ and an arced Heegaard diagram $\cHaa$ for $T_\infty\smallsetminus P$ (both alpha-bordered). By \cite{HL_Inv_bordered_floer} that map $\iota_{S^3 \smallsetminus P(K)}$ is induced by a sequence of Heegaard moves relating
    \[
    \prescript{\beta}{}{\AZ^\alpha}\cup \cHbb\cup\overline{\cH}_K^\beta \sim  \cHaa\cup \cH^\alpha_K. 
    \]
    By \cite[Lemma 4.6]{LOT_HF_as_morphism}, we have that 
    \[
    \overline{\cH}_K^\beta\cup \prescript{\beta}{}{\overline{\AZ}^\alpha} \sim \cH^\alpha_K \hspace{2cm}
    \prescript{\alpha}{}{\AZ^\beta}\cup \cHbb\cup \prescript{\beta}{}{\overline{\AZ}^\alpha} \sim \cH^\alpha.
    \] 
    We can therefore choose a sequence of Heegaard moves respecting our original decomposition:
    \begin{align*}
        \prescript{\beta}{}{\AZ^\alpha}\cup \cHbb\cup\overline{\cH}_K^\beta & \sim \prescript{\beta}{}{\AZ^\alpha}\cup \cHbb\cup\bI \cup \overline{\cH}_K^\beta \\
        & \sim \prescript{\beta}{}{\AZ^\alpha}\cup \cHbb\cup \prescript{\beta}{}{\overline{\AZ}^\alpha} \cup \prescript{\alpha}{}{\AZ^\beta}\cup \overline{\cH}_K^\beta\\
        & \sim \cHaa\cup \cH^\alpha_K
    \end{align*}
    These sequence of moves induce maps  
    \[
    \CFDAh(\AZ)\boxtimes\CFDh(S^3 \smallsetminus K)\ra \CFDh(S^3 \smallsetminus K)
    \]
    and 
    \[
    \CFDAh(\AZ)\boxtimes \CFDAh(S^1\times D^2 \smallsetminus P) \boxtimes \CFDAh(\overline{\AZ}) \ra \CFDAh(S^1\times D^2 \smallsetminus P).
    \]
    The first is the map $\iota_{\KC}$ from \cite{HL_Inv_bordered_floer} and the other is $\iota_{P}$. By the same argument as in \cite[Theorem 5.1]{HL_Inv_bordered_floer}, these maps tensor together to compute $\iota_{S^3 \smallsetminus P(K)}$. 
\end{proof}

\subsection{Proof of Main Theorem} Theorem \ref{thm: induced splittings} follows quickly.

\begin{proof}[Proof of Theorem \ref{thm: induced splittings}]
    Let $K$ be a knot in $S^3$ and $P$ a pattern knot in the solid torus. Fix an $\iota_K$-equivariant splitting of $\CFK_\cR(S^3,K)$, 
    \[
    \CFK_\cR(S^3,K) \simeq C_1 \oplus \hdots \oplus C_n.
    \]
    By Theorem \ref{thm:CFK-to-CFA}, the \LOT correspondence produces an $\iota_{\KC}$-equivariant splitting of $\CFDh(\KC)$
    \[
    \CFDh(\KC) \simeq \cM_{C_1} \oplus \hdots \oplus  \cM_{C_n}.
    \]
    Denote by $P(M)$ the type $D$ structure $\CFDA(S^1\times D^2 \smallsetminus P) \boxtimes M$. It follows from Proposition \ref{prop:bimodule_involution} that the decomposition 
    \[
    \CFDh(S^3 \smallsetminus P(K)) \simeq P(\cM_{C_1}) \oplus \hdots \oplus  P(\cM_{C_n}).
    \]
    is $\iota_{S^3 \smallsetminus P(K)}$-equivariant up to homotopy. By \Cref{lem:summand-extendable}, each of the summands $P(\cM_{C_i})$ admit extensions $\widetilde{P(\cM_{C_i})}$. By Theorem \ref{thm:CFA-to-CFK}, the induced splitting 
    \[
    \CFK_\cR(S^3,P(K)) \simeq P(C_1) \oplus \hdots \oplus P(C_n),
    \]
    is $\iota_{P(K)}$-equivariant, where $P(C_i) = \CFAt(T_\infty, \nu)\boxtimes \widetilde{P(\cM_{C_i})}$.
\end{proof}

\section{Relative splitting principles and a topological application} \label{sec:topapp}

In this section, we prove \Cref{thm: one stab inf family}. We start by very briefly summarizing the arguments of \cite{kang2022stabilization}.

The exotic 4-manifolds considered there are constructed from a pair of slice disks $D$ and $D'$ for a certain knot $K$. By doing $+1$-surgery on these disks, one obtains two contractible 4-manifolds, $B_{+1}(D)$ and $B_{+1}(D')$. The manifolds $B_{+1}(D)\#S^2 \times S^2$ and $B_{+1}(D')\#S^2 \times S^2$ can be compared by computing their induced involutive cobordism map (recall that in the involutive context, stabilizing by $S^2 \times S^2$ amounts to multiplying by $Q$.) 

In fact, it suffices to understand the maps induced by $D$ and $D'$ on the usual knot Floer complex and the action of $\iota_K$. By \cite[Proposition 4.4]{kang2022stabilization}, if 
\[
(F_D + F_{D'})(1) \not\in \mathrm{im}(\id + \iota_K),
\]
then $B_{+1}(D)\#S^2 \times S^2$ and $B_{+1}(D')\#S^2 \times S^2$ will be distinguished by their maps induced on involutive Heegaard Floer homology. 

A particularly useful class of slice disks are \emph{deform spun disks}, $D_{K,f}$, associated to a pair $(K, f)$, where $K$ is a knot in $S^3$ and $f$ is a diffeomorphism of $(S^3, K)$ fixing a small neighborhood of a point of $K$. The map induced by $D_{K, f}$ is determined by the action of $f$ on $\widehat{\HFK}(S^3, K)$. In particular, there is a diffeomorphism considered in \cite{juhasz_zemke_stabilization_bounds} related to swapping the summands of the knot $K \# K$, which can be computed explicitly, as 
\[
\mathrm{Sw}(\id \otimes \id + \id \otimes(\Phi \Psi) + \Psi \otimes \Phi).
\]
The associated deform spun disks are natural candidates for constructing stably exotic behavior due to their computable nature. The following definition characterizes the algebraic structure the knot $K$ should exhibit to use this strategy to construct stably exotic 4-manifolds.

\begin{defn}\cite[Definition 4.7]{kang2022stabilization}\footnote{We note that there is a typo in the current arXiv version of this article; Definition 4.7 currently says $K$ is \emph{involutively weird}, but it should say \emph{involutively nonsimple}, as in this article. \emph{Involutively weird} is defined in Definition 4.5 in that article.}
    We say that a knot $K$ is \emph{involutively nonsimple} if $\widehat{HFK}(S^3,K)$ admits a splitting
    \[
    \widehat{HFK}(S^3,K)\simeq V_1 \oplus V_2 \oplus W
    \]
    which is invariant under the actions of $\hat{\iota}_K$, $\hat{\Phi},$and $\hat{\Psi}$ and $\dim\,V_1=\dim\,V_2=1$.
    where the following conditions are satisfied.
\end{defn}
We summarize proof of \cite[Theorem 1.1]{kang2022stabilization} in the following proposition.

\begin{prop}[Proof of {\cite[Theorem 1.1]{kang2022stabilization}}] \label{prop: nonsimple implies stable}
    Let $K$ be an involutively nonsimple knot. Then $S^3 _{+1}(4K\# 4\overline{K})$ bounds an exotic pair of smooth contractible 4-manifolds which remains exotic after one stabilization. 
\end{prop}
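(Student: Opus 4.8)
The plan is to run the argument of \cite{kang2022stabilization} with the concrete knot there replaced by an arbitrary involutively nonsimple $K$, so that the ad hoc calculations become formal consequences of the defining splitting $\widehat{HFK}(S^3,K) = V_1\oplus V_2\oplus W$. First I would fix the topological input. Since $K\#\overline{K}$ is slice, so is $4K\#4\overline{K}$, and writing $L := 2K\#2\overline{K}$ we may view $4K\#4\overline{K} = L\#\overline{L}$. I would take $D_0$ to be the standard ribbon disk for $L\#\overline{L}$ (a boundary connected sum of ribbon disks for the four $K\#\overline{K}$ summands), and $D_1 := D_{L,f}$ to be the deform-spun disk attached to the diffeomorphism $f$ of $(S^3,L)$ which, after writing $L = K\#K\#\overline{K}\#\overline{K}$, swaps the two $K$-summands and fixes a ball meeting $L$ in a trivial arc. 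Following \cite{kang2022stabilization}, the $4$-manifold $B_{+1}(D_i)$ obtained by $+1$-surgery on a pushed-in copy of $D_i$ is compact contractible with boundary $S^3_{+1}(4K\#4\overline{K})$; being contractible with identical boundary, $B_{+1}(D_0)$ and $B_{+1}(D_1)$ are homeomorphic rel boundary by Freedman's theorem, and hence the same holds after connected sum with $S^2\times S^2$. So the content is to show the two stabilized manifolds are not diffeomorphic.

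By \cite[Proposition 4.4]{kang2022stabilization}, this reduces to checking
\[
(F_{D_0}+F_{D_1})(1)\notin \mathrm{im}\bigl(\id + \iota_{4K\#4\overline{K}}\bigr),
\]
where $F_{D_i}\colon \F \to \CFKh(S^3,4K\#4\overline{K})$ is the cobordism map of $D_i$ (decorated by a pair of parallel arcs). The plan here is threefold: (i) the standard disk gives $F_{D_0}(1) = \mathbf{t}$, the copairing element corresponding, under the K\"unneth splitting $\CFKh(S^3,L\#\overline{L})\simeq \CFKh(S^3,L)\otimes \CFKh(S^3,L)^\vee$, to $\sum_e e\otimes e^\vee$, as computed in \cite{kang2022stabilization}; (ii) by the computation of cobordism maps of deform-spun disks \cite{juhasz_zemke_stabilization_bounds, kang2022stabilization}, deform-spinning by $f$ twists the $L$-factor by $f_\ast$, so that $(F_{D_0}+F_{D_1})(1) = ((\id+f_\ast)\otimes\id)(\mathbf{t})$; and (iii) under $\CFKh(S^3,L)\simeq \CFKh(S^3,K)^{\otimes 2}\otimes \CFKh(S^3,\overline{K})^{\otimes 2}$, the map $f_\ast$ is $\mathrm{Sw}$ on the first two tensor factors and the identity on the rest, so the formula $\mathrm{Sw}\sim \id\otimes\id + \id\otimes(\Phi\Psi) + \Psi\otimes\Phi$ gives $\id + f_\ast \sim (\id\otimes(\Phi\Psi) + \Psi\otimes\Phi)\otimes\id\otimes\id$.

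Finally I would track the relevant component, and this is where the involutively nonsimple hypothesis enters. Fix generators $v_i$ of the one-dimensional summands $V_i\subset \widehat{HFK}(S^3,K)$. Since $\widehat{\Phi}$, $\widehat{\Psi}$ and $\widehat{\iota}_K$ all respect $V_1\oplus V_2\oplus W$, the $V_1\otimes V_2$-component (in the first two tensor factors, paired against fixed classes in the remaining ones) of $(F_{D_0}+F_{D_1})(1)$ is computable from the formula above: the ``diagonal'' term $\id\otimes(\Phi\Psi)$ feeds only the $V_i\otimes V_i$ directions, while the cross term $\Psi\otimes\Phi$ puts a nonzero class into $V_1\otimes V_2$, with no internal cancellation because $\dim V_1 = \dim V_2 = 1$. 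On the other hand, $F_{D_0}$ is $\iota$-equivariant up to homotopy, so $\mathbf{t}$ is conjugation-invariant, and $\iota_{4K\#4\overline{K}}$ interchanges the off-diagonal directions $V_1\otimes V_2$ and $V_2\otimes V_1$; hence every element of $\mathrm{im}(\id+\iota_{4K\#4\overline{K}})$ has matching $V_1\otimes V_2$- and $V_2\otimes V_1$-components, whereas $(F_{D_0}+F_{D_1})(1)$ does not (only the first occurs, as $f$ swaps a single pair). I expect this last step to be the main obstacle: making it rigorous requires enough control on $\iota_{4K\#4\overline{K}}$, obtained from $\iota_K$ via the connected-sum behavior of the involution together with the relation $\iota_K^2\sim \id+\Phi\Psi$, and it is exactly here that the full strength of the hypothesis (notably $\dim V_1 = \dim V_2 = 1$) is used, just as in \cite{kang2022stabilization}. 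Granting the non-membership, \cite[Proposition 4.4]{kang2022stabilization} yields that $B_{+1}(D_0)\#S^2\times S^2$ and $B_{+1}(D_1)\#S^2\times S^2$ are not diffeomorphic, completing the argument.
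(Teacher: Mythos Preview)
The paper does not actually prove this proposition: it is stated as a summary of \cite[Theorem~1.1]{kang2022stabilization}, and the preceding paragraphs only record the ingredients (deform-spun disks, the criterion \cite[Proposition~4.4]{kang2022stabilization}, and the swap formula). Your overall architecture---the pair $D_0,D_1$, the reduction to $(F_{D_0}+F_{D_1})(1)\notin\mathrm{im}(\id+\iota)$, and the identification of $F_{D_1}$ via the swap diffeomorphism---matches that outline.

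The final paragraph, however, does not work as written. First, you write $\id+f_\ast\sim(\id\otimes\Phi\Psi+\Psi\otimes\Phi)\otimes\id\otimes\id$, but the swap formula is $f_\ast\sim\mathrm{Sw}\circ(\id\otimes\id+\id\otimes\Phi\Psi+\Psi\otimes\Phi)$, so $\id+f_\ast$ still contains the algebraic transposition $\mathrm{Sw}$ and does not reduce to your expression. Second, since $\hat\Phi$ and $\hat\Psi$ shift grading and preserve each one-dimensional $V_i$, they vanish on $V_1$ and $V_2$; thus $\Psi\otimes\Phi$ annihilates everything in, and lands nowhere in, $V_1\otimes V_2$, so it cannot ``put a nonzero class'' there. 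Third, there is no mechanism by which $\iota_{4K\#4\overline K}$ interchanges $V_1\otimes V_2$ with $V_2\otimes V_1$: under the connected-sum formula the involution is assembled from $\iota_K$ on each tensor factor (which fixes each $V_i$) together with $\Phi,\Psi$-corrections, not from a transposition of factors. With these points corrected the element $((\id+f_\ast)\otimes\id)(\mathbf t)$ is in fact symmetric in the $V_1\otimes V_2$ and $V_2\otimes V_1$ directions, so your proposed asymmetry does not detect the non-membership. The actual obstruction in \cite{kang2022stabilization} uses the two one-dimensional summands in a different way; you should consult that paper for the correct endgame rather than improvise it here.
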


We now deduce \Cref{thm: one stab inf family} from \Cref{prop: nonsimple implies stable}.

\begin{figure}
\def\svgwidth{.7\linewidth}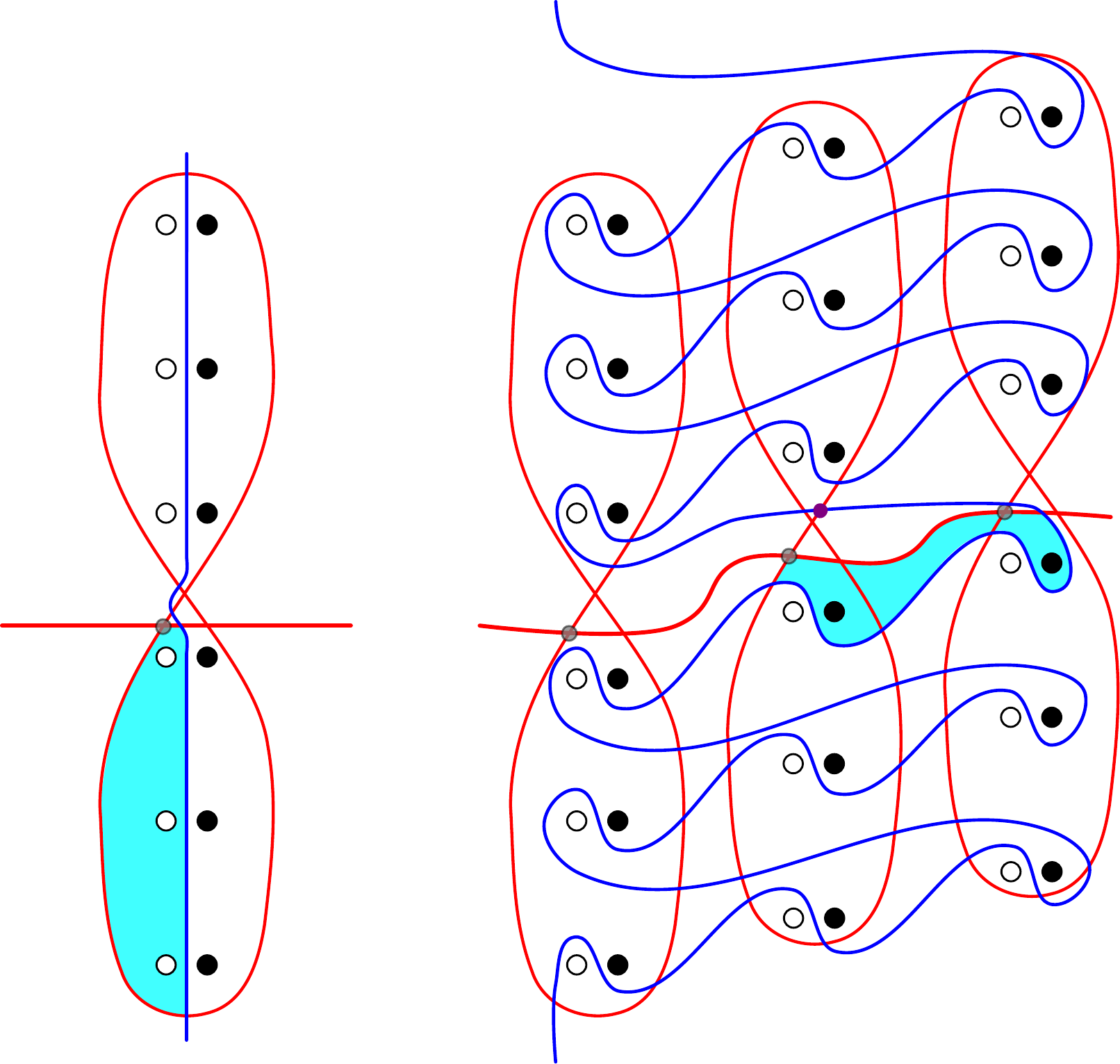
\caption{The cabling procedure via Hanselman-Chen \cite{chen_hanselman2023satellite} for the square complex $S_n$. Here, we have drawn the case $n = m = 3.$ The grey intersection points are turning points encoding the grading arrows of \cite{hanselman_rasmussen_watson_properties_apps}, which determine the relative Maslov grading. For example, in the left figure, the intersection point between the vertical and horizontal lines has grading $(0,0)$; there is a bigon to $x$ from the lowest intersection point, $c$ which crosses the turning point and $n$ of the $z$ basepoints. Hence, $\gr(c) = (-1,2n-1)$. Likewise, after applying the cabling operation, there are bigons connecting $x$ to generators in each of the resulting components which specify the grading of those components relative to the intersection point lying on the horizontal component of the immersed curve. The generator $\zeta_{\frac{m+1}{2}}$ is shown in purple.}\label{fig:cables_grading}
\end{figure}

\begin{proof}[Proof of \Cref{thm: one stab inf family}]
    Given any odd integers $n,m\ge 3$, the knot $K_{n,m}$ is given by $K_{n,m}=(K^\ast_n)_{m,-1}$, where $K^\ast_n = -T_{2n,4n+1}\# 2T_{2n,2n+1}$. By \cite[Proposition 3.5]{hendricks2022quotient} there is an $\iota_K$-invariant splitting
    \[
    \CFK_\mathcal{R}(S^3,K^\ast_n)\simeq C_n \oplus D_n
    \]
    where the complex $C_n$ is the complex
    \[
    \xymatrix{
     && b \ar[dd]_{V^n} && a \ar[ll]_{U^n} \ar[dd]^{V^n} \\
    x & \oplus & &&\\
     && d && c \ar[ll]^{U^n}
    }
    \]
    with gradings $\gr(x) = (0,0)$, $\gr(a) =(0,0)$, $\gr(b) = (2n-1,-1)$, $\gr(c) = (-1,2n-1)$,  $\gr(d)=(2n-2,2n-2).$ The involution, $\iota_K$, is given by $a\mapsto a$, $x\mapsto x+d$, $b\leftrightarrow c$, $d\mapsto d$. Note that, over the full coefficient ring $\F[U,V]$, $\iota_K(a)=a+U^{n-1}V^{n-1}x$. Decompose $C_n$ further into two parts, i.e.
    \[
    C_n \simeq O \oplus S_n,
    \]
    where $O$ is generated by $x$ and $S_n$ is generated by $a,b,c,d$.

    By \Cref{thm: induced splittings}, there is an $\iota_{K}$-invariant splitting 
    \[
    \CFK_\mathcal{R}(S^3,K_{n,m})\simeq (C_n)_{m,-1}\oplus (D_n)_{m,-1}.
    \]
    Since $O_{m,-1}\simeq O$, this complex decomposes further as
    \[
    \CFK_\mathcal{R}(S^3,K_{n,m})\simeq O\oplus (S_n)_{m,-1}\oplus (D_n)_{m,-1},
    \]
    where the action of $\iota_K$ splits as the direct sum of its action on $O\oplus (S_n)_{m,-1}$ and its action on $(D_n)_{m,-1}$. Denote the induced splitting given by setting $U=V=0$ and then taking homology as 
    \[
    \widehat{HFK}(S^3,K_{n,m})\simeq \F\langle x\rangle \oplus S^H_{n,m}\oplus D^H_{n,m}.
    \]
    
    The chain complex $(S_n)_{m,-1}$ can be computed by applying the Hanselman-Watson or Chen-Hanselman cabling formula \cite{hanselman_watson_cabling,chen_hanselman2023satellite}; see \Cref{fig:cables_grading}. Note that $\widehat{\HFK}$ is generated by intersection points between immersed curves and the vertical line which passes through the marked point on the torus, and the Alexander grading of a generator is determined by its height in the universal cover of the marked torus. $S^H_{n,m}$ has $m$ generators of bidegree $(2-2n,2-2n)$
    %\todo{Is the grading easy to see from area considerations in the immersed curve picture? \textcolor{red}{Yes, each parallelogram region containing the marked point on torus contributes to the grading by $(2,2)$}}
    , denoted $\zeta_1,\cdots,\zeta_m$. These lift to $(S_n)_{m,-1}$ such that the differential takes the following form
    \[
    \partial \zeta_i = U^{a_i}y_i + V^{b_i}z_i,
    \]
    for some generators $y_i,z_i$ of $S^H_{n,m}$. Here, the $\{a_i\}$ are strictly increasing positive integers and the $\{b_i\}$ are strictly decreasing sequences of positive integers, with
    \[
    a_{\frac{m+1}{2}} = b_{\frac{m+1}{2}} = \frac{m+1}{2}.
    \]
    By \Cref{thm: induced splittings}, $\hat{\iota}_K(\zeta_{\frac{m+1}{2}})$ is contained in $S^H_{n,m}$, and thus can be written as a nontrivial $\F$-linear combination of $\zeta_1,\cdots,\zeta_m$. If $\hat{\iota}_K(\zeta_{\frac{m+1}{2}})$ contains $\zeta_i$ for some $i<\frac{m+1}{2}$, then $\partial \iota_K(\zeta_{\frac{m+1}{2}})$ would contain $U^{a_i}y_i$, which is a contradiction since $a_i < \frac{m+1}{2}$ and 
    \[
    \partial \iota_K(\zeta_{\frac{m+1}{2}}) = \iota_K(\partial \zeta_{\frac{m+1}{2}}) = \iota_K(U^{\frac{m+1}{2}}y_{\frac{m+1}{2}} + V^{\frac{m+1}{2}}z_{\frac{m+1}{2}}) = U^{\frac{m+1}{2}} \iota_K(z_{\frac{m+1}{2}}) + V^{\frac{m+1}{2}} \iota_K(y_{\frac{m+1}{2}}).
    \]
    Similarly, $\hat{\iota}_K(\zeta_{\frac{m+1}{2}})$ contains no $\zeta_i$ for any $i>\frac{m+1}{2}$. Furthermore, the converse statement is also true: for any $i\ne \frac{m+1}{2}$, $\hat{\iota}_K(\zeta_i)$ can never contain $\zeta_{\frac{m+1}{2}}$ by the same arguments. Hence, for some summand $W$, there is a decomposition $\widehat{\HFK}(S^3, K_{m,n}) = \F \langle \zeta_{\frac{m+1}{2}}\rangle \oplus W$ which is $\hat{\iota}_K$-equivariant. 
    
    %Hence $\zeta_{\frac{m+1}{2}}$ is ``isolated'' under the action of $\hat{\iota}_K$.

    We claim that $x$ is also isolated under the action of $\hat{\iota}_K$. To show the claim, consider the chain map
    \[
    f_1:CFK_\mathcal{R}(S^3,K^\ast_n)\rightarrow CFK_\mathcal{R}(S^3,U)
    \]
    defined as the projection to the free summand $O$, as well as the chain map
    \[
    f_2:CFK_\mathcal{R}(S^3,4_1)\rightarrow CFK_\mathcal{R}(S^3,K^\ast_n)
    \]
    defined as in \cite[Lemma 5.2]{kang2022torsion}; note that $f_2$ maps the generator of the free summand of $CFK_\mathcal{R}(S^3,4_1)$ to the generator of $O$. By \Cref{thm:relative CFK-to-CFA} and \Cref{thm:relative CFA-to-CFK}, and using the fact that the induced summands in those theorems are canonically chosen, i.e. depends only on the given splittings, we see that $f_1$ and $f_2$ induce $\iota_K$-equivariant chain maps
    \[
    \begin{split}
    (f_1)_{m,-1} &: CFK_\mathcal{R}(S^3,K_{n,m})\rightarrow CFK_\mathcal{R}(S^3,U), \\
    (f_2)_{m,-1} &: CFK_\mathcal{R}(S^3,(4_1)_{m,-1})\rightarrow CFK_\mathcal{R}(S^3,K_{n,m}),
    \end{split}
    \]
    such that the following conditions are satisfied.
    \begin{itemize}
        \item $(f_1)_{m,-1}$ maps $x$, i.e. the generator of $O$, to the generator of $CFK_\mathcal{R}(S^3,U)$, and maps $S^H_{n,m}$ and $D^H_{n,m}$ to 0.
        \item $(f_2)_{m,-1}$ maps the generator of the free summand of $CFK_\mathcal{R}(S^3,(4_1)_{m,-1})$, which we denote by $z$, to $x$.
    \end{itemize}
    Since the action of $\hat{\iota}_K$ on $CFK_\mathcal{R}(S^3,U)$ is trivial, we deduce from the first condition that for any generator $w$ of $S^H_{n,m} \oplus D^H_{n,m}$, its image $\hat{\iota}_K(w)$ does not contain $x$. Furthermore, by \cite[Lemma 5.2]{kang2022torsion},  $\hat{\iota}_K(z)=z$, from which it follows that
    \[
    \hat{\iota}_K(x) = \hat{\iota}_K((f_2)_{m,-1}(z)) = (f_2)_{m,-1}(\hat{\iota}_K(z)) = (f_2)_{m,-1}(z) = x.
    \]
    The claim is thus proven.

    In summary, we have produced a splitting
    \[
    \widehat{HFK}(S^3,K_{n,m})\simeq \F \langle x \rangle \oplus \F \langle \zeta_{\frac{m+1}{2}} \rangle  \oplus W'
    \]
    which is $\hat{\iota}_K$-equivariant. However, since all terms arising in $\partial \zeta_{\frac{m+1}{2}}$ and $\partial z$ have coefficients which are of the form $U^i$ or $V^j$ with $i,j\ge 2$, and no other generators have differentials containing either $\zeta_{\frac{m+1}{2}}$ or $z$, it follows that the basepoint actions $\hat{\Phi}$ and $\hat{\Psi}$ (which are partial derivatives of the full knot Floer differential with respect to $U$ and $V$, respectively) also respect the given splitting. Therefore, $K_{n,m}$ is involutively nontrivial; the theorem then follows from \Cref{prop: nonsimple implies stable}.
\end{proof}

%\begin{figure}
%    \centering
%    \includegraphics[width =.8\textwidth]{}
%    \caption{Left, the immersed curve corresponding to the summand $S_n$. Middle, the process of applying the Hanselman-Watson cabling formula to the immersed curve on the left. Right, the immersed curve corresponding to the cabled summand $(S_n)_{m,-1}$, where we draw the components separately to make them more visible. Note that, for simplicity, we took $n=m=3$; the general case of $n,m\ge 3$ is similar.}
%    \label{fig:immersedcurves}
%\end{figure}

\begin{rem}
    It might also be possible that one can use even $n$, with the condition that $n\ge 4$. When $n=2$, it is already known \cite[Remark 4.8]{hendricks2022involutive} that the knot Floer chain complex of $-T_{2n,4n+1}\sharp 2T_{2n,2n+1}$ splits off a summand $C_2$ consisting of a free summand and a 2-by-2 box summand, with the involution $\iota_K$ acting as in the case of $n$ odd with $n\ge 3$. It is suspected that this should also be true for every even $n\ge 4$.
\end{rem}

\appendix

\section{The kernel and image of type D projection maps}

Let $\mathcal{A}=\mathcal{A}(T^2)$ be the torus algebra and $M$ be a finitely generated type D structure over $\mathcal{A}$. We assume $M$ is graded by some nonabelian group, though this plays no substantial role in our arguments. Suppose that a projection $p^1 \in \End^\cA(M)$ is given, i.e. a type D (degree-preserving) endomorphism which satisfies $(p^1)^2 = p^1$. Recall that the composition of type D morphisms is defined graphically as:
\begin{align*}
(g \circ f)^1 = 
    \begin{tikzcd}[ampersand replacement = \&]
       {} \&{} \ar[d, dashed] \\
       {} \& \delta^M \ar[d, dashed]\ar[ldd, Rightarrow, bend right = 20] \\
       {} \& f^1 \ar[d, dashed]\ar[ld] \\
       {} \ar[d, Rightarrow]\& \delta^N \ar[d, dashed] \ar[dl,Rightarrow] \\
       {}\ar[d, Rightarrow] \& g^1 \ar[d, dashed] \ar[ld] \\
       {}\ar[d, Rightarrow] \& \delta^P \ar[dd, dashed]\ar[dl,Rightarrow] \\
       \mu \ar[d] \&{} \\
       {} \&{}  \\
    \end{tikzcd}
\end{align*}
Were $M$ a chain complex and $p^1$ a chain endomorphism, the image and kernel of $p^1$ would be clear. However, since the definition of type D structures over $\mathcal{A}$ is modeled on projective $\mathcal{A}$-modules, it is not immediately clear how the image and kernel of $p^1$ should be defined. For any type D structure $(M, \delta^1)$, there is an associated differential graded module $(\cA \otimes_\mathcal{I} M, \bI_\cA \boxtimes \delta^1)$. A type D projection map $p^1$ gives rise to a projection map $\bI_\cA \boxtimes p^1$ on $\cA \boxtimes M$, and the kernel and image of this map are clear. In this appendix, we formally define type D structures $\ker(p^1)$ and $\mathrm{im}(p^1)$ whose associated differential graded modules are precisely $\ker(\bI_\cA \boxtimes p^1)$ and $\mathrm{im}(\bI_\cA \boxtimes p^1)$ and prove their homotopy invariance. 

Recall that a type D structure $M$ over $\mathcal{A}$ consists of $\iota_0$-generators which $\mathbb{F}_2$-linearly generates $\iota_0 M$, $\iota_1$-generators which $\mathbb{F}_2$-linearly generates $\iota_1 M$, and the differential $\delta^1:M\rightarrow \mathcal{A}\otimes_\mathcal{I}M$, where $\mathcal{I}=\{0,1,\iota_0,\iota_1\}$ denotes the idempotent subring of $\mathcal{A}$. Choose $\mathbb{F}_2$-linear bases $B_0$ and $B_1$ of $\iota_0 M$ and $\iota_1 M$. Then we may write
\[
p^1(b) = \sum_{I\in\{\emptyset,1,2,3,12,23,123\}} \rho_I \otimes p_I (b)
\]
for uniquely determined elements $p_I(b)\in M$. We then define an $\mathbb{F}_2$-linear endomorphism $\hat{p}$ of $M$ as follows:
\[
\hat{p}(b) = p_\emptyset(b)\quad \text{for}\quad b\in B_0\sqcup B_1.
\]
Note that this definition does not depend on the choice of bases $B_0$ and $B_1$, due to the $\mathcal{A}$-linearity of $p^1$. Furthermore, the identity $p^1=(p^1)^2$ immediately implies $\hat{p} = \hat{p}^2$, i.e. $\hat{p}$ is an $\mathbb{F}_2$-linear projection. Hence, we may write
\[
M = \ker(\hat{p}) \oplus \mathrm{Im}(\hat{p}).
\]
Going forward, we will write $\rho x$ rather than $\rho \otimes x$ for elements of $\cA \otimes_{\cI} M$ and $p$ rather than $p^1$. 

We will now try to extend this $\mathbb{F}_2$-linear basis to a suitable $\mathcal{A}$-linear basis of $\mathcal{A}\otimes_\mathcal{I} M$ that is compatible with the action of $p$. Let us introduce the following terminology.

\begin{defn}\label{def: A linear basis}
    We say that a subset $S\subset \mathcal{A}\otimes_\mathcal{I} M$ is an \emph{$\mathcal{A}$-linear basis} of $\mathcal{A}\otimes_\mathcal{I} M$ if the following conditions are satisfied:
\begin{itemize}
    \item For each $s\in S$, either $\iota_0 s=s$ (in which case we define $i_s=0$) or $\iota_1 s=s$ (in which case we define $i_s=1$, and the map
    \[
    \iota_{i_s}\mathcal{A}\rightarrow \mathcal{A}\otimes_\mathcal{I} M,\quad \rho\mapsto \rho \otimes s
    \]
    is injective;
    \item For any distinct elements $s,s'\in S$, the orbits $\mathcal{A}s,\mathcal{A}s'\subset \mathcal{A}\otimes_\mathcal{I} M$ are disjoint and either $\iota_0 s=s$ and the elements $s,\rho_1 s,\rho_3 s, \rho_{23}s, \rho_{123}s$ are pairwise distinct in $\mathcal{A}\otimes_\mathcal{I} M$, or $\iota_1 s=s$ and the elements $s,\rho_2 s,\rho_{12} s$ are pairwise distinct in $\mathcal{A}\otimes_\mathcal{I} M$;
    \item The subset $\bigcup_{s\in S} \mathcal{A} s \subset \mathcal{A}\otimes_\mathcal{I} M$ is an $\mathbb{F}_2$-linear basis of $\mathcal{A}\otimes_\mathcal{I} M$.
\end{itemize}
\end{defn}

%\textcolor{blue}{We should decide exactly what is the domain and codomain of $p$? It should be a map $p:M \ra \cA \otimes M$, no? It seems we want a basis so that $p(b) = 0$ or $p(b) = 1 \otimes b$? } \textcolor{red}{The point is that, while $p$ is a map from $M$ to $\mathcal{A}\otimes_\mathcal{I} M$, we are allowed to ``reparametrize'' $M$ using elements in $\mathcal{A}\otimes_\mathcal{I} M$, as long as we are careful with their $\mathcal{A}$-coefficients.}

\begin{lem} \label{lem: Reeb chord correction terms}
    For any $x\in \mathrm{Im}(\hat{p})$, there exists some $y\in \sum_{|I|>0} \rho_I \otimes \ker(\hat{p})$ such that $p(x+y)=x+y$. Similarly, for any $x\in \ker(\hat{p})$, there exists some $y\in \sum_{|I|>0} \rho_I \otimes \mathrm{Im}(\hat{p})$ such that $p(x+y)=0$.
\end{lem}
\begin{proof}
    Suppose first that $x\in \mathrm{Im}(\hat{p})$. Since $\hat{p}+\hat{p}^2=0$, we may write
    \[
    (\mathrm{id}+p)(x)=\sum_{|I|\ge 1} \rho_I (z_I+w_I)
    \]
    where $z_I \in \mathrm{Im}(\hat{p})$ and $w_I \in \ker(\hat{p})$. Applying $p$ then gives
    \[
    0=(p+p^2)(x) = \sum_{|I|\ge 1} \rho_I (p(z_I)+p(w_I)) = \sum_{|I|=1} \rho_I z_I + \sum_{|I|\ge 2} \rho_I (z'_I+w'_I)
    \]
    for some $z'_I\in \mathrm{Im}(\hat{p})$ and $w'_I\in \ker(\hat{p})$. By comparing coefficients of $\rho_1$, $\rho_2$, and $\rho_3$, we see that $z_1=z_2=z_3=0$. Hence we may write
    \[
    (\mathrm{id}+p)\left( x+\sum_{|I|=1} \rho_I w_I \right) = \sum_{|I|\ge 2} \rho_I (z'_I+w'_I)
    \]
    for some $z'_I \in \mathrm{Im}(\hat{p})$ and $w_I \in \ker(\hat{p})$. By applying the same arguments again, we see that $z'_I=0$ for all Reeb chords $I$ with $|I|=2$, and thus we get
    \[
    (\mathrm{id}+p)\left( x+\sum_{|I|=1} \rho_I w_I + \sum_{|I|=2} \rho_I w'_I\right) = \rho_{123}(z''_{123}+w''_{123})
    \]
    for some $z''_I \in \mathrm{Im}(\hat{p})$ and $w_I \in \ker(\hat{p})$. By applying the same arguments again, we see that $z''_{123}=0$. Therefore we get
    \[
    (\mathrm{id}+p)\left( x+\sum_{|I|=1} \rho_I w_I + \sum_{|I|=2} \rho_I w'_I + \rho_{123}w''_{123} \right) = 0,
    \]
    as desired.

    Now suppose that $x\in \ker(\hat{p})$. Then we may write
    \[
    p(x) = \sum_{|I|\ge 1} \rho_I (z_I+w_I)
    \]
    for some $z_I \in \mathrm{Im}(\hat{p})$ and $w_I \in \ker(\hat{p})$. Applying $\mathrm{id}+p$ then gives
    \[
    0 = (p+p^2)(x) = \sum_{|I|=1} \rho_I w_I + \sum_{|I|\ge 2} \rho_I (z'_I+w'_I)
    \]
    for some $z'_I \in \mathrm{Im}(\hat{p})$ and $w'_I \in \ker(\hat{p})$. Hence iterating the arguments that we have used in the case of $x\in \mathrm{Im}(\hat{p})$, while replacing $p$ with $\mathrm{id}+p$, gives us the following fact: there exists some elements $z_I,z'_I,z''_I\in \mathrm{Im}(\hat{p})$ such that
    \[
    p\left( x+\sum_{|I|=1} \rho_I z_I + \sum_{|I|=2} \rho_I z'_I + z''_{123} \right) = 0,
    \]
    as desired.
\end{proof}

The following corollary is now a direct consequence of \Cref{lem: Reeb chord correction terms}.

\begin{cor} \label{cor: nice basis for kernel and image}
    There exists an $\mathcal{A}$-linear basis $\mathcal{B}$ of $\mathcal{A}\otimes_\mathcal{I} M$ such that, for any $b\in \mathcal{B}$, we have either $p(b)=0$ or $p(b)=b$. Furthermore, if $p(b)=0$, then $\delta(b)$ is contained in the $\mathcal{A}$-linear span of elements of $\mathcal{B}\cap \ker (p)$; if $p(b)=b$, then $\delta(b)$ is contained in the $\mathcal{A}$-linear span of elements of $\mathcal{B}\cap \ker(\mathrm{id}+p)$.
\end{cor}
\begin{proof}
Choose $\mathbb{F}_2$-linear bases $B_0$ and $B_1$ of $\iota_0 M$ and $\iota_1 M$, respectively, so that for each $b\in B_0 \sqcup B_1$, either $\hat{p}(b)=0$ or $\hat{p}(b)=b$. Then \Cref{lem: Reeb chord correction terms} tells us that, for each $b\in B_0 \sqcup B_1$, there exists some element $\varphi_b\in \sum_{|I|>0} \rho_I \otimes M$ such that:
\begin{itemize}
    \item If $\hat{p}(b)=0$, then $p(b+\varphi_b)=0$;
    \item If $\hat{p}(b)=b$, then $p(b+\varphi_b) = b+\varphi_b$.
\end{itemize}
Thus, for each $b\in B_0 \sqcup B_1$, we define the corresponding element $\tilde{b}\in \mathcal{A}\otimes_\mathcal{I} M$ as follows.
\begin{itemize}
    \item If $b\in B_0$, then we define $\tilde{b} = \iota_0(b+\varphi_b)$;
    \item If $b\in B_1$, then we define $\tilde{b} = \iota_1(b+\varphi_b)$.
\end{itemize}
Then we define the subset
\[
\mathcal{B} = \{\tilde{b}\,\vert\,b\in B_0 \sqcup B_1 \} \subset \mathcal{A}\otimes_\mathcal{I} M.
\]
It is clear from constructions that $\mathcal{B}$ is an $\mathcal{A}$-linear basis of $\mathcal{A}\otimes_\mathcal{I} M$. The final statement of the corollary also follows immediately.
\end{proof}

We are now ready to define the kernel and image of a type D projection endomorphism.

\begin{defn} \label{defn: kernel and image of type D projections}
    We define the kernel and image of $p$ as
    \[
    \ker(p) = \mathbb{F}_2 \langle b\in \mathcal{B}\,\vert\,p(b)=0\rangle,\quad \mathrm{Im}(p) = \mathbb{F}_2 \langle b\in\mathcal{B}\,\vert\,p(b)=b\rangle.
    \]
    Here, the differential maps
    \[
    \delta^1:\ker(p)\rightarrow\mathcal{A}\otimes_\mathcal{I}\ker(p),\quad \delta^1:\mathrm{Im}(p)\rightarrow\mathcal{A}\otimes_\mathcal{I}\mathrm{Im}(p)
    \]
    is given by the ones that are naturally induced by the differential map $\delta^1:M\rightarrow \mathcal{A}\otimes_\mathcal{I} M$ of $M$; note that this is possible by the final statement of \Cref{cor: nice basis for kernel and image}.
\end{defn}

\begin{rem}
    While the definition of \Cref{defn: kernel and image of type D projections} relies on the choice of $\mathcal{B}$, their isomorphism classes are independent of such choices. This is because they are precisely the kernel and image of the map
    \[
    \mathrm{id}\otimes p:\mathcal{A}\otimes_\mathcal{I} M\rightarrow \mathcal{A}\otimes_\mathcal{I} M.
    \]
    In fact, what we have shown in \Cref{cor: nice basis for kernel and image} is that the kernel and image of $\mathrm{id}\otimes p$ can be identified with some $\mathcal{A}$-module induced by a finitely generated type D structure over $\mathcal{A}$. Once this existence is established, uniqueness up to isomorphism is clear.
\end{rem}

We now discuss the homotopy invariance of kernels and images of type D projections.

\begin{lem}
    Let $p$ and $p'$ be two projections of $M$ which are homotopic. Then $\ker(p)$ and $\ker(p')$ are homotopy equivalent, and similarly, $\mathrm{Im}(p)$ and $\mathrm{Im}(p')$ are homotopy equivalent.
\end{lem}
\begin{proof}
    Since $\ker(p)=\mathrm{Im}(\mathrm{id}+p)$, it suffices to show that $\mathrm{Im}(p)$ and $\mathrm{Im}(p')$ are homotopy equivalent. To show this, we choose a homotopy $H$ between $p$ and $p'$, Then we have
    \[
    p+p' = \partial H + H\partial
    \]
    as $\mathcal{A}$-linear endomorphisms of $\mathcal{A}\otimes_\mathcal{I} M$. Consider the type D endomorphisms
    \[
    f = p'\vert_{\mathrm{Im}(p)}:\mathrm{Im}(p)\rightarrow \mathrm{Im}(p'),\quad g=p\vert_{\mathrm{Im}(p')}:\mathrm{Im}(p')\rightarrow \mathrm{Im}(p).
    \]
    Then we have
    \[
    \begin{split}
    \mathrm{id}_{\mathrm{Im}(p)} +(g\circ f) &= (p+pp')\vert_{\mathrm{Im}(p)}  \\
    &= (p+p(p+\partial H+H\partial))\vert_{\mathrm{Im}(p)} \\
    &= (\partial pH + pH\partial) \vert_{\mathrm{Im}(p)}.
    \end{split}
    \]
    Hence we see that $pH\vert_{\mathrm{Im}(p)}:\mathrm{Im}(p)\rightarrow \mathrm{Im}(p)$ gives a homotopy between $\mathrm{id}_{\mathrm{Im}(p)}$ and $g\circ f$. Similarly, $p'H\vert_{\mathrm{Im}(p')}$ is a homotopy between $\mathrm{id}_{\mathrm{Im}(p')}$ and $f\circ g$. Therefore $f$ and $g$ are homotopy equivalences, and thus $\mathrm{Im}(p)$ and $\mathrm{Im}(p')$ are homotopy equivalent.
\end{proof}

\bibliographystyle{amsalpha}
\bibliography{mathbib}

\end{document}